\newcommand{\join}{\star}
\newcommand{\Decbot}[1]{\operatorname{Dec}_\bot{}\kern-2pt{#1}}
\newcommand{\Dectop}[1]{\operatorname{Dec}_\top{}\kern-2pt{#1}}
\newcommand{\simplexcategory}{\boldsymbol{\Delta}}
\newcommand{\Deltainert}{\simplexcategory_{\operatorname{inert}}}
\newcommand{\Decomp}{\kat{Decomp}}
\newcommand{\toppreserving}{last-point-preserving}
\newcommand{\CC}{\mathcal{C}}
\newcommand{\DD}{\mathcal{D}}
\newcommand{\EE}{\mathcal{E}}
\providecommand{\kat}[1]{\text{\textbf{\textsl{#1\/}}}}
\newcommand{\PrSh}{\kat{PrSh}}
\newcommand{\op}{^{\text{{\rm{op}}}}}
\newcommand{\N}{\mathbb{N}}
\newcommand{\Map}{\operatorname{Map}}
\newcommand{\Fun}{\operatorname{Fun}}
\newcommand{\id}{\operatorname{id}}
\newcommand{\Id}{\operatorname{Id}}
\newcommand{\sd}{\operatorname{Sd}}
\newcommand{\smallsd}{\operatorname{sd}}
\newcommand{\el}{\operatorname{el}}
\newcommand{\Nel}{\operatorname{Nel}}
\newcommand{\isleftadjointto}{\dashv}
\newcommand{\name}[1]{\ulcorner #1\urcorner}
\newcommand{\isopil}{\stackrel{\raisebox{0.1ex}[0ex][0ex]{\(\sim\)}}%
      {\raisebox{-0.15ex}[0.28ex]{\(\rightarrow\)}}}
\DeclareRobustCommand\comma{%
  \mathchoice%
    {\kern1.2pt\raise0pt\hbox{$\displaystyle\downarrow$}\kern1.2pt}% \displaystyle
    {\kern1pt\raise0pt\hbox{$\textstyle\downarrow$}\kern1pt}% \textstyle
    {\kern0.4pt\raise0pt\hbox{$\scriptstyle\downarrow$}\kern0.4pt}% \scriptstyle
    {\kern0.2pt\raise0pt\hbox{$\scriptscriptstyle\downarrow$}\kern0.2pt}% \scriptscriptstyle
}%
\newcommand{\last}{\xi}
\newcommand{\newlast}{\nu}
\newcommand{\newlen}{\pi}
\newcommand{\actto}{\rightarrow\Mapsfromchar}
\DeclareRobustCommand\upperstar{%
  \mathchoice%
    {\kern0pt\raise0.55ex\hbox{$\displaystyle *$}\kern0.8pt}% \displaystyle
    {\kern0pt\raise0.58ex\hbox{$\textstyle *$}\kern0.8pt}% \textstyle
    {\kern0pt\raise0.45ex\hbox{$\scriptstyle *$}\kern0.4pt}% \scriptstyle
    {\kern0pt\raise0.4ex\hbox{$\scriptscriptstyle *$}\kern0.2pt}% \scriptscriptstyle
}%
\DeclareRobustCommand\lowerstar{%
  \mathchoice%
    {\kern0pt\raise-0.65ex\hbox{$\displaystyle *$}\kern0.8pt}% \displaystyle
    {\kern0pt\raise-0.68ex\hbox{$\textstyle *$}\kern0.8pt}% \textstyle
    {\kern0pt\raise-0.55ex\hbox{$\scriptstyle *$}\kern0.4pt}% \scriptstyle
    {\kern0pt\raise-0.5ex\hbox{$\scriptscriptstyle *$}\kern0.2pt}% \scriptscriptstyle
}%
\newcommand{\lowershriek}{_!}
\DeclareMathOperator*{\colim}{colim}
\tikzset{
  act /.tip = >|
}
\newcommand{\drpullback}{\arrow[phantom]{dr}[very near start,description]{\lrcorner}}
\newcommand{\dlpullback}{\arrow[phantom]{dl}[very near start,description]{\llcorner}}
\newtheorem{lemma}{Lemma}[section]
\newtheorem{prop}[lemma]{Proposition}
\newtheorem{proposition}[lemma]{Proposition} % appendix
\newtheorem{theorem}[lemma]{Theorem}
\newtheorem{cor}[lemma]{Corollary}
\newtheorem{corollary}[lemma]{Corollary} % appendix
\newtheorem{specialtheorem}{Theorem}
\theoremstyle{definition}
\newtheorem{definition}[lemma]{Definition} % appendix
\newtheorem{remark}[lemma]{Remark} % appendix
\newtheoremstyle{noname}
  {8pt plus 2pt minus 4pt} % Space above
  {8pt plus 2pt minus 4pt} % Space below
  {}                       % Body font
  {}                       % Indent amount
  {\bfseries}              % Theorem head font
  {.}                      % Punctuation after theorem head
  {5pt plus 1pt minus 1pt} % Space after theorem head
  {\thmnumber{#2}\thmnote{.\ #3}} % Theorem head spec
\theoremstyle{noname}
\newtheorem{blanko}[lemma]{}
\newcommand{\RFIB}{\kat{RFib}}
\newcommand{\RFib}{\kat{Rfib}}
\newcommand{\simprfib}{\kat{RFib}}
\newcommand{\simplfib}{\kat{LFib}}
\newcommand{\nerve}{\mathrm{N}}
\newcommand{\spaces}{\mathcal{S}}
\newcommand{\simpspaces}{\kat{s}\spaces}
\newcommand{\catinf}{{\kat{Cat}_\infty}}
\newcommand{\culf}{\kat{Culf}}
\newcommand{\simplrighteous}{\kat{Righteous}}
\newcommand{\simplculfy}{\kat{Culfy}}
\newcommand{\map}{\Map}
\newcommand{\ho}{\operatorname{ho}}
\let\@wraptoccontribs\wraptoccontribs
\begin{document}

\title{Culf maps and edgewise subdivision}

\author{Philip Hackney}
\address{Department of Mathematics, University of Louisiana at Lafayette, USA}
\email{philip@phck.net}

\author{Joachim Kock}
\address{Department of Mathematical Sciences, University of Copenhagen, Denmark and Departament de Matem\`atiques, Universitat Aut\`onoma de Barcelona, Spain and Centre de Recerca Matem\`atica (Barcelona), Spain}
\email{joachim.kock@uab.cat}

\contrib[with an appendix coauthored with]{Jan Steinebrunner}
\address{Department of Mathematical Sciences, University of Copenhagen, Denmark}
\curraddr{Gonville \& Caius College, University of Cambridge, Cambridge, UK}
\email{js2675@cam.ac.uk}

\thanks{This work was supported by a grant from the Simons Foundation (\#850849, PH).
PH was partially supported Louisiana Board of Regents through the Board of Regents Support fund LEQSF(2024-27)-RD-A-31.
This material is partially based upon work supported by the National Science Foundation under Grant No. DMS-1928930 while PH and JS participated in a program supported by the Mathematical Sciences Research Institute; 
the program was held in the summer of 2022 in partnership with the Universidad Nacional Autónoma de México.
JK has been funded by grant No.~10.46540/3103-00099B from the
Independent Research Fund Denmark, and also received support
from grants PID2020-116481GB-I00 (AEI/FEDER, UE) of Spain and
2017-SGR-1725 of Catalonia. Support from the Severo Ochoa and
Mar\'ia de Maeztu Program for Centers and Units of Excellence
(CEX2020-001084-M) as well as the Danish National Research
Foundation through the Copenhagen Centre for Geometry and
Topology (DNRF151) is also acknowledged.
}

\subjclass[2020]{%Primary: 
18N45, % Categories of fibrations, relations to K-theory, relations to type theory
18N50, % Simplicial sets, simplicial objects
18N60} % (∞,1)-categories (quasi-categories, Segal spaces, etc.); ∞-topoi, stable ∞-categories

\begin{abstract}
We show that, for any simplicial space $X$, the $\infty$-category of culf maps over $X$
  is equivalent to the $\infty$-category of right fibrations over $\sd(X)$,
  the edgewise subdivision of $X$. (When $X$ is a Rezk complete Segal or 2-Segal space, $\sd(X)$ is the twisted arrow category of $X$.)
We give two proofs of independent
  interest; one exploiting comprehensive factorization and the natural
  transformation from the edgewise subdivision to the nerve of the category
  of elements, and another exploiting a new factorization system of
  ambifinal and culf maps, together with the right adjoint to edgewise
  subdivision.
Using this main theorem, we show that the $\infty$-category of decomposition
  spaces and culf maps is locally an $\infty$-topos.
\end{abstract}

\maketitle
\vspace{\fill}
\setcounter{tocdepth}{1} 
\tableofcontents

\vspace{\fill}

%%%%%%%%%%%%%%%%%%%%%%%%%%%%%%%%%%%%%%%%%%%%%%%%%%
\section{Introduction}
%%%%%%%%%%%%%%%%%%%%%%%%%%%%%%%%%%%%%%%%%%%%%%%%%%

%%%%%%%%%%%%%%%%%%%%%%%%%%%%%%%%%%%%%%%%%%%%%%%%%%
\subsection*{Background}
%%%%%%%%%%%%%%%%%%%%%%%%%%%%%%%%%%%%%%%%%%%%%%%%%%

\begin{blanko}[Simplicial spaces]
Simplicial spaces (simplicial $\infty$-groupoids) are central
objects in homotopy theory, where they serve among other things
to express up-to-coherent-homotopy algebraic
structures~\cite{Boardman-Vogt:LNM347}, \cite{Lurie:HA}. At the
foundational level this accounts for important
models for $\infty$-categories, notably Rezk complete Segal
spaces~\cite{Rezk:MHTHT}, and it is also an important tool in
applications of homotopy theory to algebraic
geometry~\cite{Toen-Vezzosi:0207},
K-theory~\cite{Quillen:higher-K}, and representation
theory~\cite{Dyckerhoff-Kapranov:1212.3563}, to mention a few.

In this paper we are concerned with certain properties of
simplicial maps between simplicial spaces, which are of
general interest in homotopy theory. Our motivation, however,
comes from combinatorics and process algebra,
in both cases via the notion of decomposition spaces ($2$-Segal
spaces), and we now proceed to approach the results of the
paper from that angle.
\end{blanko}

\begin{blanko}[Decomposition spaces ($2$-Segal spaces)]
  Decomposition spaces~\cite{GKT1,GKT2,GKT3} (the same thing as $2$-Segal
  spaces~\cite{Dyckerhoff-Kapranov:1212.3563}; see \cite{FGKPW}) are
  simplicial $\infty$-groupoids (simplicial spaces) subject to an exactness
  condition weaker than the Segal condition. Technically the condition says 
  that certain simplicial identities are pullback squares; equivalently,  
  a simplicial space is a decomposition space
  when every slice and every coslice is a Segal space.

  Where the Segal condition expresses composition, the
  weaker condition expresses decomposition. 
  The motivation of G\'alvez--Kock--Tonks~\cite{GKT1,GKT2,GKT3} for
  introducing and studying decomposition spaces was that they have incidence
  coalgebras and M\"obius inversion. The motivation of Dyckerhoff and
  Kapranov~\cite{Dyckerhoff-Kapranov:1212.3563} came rather from
  homological algebra and representation theory. In both lines of 
  development, an
  important example of a decomposition space is Waldhausen's
  S-construction~\cite{Waldhausen:AKTS}, of an abelian category
  $\mathcal{A}$, say. Recall that $S(\mathcal{A})$ is a simplicial groupoid
  which is contractible in degree $0$, has the objects of $\mathcal{A}$ in
  degree $1$, and short exact sequences in degree $2$, etc. Wide-ranging
  generalizations of Waldhausen's construction resulted from the
  decomposition-space viewpoint 
  \cite{Bergner-Osorno-Ozornova-Rovelli-Scheimbauer:1609.02853,
  Bergner-Osorno-Ozornova-Rovelli-Scheimbauer:1809.10924,
  Bergner-Osorno-Ozornova-Rovelli-Scheimbauer:1901.03606}, culminating 
  with the discovery that every decomposition space arises from a certain 
  generalized
  Waldhausen construction, which takes as input certain double Segal 
  spaces.
\end{blanko}

\begin{blanko}[Edgewise subdivision]
  The edgewise subdivision of a simplicial space $X$, first introduced by
  Segal~\cite{Segal:1973}, is a new simplicial space $\sd(X)$ (of the same
  homotopy type) with $(\sd X)_n = X_{2n+1}$. Formally (cf.~\ref{sd}
  below), $\sd \coloneqq Q\upperstar$, for $Q\colon \simplexcategory\to
  \simplexcategory$ given by $[n] \mapsto [n]\op\join[n]=[2n{+}1]$. When
  $X$ is the nerve of a category, $\sd(X)$ is the nerve of the twisted
  arrow category. 
  A significant example of
  edgewise subdivision is the fact (due to
  Waldhausen~\cite[\S1.9]{Waldhausen:AKTS}) that the edgewise subdivision of the
  Waldhausen S-construction is the Quillen
  Q-construction~\cite{Quillen:higher-K}, in this way relating the two main
  approaches to K-theory of categories.
  
  Decomposition spaces can be characterized in terms of edgewise 
  subdivision, by a theorem of Bergner, Osorno,  Ozornova,
  Rovelli, and Scheimbauer~\cite{BOORS:Edgewise}:
  {\em $X$ is 
  decomposition if and only if $\sd(X)$ is Segal}.   
  In this paper we explore similar viewpoints, not just on simplicial
  spaces but also on simplicial maps. 
\end{blanko}

\begin{blanko}[Culf maps]
  The most important class of simplicial maps for decomposition spaces ---
  those that induce coalgebra homomorphisms --- are the {\em culf} maps
  (standing for ``conservative'' and ``unique-lifting-of-factorization''), introduced in this setting by G\'alvez--Kock--Tonks~\cite{GKT1}. The
  culf condition is weaker than being a right (or left) fibration. For
  $\infty$-categories, the culf maps are the same thing as the conservative
  exponentiable fibrations studied by Ayala and
  Francis~\cite{AyalaFrancis:Fibrations}. For $1$-categories, culf functors
  are also called discrete Conduch\'e fibrations
  \cite{Johnstone:Conduche'}.

  A technically convenient formulation of the culf condition states that
  certain squares are pullbacks (cf.~\ref{culf} below). While that
  condition will feature in all our proofs, it is useful to know 
  (cf.~\ref{culf<=>rfib})
  that a
  simplicial map $p$ is culf if and only if $\sd(p)$ is a right fibration. (For
  $1$-categories, where edgewise subdivision is just the twisted arrow
  category, this result goes back to Lamarche and
  Bunge--Niefield~\cite{Bunge-Niefield}.) % Proposition 4.7 of Bunge-Niefield
  
  Further interpretations can be given in analogy with right (or left)
  fibrations. Recall that a functor $p \colon \mathcal{E} \to \mathcal{B}$
  is a right fibration when for
  every object $x \in \mathcal{E}$, the induced functor on slices $p_x
  \colon \mathcal{E}_{/x} \to \mathcal{B}_{/px}$ is an equivalence. 
  Similarly, $p \colon \mathcal{E} \to
  \mathcal{B}$ is a left fibration if every induced map on coslices is an
  equivalence.
  The culf condition is weaker: $p$ is culf when for every $x\in
  \mathcal{E}$ the induced map on coslices is a right fibration, or 
  equivalently, the induced map on slices is a left fibration. 
\end{blanko}

\begin{blanko}[Interval preservation, and culf maps in combinatorics]
  The data over which to slice and then coslice, or coslice and then slice,
  is just a $1$-simplex $f \colon x \to y$. The slice of the coslice (or
  the coslice of the slice) is then precisely Lawvere's notion of {\em
  interval of $f$}, denoted $I(f)$. Intuitively, the interval of an arrow
  $f$ is the category of its factorizations. Yet another characterization
  of culf maps is that they are the maps that induce equivalences on all
  intervals (cf.~\ref{culf=int-pres}). This is the original viewpoint on culf
  maps of Lawvere~\cite{Lawvere:statecats}.

  The notion of interval of a $1$-simplex is central to the combinatorial
  theory of decomposition spaces~\cite{GKT3}, \cite{GKT:ex},
  \cite{Forero:2103.11508}, since it generalizes the notion of intervals in
  a poset, which form the basis for the incidence coalgebra of the poset.
  Just as the comultiplication map in classical incidence coalgebras splits
  poset intervals, the general notion of incidence coalgebra of
  decomposition spaces is about splitting decomposition-space intervals, or
  equivalently, summing over factorizations. The interpretation of the culf
  condition from the viewpoint of combinatorics is thus to preserve
  interval structure, or to preserve decomposition structure, loosely
  speaking.
\end{blanko}

\begin{blanko}[Culf maps in dynamical systems and process algebra]
  Lawvere's original motivation, both for the notion of interval and the
  notion of culf map, came from dynamical systems and the general theory of
  processes~\cite{Lawvere:statecats} (part of his long-time effort to
  understand continuum mechanics categorically). In this theory, the
  general role of culf maps is to express abstract notions of duration and
  synchronization, but depending on the situation they are also given
  interpretation in terms of ``response'' and ``control.'' The interval of
  an arrow, thought of as a process, is then the space of trajectories, or
  executions, of the process. It is important that the culf
  condition is weaker than left fibrations (discrete opfibrations) or right fibrations
  (discrete fibrations): where left or right fibrations express determinism, 
  namely unique evolution forward or backward from a given state (object) 
  (see \cite{Winskel-Nielsen:1995} for a
  development of this viewpoint in computer science), the
  culf condition only expresses synchronization of a given process, or
  control of it, by a scheduling.
  
  Brown and Yetter~\cite{Brown-Yetter:10.1216/RMJ-2017-47-3-711} 
  interpreted the culf condition as preservation of more abstract
  notions of dynamics in the theory of $C\upperstar$-algebras.
  Melli\`es~\cite{DBLP:journals/pacmpl/Mellies19} and
  Eberhart--Hirschowitz--Laouar~\cite{DBLP:conf/rta/EberhartHL19} exploit 
  similar viewpoints in game semantics.
\end{blanko}

\begin{blanko}[Lamarche conjecture]
  Working on abstract notions of processes in computer science, at a time
  when presheaf semantics was gaining importance to model concurrency (see
  for example Cattani--Winskel~\cite{DBLP:conf/csl/CattaniW96}), Lamarche
  (1996) made the conjecture that for any category $C$, the category
  $\kat{Cat}^{\text{culf}} / C$ of culf maps over $C$ is a topos.
  It was soon discovered, though, that the conjecture is false in general, by
   counterexamples due to Johnstone~\cite{Johnstone:Conduche'},
  Bunge--Niefield~\cite{Bunge-Niefield}, and
  Bunge--Fiore~\cite{Bunge-Fiore}. (For the interesting history of this
  conjecture, see \cite{Kock-Spivak:1807.06000}.)

  The categories $C$ for which $\kat{Cat}^{\text{culf}} / C$ {\em is} a topos are
  very special, expressing a certain local linear time
  evolution~\cite{Bunge-Fiore} (see Fiore~\cite{DBLP:conf/ifipTCS/Fiore00}
  for further analysis). This includes 
  the nonnegative reals, the monoid $\N$, and more
  generally free categories on a graph --- these were the examples of
  importance to Lawvere~\cite{Lawvere:statecats} for dynamical systems.
  From the viewpoint of computer science the condition expresses a
  strict interleaving property (covering models such as labeled transition
  systems and synchronization trees \cite{Winskel-Nielsen:1995}), but comes
  short in capturing more general notions of concurrency.
\end{blanko}

\begin{blanko}[Kock--Spivak theorem]
  Decomposition spaces were first considered in connection with process
  algebra when Kock and Spivak~\cite{Kock-Spivak:1807.06000} discovered
  that Lamarche's conjecture is actually true in general, if just
  categories are replaced by decomposition spaces: they showed that
  for any {\em discrete}
  decomposition space $D$ (i.e.~a simplicial set rather than a simplicial
  space), there is a natural equivalence of categories
  $$
  \Decomp_{/D} \simeq \PrSh(\sd D) .
  $$
  
  This result shows that not only are culf maps natural to consider in
  connection with decomposition spaces, but that also decomposition spaces
  are a natural setting for culf maps: even if the base $D$ is actually a
  category, the nicely behaved class of culf maps into it is from
  decomposition spaces rather than from categories. From the viewpoint of
  processes, the lack of composability is something that occurs naturally
  in applications: Schultz and Spivak~\cite{Schultz-Spivak:1710.10258}
  observe that even if time intervals compose, processes over them do not
  necessarily compose, since constraints (called ``contracts'') may not extend
  over time.
\end{blanko}

%%%%%%%%%%%%%%%%%%%%%%%%%%%%%%%%%%%%%%%%%%%%%%%%%%
\subsection*{Contributions of this paper}
%%%%%%%%%%%%%%%%%%%%%%%%%%%%%%%%%%%%%%%%%%%%%%%%%%

One version of our main theorem is the following $\infty$-version of
the Kock--Spivak result:

\setcounter{specialtheorem}{3}
\begin{specialtheorem}[\cref{thm_d_actual}]\label{thm_d}
  The $\infty$-category of decomposition spaces and culf maps is locally an
  $\infty$-topos. More precisely, for $X$ a decomposition space, we have an
  equivalence 
$$
\Decomp_{/X} \simeq \simprfib(\sd X) \simeq 
\simprfib(\widehat{\sd X})\simeq \PrSh(\widehat{\sd X}) .
$$
\end{specialtheorem}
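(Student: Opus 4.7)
My plan is to assemble the three equivalences from separate ingredients, with the heavy lifting already done by the main theorem of the paper.

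\textbf{First equivalence.} To obtain $\Decomp_{/X} \simeq \simprfib(\sd X)$, I would invoke the main theorem of the paper, which asserts that for \emph{any} simplicial space $X$ the $\infty$-category of culf maps over $X$ is equivalent to $\simprfib(\sd X)$. This already provides an equivalence between $\simprfib(\sd X)$ and the $\infty$-category of culf maps into $X$ from arbitrary simplicial spaces. What remains is to verify that when $X$ is a decomposition space, the domain $Y$ of every culf map $p\colon Y \to X$ is itself a decomposition space, so that the two slice $\infty$-categories coincide. This should follow from pullback pasting: the pullback squares witnessing the decomposition-space condition on $X$ combine with the pullback squares characterizing culfness of $p$ (or equivalently, with the condition that $\sd(p)$ be a right fibration, recalled in the introduction) to produce the required squares for $Y$.

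\textbf{Second equivalence.} For $\simprfib(\sd X)\simeq \simprfib(\widehat{\sd X})$ I would appeal to the Bergner--Osorno--Ozornova--Rovelli--Scheimbauer result recalled in the introduction: since $X$ is a decomposition space, $\sd X$ is a Segal space, so its Rezk completion $\widehat{\sd X}$ is an $\infty$-category and the unit $\sd X \to \widehat{\sd X}$ is a Rezk equivalence. Right fibrations over a simplicial space are classified by space-valued presheaves on the underlying $\infty$-category, and therefore automatically invert equivalences in the base; so restriction along the unit induces an equivalence $\simprfib(\widehat{\sd X}) \xrightarrow{\sim} \simprfib(\sd X)$.

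\textbf{Third equivalence.} The last step $\simprfib(\widehat{\sd X})\simeq \PrSh(\widehat{\sd X})$ is the standard straightening/unstraightening correspondence for right fibrations over an $\infty$-category.

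\textbf{Main obstacle.} The substantive work is entirely concentrated in the main theorem, which is established elsewhere in the paper; once it is in hand, Theorem~D is really a packaging statement. The only genuinely new ingredient needed here is the lemma that a culf map into a decomposition space has decomposition-space source, which is a routine but slightly fiddly pullback-pasting argument and is the step I would expect to take the most care to get right. A minor secondary point is to check that the Rezk-completion step is compatible with the slice identifications, so that each of the four $\infty$-categories in the chain is comparable with the next via a genuinely natural equivalence.
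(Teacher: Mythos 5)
Your overall decomposition is exactly the paper's: the first equivalence is \cref{thm_c} combined with the fact that the source of a culf map into a decomposition space is again a decomposition space (the paper simply cites \cite[Lemma 4.6]{GKT1} for this, and your pullback-pasting sketch is indeed how that lemma goes), the second equivalence is invariance of right fibrations under Rezk completion of the Segal space $\sd X$ (Segal by the Bergner et al.\ result), and the third is straightening/unstraightening over the complete Segal space $\widehat{\sd X}$.

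The genuine gap is in your second step. You justify $\simprfib(\sd X)\simeq \simprfib(\widehat{\sd X})$ by saying that right fibrations over a simplicial space are ``classified by space-valued presheaves on the underlying $\infty$-category'' and hence invert equivalences in the base; but for a non-complete Segal space such as $\sd X$ this classification statement \emph{is} the equivalence you are trying to establish, so the argument as written is circular. This is precisely where the remaining substantive work of the theorem lives: the paper devotes its appendix to proving \cref{rfib-hat} (that pullback along $\sd X \to \widehat{\sd X}$ is an equivalence $\simprfib(\widehat{\sd X})\to\simprfib(\sd X)$), via semi-left-exactness of the Rezk completion, the factorization system of Dwyer--Kan equivalences and relative complete maps, the fact that right fibrations between Segal spaces are relative complete, and that completion preserves right fibrations. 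To repair your proof you must either cite this result (it is due to Boavida in the model-categorical setting) or supply an argument of comparable substance; note also that the naturality/slice-compatibility point you defer at the end is handled by exactly this cartesianness of the completion unit on relative complete maps.
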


Here $\widehat{(-)}$ denotes the Rezk completion of a Segal space.
We also will explain in \cref{prop twist rezk} that if $X$ itself is Rezk complete as a 
decomposition space, then $\sd(X)$ is Rezk complete as a Segal space,
and we can write $\Decomp_{/X} \simeq \PrSh(\sd X)$ directly. 
For example, all M\"obius decomposition spaces are Rezk complete by \cite[Corollary 8.7]{GKT2}.

The substantial part of the result is the first equivalence in the display,
which we establish as a special case of the following general theorem:

\setcounter{specialtheorem}{2}
\begin{specialtheorem}[\cref{untwisting theorem} \& \cref{thm_c_actual_2}]\label{thm_c}
For any simplicial space $X$, there is a natural 
equivalence
$$
\culf(X) \isopil \simprfib(\sd X).
$$
\end{specialtheorem}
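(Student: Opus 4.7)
The forward functor $\Phi \colon \culf(X) \to \simprfib(\sd X)$ is immediate: if $p \colon Y \to X$ is culf then by \ref{culf<=>rfib} the map $\sd(p)$ is a right fibration, and pullback squares defining slices of $\culf(X)$ are transported to pullback squares of $\simprfib(\sd X)$ because $\sd$ preserves limits. The task is therefore to construct a functor $\Psi$ in the other direction and to verify that the two composites are naturally equivalent to the identity.

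My plan is to use the right adjoint to subdivision. Since $\sd = Q\upperstar$ is restriction along $Q \colon \simplexcategory \to \simplexcategory$, $[n] \mapsto [n]\op \join [n]$, it admits a right adjoint $R$ given by right Kan extension along $Q$, with unit $\eta_X \colon X \to R\sd X$. For a right fibration $q \colon E \to \sd X$ I would set
\[
\Psi(q) \;\coloneqq\; X \times_{R\sd X} RE \;\longrightarrow\; X,
\]
the pullback along $\eta_X$ of $Rq$. Two things then need to be proved: (i) $\Psi(q) \to X$ is culf, and (ii) $\Phi$ and $\Psi$ are mutually inverse.

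For (ii), the candidate equivalences come from the triangle identities together with the fact that $\sd$ preserves pullbacks. Applying $\sd$ to $\Psi(q)$ yields $\sd X \times_{\sd R \sd X} \sd R E$, and the triangle identity $\epsilon_{\sd X} \circ \sd \eta_X = \id_{\sd X}$ identifies this with $E \to \sd X$. In the other direction, $\Psi\Phi(p) = X \times_{R\sd X} R\sd Y$ comes equipped with a natural comparison from $Y$ via naturality of $\eta$ at $p$; one must show this square is a pullback, which amounts to saying that $\eta$ is a cartesian natural transformation when restricted to culf maps.

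The chief obstacle will be (i): the culf condition on $\Psi(q) \to X$ concerns pullback squares indexed by active-inert decompositions of simplices of $X$, whereas the right-fibration condition on $E \to \sd X$ is about lifting inerts in $\sd X$, and one must show that the adjunction $\sd \dashv R$ transports one into the other. I would attack this along the second route signalled in the introduction: first establish an orthogonal factorization system of ambifinal and culf maps on simplicial spaces, then use the adjunction to show that pullbacks along $\eta_X$ of right fibrations over $\sd X$ are orthogonal to ambifinal maps and hence culf. Alternatively, the first route --- exploiting the natural map $\sd X \to \nerve\el(X)$ together with comprehensive factorization --- reduces both $\culf(X)$ and $\simprfib(\sd X)$ to presheaves on a common indexing category. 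Either way (i) is the substantive input; once it is established, the cartesianness in (ii) and the remaining verifications reduce to formal adjunction and pullback manipulations, and naturality in $X$ is automatic.
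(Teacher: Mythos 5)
Your route is the paper's second proof of this theorem: the inverse is $(\eta'_X)\upperstar \circ Q\lowerstar$, i.e.\ apply the right Kan extension $Q\lowerstar$ and pull back along the unit of $Q\upperstar \isleftadjointto Q\lowerstar$, and your step (i) is handled exactly as you suggest (cf.~\cref{prop rke rfib to culf}: $Q\upperstar$ sends active maps between representables to last-point-preserving maps, which are final, so by adjunction $Q\lowerstar$ sends right fibrations to culf maps, and culf maps are stable under pullback along $\eta'_X$). So the architecture is correct, and (i) is not in fact the chief obstacle.

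The gap is in (ii), where you underestimate what is needed. For $\sd\Psi(q)\simeq q$ the triangle identity alone does not identify $\sd X \times_{\sd Q\lowerstar \sd X} \sd Q\lowerstar E$ with $E$: one needs in addition that the naturality square of the counit $\varepsilon'$ of $Q\upperstar\isleftadjointto Q\lowerstar$ at the right fibration $q\colon E \to \sd X$,
\[
\begin{tikzcd}
Q\upperstar Q\lowerstar E \ar[r, "\varepsilon'_E"] \ar[d, "Q\upperstar Q\lowerstar q"'] & E \ar[d, "q"] \\
Q\upperstar Q\lowerstar \sd X \ar[r, "\varepsilon'_{\sd X}"'] & \sd X
\end{tikzcd}
\]
is a pullback; only after pasting this square onto the $\sd$-image of your defining pullback does the triangle identity $\varepsilon'_{\sd X}\circ \sd(\eta'_X)\simeq \id$ yield $E$. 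This is \cref{lem counit cart on rfib}, and it is not formal: it is proved degreewise via \cref{lem adjoint string relation}, using that the unit $\eta_{\Delta^n}\colon \Delta^n \to Q\upperstar Q\lowershriek \Delta^n$ of the \emph{other} adjunction is final (\cref{cor counit final}). Symmetrically, the cartesianness of $\eta'$ on culf maps, which you correctly identify as what $\Psi\Phi\simeq\id$ requires, is not a ``formal adjunction and pullback manipulation'': it rests on computing $\eta'$ in degree $n$ as $\Map(\varepsilon_{\Delta^n},-)$ (\cref{cor expectation 4}) and on the ambifinality of the counit $\varepsilon_{\Delta^n}$ of $Q\lowershriek\isleftadjointto Q\upperstar$ (\cref{prop counit ambifinal}), which in turn uses \cref{final<=>ambifinal} and a retract argument for even-dimensional simplices. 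These two cartesianness statements are the substantive content of the proof, on a par with (i); your proposal leaves both unsupplied, and in the first case asserts an identification that fails without the extra input.
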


\Cref{thm_d} follows from this since anything culf over a decomposition space is 
again a decomposition space, so for $X$ a decomposition space, we have
$\culf(X) \allowbreak \simeq \Decomp_{/X}$.

We give two proofs of \cref{thm_c}. The first uses the ideas of the
proof of the Kock--Spivak theorem in the discrete case, but develops these 
ideas into more formal and conceptual arguments (as often required when
upgrading a $1$-categorical argument to $\infty$-categories).
In particular we (prove 
and) exploit the {\em comprehensive factorization system} (final, right-fibration) 
in the $\infty$-category of
simplicial spaces, extending the one for $\infty$-categories.

We show that Waldhausen's 
% Waldhausen, p.355 
last-vertex map $\Nel (X) \to X$ from the nerve
of the $\infty$-category of elements back to a simplicial space $X$ is 
final (\cref{L-final}).
This was
shown by Lurie and Cisinski for simplicial {\em sets} by combinatorial
constructions. Here we give a conceptual high-level proof.

We then exploit the natural transformation $\lambda\colon \Nel \Rightarrow 
\sd$ first studied by Thomason~\cite{Thomason:notebook85}, and show that
it is cartesian on culf maps (\cref{lemma:lambdaculf}). 

With these preparations, we can exhibit an inverse to the displayed 
equivalence: it is given essentially by pullback along $\lambda$ (modulo some 
identifications involving $\Nel$).

The second proof is completely new, and involves the right adjoint to
edgewise subdivision. It also involves a new factorization system of
{\em ambifinal maps} and culf maps. This factorization system restricts to
the stretched-culf factorization system on the $\infty$-category of 
intervals of \cite{GKT3}, which in turn restricts to the 
active-inert factorization system on $\simplexcategory$. Indeed, the class 
of ambifinal maps is the saturation of the class of active maps between 
representables. 

The second proof of \cref{thm_c} follows from several small lemmas of independent 
interest:

First we study the $Q\lowershriek \isleftadjointto Q\upperstar$ adjunction,
and show that its unit 
is final on representables (\cref{cor counit final}) while its counit is 
ambifinal on representables (\cref{prop counit ambifinal}).

Moving on to the $Q\upperstar \isleftadjointto Q\lowerstar$ adjunction, we
show that just as $Q\upperstar$ takes culf maps to right fibrations
(\cref{culf<=>rfib}), its right adjoint $Q\lowerstar$ takes right
fibrations to culf maps (\cref{prop rke rfib to culf}).
The key properties are now
that the unit for the $Q\upperstar \isleftadjointto Q\lowerstar$ adjunction is 
cartesian on culf maps  (\cref{lem unit cartesian on culf}) and that
the counit
is cartesian on right fibrations
 (\cref{lem counit cart on rfib}).

After these preparations, the inverse to the equivalence displayed in
\cref{thm_c} is shown to be given by first applying $Q\lowerstar$ to get a culf
map, and then pullback along the unit $\eta'$ of the $Q\upperstar
\isleftadjointto Q\lowerstar$ adjunction.

Lemma~\ref{culf<=>rfib} together with the theorem of
Bergner et al.~\cite{BOORS:Edgewise} shows that edgewise subdivision is a
key aspect of decomposition spaces and culf maps. The lemmas just quoted
show that conversely, the classical notion of edgewise subdivision
inevitably leads to culf maps and ambifinal maps, which are much more
recent notions.

  \bigskip

  It should be noted that there is {\em another} convention for edgewise 
  subdivision and twisted arrow category, which 
  relates to the functor $Q'\colon \simplexcategory \to 
  \simplexcategory$ given by $[n] \mapsto [n] \star [n]\op$ (instead of $[n] 
  \mapsto [n]\op\star [n]$). That convention is 
  also widely used in the literature; see for example \cite{Lurie:HA}.
By taking opposites, we arrive at the following alternative version of
  \cref{thm_c}: \emph{ For any simplicial space $X$, there is a natural 
  equivalence $\culf(X) \isopil \simplfib(\sd' X)$.}

%%%%%%%%%%%%%%%%%%%%%%%%%%%%%%%%%%%%%%%%%%%%%%%%%%
\subsection*{Motivation and related work}
%%%%%%%%%%%%%%%%%%%%%%%%%%%%%%%%%%%%%%%%%%%%%%%%%%

\begin{blanko}[$\infty$-aspects in process algebra?]
  Viewing our \cref{thm_d} as an $\infty$-version of the Kock--Spivak theorem,
  it is natural to ask if it has any implications in process algebra. At
  the moment we don't know of any, but rather than writing it off, we
  prefer to think that the theorem is a little bit ahead of its time,
  as category theory applied to computer science is still in the
  process of upgrading to $\infty$-categories. In the light of 
  homotopy type theory~\cite{HoTT-book}, where set-based semantics is
  routinely being replaced by 
   semantics in $\infty$-groupoids, 
   this upgrade seems inevitable.

Assuming this, \cref{thm_d} does have potential for applications. In
process algebra there is usually a base to slice over, playing the role of
time, or template for evolution, and the importance of being a topos --- or
even an $\infty$-topos --- is the use of internal logic for them, as
demonstrated by Schultz--Spivak~\cite{Schultz-Spivak:1710.10258} and
Schultz--Spivak--Vasilakopoulou~\cite{Schultz-Spivak-Vasilakopoulou}. Since
every $\infty$-topos interprets homotopy type theory with the univalence
axiom (by a recent breakthrough result of
Shulman~\cite{Shulman:1904.07004}), this logic now becomes available as an
internal language to reason in any slice. The notion of temporal type
theory, introduced by Schultz and Spivak~\cite{Schultz-Spivak:1710.10258}
for the purpose of dynamical systems, is still formulated in ordinary sheaf
semantics as in $1$-toposes, but as the theory develops and constructive
concerns impose themselves, it is to be expected that identity types and
higher structures will creep in, thus necessitating $\infty$-sheaf
semantics in the setting of $\infty$-toposes.

Even without reference to homotopy type theory, simplicial methods can be
useful in process algebra and concurrency to overcome non-strict situations 
(as already occurs in combinatorics).
Recently it was shown~\cite{Kock:2005.05108} that processes of a 
Petri net rather easily assemble into a simplicial groupoid which is Segal, 
whereas it is very subtle to actually assemble them into an ordinary 
category.

\bigskip

While we do hope our theorem can find use in these contexts, our own 
motivations for it were very different:
\end{blanko}

\begin{blanko}[Free decomposition spaces]
  Our motivations for \cref{thm_d} originate in combinatorics. In fact, the
  proof of \cref{thm_d}, grew out of work on a more specific problem, whose
  solution is presented in the companion paper \cite{HK-free}, and which is
  now an application of the theorem.

  For $j\colon \Deltainert \to \simplexcategory$ the inclusion of the
  subcategory of inert maps in $\simplexcategory$, we show in
  \cite{HK-free} that {\em the simplicial space given by left Kan extension
  along $j$ is always a decomposition space}, and that {\em the left Kan
  extension of any map is always culf}. More precisely we establish

\end{blanko}

\setcounter{specialtheorem}{5}
\begin{specialtheorem}[\cite{HK-free}]\label{thm_f}
Left Kan extension along $j$ induces a canonical
  equivalence of $\infty$-categories
  \begin{equation*}
  \PrSh(\Deltainert) \simeq \Decomp_{/B\N}.
  \end{equation*}
\end{specialtheorem}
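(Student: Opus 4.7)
The plan is to apply Theorem~D to the decomposition space $X=B\N$, identify $\sd(B\N)$ with $\Deltainert$, and then match the resulting equivalence with left Kan extension along $j$.

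Since $B\N$ is the nerve of a $1$-category (the one-object category whose hom-monoid is $\N$), it is a Rezk complete Segal space, and in particular a decomposition space. By the $\sd$-Rezk-completeness compatibility noted above, $\sd(B\N)$ is itself Rezk complete. Theorem~D then yields
\[
\Decomp_{/B\N}\;\simeq\;\simprfib\bigl(\sd(B\N)\bigr)\;\simeq\;\PrSh\bigl(\sd(B\N)\bigr).
\]
A direct computation of the twisted arrow category identifies $\sd(B\N)\simeq\Deltainert$: its objects are elements of $\N$ (the morphisms of $B\N$), and a morphism from $m$ to $n$ is a pair $(a,b)\in\N^2$ with $a+m+b=n$, which under $n\leftrightarrow[n]$ corresponds exactly to the inert map $[m]\inertto[n]$ realising the interval inclusion $\{a,\ldots,a+m\}\subseteq\{0,\ldots,n\}$. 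Substituting, we obtain $\Decomp_{/B\N}\simeq\PrSh(\Deltainert)$.

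It remains to verify that this equivalence is realised by left Kan extension along $j$. The cleanest route is through right adjoints: the inverse equivalence $\Decomp_{/B\N}\to\PrSh(\Deltainert)$ should coincide with the restriction functor $j^*$ applied to the underlying simplicial space of a culf map. Tracing through one of the constructions of Theorem~C --- for instance the pullback-along-$\lambda$ description of the first proof, or the pullback-along-$\eta'$ description of the second --- one checks that the right fibration over $\sd(B\N)\simeq\Deltainert$ associated to a culf map $Y\to B\N$ records precisely the values $Y_k$ at inert $[k]$ with their inert structure maps, i.e.\ $j^*Y$. An adjoint-functor argument (or a comparison on representables) then promotes this identification of right adjoints to an equivalence $\PrSh(\Deltainert)\simeq\Decomp_{/B\N}$ implemented by $j_!$.

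The principal obstacle is this last step: pinning down that the abstract Theorem~D equivalence, after transport along $\sd(B\N)\simeq\Deltainert$, really does agree with restriction along $j$, so that its inverse is identified with left Kan extension $j_!$. This requires unwinding one of the explicit constructions in Theorem~C and bookkeeping the face-map structure carefully.
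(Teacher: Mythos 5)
Your overall route is exactly the one this paper sketches: apply \cref{thm_d} to $X=B\N$ (noting $B\N$ is a Rezk complete decomposition space, so by \cref{prop twist rezk} no further completion is needed), identify $\sd(B\N)\simeq\Deltainert$ via the twisted-arrow description, and then identify the resulting equivalence with left Kan extension along $j$. The first two steps are fine, and your computation of $\sd(B\N)$ is correct. But the paper itself does not prove \cref{thm_f} here: it explicitly says that identifying the abstract equivalence with $j_!$ requires ``some more work'' and machinery, and defers the actual proof to the companion paper \cite{HK-free}. Your proposal leaves precisely that step unproven --- you state it as ``the principal obstacle'' and replace the argument by ``one checks'' --- so what is missing is not bookkeeping but the substantive content of the theorem.

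Moreover, the one concrete claim you make about this step is not right as stated. The \cref{thm_d} equivalence sends a culf map $p\colon Y\to B\N$ to (the straightening of) $\sd(Y)\to\sd(B\N)$, whose value at $[n]\in\Deltainert$ is the fiber of $Y_1\to(B\N)_1=\N$ over $n$; this is \emph{not} $(j^*Y)_n=Y_n$. For instance, for $Y=B\N$ itself the equivalence must produce the terminal presheaf (value a point at every $[n]$), whereas $Y_n=\N^n$. The functor that should match the \cref{thm_d} equivalence is the right adjoint of the \emph{sliced} $j_!$, namely pullback of $j^*Y\to j^*B\N$ along the unit $1\to j^*j_!1=j^*B\N$; its value at $[n]$ is the fiber of $Y_n\to\N^n$ over the generic simplex $(1,\dots,1)$, and identifying this with the fiber of $Y_1$ over $n$ already uses the culf condition (cartesianness over the active map $[1]\to[n]$). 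Carrying out this comparison coherently in all simplicial degrees, and checking it is compatible with the equivalence of \cref{untwisting theorem} (or \cref{thm_c_actual_2}), is the work done in \cite{HK-free}; as written, your proposal asserts rather than proves it.
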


  Here $B\N$ is the nerve of the monoid of natural numbers, appearing
  here because $B\N \simeq j\lowershriek (1)$. This theorem can be derived as a
  corollary of \cref{thm_d} of the present paper, via the neat identification
  $$
  \Deltainert \simeq \sd(B\N) .
  $$
  Some more work is involved
  (in particular to identify the general equivalence
  with left Kan extension), and there is some machinery to set up. The proof
  of \cref{thm_d} and \cref{thm_c} grew out of an attempt at optimizing the original
  proof of \cref{thm_f}.

  Decomposition spaces arising from left Kan extension along $j$ are called
  {\em free}. We show in \cite{HK-free} that virtually all
  comultiplications of deconcatenation type in combinatorial coalgebras arise
  as incidence coalgebras of free decomposition spaces. In particular the
  Hopf algebra of quasisymmetric functions arises in this way, and the
  universal map it receives (as terminal object in the category of combinatorial
  coalgebras equipped with a zeta function~\cite{Aguiar-Bergeron-Sottile})
  may be given an interpretation in terms of free decomposition
  spaces.

  For the theory of free decomposition spaces,
  \Cref{thm_d} may be regarded as somewhat of an overkill,  
  but it has a second motivation coming from
  combinatorial Hopf algebras:

\begin{blanko}[Implications in conjunction with the G\'alvez--Kock--Tonks conjecture]
  \label{GKT-conj}
  \Cref{thm_d} acquires further interest in connection with the so-called
  G\'alvez--Kock--Tonks conjecture, from \cite{GKT3}. Lawvere's interval
  construction and the universal Hopf algebra of
  intervals~\cite{Lawvere-Menni} was shown to be the incidence bialgebra of
  a decomposition space $U$ of all intervals~\cite{GKT3}. It was
  conjectured that $U$ enjoys the following universal property: for any
  decomposition space $X$, we have $\Map(X,U) \simeq 1$. The mapping space
  is the space of all culf maps. This would explain in which sense
  Lawvere's Hopf algebra is universal. This is almost like saying that $U$
  is a terminal object in $\Decomp$, but size issues prevent this
  interpretation. However, the whole construction and the conjecture can be
  restricted to the case of {\em M\"obius decomposition spaces}
  \cite{GKT2}, certain decomposition spaces satisfying a finiteness
  condition ensuring that the general M\"obius inversion principle admits a
  homotopy cardinality. Most decomposition spaces from combinatorics are
  M\"obius.

  The decomposition space of M\"obius intervals $U^{\text{M\"obius}}$ is
  small, so as to constitute a genuine terminal object in
  $\Decomp^{\text{M\"obius}}$, according to the conjecture. This is where
  \cref{thm_d} comes in: if a decomposition space $X$ is itself M\"obius, then
  everything culf over $X$ is M\"obius again, so that
  $$
  \Decomp^{\text{M\"obius}} _{/X} \simeq 
  \Decomp_{/X} .
  $$
  By \cref{thm_d}, the latter slice is an $\infty$-topos, and if $X$ is taken to 
  be $U^{\text{M\"obius}}$, and we assume the conjecture is true, then
  $$
  \Decomp^{\text{M\"obius}} \simeq 
  \Decomp _{/U^{\text{M\"obius}}} \simeq 
  \PrSh(\sd(U^{\text{M\"obius}})),
  $$
  so that $\Decomp^{\text{M\"obius}}$ itself will be an $\infty$-topos!

  The current status of the conjecture is the following (see
  Forero~\cite{Forero:2103.11508} for a detailed exposition of the 
  conjecture's history and
  motivation). The work of Lawvere (suitably upgraded to
  the present context) shows that $\Map(X,U)$ is inhabited: it contains the
  interval construction $f\mapsto I(f)$ from \cite{Lawvere:statecats}.
  G\'alvez--Kock--Tonks~\cite{GKT3} proved that it is also connected: every
  culf map $X\to U$ is homotopy equivalent to $I$. The finer property of
  being contractible is the full homotopy uniqueness statement, that not
  only is every map equivalent to $I$: it is so uniquely (in a coherent
  homotopy sense). Forero~\cite{Forero:2103.11508} has proved the
  conjecture in the discrete case (where $X$ is a simplicial set). In this
  case there is a shift in categorical dimension: the universal $U$ for
  discrete decomposition spaces is not itself discrete but rather a
  simplicial groupoid. This shift in categorical dimension is
  unavoidable in the truncated situation, but goes away in
  the untruncated situation.

  The prospective of a universal decomposition space (which cannot
  exist in truncated settings) was one of the motivations for
  G\'alvez, Kock, and Tonks to develop the theory of decomposition spaces in
  the $\infty$-setting (see the introduction of \cite{GKT1}), although most
  examples in combinatorics are $0$- or $1$-truncated~\cite{GKT:ex}.
\end{blanko}

\subsection*{Acknowledgments}
We thank Nima Rasekh for help with the comprehensive factorization system 
for simplicial spaces. 
We are also very grateful to the anonymous referee, whose insightful report led to considerable improvements to the paper.

%%%%%%%%%%%%%%%%%%%%%%%%%%%%%%%%%%%%%%%%%%%%%%%%%%
\section{Comprehensive factorization}
%%%%%%%%%%%%%%%%%%%%%%%%%%%%%%%%%%%%%%%%%%%%%%%%%%

\begin{blanko}[Conventions and setting]
  In this paper we work with $\infty$-categories in a model-independent 
fashion. We assume a (large) $\infty$-category $\catinf$
of all small
$\infty$-categories, with a full sub-$\infty$-category $\spaces$ of
$\infty$-groupoids, which we call {\em spaces}. We are in particular 
concerned with simplicial spaces within the given model of 
$\infty$-categories: by definition $\simpspaces$ is the functor
$\infty$-category $\Fun(\simplexcategory\op,\spaces)$. 
There is a fully faithful nerve functor
\begin{eqnarray*}
  \nerve \colon \catinf & \longrightarrow & \simpspaces  \\
  \CC & \longmapsto & \Map( - , \CC)
\end{eqnarray*}
whose essential 
image is the subcategory of Rezk complete Segal spaces \cite{Joyal-Tierney}, which can therefore
be considered an internal model of $\infty$-categories within the given 
model.

As a specific choice, one can take $\infty$-category to mean 
quasi-category in the sense of Joyal~\cite{quadern45} (simply called 
$\infty$-categories by Lurie~\cite{HTT}). 
For our emphasis on synthetic reasoning, we recommend 
Riehl--Verity~\cite{RiehlVerity:EICT} as a 
background reference for $\infty$-categories, and also 
Ayala--Francis~\cite{AyalaFrancis:Fibrations}
for more specific results on fibrations, and
Anel--Biedermann--Finster--Joyal~\cite{ABFJ:LELI} for factorization 
systems.

All concepts in this paper are the relevant equivalence-invariant ones, which are the only versions which make sense in this context.
For instance, 
``unique'' lifts means that the space of lifts is contractible, and so on.
\end{blanko}

\begin{blanko}[Factorization systems]\label{factorization systems}
We recall some basics on factorization systems from \cite[\S3.1]{ABFJ:LELI}.
Suppose $\CC$ is an $\infty$-category.
If $i, p$ are two maps in $\CC$, we write $i \perp p$ to mean that $i$ is left orthogonal to $p$ (or $p$ is right orthogonal to $i$), that is, each commutative square 
\[
\begin{tikzcd}
\cdot \dar[swap]{i} \rar & \cdot \dar{p} \\
\cdot \ar[ur,dotted] \rar & \cdot
\end{tikzcd}
\]
has a contractible space of lifts.
If $\mathcal{A}, \mathcal{B}$ are classes of maps in $\mathcal{C}$, we write $\mathcal{A} \perp \mathcal{B}$ whenever $i \perp p$ for all $i\in \mathcal{A}$ and $p \in \mathcal{B}$, and we write $\mathcal{A}^\perp = \{p \mid \mathcal{A} \perp p \}$ and $\vphantom{\mathcal{B}}^\perp\mathcal{B} = \{i\mid i \perp \mathcal{B}\}$ for the classes of maps which are left orthogonal to $\mathcal{A}$ / right orthogonal to $\mathcal{B}$.
A \emph{factorization system} is a pair of classes of maps $(\mathcal{L}, \mathcal{R})$ of $\CC$ such that both classes span replete subcategories of the arrow $\infty$-category of $\CC$, $\mathcal{L}\perp \mathcal{R}$, and every map $f$ in $\CC$ factors as $f = pi$ with $p\in \mathcal{R}$ and $i\in \mathcal{L}$.

A number of important properties of the classes in a factorization system are given in \cite[Proposition 3.1.11]{ABFJ:LELI}, but one especially important one for us is that $\mathcal{R}$ is closed under left-cancellation: if $pq$ and $p$ are both in $\mathcal{R}$, then so is $q$.

Notice also that if $F \colon \CC \rightleftarrows \DD : G$ is an adjunction, $i$ is a map in $\CC$ and $p$ is a map in $\DD$, then $i \perp G(p)$ if and only if $F(i) \perp p$. (See \cite[Lemma 3.1.4]{ABFJ:LELI}.)
\end{blanko}

\begin{blanko}[Right fibrations of simplicial spaces]
  A simplicial map $f \colon Y \to X$ is called a {\em right fibration} when
  it is right orthogonal to all terminal-object-preserving maps
  $\ell \colon\Delta^m \to \Delta^n$; or equivalently, considered as a
  natural transformation, $f$ is cartesian on all
  \toppreserving\ maps $\ell \colon [m] \to [n]$. The
  diagram on the left expresses the right orthogonality; the diagram
  on the right expresses the equivalent cartesian condition:
  \[
  \begin{tikzcd}
  \Delta^m \ar[r] \ar[d, "\ell"'] & Y \ar[d, "f"]  \\
  \Delta^n \ar[r] \ar[ru, dotted, "\exists!"] & X
  \end{tikzcd}
  \qquad\qquad
  \begin{tikzcd}
  Y_n \drpullback \ar[d, "f_n"'] \ar[r, "\ell\upperstar"] & Y_m 
  \ar[d, "f_m"]  \\
  X_n \ar[r, "\ell\upperstar"'] & X_m
  \end{tikzcd}
  \]
\end{blanko}

The following three lemmas are exercises using pullbacks.
\begin{lemma}\label{lemma right fib char}
  A simplicial map is a right fibration if and only if it is cartesian on 
  each last-point inclusion $[0] \to [n]$.
\end{lemma}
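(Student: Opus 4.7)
One direction is immediate: every last-point inclusion $[0]\to[n]$ is itself a \toppreserving{} map, so a right fibration is in particular cartesian on such maps.

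For the converse, suppose $f\colon Y\to X$ is cartesian on each last-point inclusion $\sigma_n\colon [0]\to [n]$, and let $\ell\colon [m]\to [n]$ be an arbitrary \toppreserving{} map. The idea is to reduce cartesianness of $\ell\upperstar$ to cartesianness of the two last-point inclusions $\sigma_m$ and $\sigma_n$, using the pasting lemma for pullbacks in $\spaces$. Since $\ell$ sends the top element of $[m]$ to the top element of $[n]$, we have $\ell\circ\sigma_m = \sigma_n$ in $\simplexcategory$, which gives a commutative rectangle
\[
\begin{tikzcd}
Y_n \ar[d,"f_n"'] \ar[r,"\ell\upperstar"] & Y_m \ar[d,"f_m"] \ar[r,"\sigma_m\upperstar"] & Y_0 \ar[d,"f_0"] \\
X_n \ar[r,"\ell\upperstar"'] & X_m \ar[r,"\sigma_m\upperstar"'] & X_0
\end{tikzcd}
\]
in $\spaces$. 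By hypothesis the right-hand square (the one for $\sigma_m$) and the outer rectangle (the one for $\sigma_n = \ell\circ\sigma_m$) are pullbacks. The pasting lemma then forces the left-hand square to be a pullback as well, which is precisely the cartesianness of $f$ on $\ell$.

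There is no genuine obstacle: all the content lies in the cancellation half of the pasting lemma, which holds in any $\infty$-category with pullbacks and in particular in $\spaces$. It is worth noting that exactly the same argument, using domain-preserving maps and first-point inclusions $[0]\to[n]$, would characterise left fibrations, and will presumably be reused for the sibling lemmas that follow.
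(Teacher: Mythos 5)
Your proof is correct, and it is exactly the intended argument: the paper states this lemma without proof as an "exercise using pullbacks," and the factorization $\ell\circ\sigma_m=\sigma_n$ together with the cancellation half of the pasting lemma for (homotopy) pullbacks is precisely that exercise. Nothing is missing.
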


\begin{lemma}\label{lem rfib creates segal}
  For a right fibration $Y \to X$, if $X$ is a Segal space (resp.~a 
  Rezk complete Segal space) then also $Y$ is a Segal space (resp.~a 
  Rezk complete Segal space).
\end{lemma}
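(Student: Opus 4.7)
The plan is to exploit \cref{lemma right fib char} and, more generally, the characterization of right fibrations as cartesian on every last-point-preserving map $\ell \colon [m] \to [n]$. This turns any structure map of $Y$ with last-point-preserving source into a pullback of the corresponding one for $X$ over $Y_0$ (or $Y_1$), so that the Segal and Rezk conditions for $Y$ can be deduced from those for $X$ by pullback manipulations.

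For the Segal condition, I would verify the two-piece Segal decomposition $Y_n \simeq Y_{n-1} \times_{Y_0} Y_1$ for each $n \geq 2$, which by induction is equivalent to the full Segal condition. Decomposing $[n]$ as $[n-1] \cup_{\{n-1\}} [1]$ (with $\{n-1\}$ included as the last point on the left and as the first point on the right), Segal for $X$ gives $X_n \simeq X_{n-1} \times_{X_0} X_1$. The inclusion $\{n-1, n\} \hookrightarrow [n]$ is last-point-preserving, so cartesianness yields $Y_n \simeq X_n \times_{X_1} Y_1$, and the last-point inclusion $\{n-1\} \hookrightarrow [n-1]$ similarly gives $Y_{n-1} \simeq X_{n-1} \times_{X_0} Y_0$. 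Pasting these pullbacks with Segal for $X$:
$$
Y_n \simeq X_n \times_{X_1} Y_1 \simeq (X_{n-1} \times_{X_0} X_1) \times_{X_1} Y_1 \simeq X_{n-1} \times_{X_0} Y_1 \simeq Y_{n-1} \times_{Y_0} Y_1.
$$

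For Rezk completeness, with $Y$ now known to be Segal, I would use the standard reformulation that a Segal space is Rezk complete iff the degeneracy $s_0$ restricts to an equivalence $X_0 \isopil X_1^{\sim}$, where $X_1^{\sim} \subseteq X_1$ denotes the subspace of $1$-simplices that are equivalences. Since right fibrations are conservative, a $1$-simplex of $Y$ is an equivalence iff its image in $X$ is, so $Y_1^{\sim} \simeq Y_1 \times_{X_1} X_1^{\sim}$; combining with the pullback $Y_1 \simeq X_1 \times_{X_0} Y_0$ yields $Y_1^{\sim} \simeq X_1^{\sim} \times_{X_0} Y_0$. Base-changing the Rezk equivalence $s_0 \colon X_0 \isopil X_1^{\sim}$ along $Y_0 \to X_0$ then produces $Y_0 \isopil Y_1^{\sim}$, which by naturality is identified with the degeneracy $s_0$ for $Y$.

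The only step needing care is the conservativity of the right fibration $f \colon Y \to X$, which follows from lifting $2$-simplex witnesses of invertibility along last-point-preserving inclusions (a standard property of right fibrations). Everything else reduces to pullback pasting, consistent with the paper's description of these lemmas as ``exercises using pullbacks.''
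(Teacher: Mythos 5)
Your argument is correct and is exactly the kind of pullback-pasting the paper intends, since it states this lemma (together with \cref{lemma right fib char} and \cref{lem rfib between segal spaces}) without proof as an ``exercise using pullbacks'': cartesianness on the last-point-preserving maps $\{n-1,n\}\hookrightarrow[n]$ and $\{n-1\}\hookrightarrow[n-1]$ does reduce the Segal condition for $Y$ to that for $X$ (one can even get by with only the last-vertex inclusions of \cref{lemma right fib char}, writing the Segal map of $Y$ as the base change along $Y_0\to X_0$ of the Segal map of $X$). The only non-formal ingredient, conservativity of right fibrations between Segal spaces, is correctly flagged and goes through as you sketch, by lifting the $2$-simplex witnesses of invertibility along the last-point-preserving inclusions $\Delta^{\{1,2\}}\hookrightarrow\Delta^2$ and using that the space of lifts of an identity edge with prescribed last vertex is contractible, hence contains only (the degenerate) equivalences.
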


\begin{lemma}\label{lem rfib between segal spaces}
  A simplicial map between Segal spaces $Y \to X$ is a right fibration 
  if and only if it is cartesian on the coface map $d^0 \colon [0] 
  \to [1]$; that is, the square
  \[
  \begin{tikzcd}
  Y_0 \ar[d] & Y_1 \ar[l, "d_0"'] \ar[d] \dlpullback  \\
  X_0 & X_1 \ar[l, "d_0"]
  \end{tikzcd}
  \]
  is a pullback.
\end{lemma}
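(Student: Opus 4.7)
The forward implication is immediate, since $d^0 \colon [0] \to [1]$ sends $0 \mapsto 1$, the last vertex of $[1]$, and so is itself a last-point-preserving map, hence any right fibration is cartesian on it.

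For the converse, I would invoke \cref{lemma right fib char} to reduce to showing that $f$ is cartesian on each last-point inclusion $\ell_n \colon [0] \to [n]$ (sending $0 \mapsto n$), which I would establish by induction on $n$. The case $n=0$ is trivial, and $n=1$ is exactly the hypothesis. For $n \geq 2$, I would factor $\ell_n$ through the last edge, $[0] \xrightarrow{d^0} [1] \xrightarrow{\lambda_n} [n]$, where $\lambda_n$ sends $0 \mapsto n-1$, $1 \mapsto n$. Both factors are last-point-preserving, so by the pasting lemma for pullbacks the inductive step reduces to verifying cartesianness on $\lambda_n$.

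To handle $\lambda_n$, I would exploit the Segal decompositions $Y_n \simeq Y_{n-1} \times_{Y_0} Y_1$ and $X_n \simeq X_{n-1} \times_{X_0} X_1$, whose glueing identifies the last vertex of the $(n-1)$-part with the source $d_1$ of the final edge. I would then compute $X_n \times_{X_1} Y_1$ by substituting the Segal decomposition: the pullback only affects the last factor, producing $X_{n-1} \times_{X_0} Y_1$ (with $Y_1 \to X_0$ the composite $Y_1 \xrightarrow{d_1} Y_0 \xrightarrow{f_0} X_0$). On the other side, applying the inductive hypothesis $Y_{n-1} \simeq X_{n-1} \times_{X_0} Y_0$ to the Segal decomposition of $Y_n$ gives $Y_n \simeq (X_{n-1} \times_{X_0} Y_0) \times_{Y_0} Y_1 \simeq X_{n-1} \times_{X_0} Y_1$. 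Comparing the two yields the desired equivalence $Y_n \simeq X_n \times_{X_1} Y_1$, closing the induction.

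The main obstacle is bookkeeping: one must verify that the Segal glueing on the $Y$-side (pairing the last-vertex map $Y_{n-1} \to Y_0$ with $d_1 \colon Y_1 \to Y_0$) is compatible with the factorization of the last-point map as $Y_n \to Y_1 \xrightarrow{d_0} Y_0$, so that the iterated pullback identifications commute. Once the Segal glueing is correctly accounted for, the argument is purely formal and invokes the $d^0$-hypothesis exactly once.
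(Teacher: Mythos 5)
Your argument is correct, and it is exactly the sort of pullback exercise the paper intends (the text gives no proof, listing this among the lemmas left as ``exercises using pullbacks''): reduce via \cref{lemma right fib char} to last-point inclusions, factor through the last edge, and induct using the one-step Segal decomposition $X_n \simeq X_{n-1}\times_{X_0}X_1$. The bookkeeping you flag at the end can be discharged cleanly without ever comparing two fiber-product presentations of $Y_n$: to get cartesianness on $\lambda_n\colon[1]\to[n]$, paste the square you want with the Segal square for $X$ (edges $d_n\colon X_n\to X_{n-1}$ and $d_1\colon X_1\to X_0$, which is a pullback since $X$ is Segal); by naturality of $f$ the resulting outer rectangle is the pasting of the Segal square for $Y$ with the inductive-hypothesis square (cartesianness on $[0]\to[n-1]$), hence a pullback, and pullback cancellation (outer and right square pullbacks imply left square pullback) gives cartesianness on $\lambda_n$. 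Then one further pasting with the $d^0$-hypothesis square closes the induction, exactly as you describe; the compatibility you worried about is precisely the identities ``the Segal projection $Y_{n-1}\to Y_0$ is the last-vertex map'' and $d_1\circ f_1=f_0\circ d_1$, both of which hold on the nose.
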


Thus our definition of right fibration recovers the usual one for Segal
spaces from \cite{Boavida:SOGC}. There is an evident dual notion of
\emph{left fibration} of simplicial spaces using initial-object-preserving
maps between representables; restricted to Segal spaces one recovers the
notion of left fibration from \cite[2.1.1]{KazhdanVarshavski}.

\begin{blanko}[D\'ecalage]\label{dec}
  Recall that the {\em upper d\'ecalage} $\Dectop{(X)}$ of a simplicial space $X$ is
  obtained by deleting $X_0$ as well as the top face and degeneracy maps,
  and shifting all spaces one degree down: $(\Dectop{X})_k = X_{k+1}$. 
  More formally, as we shall 
  exploit, let $\simplexcategory^t$ denote
  the category of ordinals with a top element, and top-preserving monotone maps, with
  forgetful functor $u\colon \simplexcategory^t \to \simplexcategory$ and
  left adjoint $i\colon \simplexcategory \to \simplexcategory^t$. (One can
  think of a $(\simplexcategory^t)\op$-diagram as a simplicial object with
  missing top face maps.) The upper d\'ecalage comonad on $\simpspaces$ can 
  now be described as $\Dectop{} = i\upperstar \circ u\upperstar$.
  Similarly, there is a {\em lower d\'ecalage} $\Decbot{(X)}$ which deletes the bottom face and degeneracy maps.
\end{blanko}

\begin{blanko}[Slices]\label{blanko slices}
  The notion of slice makes sense for general simplicial spaces $X$ (not 
  just for Segal spaces): for $x\in 
  X_0$, the slice $X_{/x}$ is defined as the pullback
  \[
  \begin{tikzcd}
  X_{/x} \drpullback \ar[d] \ar[r] & \Dectop{(X)} \ar[d, "d_0"]  \\
  1 \ar[r, "\name{x}"'] & X_0 .
  \end{tikzcd}
  \]
  Here the simplicial spaces in the bottom row are constant, and $d_0 
  \colon \Dectop{(X)} \to X_0$ denotes the canonical augmentation sending an 
  $(n{+}1)$-simplex in $X$ to its last vertex. Note also that $\Dectop(X)$ (and 
  hence $X_{/x}$) comes 
  with a canonical splitting, given by the original top degeneracy maps,
  making it into a $(\simplexcategory^t)\op$-diagram.
  More precisely, 
  the pullback square above is $i\upperstar$ applied to the following pullback square of 
  $\simplexcategory^t$-presheaves
  \[
  \begin{tikzcd}
  (u\upperstar X)_{/x} \drpullback \ar[d] \ar[r] & u\upperstar X \ar[d]  \\
  1 \ar[r, "\name{x}"'] & X_0 ,
  \end{tikzcd}
  \]
  where the bottom row consists of constant
  $\simplexcategory^t$-presheaves and the right map is the augmentation.
  Here, $u\upperstar X$ simply deletes the top face maps of $X$. 
  (We can more generally take slices of an arbitrary $\simplexcategory^t$-presheaf, and each such presheaf is the sum over all of its slices.)
\end{blanko}

\begin{blanko}[Warning]
  The canonical projection $X_{/x} \to X$ is not in general 
  a right fibration. (It is a right fibration when $X$ is Segal, of 
  course, and it is culf (cf.~\S\ref{sec:culf}) when $X$ is a decomposition 
  space~\cite[Proposition~4.9]{GKT1}.)
\end{blanko}

\begin{lemma}\label{rfib-slice}
  A simplicial map $p\colon Y \to X$ is a right fibration if and only if 
  for every $y\in Y_0$, the induced simplicial map $Y_{/y} \to X_{/py}$ is 
  a (levelwise) equivalence.
\end{lemma}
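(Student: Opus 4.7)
The plan is to unpack the slice as a fiber and then reduce everything to \cref{lemma right fib char}, which characterizes right fibrations by cartesianness on the last-point inclusions $[0]\to[n]$.

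First I would unwind the definition of the slice: from the pullback square defining $X_{/x}$, the simplicial space $X_{/x}$ is the levelwise fiber of the augmentation $d_0 \colon \Dectop X \to X_0$ over the point $x$. Since $(\Dectop X)_n = X_{n+1}$ and the augmentation at level $n$ is the last-vertex map $\last\colon X_{n+1}\to X_0$, this gives the identification
\[
(X_{/x})_n \;\simeq\; \operatorname{fib}_x\bigl(X_{n+1}\xrightarrow{\last} X_0\bigr).
\]
With this in hand, the map $Y_{/y}\to X_{/py}$ at level $n$ is exactly the induced map of fibers in the commuting square
\[
\begin{tikzcd}
Y_{n+1} \ar[d, "p_{n+1}"'] \ar[r, "\last"] & Y_0 \ar[d, "p_0"]  \\
X_{n+1} \ar[r, "\last"'] & X_0
\end{tikzcd}
\tag{$\ast_{n+1}$}
\]
over $y\in Y_0$ (noting that such a fiber only depends on the image $py$ in $X_0$).

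For the forward direction, I would invoke \cref{lemma right fib char}: if $p$ is a right fibration, then each square $(\ast_n)$ with $n\ge 1$ is a pullback, so taking fibers over any $y\in Y_0$ produces an equivalence $(Y_{/y})_{n-1}\isopil (X_{/py})_{n-1}$, which is exactly the claim.

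For the converse, I would start from the hypothesis that each $(Y_{/y})_n\to (X_{/py})_n$ is an equivalence and show that each square $(\ast_{n+1})$ is a pullback. Consider the canonical comparison map $Y_{n+1}\to X_{n+1}\times_{X_0}Y_0$ of spaces over $Y_0$; over each $y\in Y_0$ this map on fibers is precisely $(Y_{/y})_n\to (X_{/py})_n$, hence an equivalence by hypothesis. The only nontrivial step here is the standard fact that a map of spaces over $Y_0$ which is an equivalence on all fibers is itself an equivalence; this is the one $\infty$-categorical subtlety, and it is what ensures that fiberwise information upgrades to an actual pullback square rather than a mere levelwise fiber identification. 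Applying \cref{lemma right fib char} then concludes that $p$ is a right fibration. The case $n=0$ is trivial in both directions.
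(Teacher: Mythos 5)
Your proposal is correct and follows essentially the same route as the paper: both reduce via \cref{lemma right fib char} to cartesianness on last-point inclusions and then identify the map on fibers of the last-vertex augmentation with the slice map $(Y_{/y})_n \to (X_{/py})_n$, using that a map of spaces over $Y_0$ is an equivalence iff it is so on all fibers. The paper states this fiberwise detection principle in one sentence where you spell out the comparison map $Y_{n+1}\to X_{n+1}\times_{X_0}Y_0$, but the argument is the same.
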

\begin{proof}
  To say that $p$ is a right fibration means that for all $n\geq 0$ the 
  square
  \[
  \begin{tikzcd}
  Y_0 \ar[d]  & Y_{n+1} \ar[d] \ar[l, "\operatorname{last}"'] \\
  X_0 & X_{n+1} \ar[l, "\operatorname{last}"]
  \end{tikzcd}
  \]
  is a pullback (\cref{lemma right fib char}). This
  in turn is equivalent to saying that the induced map on fibers is 
  an equivalence for every $y\in Y_0$. But this map is precisely 
  $(Y_{/y})_n \to (X_{/py})_n$.
\end{proof}

\begin{blanko}[Remark]\label{rmk simp-t-op-slices}
  As a variation of the lemma, we have also that $p\colon Y \to X$ 
  is a right fibration if and only if for each $y\in Y_0$ the induced map 
  $(u\upperstar Y)_{/y} \to (u\upperstar X)_{/py}$ is a levelwise
  equivalence of $\simplexcategory^t$-presheaves. 
  We shall use this in the proof of \cref{lem: terminal objects}.
\end{blanko}

\begin{blanko}[Terminal vertex]
  A vertex $a\in A_0$ of a simplicial space $A$ is called
  a {\em terminal vertex} when the canonical projection $A_{/a}\to A$ is a levelwise equivalence.
\end{blanko}

\begin{blanko}[Final maps]
  A simplicial map is called {\em final} if it is left orthogonal to
  every right fibration. Note that every terminal-object-preserving map
  between representables $\ell \colon \Delta^m \to \Delta^n$ is final.       
\end{blanko}

\begin{lemma}\label{lem: terminal objects}
  If a simplicial map $f\colon B \to A$
  between simplicial spaces with a terminal vertex preserves 
  those terminal vertices, then it is final.
\end{lemma}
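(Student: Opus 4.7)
The plan is to verify the defining lifting property: for any right fibration $p\colon Y \to X$ and any commutative square
\[
\begin{tikzcd}
B \ar[r, "g"] \ar[d, "f"'] & Y \ar[d, "p"] \\
A \ar[r, "h"'] & X
\end{tikzcd}
\]
produce an essentially unique diagonal $\ell\colon A \to Y$. Let $b \in B_0$ and $a = f(b) \in A_0$ denote the respective terminal vertices, set $y \coloneqq g(b)$, and note that $p(y) = h(a)$.

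The idea is to transfer the problem to the slice construction of \S\ref{blanko slices}. Functoriality of slicing produces a second square
\[
\begin{tikzcd}
B_{/b} \ar[r, "g_{/b}"] \ar[d, "f_{/b}"'] & Y_{/y} \ar[d, "p_{/y}"] \\
A_{/a} \ar[r, "h_{/a}"'] & X_{/h(a)}
\end{tikzcd}
\]
sitting naturally above the original via the projection transformations $(-)_{/\bullet} \hookrightarrow \Dectop(-) \to (-)$. By \cref{rfib-slice}, the right-hand column $p_{/y}$ is a levelwise equivalence; by the terminal-vertex assumption (interpreted, using \cref{rmk simp-t-op-slices}, as the projections $(u\upperstar A)_{/a} \to u\upperstar A$ and $(u\upperstar B)_{/b} \to u\upperstar B$ being levelwise equivalences of $\simplexcategory^t$-presheaves, and hence the slices $A_{/a}$ and $B_{/b}$ being equivalent to $A$ and $B$ respectively), the left-hand column is identified with $f$ itself.

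Since $p_{/y}$ is an equivalence, the slice square admits an essentially unique diagonal lift $A_{/a} \to Y_{/y}$; composing with the projection $Y_{/y} \to Y$ and precomposing with the inverse equivalence $A \isopil A_{/a}$ defines the desired $\ell\colon A \to Y$. The compatibilities $p\ell \simeq h$ and $\ell f \simeq g$ then follow by chasing naturality of the slice projection $(-)_{/\bullet} \to (-)$ along $p$, $h$, $f$ (using $f(b) = a$), and $g$ (using $g(b) = y$); essential uniqueness of $\ell$ is inherited from uniqueness of the slice lift.

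The principal subtlety is pure bookkeeping: fixing the right interpretation of ``terminal vertex'' so that the equivalences $A_{/a} \simeq A$ and $B_{/b} \simeq B$ are realised as compatible maps of simplicial spaces (equivalently, of $\simplexcategory^t$-presheaves via \cref{rmk simp-t-op-slices}), and then verifying that the slice functor does produce the naturally commuting two-tier diagram needed for the final diagram chase.
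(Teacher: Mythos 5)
Your construction of a lift is fine, and its starting point coincides with the paper's: use the terminal vertices to replace the left leg $f$ by $f_{/b}\colon B_{/b}\to A_{/a}$. But the last sentence of your argument is where the proof actually lives, and it is not justified. Being final means the \emph{space} of lifts against every right fibration is contractible, not merely nonempty. What you have produced is a map from the (contractible) space of lifts of the sliced square into the space of lifts of the original square; contractibility of the target does not follow unless you show this map is an equivalence, or otherwise compare an arbitrary lift $\ell'\colon A\to Y$ with your constructed one. Doing that would require that any map $A_{/a}\to Y$ carrying the terminal vertex to $y$ factors essentially uniquely through the projection $Y_{/y}\to Y$ --- a universal property of the slice which is exactly what is \emph{not} formal for general simplicial spaces: the projection $Y_{/y}\to Y$ is not a right fibration (see the Warning after \ref{blanko slices}), and the naturality square comparing $p_{/y}$ with $p$ along these projections is not a pullback (only the degreewise squares against last-vertex maps are, which is all \cref{rfib-slice} gives). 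So uniqueness does not ``transfer'' by naturality; there is also nontrivial coherence to handle even in defining the comparison, since for an arbitrary lift $\ell'$ one only has $\ell'(a)\simeq y$ via the homotopy $\ell' f\simeq g$.

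This is precisely the point where the paper's proof does something different: it never slices $Y$ and $X$ at all. Instead it observes that $B_{/b}\to A_{/a}$ is $i\upperstar$ of a map of $\simplexcategory^t$-presheaves, transfers the entire lifting problem across the d\'ecalage adjunction $i\upperstar\dashv u\upperstar$, and then invokes the factorization system on $\PrSh(\simplexcategory^t)$ whose left class consists of maps that are equivalences at the initial object and whose right class consists of cartesian transformations (with $u\upperstar(p)$ cartesian because $p$ is a right fibration). That argument controls the whole space of lifts in one stroke, which is the content your proposal is missing. In effect, the factorization you would need to make ``uniqueness is inherited'' precise is equivalent to this adjunction argument, so as written your proposal establishes existence of a lift but not finality.
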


\begin{proof}
  Let $b\in B_0$ be a terminal vertex of $B$, then $a = f(b) \in A_0$ is terminal in $A$.
We have the commutative square
\[ \begin{tikzcd}
B_{/b} \rar{\simeq} \dar & B \dar{f} \\
A_{/a} \rar{\simeq} & A  .
\end{tikzcd} \]  
The left-hand map is in the image of the left adjoint $i\upperstar$ of the d\'ecalage adjunction (see~\ref{dec})
\[
  \begin{tikzcd}
  i\upperstar : \Fun((\simplexcategory^t)\op, \spaces) \ar[r, shift left]
  &  \Fun(\simplexcategory\op, \spaces) \ar[l, shift left] : u\upperstar ,
  \end{tikzcd}
\]
following \ref{blanko slices}.
It thus suffices to check there is a contractible space of lifts for each square of simplicial spaces on the left below, where $p\colon Y \to X$ is a right fibration.
    \[
  \begin{tikzcd}
  i\upperstar((u\upperstar B)_{/b}) \dar \rar& Y \ar[d, "p"]  \\
  i\upperstar((u\upperstar A)_{/a}) \ar[r] \ar[ru, dotted]& X
  \end{tikzcd}
  \qquad\qquad
  \begin{tikzcd}
  (u\upperstar B)_{/b} \dar \rar & u\upperstar(Y) \ar[d, "u\upperstar(p)"]  \\
  (u\upperstar A)_{/a} \ar[r] \ar[ru, dotted] & u\upperstar(X)
  \end{tikzcd}
  \] 
By adjunction (see \ref{factorization systems}), this is equivalent to there being a contractible space of lifts for the diagram of $\simplexcategory^t$-presheaves on the 
  right.
In the next paragraph we explain why this holds.

Since $\simplexcategory^t$ has an initial object, by a standard lemma (see,
for instance, \cite[1.15]{GKT3}) the $\infty$-category
$\PrSh(\simplexcategory^t) = \Fun((\simplexcategory^t)\op, \spaces)$ has a
factorization system where the left class consists of those natural
transformations which are equivalences at the initial object of
$\simplexcategory^t$, and where the right class consists of the cartesian
natural transformations. Now $(u\upperstar B)_{/b} \to (u\upperstar
A)_{/a}$ is in the left class, since both of these presheaves have a
contractible space as augmentation object (that is, the value at the initial object of $\simplexcategory^t$).
On the other hand, $u\upperstar(p)$ is in the
right class: since $p$ is a right fibration, its restriction $u\upperstar(p)$ to the last-point-preserving maps is cartesian. 
Thus there
is a contractible space of lifts in the square above-right, and therefore, by the adjunction property \ref{factorization systems}, also in the square above-left, as required.
\end{proof}

\begin{theorem}\label{comprehensive}
  The $\infty$-category of simplicial spaces admits a factorization system, called the {\em comprehensive 
  factorization system}, whose left class consists of the final maps, and whose right class consists of the right fibrations.
\end{theorem}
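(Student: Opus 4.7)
The plan is to invoke the general fact, recorded in \cite{ABFJ:LELI}, that in a presentable $\infty$-category any small set $S$ of morphisms generates an orthogonal factorization system whose right class is $S^\perp$ and whose left class is the corresponding left orthogonal complement ${}^\perp(S^\perp)$. The ambient $\infty$-category $\simpspaces = \PrSh(\simplexcategory)$ is a presheaf $\infty$-category, hence presentable, so this general machinery is available.

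Concretely, I would take $S$ to be the (manifestly small) set of \toppreserving\ maps $\ell\colon \Delta^m \to \Delta^n$ between representables. With this choice of $S$, the two classes of the generated factorization system identify tautologically with the desired ones. By the Yoneda lemma, right orthogonality of a map $f\colon Y \to X$ against a representable map $\ell\colon \Delta^m \to \Delta^n$ is equivalent to the naturality square of $f$ at $\ell$ being a pullback of spaces, which is precisely the cartesian reformulation used above as the working definition of right fibration. Hence $S^\perp$ is the class of right fibrations, and by the very definition of final map given just before the statement, ${}^\perp(S^\perp)$ is the class of final maps.

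The only step that carries actual content is the appeal to the generation theorem of \cite{ABFJ:LELI}, which needs to be applicable in the chosen model of $\infty$-categories; this is standard for presheaf $\infty$-categories and in particular for $\simpspaces$. Everything else in the argument is unwinding of definitions, so I do not anticipate any further obstacle. It may be worth noting afterwards that, restricted to the subcategory of Rezk-complete Segal spaces (i.e.\ the essential image of the nerve functor $\nerve\colon \catinf \to \simpspaces$), this recovers the classical comprehensive factorization system for $\infty$-categories, so the theorem genuinely extends the standard one from $\catinf$ to all of $\simpspaces$.
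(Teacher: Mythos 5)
Your proposal is correct and follows essentially the same route as the paper: both take the small set of last-vertex-preserving (top-preserving) maps between representables as generators, invoke the generation theorem for factorization systems in a presentable $\infty$-category from \cite{ABFJ:LELI}, identify the right class with the right fibrations by definition, and identify the left class with the final maps via ${}^\perp(S^\perp)$. The only cosmetic difference is that the paper spells out the intermediate step that the generated left class is the saturation $\overline{\Sigma}$ and then equals ${}^\perp(\Sigma^\perp)$, which your phrasing absorbs into the statement of the generation theorem.
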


\begin{proof}
  Let $\Sigma$ be the set of last-vertex-preserving maps between representables (alternatively: the set of bottom coface maps, or the set of last-vertex inclusions $\Delta^0 \to \Delta^n$)
  and let $\overline{\Sigma}$ be the saturated class generated by $\Sigma$.
  Since the $\infty$-category of simplicial spaces is presentable, 
  it follows from 
  \cite[Proposition 3.1.18]{ABFJ:LELI} that $(\overline{\Sigma}, \Sigma^{\perp})$ is a factorization system.
  By definition, $\Sigma^{\perp}$ is the class of right fibrations.
  Since this is a factorization system, $\vphantom{R}^\perp (\Sigma^{\perp}) = \overline{\Sigma}$ by \cite[Proposition 5.2.8.11]{HTT} or \cite[Lemma 3.1.9]{ABFJ:LELI}, hence $\overline{\Sigma}$ is the class of final maps.
\end{proof}

\begin{blanko}[Remarks]
  The comprehensive factorization system for $1$-categories is
  classical, due to Street and Walters~\cite{StreetWalters}. For
  $\infty$-categories, the result appears in
  Joyal~\cite[169--173]{quadern45} and a proof can be found in
  Ayala--Francis~\cite[\S6.3]{AyalaFrancis:Fibrations}. 
  The comprehensive factorization system for simplicial
  spaces is also implicit in
  Rasekh~\cite[5.30--5.34]{Rasekh:1711.03160}, in a model-categorical
  setting, where the final maps are defined as certain contravariant
  equivalences.
\end{blanko}

\begin{blanko}[Comprehensive factorization for $\infty$-categories]
The next proposition will justify the usage of the name ``final'' for 
  these simplicial maps.
Recall that a \emph{final functor} $f \colon \CC \to \mathcal{B}$ between $\infty$-categories is a functor so that for any other functor $\mathcal{B} \to \mathcal{Z}$, the map $\colim (\CC \to \mathcal{B} \to \mathcal{Z}) \to \colim(\mathcal{B} \to \mathcal{Z})$ exists and is an equivalence if either colimit exists \cite[Definition 6.1.1]{AyalaFrancis:Fibrations}.
This is the left class in a comprehensive factorization system on $\catinf$, 
whose right class consists of the \emph{right fibrations}, those functors 
$\pi \colon \EE \to \CC$ whose space of lifts is contractible for any square
\begin{equation}\label{pi}
  \begin{tikzcd}
{\Delta^0} \rar \ar[d, "1"'] & \EE \dar{\pi} \\
{\Delta^1} \rar \ar[ur,dashed] & \CC  .
\end{tikzcd} 
\end{equation}
\end{blanko}

\begin{prop}\label{prop CFS restriction}
The comprehensive factorization system on simplicial spaces restricts to the comprehensive factorization system on $\infty$-categories.
\end{prop}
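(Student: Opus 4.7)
The strategy is to take an arbitrary functor between $\infty$-categories, apply the comprehensive factorization in $\simpspaces$ provided by \cref{comprehensive}, and verify that both factors lie in $\catinf$ and agree with the $\infty$-categorical notions of final map and right fibration. Uniqueness of factorization systems will then force this to be the $\infty$-categorical comprehensive factorization.

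So let $f\colon \mathcal{B} \to \mathcal{C}$ be a functor between $\infty$-categories, which via the nerve we regard as a map of Rezk complete Segal spaces. By \cref{comprehensive}, we may factor it as $\mathcal{B} \xrightarrow{j} M \xrightarrow{p} \mathcal{C}$ where $j$ is final and $p$ is a right fibration of simplicial spaces. The first step is to invoke \cref{lem rfib creates segal}: since $\mathcal{C}$ is a Rezk complete Segal space and $p$ is a right fibration, $M$ is also a Rezk complete Segal space. Thus the factorization lives entirely in the essential image of the nerve.

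The second step is to identify $p$ as a right fibration of $\infty$-categories. In the $\infty$-categorical definition \eqref{pi}, the lifting property is against the last-vertex inclusion $\{1\} \colon \Delta^0 \to \Delta^1$, which by the usual adjunction trick is equivalent to the square involving $d_0\colon [0]\to [1]$ being a pullback; this is exactly the condition of \cref{lem rfib between segal spaces} now that both $M$ and $\mathcal{C}$ are Segal. The third step is to identify $j$ as a final functor. By construction $j$ is left orthogonal in $\simpspaces$ to every right fibration of simplicial spaces; in particular, by Step 2, it is left orthogonal to every right fibration of $\infty$-categories viewed through the nerve, so it is final in the sense recalled before the proposition.

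Finally, the classical comprehensive factorization system on $\catinf$ (see e.g.\ \cite[\S6.3]{AyalaFrancis:Fibrations}) is a factorization system whose classes are final functors and right fibrations. Since $f = p\circ j$ is such a factorization, uniqueness forces it to agree with the $\catinf$-comprehensive factorization of $f$. This simultaneously shows that the classes agree on maps between $\infty$-categories and that the simplicial-space factorization of an $\infty$-categorical map stays inside $\catinf$, which is exactly what it means for the factorization system to restrict. The main obstacle is just bookkeeping: matching up the last-vertex convention used in \eqref{pi} with the one built into \cref{lem rfib between segal spaces}, so that the two right-fibration notions genuinely coincide between Segal spaces rather than only seeming to.
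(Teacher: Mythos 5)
Your proposal is correct and takes essentially the same route as the paper's proof: factor the functor in simplicial spaces, use \cref{lem rfib creates segal} to see the middle term is a Rezk complete Segal space, identify the right-fibration part via \cref{lem rfib between segal spaces}, deduce that the final simplicial map is a final functor from orthogonality together with full faithfulness of the nerve, and conclude by uniqueness of factorizations against the known comprehensive factorization system on $\catinf$. The only difference is presentational: the paper makes the inclusion of final functors into simplicial final maps explicit by factoring an arbitrary final functor and invoking cancellation, which is exactly the small verification compressed into your phrase ``uniqueness forces the classes to agree.''
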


\begin{proof}
  Temporarily write $(\mathcal{L},\mathcal{R})$ for the comprehensive factorization system on
  simplicial spaces, where $\mathcal{L}$ is the class of final maps and $\mathcal{R}$ is the
  class of right fibrations. Let $\nerve \colon \catinf \to \simpspaces$
  be the inclusion of $\infty$-categories into simplicial spaces, where we
  may think of the former as the Rezk complete Segal spaces. By \cref{lem
  rfib between segal spaces} we have that $\mathcal{R} \cap \catinf$ is the usual
  class of right fibrations between $\infty$-categories, as in \eqref{pi}. Denoting the usual
  final-rightfibration factorization system on $\catinf$ by $(\mathcal{L}',\mathcal{R}')$, we have
\begin{equation}\label{eq L cap to L prime}
  \mathcal{L}\cap \catinf \subseteq \vphantom{\mathcal{R}}^\perp (\mathcal{R} \cap \catinf) = \vphantom{\mathcal{R}}^\perp \mathcal{R}' = \mathcal{L}'
\end{equation}
since $\nerve$ is fully faithful.
We wish to show that this inclusion is an equivalence.

  Suppose $f\colon \CC \to \mathcal{B}$ is a final functor between
  $\infty$-categories, that is, a morphism in $\mathcal{L}'$. Form the factorization
  of simplicial maps below left
\[ \begin{tikzcd}[column sep=small]
& X \ar[dr,"r"] \\
\nerve \CC \ar[ur,"\ell"] \ar[rr,"\nerve f"'] & & \nerve \mathcal{B}
\end{tikzcd} \qquad 
\begin{tikzcd}[column sep=small]
& \EE \ar[dr,"\tilde r"] \\
\CC \ar[ur,"\tilde \ell"] \ar[rr,"f"'] & & \mathcal{B}
\end{tikzcd}
 \]
  with $\ell \in \mathcal{L}$ and $r\in \mathcal{R}$. By \cref{lem rfib creates segal}, $X$ is
  a Rezk complete Segal space, $X\simeq \nerve \EE$ for some $\EE \in
  \catinf$, and we may regard the triangle above left as the nerve of the
  triangle above right. Since $\tilde \ell$ is a final functor by \eqref{eq
  L cap to L prime}, it follows that $\tilde r$ is in $\mathcal{L}' \cap \mathcal{R}'$, hence
  is an equivalence. Thus $r$ is also an equivalence, and we conclude that
  $\nerve f \colon \nerve \CC \to \nerve \mathcal{B}$ is in $\mathcal{L}$. Thus $\mathcal{L}' =
  \mathcal{L}\cap \catinf$.
\end{proof}

\begin{blanko}[Categories of right fibrations]\label{bl cat right fib}
  For a simplicial space $X$, denote by $\RFIB(X)$ the full subcategory
of $\simpspaces_{/X}$ spanned by the right fibrations. By the 
left cancellation 
property satisfied by right classes, the morphisms in $\RFIB(X)$
are again right fibrations, so $\RFIB(X)$ can also be described as
the slice over $X$ of the $\infty$-category whose objects are simplicial spaces and whose morphisms are 
right fibrations.
In light of \cref{lem rfib creates segal} and \cref{prop CFS restriction}, if $\CC$ is an $\infty$-category and $\RFib(\CC) \subset {\catinf}_{/\CC}$ is the usual $\infty$-category of right fibrations over $\CC$, then the nerve functor induces an equivalence $\RFib(\CC) \simeq \simprfib(\nerve \CC)$.

Closely related to the existence of the comprehensive factorization system on $\simpspaces$ is
the fact that the inclusion functor $\RFIB(X) \to \simpspaces_{/X}$
has a left adjoint (reflection): it sends an arbitrary simplicial map
$Y \to X$ to the right-fibration part of its comprehensive
factorization. The left part is the unit of the adjunction. (This
reflection is the main ingredient in the construction of the
factorization system (cf.~proof of \cite[5.5.5.7]{HTT}; see in 
particular \cite[5.5.4.15]{HTT}).)

\end{blanko}

\begin{blanko}[Base change]
  For a simplicial map $F\colon X'\to X$, there is a canonical {\em base-change} functor $F\upperstar 
\colon \simprfib(X) \to \simprfib(X')$ given by pullback along $F$, sending $p\colon Y \to X$ to 
$p'$ as in the diagram
\[
\begin{tikzcd}
Y' \drpullback \ar[d, "p'"'] \ar[r] & Y \ar[d, "p"]  \\
X' \ar[r, "F"']& X  .
\end{tikzcd}
\]
\end{blanko}

The following is a special case of \cite[Proposition 3.1.22]{ABFJ:LELI}.

\begin{blanko}[Cobase change]
  The base-change functor $F\upperstar$ has a left adjoint {\em 
  cobase-change} functor 
$F\lowershriek \colon \simprfib(X') \to \simprfib(X)$ given by first postcomposing
with $F$, then taking factorization into final map followed by right 
fibration, and finally returning the right fibration, as $q' \mapsto q$
in the diagram
\[
\begin{tikzcd}
Y' \ar[d, "q'"'] \ar[r, dotted, "\text{final}"] & Y 
\ar[d, dotted, "q \text{ r.fib.}"]  \\
X' \ar[r, "F"']& X   .
\end{tikzcd}
\]
(For $\infty$-categories, see also \cite[Remark~6.1.10]{AyalaFrancis:Fibrations}
and \cite[6.1.14]{Cisinski:HCHA}.)

The unit is given by the universal property 
of the pullback. The counit $F\lowershriek F\upperstar (p) \to p$ is 
given by the universal property of the final-rightfibration factorization
system, as exemplified in Lemma~\ref{lemma:epsilonL is equiv}.
\end{blanko}

%%%%%%%%%%%%%%%%%%%%%%%%%%%%%%%%%%%%%%%%%%%%%%%%%%
\section{Culf maps and ambifinal maps}
%%%%%%%%%%%%%%%%%%%%%%%%%%%%%%%%%%%%%%%%%%%%%%%%%%
\label{sec:culf}

\begin{blanko}[The active-inert factorization system]
  The category $\simplexcategory$ has an active-inert factorization system:
  the {\em active maps}, written $g\colon[k]\actto [n]$, are those that  
  preserve end-points,
  $g(0)=0$ and $g(k)=n$; the {\em inert maps}, written $f\colon 
  [m]\rightarrowtail [n]$, are
  those that are distance preserving,
  $f(i{+}1)=f(i)+1$ for $0\leq i\leq m-1$.  The active maps are generated by
  the codegeneracy maps and the inner coface maps; the inert maps are
  generated by the outer coface maps $d^\bot$ and $d^\top$.
  (This orthogonal factorization system is an instance of the 
  important general notion of generic-free factorization system of
  Weber~\cite{Weber:TAC18} who referred to the two classes as generic
  and free. The active-inert terminology is due to
  Lurie~\cite{Lurie:HA}.)
\end{blanko}

\begin{blanko}[Culf maps]\label{culf}
  Recall (from \cite[\S4]{GKT1}) that a simplicial map $p\colon Y \to X$ is
  {\em culf} when it is right orthogonal to every active map $\Delta^k
  \actto \Delta^n$, or equivalently, when it is cartesian on active maps.
  The picture on the left expresses the right orthogonality; the picture on
  the right expresses the equivalent cartesian condition:
  \[
  \begin{tikzcd}
  \Delta^k \ar[r] \ar[d, -act] & Y \ar[d, "p"]  \\
  \Delta^n \ar[r] \ar[ru, dotted, "\exists!"] & X
  \end{tikzcd}
  \qquad\qquad
  \begin{tikzcd}
  Y_n \drpullback \ar[d, "p_n"'] \ar[r, -act, "g\upperstar"] & Y_k 
  \ar[d, "p_m"]  \\
  X_n \ar[r, -act, "g\upperstar"'] & X_k .
  \end{tikzcd}
  \]

  Note that every left or right fibration is culf.
\end{blanko}

\begin{blanko}[Remark]
  Culf stands for ``conservative'' and ``unique lifting of factorizations'' 
  (cf.~Lawvere~\cite{Lawvere:statecats}), as the notion recovers these
  conditions in the case of strict nerves of ordinary categories.
  In this case the notion of culf functor is also
  the same as discrete Conduch\'e fibration~\cite{Johnstone:Conduche'}. In the case of 
  $\infty$-categories, the culf maps are the same 
  thing as the conservative exponentiable fibrations studied by Ayala and 
  Francis~\cite{AyalaFrancis:Fibrations} (see \cite[Lemma 2.2.22]{Barkan-Steinebrunner:2211.02576} for a proof), which in the quasi-category model would be called conservative flat fibrations \cite[B.3.8]{Lurie:HA}.
\end{blanko}

\begin{lemma}\label{lem:simp-culf}
  {\rm \cite[Lemma 4.1]{GKT1}}
  A simplicial map is culf if and only if it is cartesian on 
  each active map of the form $[1]\actto [n]$ for $n\geq 0$.
\end{lemma}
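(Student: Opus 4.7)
The only-if direction is immediate since active maps of the form $[1] \actto [n]$ constitute a subclass of the class of all active maps. For the converse, my plan is to reduce the general cartesian condition on an active map $g\colon [k]\actto [n]$ to the two special cases $\gamma_k$ and $\gamma_n$ via the pullback pasting lemma.

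The key observation is that for each $m \geq 0$ there is a unique active map $\gamma_m\colon [1]\actto [m]$, namely the one sending $0\mapsto 0$ and $1\mapsto m$. For any active $g\colon [k]\actto [n]$, the identity
\[
  g \circ \gamma_k \;=\; \gamma_n
\]
holds in $\simplexcategory$, simply because $g$ preserves endpoints. Passing to the contravariant side, this produces a commutative diagram
\[
\begin{tikzcd}
Y_n \ar[r, "g^*"] \ar[d, "p_n"'] & Y_k \ar[r, "\gamma_k^*"] \ar[d, "p_k"'] & Y_1 \ar[d, "p_1"] \\
X_n \ar[r, "g^*"'] & X_k \ar[r, "\gamma_k^*"'] & X_1
\end{tikzcd}
\]
in which the outer rectangle corresponds to $\gamma_n^*$ and the right-hand square to $\gamma_k^*$; both are pullbacks by hypothesis. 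The pasting lemma for pullbacks then forces the left-hand square to be a pullback, which is the desired cartesianness on $g$.

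The argument is purely formal, so I do not expect a serious obstacle. The one thing to pin down is the factorization $\gamma_n = g \circ \gamma_k$, which shows that the long-edge maps $\gamma_m$ already encode all cartesian information carried by arbitrary active maps. Notably, no Segal-type hypothesis on $X$ or $Y$ enters the argument, which is consistent with the culf condition being a purely simplicial notion.
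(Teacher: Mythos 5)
Your proof is correct: the observation that the unique active map $\gamma_m\colon[1]\actto[m]$ satisfies $g\circ\gamma_k=\gamma_n$ for any active $g\colon[k]\actto[n]$, combined with the pasting lemma (right square and outer rectangle are pullbacks by hypothesis, hence so is the left square), is exactly the standard reduction. The paper itself gives no proof, deferring to \cite[Lemma 4.1]{GKT1}, and your argument is essentially the one used there, so there is nothing substantive to compare.
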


A simplicial map is culf if and only if it is cartesian on degeneracy maps and inner face maps.
The next lemma gives that only the inner face maps are necessary.

\begin{lemma}\label{ulf=culf}
  To check that a general simplicial map $F\colon Y \to X$ is culf, it is enough to
  check that it is cartesian on active maps of the form $[1]\actto [n]$ for $n\geq 1$; it is then automatically 
  cartesian also on $[1]\actto [0]$. (In other words, ulf implies culf.)
\end{lemma}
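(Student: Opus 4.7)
The only active map $[1]\actto[0]$ is the degeneracy $s^0$, and the goal is to show that the canonical comparison
$$\phi\colon Y_0 \longrightarrow X_0\times_{X_1} Y_1$$
induced by $s_0\colon Y_0 \to Y_1$ is an equivalence. Writing $P \coloneqq X_0\times_{X_1} Y_1$, the composite $P \to Y_1 \xrightarrow{d_0} Y_0$ defines a map $\psi\colon P \to Y_0$ (landing in the correct fibre since $F_0(d_0 y) = d_0 F_1(y) = d_0 s_0(x) = x$ for $(x,y)\in P$), and the simplicial identity $d_0 s_0 = \operatorname{id}$ yields $\psi\phi = \operatorname{id}_{Y_0}$ at once. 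It therefore remains to show $\phi\psi \simeq \operatorname{id}_P$, or equivalently that $y \simeq s_0 d_0 y$ in $Y_1$ whenever $F_1(y)$ lies in the image of the degeneracy $s_0\colon X_0 \to X_1$.

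This would use the hypothesis only in the case $n=2$. Cartesianness of $F$ on the active face $d^1\colon [1]\actto [2]$ provides the equivalence $Y_2 \simeq X_2 \times_{X_1} Y_1$ via $(F_2, d_1)$. Given $y \in Y_1$ with $F_1(y) = s_0 x$, I would compare the two degenerate 2-simplices $s_0 y,\, s_1 y \in Y_2$: both have long edge $y$ by $d_1 s_0 = \operatorname{id} = d_1 s_1$, and both have image $s_0 s_0 x = s_1 s_0 x$ in $X_2$ by naturality together with the simplicial identity $s_0 s_0 = s_1 s_0$. The above equivalence therefore forces $s_0 y \simeq s_1 y$ in $Y_2$; applying $d_0$ and using $d_0 s_0 = \operatorname{id}$ together with $d_0 s_1 = s_0 d_0$ gives $y \simeq s_0 d_0 y$, as required.

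There is no real obstacle; the argument is essentially a short manipulation of simplicial identities. The only conceptual observation is that the two natural degenerations $s_0, s_1\colon Y_1 \rightrightarrows Y_2$ must agree on any 1-simplex whose image in $X$ is degenerate — a coincidence witnessed precisely by the pullback square for $[1]\actto[2]$ — and this equality then transports down to $Y_1$ via $d_0$ to yield the desired degeneracy witness for $y$.
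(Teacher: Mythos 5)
Your overall strategy is the right combinatorial shadow of the paper's argument (the paper also only uses cartesianness on the active $d^1\colon[1]\actto[2]$, and also compares the two degeneracies into degree $2$), but as written there is a genuine gap in the second step. You reduce ``$\phi\psi\simeq\id_P$'' to the pointwise claim that $y\simeq s_0d_0y$ for each individual $1$-simplex $y$ whose image is degenerate. In the setting of this lemma the levels $Y_n$, $X_n$ are $\infty$-groupoids and $P=X_0\times_{X_1}Y_1$ is a homotopy pullback, so a point-by-point homotopy $\phi\psi(p)\simeq p$ is strictly weaker than a homotopy of maps $\phi\psi\simeq\id_P$: it gives $\pi_0$-surjectivity of $\phi$, and the retraction $\psi\phi\simeq\id$ gives split injectivity on homotopy groups, but that does not force $\phi$ to be an equivalence (compare $\ast\to B$ with $B$ connected but not contractible, retracted by the constant map: the two composites are the identity, respectively pointwise homotopic to the identity, yet the map is not an equivalence). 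To make your argument work you would have to carry out the comparison of $s_0y$ and $s_1y$ naturally in $P$ — i.e.\ as a homotopy between the two maps $s_0\pi_Y, s_1\pi_Y\colon P\to Y_2$ over $X_2\times_{X_1}Y_1$, checking that the whiskered homotopies coming from the pullback data are compatible after applying $d_1$, and then that the resulting homotopy in $Y_1$ is compatible with the $X_0$-component of $P$. None of this coherence is addressed, and it is exactly the kind of element-chasing that the paper's remark after the lemma warns does not carry over to general simplicial spaces.

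The paper's proof packages the same degree-$2$ idea diagrammatically, avoiding all pointwise reasoning: the square for $s_1\colon Y_1\to Y_2$ over $X$ is cartesian by right cancellation of pullbacks, because $s_1$ is a section of the active face $d_1$ whose square is cartesian by hypothesis; and the square for $s_0\colon Y_0\to Y_1$ is exhibited as a retract of the $s_1$-square (via the $d_0$'s and $s_0$'s you also use), so it is cartesian since pullback squares are closed under retracts. If you replace your element chase by this retract-and-cancellation formulation — which uses only the simplicial identities you already invoked — your proof becomes correct and coincides with the paper's.
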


\begin{proof}
  By \cref{lem:simp-culf} it only remains to check that $F$ is cartesian 
  on the codegeneracy map $[0] \leftarrow [1]$.
  This is the leftmost (=rightmost) square in
  the commutative $(\Delta^1\times\Delta^1\times \Delta^2)$-diagram
  \[
  \begin{tikzcd}[sep={30pt,between origins}]
      {Y_0} \ar[rrr, "s_0"] \ar[dd] \ar[dr, "s_0"] & & & 
      {Y_1} \ar[rrr, "d_0"] \ar[dd] \ar[dr, "s_1"] & & & 
      {Y_0} \ar[dd] \ar[dr, "s_0"] & \\ &
      {Y_1} \ar[rrr, pos=0.4, "s_0", crossing over]   & & & 
      {Y_2} \ar[rrr, pos=0.4, "d_0", crossing over]   & & & 
      {Y_1} \ar[dd] \\ 
      {X_0} \ar[rrr, pos=0.64, "s_0"] \ar[dr, "s_0"] & & & 
      {X_1} \ar[rrr, pos=0.64, "d_0"] \ar[dr, "s_1"] & & & 
      {X_0} \ar[dr, "s_0"] & \\ &
      {X_1} \ar[rrr, "s_0"] \ar[from=uu, crossing over] & & &  
      {X_2} \ar[rrr, "d_0"] \ar[from=uu, crossing over] & & &  
      {X_1} ,
  \end{tikzcd}
  \]
  which
  exhibits the square as a retract of the middle square. The middle square 
  in turn is already 
  known to be a pullback, since $s_1$ is a section to the active face map 
  $d_1$. Since pullbacks are stable 
  under retracts, it follows that also the leftmost square is a pullback.
\end{proof}

\begin{blanko}[Remark]
  Lawvere and Menni~\cite[Lemma~4.4]{Lawvere-Menni} proved this for the
  case of $1$-categories in which all identities are indecomposable. Their
  proof works more generally for all decomposition spaces that are
  split~\cite[\S5]{GKT2}. G\'alvez, Kock, and Tonks~\cite[Proposition
  4.2]{GKT1} proved the result for general decomposition spaces, but their
  prism-lemma proof does not work for general simplicial spaces. The above
  retract argument giving the general case is directly inspired by the
  proof by Feller et al.~\cite{FGKPW} that all $2$-Segal spaces are unital.
  The result in full generality was found independently by Barkan and Steinebrunner (personal communication).
\end{blanko}

\begin{blanko}[Intervals]
  An {\em interval} is a simplicial space with an initial and a terminal
  object. To every $1$-simplex $f\colon a\to b$ in a simplicial space $X$
  there is associated an interval $I(f) \coloneqq (X_{/b})_{f/}$ (with initial
  object $s_0(f)$ and terminal object $s_1(f)$).
  This simplicial space can also be described as $\Decbot{\Dectop{(X)}}
  \times_{X_1} \{f\}$.
  Usually (see \cite{GKT3}),
  this notion is considered only when $X$ is a decomposition space, in
  which case $I(f)$ is always a Segal space. Here we consider the more
  general case only to be able to state the following result (which goes
  back to Lawvere~\cite{Lawvere:statecats} in the case where $X$ is the
  strict nerve of a $1$-category).
\end{blanko}

\begin{blanko}[Warning]
  For general simplicial spaces $X$, the canonical projection $I(f) \to X$
  (given on objects by taking a $2$-simplex with long edge $f$ to its 
  middle vertex) is not always culf. (It is culf when $X$ is a 
  decomposition space~\cite[\S3]{GKT3}.) 
\end{blanko}

\begin{lemma}\label{culf=int-pres}
  A simplicial map $p\colon Y \to X$ is culf if and only if for every 
  $f\in Y_1$, the corresponding map of intervals $I(f) \to I(pf)$ is a
  (levelwise) equivalence.
\end{lemma}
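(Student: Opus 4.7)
The plan is to match the culf condition level-by-level against the levelwise equivalence condition on intervals, via the identification of $I(f)_k$ as a fiber of a long-edge operator.

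By \cref{lem:simp-culf} together with \cref{ulf=culf}, $p$ is culf if and only if $p$ is cartesian on the unique active map $[1]\actto[n]$ for every $n\geq 1$; the case $n=1$ is trivial, and the case $n=0$ is absorbed into the others by \cref{ulf=culf}. Hence one only needs to test the active maps $[1]\actto[n]$ for $n\geq 2$, each of which is the endpoint inclusion $0\mapsto 0$, $1\mapsto n$, inducing on $X$ the ``long-edge'' face operator $X_n\to X_1$.

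The first step is to identify, for $n\geq 2$, this long-edge operator with the augmentation $(\Decbot\Dectop X)_{n-2}\to X_1$ appearing in the definition of $I(f)$. From $I(f)=\Decbot\Dectop X \times_{X_1}\{f\}$ one then reads off
\[
  I(f)_{n-2} \;\simeq\; X_n \times_{X_1} \{f\},
\]
and similarly for $I(pf)$, so levels of the interval are literally the fibers of the long-edge operator over $f$.

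The second step is to invoke the elementary fact that the naturality square
\[
\begin{tikzcd}
Y_n \ar[r] \ar[d,"p_n"'] & Y_1 \ar[d,"p_1"] \\
X_n \ar[r] & X_1
\end{tikzcd}
\]
(horizontals the long-edge maps) is a pullback if and only if, for every $f\in Y_1$, the induced map of fibers $Y_n\times_{Y_1}\{f\}\to X_n\times_{X_1}\{pf\}$ is an equivalence. Combined with the identification above, this is precisely the statement that $I(f)_{n-2}\to I(pf)_{n-2}$ is an equivalence for every $f\in Y_1$. Letting $n\geq 2$ vary covers every level $k\geq 0$ of the intervals, yielding both implications simultaneously.

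The only subtle piece of bookkeeping is matching the long-edge operator with the d\'ecalage augmentation; this is a direct unpacking of \ref{dec} and \ref{blanko slices}, but it is the hinge of the argument, so it is what I would write out most carefully. Everything else is formal and follows from the basic properties of pullbacks in $\spaces$.
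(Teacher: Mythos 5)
Your proposal is correct and follows essentially the same route as the paper's proof: reduce via \cref{lem:simp-culf} and \cref{ulf=culf} to cartesianness on the long-edge operators $Y_{n+2}\to Y_1$, then use the fiber criterion for pullbacks and identify the fiber over $f$ with the corresponding level of $I(f)$. The only cosmetic difference is that you phrase the identification through $\Decbot\Dectop X\times_{X_1}\{f\}$ while the paper uses the equivalent slice--coslice description $((X_{/b})_{f/})_n$.
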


\begin{proof}
  By \cref{ulf=culf}, to say that $p$ is culf means that for all $n\geq 0$ the square
  \[
  \begin{tikzcd}
  Y_1 \ar[d]  & Y_{n+2} \ar[d] \ar[l, "\operatorname{long}"'] \\
  X_1 & X_{n+2} \ar[l, "\operatorname{long}"]
  \end{tikzcd}
  \]
  is a pullback. (The horizontal maps return the long edge of a simplex.)
  This in turn means that for each $f\colon a \to b$ in $Y_1$ the induced
  map on fibers $(Y_{n+2})_f \to (X_{n+2})_{pf}$ is an equivalence. But
  this map is precisely $((Y_{/b})_{f/})_n \to ((X_{/pb})_{pf/})_n$, which
  is the $n$-component of the map on intervals $I(f) \to I(pf)$.
\end{proof}

\begin{blanko}[Ambifinal maps]
  A simplicial map is called {\em ambifinal} if it is left orthogonal to
  every culf map.
\end{blanko}

In close analogy with Lemma~\ref{lem: terminal objects} we have the following result, which we treat only briefly as it is not necessary in what follows.
\begin{lemma}\label{lem: initial and terminal objects}
  If a simplicial map between simplicial spaces with both an initial and a
  terminal vertex preserves those initial and terminal vertices, then it is
  ambifinal.
\end{lemma}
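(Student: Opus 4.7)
The plan is to follow the strategy of \cref{lem: terminal objects} very closely, but with the slice construction replaced by Lawvere's interval construction (cf.~\cref{culf=int-pres}). Let $b_\bot, b_\top \in B_0$ be the initial and terminal vertices of $B$, and $a_\bot, a_\top \in A_0$ their images, which are also initial and terminal by hypothesis. Let $e_B \in B_1$ denote the canonical $1$-simplex from $b_\bot$ to $b_\top$ and set $e_A \coloneqq f(e_B)$. The hypothesis should yield levelwise equivalences $I(e_B) \isopil B$ and $I(e_A) \isopil A$, so that any lifting problem against a culf map $p\colon Y \to X$ can be transferred to one whose left-hand vertical is $I(e_B) \to I(e_A)$.

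To realise intervals as a left-adjoint image in the style of the previous proof, I would introduce the auxiliary category $\simplexcategory^{bt}$ of finite ordinals with distinct distinguished bottom and top, with monotone maps preserving both. Its morphisms are precisely the active maps between ordinals of cardinality $\geq 2$, and its initial object is $[1]^{bt}$. The forgetful functor $w \colon \simplexcategory^{bt} \to \simplexcategory$ admits a left adjoint $j \colon \simplexcategory \to \simplexcategory^{bt}$ given by $j[n] = [n+2]^{bt}$, and one obtains an adjunction $j\upperstar \isleftadjointto w\upperstar$ on presheaves. The double d\'ecalage decomposes as $\Decbot \Dectop = j\upperstar \circ w\upperstar$, and for any $f \in X_1$ one has
\[
I(f) \;\simeq\; j\upperstar\bigl((w\upperstar X)|_{f}\bigr),
\]
where $(w\upperstar X)|_f$ denotes the fibre over $f$ of the augmentation $w\upperstar X \to X_1$, i.e.~the value of $w\upperstar X$ at the initial object $[1]^{bt}$.

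By this adjunction, finding a contractible space of lifts of the original lifting problem in simplicial spaces is equivalent to finding a contractible space of lifts for
\[
\begin{tikzcd}
(w\upperstar B)|_{e_B} \ar[r] \ar[d] & w\upperstar Y \ar[d, "w\upperstar p"] \\
(w\upperstar A)|_{e_A} \ar[r] \ar[ur, dotted] & w\upperstar X
\end{tikzcd}
\]
in $\PrSh(\simplexcategory^{bt})$. I would handle this using the factorization system on $\PrSh(\simplexcategory^{bt})$ afforded by the initial object $[1]^{bt}$: the left class consists of transformations that are equivalences at $[1]^{bt}$, and the right class of cartesian transformations. The left-hand vertical evaluates at $[1]^{bt}$ to the map $\{e_B\} \to \{e_A\}$ between contractible spaces, so it is in the left class. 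The right-hand vertical $w\upperstar p$ is cartesian exactly because $p$ is culf: being cartesian on active maps coincides with being cartesian on all morphisms of $\simplexcategory^{bt}$ (using \cref{ulf=culf} to ignore the anomalous active maps with target $[0]$). A contractible space of lifts therefore exists, and transferring back along the adjunction yields the desired lift in simplicial spaces.

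The main obstacle will be justifying the equivalence $I(e_B) \isopil B$ (and similarly for $A$) from the hypothesis of having both an initial and a terminal vertex. This presumably requires combining the slice-equivalence $B_{/b_\top} \isopil B$ (as used in \cref{lem: terminal objects}) with its dual coslice-equivalence $B_{b_\bot/} \isopil B$, to conclude that their iterated version $I(e_B) = (B_{/b_\top})_{e_B/}$ is again equivalent to $B$.
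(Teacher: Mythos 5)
Your proposal is correct and takes essentially the same approach as the paper: the paper's own (sketched) proof likewise runs the argument of \cref{lem: terminal objects} with $\simplexcategory^t$ replaced by the category $\simplexcategory^{t,b}$ of ordinals with distinct preserved endpoints (your $\simplexcategory^{bt}$), whose induced comonad is double d\'ecalage, so that the transposed lifting problem is settled by the same initial-object factorization system, with culfness of $p$ supplying the cartesian condition on active maps. Your final step, identifying $I(e_B)\simeq B$ by combining the slice equivalence at the terminal vertex with the coslice equivalence at the initial vertex, is exactly the detail the paper leaves implicit, and it goes through as you indicate.
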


\begin{proof}[Proof sketch]
The proof is analogous to that of \cref{lem: terminal objects}, but using instead the adjunction 
 $$
  \begin{tikzcd}
  i\upperstar : \Fun((\simplexcategory^{t,b})\op, \spaces) \ar[r, shift left]
  &  \Fun(\simplexcategory\op, \spaces) \ar[l, shift left] : u\upperstar
  \end{tikzcd}
  $$
  from \cite[\S\S2--3]{GKT3},
whose induced comonad $i\upperstar u\upperstar$ is double d\'ecalage (both upper and lower).
Here, $\simplexcategory^{t,b}$ is the category of ordinals with distinct top and bottom elements, and monotone maps which preserve these.
\end{proof}

\begin{theorem}
  The classes of ambifinal maps and culf maps form a factorization system 
  on $\simpspaces$.
\end{theorem}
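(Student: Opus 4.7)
The plan is to mimic the proof of the comprehensive factorization system in \cref{comprehensive}, which was produced via the generic machinery of \cite[Proposition 3.1.18]{ABFJ:LELI}. That result says that for a presentable $\infty$-category and any small set $\Sigma$ of morphisms, the pair $(\overline{\Sigma}, \Sigma^{\perp})$ is a factorization system, where $\overline{\Sigma}$ denotes the saturation of $\Sigma$.

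First I would take $\Sigma$ to be the (small) set of active maps between representables, i.e.\ maps of the form $\Delta^k \actto \Delta^n$ for $k,n \geq 0$. Since $\simpspaces$ is presentable, \cite[Proposition 3.1.18]{ABFJ:LELI} immediately yields a factorization system $(\overline{\Sigma}, \Sigma^{\perp})$. Next I would identify the two classes: by the very definition of culf (see \ref{culf}), $\Sigma^{\perp}$ is precisely the class of culf maps. Using that this is a factorization system, the identity $\vphantom{R}^\perp(\Sigma^{\perp}) = \overline{\Sigma}$ (\cite[Proposition 5.2.8.11]{HTT} or \cite[Lemma 3.1.9]{ABFJ:LELI}) together with the definition of ambifinal (left orthogonal to culf) shows that $\overline{\Sigma}$ is the class of ambifinal maps. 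This concludes the argument.

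There is no real obstacle here, since the proof is entirely formal once the two classes are matched with a generating set and its orthogonal. The only thing one might want to double-check is smallness of $\Sigma$, but this is obvious as $\simplexcategory$ is a small category. Note that \cref{lem:simp-culf} would allow one to take instead the smaller generating set of active maps $\Delta^1 \actto \Delta^n$, but this refinement is not needed for the existence of the factorization system.
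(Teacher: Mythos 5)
Your proposal is correct and is essentially identical to the paper's own proof: the paper also argues "completely analogously to \cref{comprehensive}," taking $\Sigma$ to be the active maps between representables, invoking \cite[Proposition 3.1.18]{ABFJ:LELI} for the factorization system $(\overline{\Sigma},\Sigma^{\perp})$, and identifying $\Sigma^{\perp}$ as the culf maps and $\overline{\Sigma}={}^{\perp}(\Sigma^{\perp})$ as the ambifinal maps.
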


\begin{proof}
  The proof is completely analogous to the proof of \cref{comprehensive},
  but with generating set $\Sigma$ of the left class now being the set of
  active maps $\Delta^k \actto \Delta^n$ between representables; then the
  saturated class is the class of ambifinal maps.
\end{proof}

\begin{blanko}[The generating set $\Sigma$]\label{remark gen set sigma}
In the proof of the preceding theorem, one can choose many possible alternate generating sets $\Sigma$ for the left class.
For instance, from \cref{lem:simp-culf} we know that we could take only the active maps $\Delta^1 \actto \Delta^n$.
Instead, one could take active maps
$\Delta^1 \actto \Delta^{2n+1}$ landing in odd-dimensional simplices. 
This is because the active map $\Delta^1 \actto\Delta^{2n}$ into a even-dimensional simplex, for $n>0$,
is a retract of $\Delta^1 \actto \Delta^{2n+1}$.
\Cref{ulf=culf} takes care of the remaining map $\Delta^n \actto \Delta^0$.
In fact, one could take the active maps $\Delta^1 \actto \Delta^{n_k}$ for any infinite collection of non-negative integers $n_k$.
\end{blanko}

%%%%%%%%%%%%%%%%%%%%%%%%%%%%%%%%%%%%%%%%%%%%%%%%%%
\section{The last-vertex map}
%%%%%%%%%%%%%%%%%%%%%%%%%%%%%%%%%%%%%%%%%%%%%%%%%%
\label{sec:lastvertex}

\begin{blanko}[Categories of elements]
Let $X\colon \CC\op\to\spaces$ be a presheaf. The {\em category of elements} of $X$
is by definition $\el(X) \coloneqq \CC \comma X$, the domain of the right
fibration corresponding to $X$ under the basic
straightening-unstraightening equivalence of $\infty$-categories $\RFib(\CC) \simeq \PrSh(\CC)$
(due to Lurie~\cite{HTT};
see \cite[Thm~3.4.6]{AyalaFrancis:Fibrations} for a model-independent
statement).  
Notice that $\el(X)$ is an $\infty$-category, but we retain the traditional and shorter terminology ``category of elements'' throughout.
\end{blanko}

For a simplicial space $X\colon \simplexcategory\op\to\spaces$, we shall be concerned with
the nerve of its category of elements, written $\Nel(X)$. It is thus
a simplicial space again. The $k$-simplices of $\Nel(X)$ are configurations
$$
\Delta^{n_0} \to \Delta^{n_1} \to \cdots \to \Delta^{n_k} \longrightarrow X .
$$

\begin{blanko}[The last-vertex map $\last\colon \Nel(X) \to 
  X$]\label{lastvertex}
  For any simplicial space $X$, the {\em last-vertex map} $$\last_X \colon \Nel(X) 
  \to X$$ is given on objects by sending an $n$-simplex $\sigma\colon \Delta^n 
  \to X$ to its last vertex
  $$
  \Delta^0 \stackrel{\text{last}}\to \Delta^n \stackrel{\sigma}\to X .
  $$
  The action of $\last$ on higher simplices is given, with 
  reference to the general combinatorial  {\em lower-segments 
  construction} below, by sending
  $$
  f \; \colon \ \Delta^{n_0} \to \cdots\to \Delta^{n_k} \to X 
  \qquad\qquad \in (\Nel X)_k
  $$
  to
  $$
  \Delta^{k} \stackrel{\beta_f}{\to} \Delta^{n_k} \to X
    \qquad\qquad \in X_k ,
  $$
  the $k$-simplex given by all the successive last vertices.
  
  Our next main task is to formally define $\last_X$ and to show that they assemble into a natural transformation $\last \colon \Nel \Rightarrow \id$.
  For this we will need several auxiliary constructions and lemmas, which will also be useful when describing an important natural transformation $\lambda$ in  \cref{sec:Sd}.
\end{blanko}

\begin{blanko}[Remark]
  The last-vertex map $\last \colon \Nel(A) \to A$
  for simplicial {\em sets} already has some history. It was used by Waldhausen~\cite[p.\ 359]{Waldhausen:AKTS} and 
  more recently by Lurie~\cite[4.2.3.14]{HTT}.
  The case of simplicial spaces is considerably subtler. 
A version of $\last$ for Segal spaces was studied by Mazel-Gee~\cite[\S5.1]{MG:GC}, and we will use this below.
\end{blanko}

\begin{blanko}[Lower-segments construction]\label{lower-segment}
  (See~\cite[Lemma 3.2]{Kock-Spivak:1807.06000}). For any $k$, let $f \in
  (\nerve\simplexcategory)_k$ denote a sequence of maps
  $[n_0]\stackrel{f_1}\to[n_1]\stackrel{f_2}\to\cdots\stackrel{f_k}\to[n_k]$
  in $\simplexcategory$. Then there is a unique commutative diagram of the form
\[\begin{tikzcd}[column sep=large]
  {[0]}\ar[r, "d^\top"]
  \ar[d]&
%   \ar[d, "\beta(n_0)"']&
  {[1]}\ar[r, "d^\top"]
  \ar[d]&
%   \ar[d, "\beta(f_1)"']&
  \cdots\ar[r, "d^\top"]&
  {[k]}\ar[d, "\beta_f"']\\
  {[n_0]}\ar[r, "f_1"']&
  {[n_1]}\ar[r, "f_2"']&
  \cdots\ar[r, "f_k"']&
  {[n_k]}
\end{tikzcd}\]
  for which all the vertical maps are \toppreserving, and all the maps in
  the top row are $d^\top$. Indeed, building the diagram from the left to 
  the right, in each step it remains to define the
  next $\beta$ map on the last vertex, and here the value is determined by
  the requirement that it be \toppreserving.

If $f \in (\nerve\simplexcategory)_k$ is as above, and $0\leq i \leq j \leq k$, we write $f_{ij} \colon [n_i] \to [n_j]$ for the composite $f_j \cdots f_{i+1}$ (or $\id_{[n_i]}$ when $i=j$).
The resulting map $\beta_f$ can be described explicitly as 
\begin{eqnarray*}
  {}[k] & \longrightarrow & [n_k]  \\
  i & \longmapsto & %f(\top_i)
  f_{ik}(n_i).
\end{eqnarray*}
Let $\newlen \colon \el (\nerve \simplexcategory) \to \simplexcategory$ be 
the right fibration associated to $\nerve \simplexcategory$ (actually, a discrete fibration of 1-categories), and let $\newlast_{\simplexcategory} \colon \el (\nerve \simplexcategory) \to \simplexcategory$ be the functor sending $f \colon [k] \to \simplexcategory$ to $f(k) = [n_k]$.
\end{blanko}

\begin{lemma}\label{lem beta natural}
The maps $\beta_f$ define a natural transformation of functors
\[ \beta\colon \newlen \Rightarrow \newlast_{\simplexcategory} \colon \el (\nerve\simplexcategory) \to \simplexcategory.\]
\end{lemma}

\begin{proof}
Let $f\colon [k] \to \simplexcategory$ and $g\colon [\ell] \to 
\simplexcategory$ be two objects in $\el (\nerve\simplexcategory)$, and suppose that $\gamma \colon g \to f$ is a map.
That is, $\gamma$ is a map $[\ell] \to [k]$ in $\simplexcategory$ with $f\gamma = g$.
As above, write $f_{ij} \colon [n_i] \to [n_j]$ for $f(i \to j)$ and $g_{ij} \colon [m_i] \to [m_j]$ for $g(i \to j)$.
We have $\newlast_{\simplexcategory}(\gamma) = f_{\gamma(\ell)k} \colon [m_\ell] = [n_{\gamma(\ell)}]  \to [n_k]$, so we wish to show that the square
\[
\begin{tikzcd}
{[\ell]} \rar{\gamma} \dar[swap]{\beta_g} & {[k]} \dar{\beta_f} \\
{[m_\ell]} \rar[swap]{f_{\gamma(\ell)k}} & {[n_k]}
\end{tikzcd}
\]
commutes.
But for $0\leq t \leq \ell$ we have
\begin{align*}
f_{\gamma(\ell)k}(\beta_g(t)) &= f_{\gamma(\ell)k}(g_{t\ell}(m_t)) \\
&= f_{\gamma(\ell)k}(f_{\gamma(t)\gamma(\ell)}(n_{\gamma(t)})) \\
&= f_{\gamma(t)\ell}(n_{\gamma(t)}) = \beta_f(\gamma(t)),
\end{align*}
so $\beta$ is a natural transformation.
\end{proof}

We are going to lift this natural transformation to an arbitrary simplicial space $X$.

\begin{lemma}[{\cite[\S5.1]{MG:GC}}]\label{lem:last}
    There is a natural transformation $\newlast \colon \el \nerve \Rightarrow \id_{\catinf}$. 
    At an $\infty$-category $\CC$,
    it takes an object $f\colon [k]{\to}\CC$ to its last vertex $f(k)$.
\end{lemma}

\begin{proof}
For a simplicial space $X\colon \simplexcategory\op \to \spaces$, temporarily write $q\colon \int X \to \simplexcategory\op$ for associated left fibration.
Then $q\op \colon (\int X)\op \to \simplexcategory$ is equivalent to the right fibration $p\colon \el (X) \to \simplexcategory$.
Mazel-Gee in \cite[Construction 5.4]{MG:GC} produced the map $\int \nerve\DD \to \DD$ which on objects picks out the \emph{first} vertex, rather than the last.
But we can recover the desired map as follows.
First, take opposites to get $\el(\nerve\DD) \simeq (\int \nerve\DD)\op \to \DD\op$.
Then, pull back as follows:
\[
\begin{tikzcd}
\el (\nerve(\DD)\op) \rar \dar \drpullback & \el (\nerve \DD) \dar \rar & \DD\op \\
\simplexcategory \rar[swap]{\text{rev}} & \simplexcategory ,
\end{tikzcd}
\]
where $\text{rev}$ is the unique nontrivial automorphism of $\simplexcategory$. 
Instantiating the top line at $\DD = \CC\op$ then gives the desired map 
$\el (\nerve\CC) \to \CC.$
As $\int \nerve\DD \to \DD$ is natural by \cite{MG:GC}, so too is $\el (\nerve \CC) \to \CC$.
\end{proof}

\begin{blanko}[Construction of the last-vertex map]\label{bl construction last vertex}
  We instantiate Lemma~\ref{lem:last} at the canonical projection $p\colon \el(X)\to \simplexcategory$, for $X$ a general simplicial space. This gives the solid (lower) square in the diagram
\begin{equation}\label{diag cylinder}
\begin{tikzcd}[sep={15mm,between origins}]
\el (\Nel X )
\ar[dd, "\el\nerve(p)"']
\ar[rr, dotted, bend left, "\widetilde\xi_X"]
\ar[rr, phantom, "\Downarrow", start anchor=center, end anchor=center]
\ar[rr, bend right, "\newlast_{\el(X)}"']
& & \el(X)
\ar[dd, "p"]
\\
\\
\el (\nerve \simplexcategory ) 
\ar[rr, bend left, "\newlen"]
\ar[rr, phantom, "\Downarrow\beta"]
\ar[rr, bend right, "\newlast_{\simplexcategory}"' ]
& & \simplexcategory  .
\end{tikzcd}
\end{equation}
We now lift the natural transformation $\beta$ to $\newlast_{\el(X)}$.
This is possible because $p\colon \el(X) \to \simplexcategory$ is a right fibration. Indeed,
write $\beta$ as $[1] \times \el (\nerve \simplexcategory) \to \simplexcategory$, and consider
the commutative square 
\[ \begin{tikzcd}[column sep = large]
\{1\} \times \el (\Nel X) \ar[rr, "\newlast_{\el X}"] \dar & & \el (X) \dar{p} \\
{[1]} \times \el (\Nel X) \rar[swap]{\id \times \el \nerve (p)} &  {[1]} \times \el (\nerve \simplexcategory) \rar[swap]{\beta} & \simplexcategory .
\end{tikzcd} \]

Since the vertical map on the left is final while the vertical map on the right is a right fibration, we get a unique lift $\beta_X$ in the square, which is
the asserted lifted natural transformation, whose domain we call $\widetilde\xi_X \colon \el (\Nel X) \to \el (X)$.

This functor $\widetilde \xi_X$ is a right fibration because it fits
into the upper square of \eqref{diag cylinder} with $\el(\nerve p)$, $\newlen$, and $p$ all right fibrations. 
Hence $\widetilde \xi_X$ comes from a simplicial map
$$
\xi_X \colon \Nel (X) \to X
$$
which is the ``last-vertex'' map. 
In \cref{lem xi natural} we will show this is natural in $X$.
\end{blanko}

The argument constructing $\beta_X$ above just as well proves the following general lemma, which we will use several times below.
\begin{lemma}\label{lem nt lifting}
Suppose we are given a commutative square in $\catinf$ and a natural 
transformation $\gamma$ as depicted below.
\[ \begin{tikzcd}
\CC \rar{g'} \dar & \EE \dar{p} \\
\mathcal{A}  \rar{g} & \mathcal{B}
\end{tikzcd} \qquad 
\begin{tikzcd}
\mathcal{A}  \rar[bend right, "g"'] \rar[bend left, "f"] \rar[phantom, "\Downarrow \gamma"]& \mathcal{B}
\end{tikzcd}
\] 
If $p$ is a right fibration, then there is a unique lift of $\gamma$ to a natural transformation with codomain $g'$.
More precisely, the space of dotted fillers in 
\[ \begin{tikzcd}
\{ 1 \} \times \CC \rar{g'} \dar & \{ 1 \} \times \EE \dar \\
{[1]} \times \CC \rar[dotted, "\gamma'" description] \dar & {[1]} \times \EE \dar{\id \times p} \\
{[1]} \times \mathcal{A}  \rar[swap]{\gamma} & {[1]} \times \mathcal{B}
\end{tikzcd} \]
is contractible. \qed
\end{lemma}

\begin{lemma}\label{lem xi natural}
The maps $\widetilde{\xi}_X$ assemble into a natural transformation \[ \widetilde{\xi} \colon \el \Nel  \Rightarrow \el \colon \simpspaces \to \RFib(\simplexcategory)\subseteq (\catinf)_{/\simplexcategory}. \]
Consequently, we also have a natural transformation
\[
  \xi \colon \Nel \Rightarrow \id_{\simpspaces}.
\]
\end{lemma}

\begin{proof}
Let $Y \to X$ be a simplicial map, with associated right fibration $q \colon \el (Y) \to \el (X)$ over $\simplexcategory$.
Consider the square
\begin{equation}\label{diag to lift}
\begin{tikzcd}[column sep=huge]
\{1\} \times \el (\Nel Y) \rar{q \circ \newlast_{\el Y}} \dar & \el (X) \dar{p} \\
{[1]} \times \el (\Nel Y) \rar[swap]{\beta \circ (\id \times \el \nerve (pq))} \urar[dotted] & \simplexcategory,
\end{tikzcd}
\end{equation}
which has a contractible space of lifts since $p$ is a right fibration and the left-hand map is final.
We will actually show that the top square in the diagram
\begin{equation}\label{diag naturality}
\begin{tikzcd}
{[1]} \times \el (\Nel Y) \rar{\beta_Y} \dar["\id \times \el \nerve (q)" swap] & \el (Y) \dar{q} \\
{[1]} \times \el (\Nel X) \rar{\beta_X} \dar[swap]{\id \times \el \nerve(p)} & \el (X) \dar{p} \\
{[1]} \times \el (\nerve \simplexcategory) \rar{\beta} & \simplexcategory 
\end{tikzcd}
\end{equation}
commutes, by showing that both ways around this square are lifts in \eqref{diag to lift}.
For the bottom triangle in \eqref{diag to lift}, this amounts to the known commutativity of the outer rectangle and the bottom square of \eqref{diag naturality}.
The top triangle in \eqref{diag to lift} commutes in each case since
\[
\begin{tikzcd}
\el (\Nel Y) \ar[r, "\newlast_{\el Y}"] \ar[d] \ar[rd, "\el \nerve(q)" description]& \el (Y) \ar[rd, "q"] &   \\
{[1] \times \el (\Nel Y)} \ar[rd, "\id\times \el \nerve(q)"'] &  \el (\Nel X) \ar[r, 
"\newlast_{\el X}"'] \ar[d] & \el(X) \\
& {[1]\times \el(\Nel X)} \ar[ru, "\beta_X"', bend right]
\end{tikzcd}
\]
and
\[
\begin{tikzcd}
\el (\Nel Y) \dar \ar[r, "\newlast_{\el Y}"] &  \el (Y) \rar{q} & \el (X) \\
{[1]} \times \el (\Nel Y) \urar[swap]{\beta_Y} & &
\end{tikzcd}
\]
both commute.
By uniqueness of lifts of \eqref{diag to lift}, the top square in \eqref{diag naturality} commutes. Hence 
\[ \begin{tikzcd}
\el (\Nel Y) \ar[r, "\widetilde{\xi}_{Y}"] \dar[swap]{\el \nerve(q)} &  \el (Y) \dar{q} \\
\el (\Nel X) \ar[r, "\widetilde{\xi}_{X}"'] &  \el (X)
\end{tikzcd} \]
commutes as well.
\end{proof}

Notice that if $\CC$ is an $\infty$-category, then 
$\xi_{\nerve \CC} = \nerve(\newlast_\CC)$.

\begin{lemma}\label{el-L}
  The natural transformation $\last \colon \Nel \Rightarrow \id$ 
  is cartesian on right fibrations. That is,
  for $p\colon Y \to X$ a right fibration between simplicial spaces, 
  the naturality square
  \[
  \begin{tikzcd}
  \Nel(Y) \ar[r, "\last_Y"] \ar[d, "p'"'] & Y \ar[d, "p"]  \\
  \Nel(X) \ar[r, "\last_X"'] & X
  \end{tikzcd}
  \]
  is a pullback.
\end{lemma}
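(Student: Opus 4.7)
The plan is to verify the pullback at each simplicial level $k \geq 0$. The key structural input will be a natural decomposition of $(\Nel Z)_k$ for any simplicial space $Z$: since $\simplexcategory$ is a $1$-category, $(\nerve\simplexcategory)_k$ is a discrete set of chains $f = ([n_0] \to \cdots \to [n_k])$, and since $\el(Z) \to \simplexcategory$ is (by definition) the right fibration classifying $Z$, a functor $[k] \to \el(Z)$ is determined by its image in $\nerve\simplexcategory$ together with a lift of the last vertex to the fiber $Z_{n_k}$. This yields an equivalence
$$
(\Nel Z)_k \;\simeq\; \bigsqcup_{f \in (\nerve\simplexcategory)_k} Z_{n_k}
$$
that is natural in $Z$, and in particular compatible with $\Nel(p) \colon \Nel(Y) \to \Nel(X)$, which on the $f$-component acts as $p_{n_k} \colon Y_{n_k} \to X_{n_k}$.

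Under this decomposition, \cref{lastvertex} says that the restriction of $\xi_Z$ to the $f$-component is precisely $\beta(f)\upperstar \colon Z_{n_k} \to Z_k$, where $\beta(f)\colon [k] \to [n_k]$ is the last-point-preserving map produced by the lower-segments construction (\cref{lower-segment}). Accordingly, the level-$k$ naturality square decomposes as a disjoint union, indexed by $f$, of squares
\[
\begin{tikzcd}
Y_{n_k} \ar[r, "\beta(f)\upperstar"] \ar[d, "p_{n_k}"'] & Y_k \ar[d, "p_k"] \\
X_{n_k} \ar[r, "\beta(f)\upperstar"'] & X_k.
\end{tikzcd}
\]
Each such square is a pullback: $\beta(f)$ is last-point-preserving by construction, and $p$ is a right fibration, so being cartesian on $\beta(f)$ is immediate (\cref{lemma right fib char} even reduces this to the last-vertex inclusion $[0] \to [n_k]$ composed with $\beta(f)$ and with $[0] \to [k]$). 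Since coproducts of pullbacks are pullbacks in $\spaces$, the level-$k$ square is a pullback, and letting $k$ vary gives the claim.

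The only real obstacle is justifying the coproduct decomposition of $(\Nel Z)_k$ together with its naturality in $Z$; once that is in hand, everything else is a direct unpacking of definitions. That decomposition is a standard consequence of straightening--unstraightening applied to right fibrations over the $1$-category $\simplexcategory$, combined with the observation that mapping spaces into a $1$-category are discrete. Care is needed only to ensure the identifications intertwine $\xi_Y$, $\xi_X$, $p$, and $\Nel(p)$ as advertised, which they do because both $\Nel$ and $\xi$ are defined by functorial constructions in $Z$.
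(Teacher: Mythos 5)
Your proposal is correct and follows essentially the same route as the paper's own proof: a degreewise decomposition $(\Nel Z)_k \simeq \sum_f Z_{n_k}$ over chains $f$ in $\simplexcategory$, identification of $\last$ on each summand as $\beta(f)\upperstar$ via the lower-segments construction, and the observation that each resulting square is a pullback because $\beta(f)$ is \toppreserving\ and $p$ is a right fibration. The only difference is that you spell out the justification of the summand decomposition (via straightening over the $1$-category $\simplexcategory$), which the paper takes for granted.
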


\begin{proof}
  We check it in each simplicial degree separately. In simplicial degree $k$
  we have
  $$
  \Nel(Y)_k = \sum_{[n_0]\to\cdots\to [n_k]} Y_{n_k} .
  $$
  If the chain of maps $[n_0]\to\cdots\to [n_k]$ is called $f$, then 
  the lower-segments 
  construction (\ref{lower-segment}) gives us a \toppreserving\ map $\beta_f \colon [k] \to [n_k]$,
  and $\last_Y$ is given in the $f$-summand by
  $$
 Y_{n_k} \stackrel{\beta_f\upperstar} \longrightarrow Y_{k} .
  $$
  Altogether, the square we want to show is a pullback (in degree $k$)
  is identified 
  with
  \[
  \begin{tikzcd}
  \sum_f Y_{n_k} \ar[r, "\beta_f\upperstar"] \ar[d] & Y_{k} \ar[d, 
  "p_{k}"]  \\
  \sum_f X_{n_k} \ar[r, "\beta_f\upperstar"'] & X_{k} 
  \end{tikzcd}
  \]
 where the horizontal maps on each summand depend on $f\in (\nerve \simplexcategory)_k$.
  The left vertical map $p'_k$ respects $f$ (since $p'$ is a morphism of 
  right fibrations over $\nerve\simplexcategory$). Therefore, since pullbacks commute with sums, the 
  pullback property can be established separately for each summand. 
  For a fixed chain $f$, the square is thus
  \[
  \begin{tikzcd}
  Y_{n_k} \ar[r, "\beta_f\upperstar"] \ar[d] & Y_k \ar[d, "p_{k}"]  \\
  X_{n_k} \ar[r, "\beta_f\upperstar"'] & X_{k} ,
  \end{tikzcd}
  \]
  and this square is a pullback since $\beta_f$ is \toppreserving\ and 
  $p$ is a right fibration.
\end{proof}

\begin{lemma}\label{L-final}  For any simplicial space $A$,
  the last-vertex map $\last \colon \Nel(A)\to A$ is final.
\end{lemma}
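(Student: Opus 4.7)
The plan is to reduce to the representable case $A = \Delta^n$ via a colimit argument. Since the class of final maps is the left class of the comprehensive factorization system (\cref{comprehensive}) in the presentable $\infty$-category $\simpspaces$, it is closed under colimits in the arrow category $\simpspaces^{\Delta^1}$: given a colimit-diagram of squares mapping to a fixed right fibration $p$, a lift in the colimit square is precisely a compatible system of lifts in each individual square, so the space of lifts is a limit of contractible spaces, hence contractible.

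For the representable case, the simplicial space $\Delta^n$ has the vertex $n \in [n]$ as a terminal vertex in the sense of \cref{lem: terminal objects}, since $\Delta^n_{/n} = \nerve([n]_{/n}) \simeq \Delta^n$. Similarly, $\Nel(\Delta^n) = \nerve(\simplexcategory_{/[n]})$ has a terminal vertex at $\id_{[n]}$, since $\id_{[n]}$ is terminal in the $1$-category $\simplexcategory_{/[n]}$. From the description of $\last$ in \ref{lastvertex}, we see $\last_{\Delta^n}(\id_{[n]}) = n$, so \cref{lem: terminal objects} shows that $\last_{\Delta^n}$ is final.

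For the reduction step, I would first observe that $\Nel \colon \simpspaces \to \simpspaces$ preserves colimits. Indeed, the formula
\[
(\Nel A)_k \; = \; \coprod_{f \in (\nerve\simplexcategory)_k} A_{n_k(f)},
\]
where $f$ ranges over chains $[n_0] \to \cdots \to [n_k]$ in $\simplexcategory$, exhibits $\Nel$ in each simplicial degree as a discrete coproduct of evaluation functors, each of which preserves colimits (as colimits of simplicial spaces are pointwise in simplicial degree). Writing $A$ as the canonical colimit of its representables $A = \colim_i \Delta^{n_i}$, the naturality of $\last$ then presents $\last_A$ as the colimit in $\simpspaces^{\Delta^1}$ of the final maps $\last_{\Delta^{n_i}}$, so $\last_A$ is final.

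The main technical point is the closure of the left class under colimits in the arrow category; beyond that, the argument is a clean packaging of the representable computation together with the colimit-preservation of $\Nel$.
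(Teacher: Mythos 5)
Your proof is correct, but it takes a genuinely different route from the paper's. You reduce to representables: finality of $\last_{\Delta^n}\colon \Nel(\Delta^n)\simeq\nerve(\simplexcategory_{/[n]})\to\Delta^n$ is an immediate application of \cref{lem: terminal objects}, since both sides have terminal vertices ($\id_{[n]}$ and $n$) and $\last$ matches them; you then write $A$ as the canonical colimit of its simplices, use that $\Nel$ preserves colimits (clear from the levelwise formula $(\Nel X)_k\simeq\coprod_f X_{n_k}$, which is exactly the description the paper itself uses in the proof of \cref{el-L}), and invoke the fact that the left class of a factorization system --- here the final maps of \cref{comprehensive} --- is closed under colimits in the arrow category; your lifting-space argument (the space of lifts against a fixed right fibration for the colimit square is a limit of contractible spaces) is the standard and correct justification. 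The paper instead argues directly, without any colimit bookkeeping: given a lifting problem against a right fibration $p\colon Y\to X$, it uses the cartesianness of $\last$ on right fibrations (\cref{el-L}) to produce, via the pullback $\Nel(Y)\simeq \Nel(X)\times_X Y$, a map $\Nel(A)\to\Nel(Y)$ lying over $\nerve\simplexcategory$, and then full faithfulness of the nerve to descend this to the desired lift $A\to Y$. The trade-off: the paper's argument reuses \cref{el-L}, which is needed later anyway (e.g.\ for \cref{lemma:epsilonL is equiv}), and produces the lift explicitly; yours is more modular, isolating the representable case as a one-line consequence of \cref{lem: terminal objects} and otherwise running on general machinery (density of representables, colimit-preservation of $\Nel$, closure of left classes under colimits), at the cost of having to verify those two functorial points --- which you do, at the same level of rigor the paper operates at.
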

The analogue of this lemma for simplicial sets appears in works of Lurie~\cite[4.2.3.14]{HTT} and Cisinski~\cite[7.3.9]{Cisinski:HCHA}.
\begin{proof}
  To show that $\last_A \colon \Nel (A) \to A$ is final, we need to show that there is a contractible space of lifts for the square
  \begin{equation}\label{NELXYZ}
  \begin{tikzcd}
  \Nel(A) \ar[d, "\last_A"'] \ar[r, "f"] & Y \ar[d, "p"]  \\
  A \ar[r, "g"'] & X
  \end{tikzcd}
  \end{equation}
  for any right fibration $p\colon Y \to X$ and arbitrary simplicial maps $f$ and $g$.
  
  By Lemma~\ref{el-L} we have the pullback square
  \[
  \begin{tikzcd}
  \Nel(Y) \drpullback \ar[r, "\last_Y"] \ar[d, "p'"'] & Y \ar[d, "p"]  \\
  \Nel(X) \ar[r, "\last_X"'] & X .
  \end{tikzcd}
  \]
This pullback square is the outer square of the diagram
\[ \begin{tikzcd}[sep=small]
\Nel(Y) \ar[rrr,"\last_Y"] \ar[ddd,"p'"'] & & & Y \ar[ddd,"p"] \\
& \Nel(A) \ar[dr, "\last_A"] \ar[urr,"f"'] \ar[ddl,"g'"] \ar[ul,dashed,"h'"] \\
& & A  \ar[dr,"g"] \\
\Nel(X) \ar[rrr,"\last_X"] & & & X ,
\end{tikzcd} \]
where the two distorted squares are \eqref{NELXYZ} and naturality of $\last$ with respect to $g$.
The universal property of the pullback now gives the dashed arrow $h'$.

All three small triangles in the following diagram commute, hence the outer triangle commutes as well. 
\[ \begin{tikzcd}
\Nel(A) \ar[dr,"g'"'] \ar[rr,"h'"] \ar[ddr,bend right] & & \Nel(Y) \ar[dl,"p'"] \ar[ddl, bend left] \\[-0.4cm]
& \Nel (X) \dar  \\
& \nerve \simplexcategory  
\end{tikzcd} \]
Since the nerve functor is fully faithful, this implies that $h'$ is the nerve of a right fibration $\el (A) \to \el (Y)$ over $\simplexcategory$, hence $h' = \Nel(h)$ for a unique simplicial map $h \colon A \to Y$, as in the solid triangle

  \[
  \begin{tikzcd}
  { \color{gray} \Nel(A) } \ar[d, color=gray, "\last_A"'] \ar[r, color=gray, "f"] & Y \ar[d, "p"]  \\
  A \ar[r, "g"'] \ar[ur,"h"] & X  .
  \end{tikzcd}
  \]
Meanwhile, the upper triangle is the outer triangle in the commutative diagram
\[
\begin{tikzcd}
\Nel (A) \ar[rr,bend left=30,"f"] \dar["\xi_A"'] \ar[r,"h'"] & \Nel (Y) \rar["\xi_Y"] & Y \\
A \ar[urr,"h"']
\end{tikzcd} \]
so $h$ is a lift in the square.
\end{proof}

\begin{blanko}[Alternate interpretation of proof]
We want to prove that $\Map(A,Y) \to \Map(\last_A, p)$ is an equivalence.
But the statement of \cref{el-L} is that 
\[
\Map(\Nel A , \Nel Y) \to \Map(\Nel A, Y) \times_{\Map(\Nel A, X)} \Map(\Nel A, \Nel X)
\]
is an equivalence.
Now we have \[ \Map(\last_A, p) = \Map(\Nel A, Y) \times_{\Map(\Nel A, X)} \Map(A, X) , \]
and $\Map(A, X) \subset \Map(\Nel A, \Nel X)$ consists of those maps living over $\nerve \simplexcategory$.
So this proof is about identifying the two subspaces on the top row:
\[ \begin{tikzcd} 
\Map(A,Y) \rar \dar[hook,"i"] & \Map(\Nel A, Y) \times_{\Map(\Nel A, X)} \Map(A, X)  \dar[hook,"j"]
\\
\Map(\Nel A , \Nel Y) \rar{\sim} & 
\Map(\Nel A, Y) \times_{\Map(\Nel A, X)} \Map(\Nel A, \Nel X)  .
\end{tikzcd} \]

But we're exhibiting a map in the opposite direction on the top by
composing $j$ with a homotopy inverse on the bottom, and then showing that
it lands in $\Map(A,Y)$ (or factors through $i$). In that case the
exhibited map is automatically a homotopy inverse since the downward arrows
are monomorphisms of $\infty$-groupoids.
\end{blanko}

\begin{lemma}\label{lemma:epsilonL is equiv}
  For $\last_X \colon \Nel(X) \to X$ the last-vertex map of 
  \ref{lastvertex}, the counit $(\last_X)\lowershriek (\last_X)\upperstar \Rightarrow \Id$
  is an equivalence. In particular, \[(\last_X)\upperstar\colon 
  \simprfib(X) \to \simprfib(\Nel X)\] is fully faithful.
\end{lemma}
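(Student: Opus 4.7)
The plan is to evaluate $\last\lowershriek \last\upperstar(p)$ directly on a given right fibration $p\colon Y \to X$ and observe that the resulting comprehensive factorization is already in place. Concretely, I would fix such a $p$, form the pullback $\last\upperstar(p)$, and then trace through the construction of $\last\lowershriek$ using the two preceding lemmas.

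First, by \cref{el-L}, the cartesian square computing $\last\upperstar(p)$ is
\[
\begin{tikzcd}
\Nel(Y) \drpullback \ar[r, "\last_Y"] \ar[d, "p'"'] & Y \ar[d, "p"]  \\
\Nel(X) \ar[r, "\last_X"'] & X,
\end{tikzcd}
\]
so the pulled-back right fibration is $p'\colon \Nel(Y) \to \Nel(X)$. To form $\last\lowershriek(p')$, one postcomposes with $\last_X$ to get $\last_X\circ p'\colon \Nel(Y) \to X$, and then factors this map into a final map followed by a right fibration. By commutativity of the square above, $\last_X\circ p' \simeq p \circ \last_Y$, and by \cref{L-final} the map $\last_Y\colon \Nel(Y) \to Y$ is final, while $p$ is a right fibration by hypothesis. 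So this very factorization is the comprehensive factorization of $\last_X\circ p'$, which identifies $\last\lowershriek\last\upperstar(p)$ with $p$ itself and exhibits the counit component at $p$ as the identity (equivalently, an equivalence). Since this is natural in $p$, the counit $\last\lowershriek\last\upperstar \Rightarrow \Id$ is an equivalence.

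The ``in particular'' clause is then the standard fact that a right adjoint whose counit is an equivalence is fully faithful (equivalently, the adjunction is a coreflection); see for example the characterization of fully faithful right adjoints in terms of the counit in any $\infty$-categorical adjunction.

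I do not expect a serious obstacle here: the real content has been absorbed into \cref{el-L} (cartesianness of $\last$ on right fibrations) and \cref{L-final} (finality of $\last$), and the argument is just a one-line bookkeeping once $\last\lowershriek$ is unpacked via its definition in terms of comprehensive factorization. The only care needed is to make sure that when one writes ``the factorization is already in place'', this is really a factorization in the required sense, which is immediate from the uniqueness of the final/right-fibration factorization up to equivalence.
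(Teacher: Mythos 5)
Your proposal is correct and matches the paper's own argument: both use \cref{el-L} to identify $\last\upperstar(p)$ with $p'\colon \Nel(Y)\to\Nel(X)$ and then \cref{L-final} to see that $p\circ\last_Y$ is already the final/right-fibration factorization of $\last_X\circ p'$, so the counit is an equivalence. The ``in particular'' clause is handled the same standard way.
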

\begin{proof}
  For any right fibration $p\colon Y \to X$, the pullback diagram 
  \[
  \begin{tikzcd}
  \Nel(Y) \drpullback \ar[r, "\last_Y"] \ar[d, "p'"'] & Y \ar[d, "p"]  \\
  \Nel(X) \ar[r, "\last_X"'] & X
  \end{tikzcd}
  \]
  of Lemma~\ref{el-L}, together with the fact that $\last_Y$ is final 
  (Lemma~\ref{L-final}), shows that $p$ is already the 
  right-fibration part of the
  final-rightfibration factorization of $\last_X\circ p'$, so $\varepsilon_p \colon 
  (\last_X)\lowershriek (\last_X)\upperstar (p) \to p$ is the identity.
\end{proof}

\begin{cor}
  For a simplicial space $X$, we have a natural equivalence $(\last_X)\upperstar \simeq \Nel_X$ of 
  functors from $\simprfib(X)$ to $\simprfib(\Nel(X))$.
\end{cor}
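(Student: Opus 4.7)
The plan is to observe that this is essentially a direct corollary of Lemma~\ref{el-L}. For a right fibration $p \colon Y \to X$, the naturality square of the natural transformation $\last \colon \Nel \Rightarrow \Id$ at $p$ is
\[
\begin{tikzcd}
\Nel(Y) \ar[r, "\last_Y"] \ar[d, "\Nel(p)"'] & Y \ar[d, "p"]  \\
\Nel(X) \ar[r, "\last_X"'] & X ,
\end{tikzcd}
\]
and Lemma~\ref{el-L} tells us this is cartesian. The square itself therefore exhibits $\Nel(p)$ as the pullback of $p$ along $\last_X$, that is, as $\last_X\upperstar(p)$.

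First I need to check that the target-functor $\Nel_X$ is well-defined as a functor $\simprfib(X) \to \simprfib(\Nel X)$ in the first place — namely, that $\Nel(p)$ is a right fibration when $p$ is. But this is immediate from the same cartesian square: $\Nel(p)$ is the pullback of the right fibration $p$, and right fibrations are stable under base change (being the right class of the comprehensive factorization system of Theorem~\ref{comprehensive}).

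Having this, the equivalence of functors is built objectwise by the cartesian square above, which identifies $\last_X\upperstar(p)$ with $\Nel_X(p)$ via the canonical comparison map, and naturality in $p$ comes from the naturality of the transformation $\last$: a morphism $p \to q$ in $\simprfib(X)$ induces a morphism of naturality squares, hence a morphism of the pullback cones that agrees with $\Nel$ applied to $p \to q$. I expect no real technical obstacle here; the only thing to be careful about is packaging Lemma~\ref{el-L} as an equivalence of $\infty$-functors rather than just an objectwise identification, which is standard given that both $\last\upperstar$ and $\Nel$ are functorial constructions and the comparison is the canonical pullback map that Lemma~\ref{el-L} asserts is an equivalence.
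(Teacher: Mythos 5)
Your proposal is correct and takes essentially the same route as the paper: the corollary is exactly the content of the cartesian square of Lemma~\ref{el-L}, which identifies $\Nel(p)$ (a right fibration, being a pullback of one) with $\last_X\upperstar(p)$ via the canonical comparison map, naturally in $p$.
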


\noindent 
Here, $\Nel_X$ takes a right fibration $p\colon Y \to X$ to $\Nel(p) \colon \Nel(Y) \to \Nel(X)$, following the convention that subscripts on functors indicate functors induced on slices (or subcategories of slices).

%%%%%%%%%%%%%%%%%%%%%%%%%%%%%%%%%%%%%%%%%%%%%%%%%%
\section{Edgewise subdivision and the natural transformation \texorpdfstring{$\lambda$}{λ}}
%%%%%%%%%%%%%%%%%%%%%%%%%%%%%%%%%%%%%%%%%%%%%%%%%%
\label{sec:Sd}

Consider the functor 
\begin{eqnarray*}
  Q  \colon  \simplexcategory & \longrightarrow & \simplexcategory \\
  {}[n] & \longmapsto & [n]\op\join [n] = [2n{+}1]  .
\end{eqnarray*}
With the following special notation (following Waldhausen~\cite{Waldhausen:AKTS})
for the elements 
of the ordinal
$[n]\op\join [n] = [2n{+}1]$,
  \begin{equation}\label{eqn.twisted_n}
  \begin{tikzcd}[cramped, row sep=small, column sep=small]
    0\ar[r]&1\ar[r]&\cdots\ar[r]&n\\
    0'\ar[u]&1'\ar[l]&\cdots\ar[l]&n' ,
    \ar[l]
  \end{tikzcd}
  \end{equation}
the functor $Q$ is 
described on arrows by sending a
coface map $d^i  \colon [n{-}1]\to [n]$ to the monotone injection that omits the 
elements $i$ and $i'$, and by sending a codegeneracy map $s^i  \colon [n] 
\to [n{-}1]$ to the monotone surjection that repeats both $i$ and $i'$. 

The next lemma follows from the definition.
\begin{lemma}\label{Q(top)=active}
  The functor $Q \colon \simplexcategory \to \simplexcategory$ sends 
  \toppreserving\ maps to active maps, giving this commutative square:
  \[
  \begin{tikzcd}
  \simplexcategory \ar[r, "Q"]  & \simplexcategory  \\
  \simplexcategory^t \ar[u] \ar[r, "Q"'] & 
  \simplexcategory_{\operatorname{act}}  . \ar[u]
  \end{tikzcd}
  \]
\end{lemma}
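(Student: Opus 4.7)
The plan is to verify directly that the image of a last-point-preserving map under $Q$ preserves both endpoints of the resulting ordinal. Using the notation fixed in \eqref{eqn.twisted_n}, the minimum of $[2n{+}1] = [n]\op\join[n]$ is $n'$ and the maximum is $n$, and similarly $m'$ and $m$ bound $[2m{+}1]$. The key observation is that $Q$ acts as $(-)\op \join (-)$ on morphisms as well as on objects, so on a map $f \colon [m] \to [n]$ it acts by $j' \mapsto f(j)'$ on the primed half and $j \mapsto f(j)$ on the unprimed half.

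With this setup the check is immediate: if $f$ is last-point-preserving then $f(m) = n$, hence $Q(f)(m') = f(m)' = n'$ and $Q(f)(m) = f(m) = n$, which are exactly the minimum and maximum of $[2n{+}1]$; therefore $Q(f)$ is active. Alternatively, one may verify this on the generators of $\simplexcategory^t$, namely the codegeneracies and the non-top cofaces: the formulas given in the paragraph defining $Q$ show that $Q(s^i)$ repeats $i$ and $i'$, hence is surjective and in particular active, while $Q(d^i)$ for $i < n$ omits $i$ and $i'$, neither of which is an endpoint of $[2n{+}1]$, so the endpoints are retained.

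For the displayed square itself, the vertical arrows are the evident inclusions of subcategories, and the two copies of $Q$ have the same formula on objects and morphisms, so commutativity is tautological once the containment $Q(\simplexcategory^t) \subseteq \simplexcategory_{\operatorname{act}}$ has been established. I do not expect any substantive obstacle; the only mildly fiddly point is keeping track of which end of the ordinal $[2n{+}1]$ corresponds to which endpoint, which is settled once and for all by the convention in \eqref{eqn.twisted_n}.
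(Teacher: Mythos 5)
Your proof is correct and takes exactly the route the paper intends: the paper gives no separate argument, saying only that the lemma ``follows from the definition,'' and your direct check that $Q(f)$ preserves the endpoints $m'\mapsto n'$, $m\mapsto n$ (or, equivalently, the verification on the generators $s^i$ and the non-top cofaces $d^i$, which matches the paper's description of $Q$ on arrows) supplies precisely those routine details.
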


\begin{blanko}[Edgewise subdivision functor]\label{sd}
We now define the \emph{edgewise subdivision} functor $\sd = Q\upperstar \colon \simpspaces \to \simpspaces$. 
  For $X\colon \simplexcategory\op\to\spaces$, the simplicial space $\sd(X)$ is given by precomposing with $Q\colon \simplexcategory \to 
  \simplexcategory$:
  $$
  \sd (X) \coloneqq Q\upperstar X = X \circ Q .
  $$
  At the level of right fibrations over 
  $\simplexcategory$, this is simply the pullback 
  \begin{equation}\label{eq:omega}
  \begin{tikzcd}
  Q\upperstar (\el X) \drpullback \ar[d] \ar[r, "\omega"] & \el(X) \ar[d]  \\
  \simplexcategory \ar[r, "Q"'] & \simplexcategory .
  \end{tikzcd}
  \end{equation}
  This means that we have the identification
  $$
  Q\upperstar (\el X) = \el( \sd X).
  $$
  The top horizontal map $\omega\colon \el( \sd X) \to \el(X)$ has been 
  named because it will be used in several proofs below. 
  On objects it is given 
  by sending an $n$-simplex $\Delta^n \to \sd(X)$ to the corresponding map 
  under adjunction $Q\lowershriek\Delta^n \to X$, which is a $(2n+1)$-simplex of $X$. 

  As usual, $Q\upperstar$ has both a left adjoint $Q\lowershriek$ (sending a representable $\Delta^n$ to $\Delta^{2n+1}$) and a right adjoint $Q\lowerstar$, which will play a key role in Section~\ref{sec main via rke}.
\end{blanko}

\begin{lemma}
  \label{culf<=>rfib}
  A simplicial map $f\colon Y \to X$ is culf if and only if $\sd(f)\colon 
  \sd(Y) \to \sd(X)$ is a right fibration.
\end{lemma}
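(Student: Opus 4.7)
The plan is to match the cartesian characterizations of the two conditions through the functor $Q$. Recall that $\sd(f) = Q\upperstar f$ is a right fibration iff, by \cref{lemma right fib char}, it is cartesian on each last-point inclusion $\ell \colon [0]\inertto[n]$; and $f$ is culf iff it is cartesian on every active map. Since $(\sd f)_n = f_{2n+1}$ and the action of $\ell\upperstar$ on $\sd f$ is the action of $Q(\ell)\upperstar$ on $f$, the square that witnesses cartesianness of $\sd(f)$ on $\ell$ \emph{is} the square that witnesses cartesianness of $f$ on $Q(\ell)$. So the lemma will follow once I check that the class of $Q$-images of last-point inclusions, and the class of active maps, generate the same cartesianness condition.

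For the implication $(\Rightarrow)$: if $f$ is culf, then $f$ is cartesian on every active map, and in particular on $Q(\ell)$ for any last-point-preserving $\ell$ by \cref{Q(top)=active}. Translating back, $\sd(f)$ is cartesian on every last-point inclusion $[0]\inertto[n]$, hence is a right fibration by \cref{lemma right fib char}.

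For the converse $(\Leftarrow)$: here I use \ref{remark gen set sigma}, which says that to check $f$ is culf it suffices to check that $f$ is cartesian on the active maps $\Delta^1\actto\Delta^{2n+1}$ of odd codomain, together with $\Delta^1\actto\Delta^0$ (handled by \cref{ulf=culf}). The key point is that the long-edge active map $[1]\actto[2n+1]$ is exactly $Q$ applied to the last-point inclusion $[0]\inertto[n]$: in the notation \eqref{eqn.twisted_n}, the latter factors as a chain of $d^0$'s and $Q$ sends each $d^0$ to the map omitting $0$ and $0'$, so $Q([0]\inertto[n])$ is the unique edge $[1]\to[2n+1]$ with endpoints the bottom $n'$ and the top $n$, i.e.\ the long edge. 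Thus if $\sd(f)$ is a right fibration, then $f$ is cartesian on every such long edge, hence on every generator of the ambifinal class described in \ref{remark gen set sigma}, and therefore culf.

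The main obstacle in carrying this out is keeping the bookkeeping clean: one must verify precisely that $Q$ of the last-point inclusions exhausts the generating set of active maps singled out in \ref{remark gen set sigma}, and that the cartesianness square for $\sd(f)$ at $\ell$ coincides with the cartesianness square for $f$ at $Q(\ell)$. Both are unwindings of definitions, and neither is subtle once one trusts \cref{Q(top)=active}, \cref{lemma right fib char}, and \ref{remark gen set sigma}; but the argument hinges entirely on the observation that the \emph{top-preserving} coface $[0]\inertto[n]$, rather than an arbitrary inert map, lands on the long-edge active map under $Q$, which is what makes the two sides line up.
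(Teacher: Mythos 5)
Your proof is correct and follows essentially the same route as the paper: the forward direction uses \cref{Q(top)=active} to see that $Q$ of a last-point inclusion is active, and the converse reduces to the odd active maps $\Delta^1\actto\Delta^{2n+1}$, with the even case handled by the retract trick and $\Delta^1\actto\Delta^0$ by \cref{ulf=culf}. The only cosmetic difference is that you phrase the translation via cartesian squares and cite \ref{remark gen set sigma} for the reduction, whereas the paper phrases it as an adjunction/lifting argument and repeats the retract argument inline.
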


\begin{proof}
  Suppose $f\colon Y\to X$ is culf. By \cref{lemma right fib char}, to check that $\sd(Y) \to \sd(X)$ is a right 
  fibration is to show that for every every last-vertex inclusion $\ell\colon [0]\to[n]$
  there is a unique lift for the diagram on the left, 
  \[
  \begin{tikzcd}[sep={45pt,between origins}] 
  \Delta^0 \ar[d, "\ell"'] \ar[r] & \sd(Y) \ar[d, "\sd(f)"]  \\
  \Delta^n \ar[r] \ar[ru, dotted]& \sd(X)
  \end{tikzcd}
  \qquad
  \begin{tikzcd}[sep={45pt,between origins}]
  \Delta^1 \ar[d, -act, "Q\lowershriek(\ell)"'] \ar[r] & Y \ar[d, "f"]  \\
  \Delta^{2n+1} \ar[r] \ar[ru, dotted] & X,
  \end{tikzcd}
  \] 
  or equivalently, by adjunction (\ref{factorization systems}), a unique lift for the diagram on the 
  right.
  But if $f$ is culf then there is such a unique lifting, because 
  $Q\lowershriek(\ell)$ 
  is active (by \cref{Q(top)=active}).

  Conversely, suppose $\sd(Y) \to \sd(X)$ is a right 
  fibration. To check that $Y\to X$ is culf, we should find a lift against
  any active map $\Delta^1 \actto \Delta^k$, and by \cref{ulf=culf} it is 
  enough to treat $k>0$. For odd values of $k$, this 
  is the same adjunction argument in reverse.
  For even $k>0$, it is enough to observe that every active map of the form 
  $\Delta^1 \actto\Delta^{2n}$ ($n>0$) is a retract of $\Delta^1 \actto\Delta^{2n+1}$, 
  like those appearing in the 
  adjunction argument, so if $f$ is right orthogonal to $\Delta^1 \actto\Delta^{2n+1}$
  it is also right orthogonal to $\Delta^1 \actto\Delta^{2n}$. 
\end{proof}
\begin{blanko}[Remark]
  In the special case of $1$-categories, this result is due to Bunge and 
  Niefield~\cite[Proposition~4.4]{Bunge-Niefield}.
\end{blanko}

We will need also the following corollary which generalizes 
\cref{Q(top)=active}.

\begin{cor}\label{final<=>ambifinal}
  If $f\colon B \to A$ is final, then
  $Q\lowershriek(f) \colon Q\lowershriek(B) \to Q\lowershriek(A)$ is 
  ambifinal.
\end{cor}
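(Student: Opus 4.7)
The plan is to reduce lifting against culf maps upstairs (after applying $Q\lowershriek$) to lifting against right fibrations downstairs (after applying $Q\upperstar$), using the adjunction $Q\lowershriek \isleftadjointto Q\upperstar$ together with \cref{culf<=>rfib}. Concretely, to show that $Q\lowershriek(f)$ is ambifinal, it suffices to show that for every culf map $p \colon Y \to X$, the space of lifts in any square
\[
\begin{tikzcd}
Q\lowershriek(B) \ar[r] \ar[d, "Q\lowershriek(f)"'] & Y \ar[d, "p"] \\
Q\lowershriek(A) \ar[r] \ar[ur, dotted] & X
\end{tikzcd}
\]
is contractible.

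By the $Q\lowershriek \isleftadjointto Q\upperstar$ adjunction, the space of such squares, together with the space of lifts, is equivalent to the corresponding data for
\[
\begin{tikzcd}
B \ar[r] \ar[d, "f"'] & Q\upperstar(Y) \ar[d, "Q\upperstar(p)\, =\, \sd(p)"] \\
A \ar[r] \ar[ur, dotted] & Q\upperstar(X) .
\end{tikzcd}
\]
Since $p$ is culf, \cref{culf<=>rfib} gives that $\sd(p) = Q\upperstar(p)$ is a right fibration. Since $f$ is final, it is by definition left orthogonal to $\sd(p)$, so the space of lifts in the second square is contractible. Transporting back across the adjunction, the same holds for the original square, which is exactly the ambifinality of $Q\lowershriek(f)$.

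The only point that needs care is justifying that the adjunction $Q\lowershriek \isleftadjointto Q\upperstar$ really induces an equivalence on \emph{spaces of lifts}, not merely on spaces of squares; but this is formal for any adjunction between $\infty$-categories, since the mapping space of a commutative square and its filler can be expressed as a pullback of mapping spaces, all of which are preserved by the hom-equivalence of the adjunction. Thus there is no serious obstacle: the corollary is essentially the statement that $Q\lowershriek$ sends the left class of the comprehensive factorization system into the left class of the ambifinal-culf factorization system, and this formal transport along an adjunction is the entire content of the argument, granted \cref{culf<=>rfib} as the key input.
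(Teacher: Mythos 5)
Your proposal is correct and is essentially the paper's own proof: the paper likewise transposes the lifting problem across the $Q\lowershriek \isleftadjointto Q\upperstar$ adjunction and invokes \cref{culf<=>rfib} to reduce to the finality of $f$ against the right fibration $Q\upperstar(p)$. The extra remark justifying that the adjunction identifies spaces of lifts is a reasonable elaboration of what the paper calls ``the adjunction argument,'' but adds nothing beyond the paper's route.
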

\begin{proof}
  We need to check that there is a unique lift in the diagram on the left 
  (for every culf map $p$),
  but by the adjunction argument (\ref{factorization systems}) this is equivalent to having a unique 
  lift in the diagram on the right:
  \[
  \begin{tikzcd}[sep={45pt,between origins}]
  Q\lowershriek (B) \ar[d, "Q\lowershriek(f)"'] \ar[r] &  Y \ar[d, "p"]  \\
  Q\lowershriek (A) \ar[r] \ar[ru, dotted]& X
  \end{tikzcd}
  \quad
  \Leftrightarrow
  \quad
  \begin{tikzcd}[sep={45pt,between origins}]
  B \ar[d, "f"'] \ar[r] & Q\upperstar (Y) \ar[d, "Q\upperstar(p)"]  \\
  A \ar[r] \ar[ru, dotted]& Q\upperstar(X) ,
  \end{tikzcd}
\]
  and this lift exists uniquely since $Q\upperstar(p)$ is a right fibration by 
  \cref{culf<=>rfib}.
\end{proof}

\begin{blanko}[The natural transformation $\lambda \colon \Nel \Rightarrow \sd$]\label{bl nt lambda}
  We construct a natural transformation
  $$
  \lambda \colon \Nel \Rightarrow \sd
  $$
  whose component on a simplicial space $X$ is given in simplicial 
  degree $0$ by sending $\Delta^n \to X$ (a $0$-simplex of $\Nel(X)$)
  to the long edge $\Delta^1 \actto \Delta^n \to X$ (considered as a 
  $0$-simplex of $\sd(X)$).
  
  The action of $\lambda$ on higher simplices is given, with 
  reference to the general combinatorial {\em middle-segments 
  construction} below, by sending 
  $$
  f \; \colon \
  \Delta^{n_0} \to \cdots\to \Delta^{n_k} \to X 
  \qquad\qquad \in (\Nel X)_k
  $$
  to
  $$
  Q\lowershriek \Delta^{k} \stackrel{\alpha_f}{\actto} \Delta^{n_k} \to X
    \qquad\qquad \in (\sd X)_k .
  $$
  The construction is similar to that of $\xi$ in Section~\ref{sec:lastvertex}, and it relies on some of the same lemmas established there.

\end{blanko}

\begin{blanko}[Remark]
  In the $1$-category case, the natural transformation $\lambda$ goes back to Thomason's notebooks
  \cite[p.152]{Thomason:notebook85}; it was exploited by
  G\'alvez--Neumann--Tonks~\cite{Galvez-Neumann-Tonks:JPAA2013} to exhibit
  Baues--Wirsching cohomology as a special case of Gabriel--Zisman
  cohomology. The case of general simplicial sets is from Kock--Spivak~\cite{Kock-Spivak:1807.06000}, whose treatment we upgrade to the case of general simplicial spaces.
\end{blanko}

\begin{blanko}[Middle-segments construction]\label{middle-segment}
  [Cf.~\cite[Lemma 3.3]{Kock-Spivak:1807.06000}).] This is a two-sided variant of 
the lower-segments construction \ref{lower-segment}: For
any $k$, let $f \in (\nerve\simplexcategory)_k$ denote a sequence
of maps
$[n_0]\stackrel{f_1}\to[n_1]\stackrel{f_2}\to\cdots\stackrel{f_k}\to[n_k]$
in $\simplexcategory$. Then there is a unique commutative diagram of the form
\begin{equation}
\begin{tikzcd}[column sep=large]
  Q[0]\ar[r, "Q\left(d^\top\right)"]
  \ar[d, ->|]&
%   \ar[d, ->|, "\alpha(n_0)"']&
  Q[1]\ar[r, "Q\left(d^\top\right)"]
  \ar[d, ->|]&
%   \ar[d, ->|, "\alpha(f_1)"']&
  \cdots\ar[r, "Q\left(d^\top\right)"]&
  Q[k]\ar[d, ->|, "\alpha_f"']\\
  {[n_0]}\ar[r, "f_1"']&
  {[n_1]}\ar[r, "f_2"']&
  \cdots\ar[r, "f_k"']&
  {[n_k]} ,
\end{tikzcd}
\end{equation}
i.e.~for which all the vertical maps are active and all the maps in the 
top row are
of the form $Q(d^\top)$.

 Indeed, building the diagram from the left to 
  the right, in each step it remains to define the
  next vertical map on the first and the last vertex, and here the value is determined by
  the requirement that it be active.

The resulting map $\alpha_f$ can be described explicitly as 
\begin{eqnarray*}
  {}Q[k] & \longrightarrow & [n_k]  \\
  i & \longmapsto & f_{ik}(n_i) \\
%   f(\top_i)  \\
    i' & \longmapsto & 
%   f(\bot_i)
  f_{ik}(0) .
\end{eqnarray*}
Here, as in \ref{lower-segment}, we are writing $f_{ij} \colon [n_i] \to [n_j]$
for $f(i\to j)$.
\end{blanko}

\begin{lemma}
The maps $\alpha_f$ define a natural transformation of functors \[ \alpha\colon Q\circ \newlen \Rightarrow \newlast_{\simplexcategory} \colon \el \nerve(\simplexcategory) \to \simplexcategory.\]
\end{lemma}

\begin{proof}
The proof is a minor variation on that of \cref{lem beta natural}.
As in that proof, we consider two objects $f\colon [k] \to 
\simplexcategory$ and $g\colon [\ell] \to \simplexcategory$ of $\el (\nerve\simplexcategory)$ along with a map $\gamma \colon g \to f$.
We wish to show that the square
\[
\begin{tikzcd}
{Q[\ell]} \rar{Q\gamma} \dar[swap]{\alpha_g} & {Q[k]} \dar{\alpha_f} \\
{[m_\ell]} \rar[swap]{f_{\gamma(\ell)k}} & {[n_k]}
\end{tikzcd}
\]
commutes.
Recall that $Q[k]$ has two types of elements: $t$ and $t'$.
For $0\leq t \leq \ell$ we have
\begin{align*}
f_{\gamma(\ell)k}(\alpha_g(t')) &= f_{\gamma(\ell)k}(g_{t\ell}(0)) \\
&= f_{\gamma(\ell)k}(f_{\gamma(t)\gamma(\ell)}(0)) \\
&= f_{\gamma(t)\ell}(0) = \alpha_f(\gamma(t)') = \alpha_f((Q\gamma)(t')).
\end{align*}
As $\alpha_f(t) = \beta_f(t)$ we have
$f_{\gamma(\ell)k}(\alpha_g(t)) = f_{\gamma(\ell)k}(\beta_g(t)) = \beta_f(\gamma(t)) = \alpha_f((Q\gamma)(t))$ by \cref{lem beta natural}.
Hence the square commutes and $\alpha$ is a natural transformation.
\end{proof}

\begin{blanko}[Construction of $\widetilde \lambda$] \label{construction lambdatilde}
  Let $X$ be a simplicial space, with unstraightening $p\colon 
  \el X \to \simplexcategory$.
As in \ref{bl construction last vertex} and \ref{lem nt lifting},
we can lift $\alpha$ to a natural transformation 
\[
\begin{tikzcd}[sep=large]
\el (\Nel X) \rar[bend left] \rar[bend right,"\newlast_{\el X}"'] \rar[phantom,"\Downarrow \alpha_X"] & \el (X)
\end{tikzcd}
\]
by taking the diagonal lift in the square
\[ \begin{tikzcd}[column sep = {10em,between origins}, 
row sep = {5em,between origins}]
\{1\} \times \el (\Nel X) \ar[rr, "\newlast_{\el X}"] \dar & & \el (X) \dar{p} \\
{[1]} \times \el (\Nel X) \ar[urr, dashed, "\alpha_X" description] 
\rar[swap]{\id \times \el \nerve (p)} &  {[1]} \times \el (\nerve \simplexcategory) \rar[swap]{\alpha} & \simplexcategory .
\end{tikzcd} \]
We write $\mu_X \colon \el (\Nel X) \to \el (X)$ for the domain of the natural 
transformation $\alpha_X$. It thus fits into the outer commutative square 
in the diagram
\begin{equation}\label{eq:lambdatilde}
\begin{tikzcd}
\el (\Nel X) \ar[drr, bend left, "\mu_X"] \ar[dd, "\el\nerve(p)"'] \drar[dotted,"\tilde \lambda_X"]
\\
& \el (\sd X) \rar{\omega_X} \dar[swap]{q} \drar[phantom, "\lrcorner" very near start] & \el (X) \dar{p}  \\
\el(\nerve\simplexcategory) \ar[r, "\newlen"'] & \simplexcategory 
\rar[swap]{Q} & \simplexcategory .
\end{tikzcd}
\end{equation}
Next, we use the universal property of the pullback square in the diagram
to obtain a map
\[
\tilde \lambda_X \colon \el (\Nel X) \to \el (\sd X).
\]
Notice that $\tilde \lambda_X$ is automatically a right fibration since the
other maps in the left trapezoid of \eqref{eq:lambdatilde} are right
fibrations.
\end{blanko}

\begin{lemma}
The maps $\tilde \lambda_X$ assemble into a natural transformation
\[ \tilde\lambda \colon \el \Nel  \Rightarrow \el \sd \colon \simpspaces \to \RFib(\simplexcategory)\subseteq (\catinf)_{/\simplexcategory}. \]
Consequently, we also have a natural transformation $\lambda \colon \Nel \Rightarrow \sd$.
\end{lemma}

\begin{proof}
A minor modification of the proof of \cref{lem xi natural} (simply replacing $\beta$ by $\alpha$) shows that $\mu \colon \el\Nel  \Rightarrow \el$ is a natural transformation.
Now suppose $Y\to X$ is a simplicial map, and consider the diagram
\[
\begin{tikzcd}
\el (\Nel Y) \rar{\tilde \lambda_Y} \dar 
& \el (\sd Y) \rar{\omega_Y} \rar \dar & \el (Y) \dar \\
\el (\Nel X) \rar{\tilde \lambda_X} \dar & \el (\sd X) \rar{\omega_X} \rar \dar \drpullback & \el (X) \dar  \\
\el(\nerve\simplexcategory) \ar[r, "\newlen"'] & 
\simplexcategory \rar[swap]{Q} & \simplexcategory .
\end{tikzcd}
\]
We want to show that the upper left square commutes.
Everything else in this diagram commutes (using that $\mu$ is a natural transformation for the top rectangle); since the bottom square is a pullback, the two maps $\el(\Nel Y) \to \el (\sd X)$ are equivalent.
\end{proof}

\begin{blanko}[Remark]\label{bl lambda is as we say}
We've formally constructed the natural transformation $\lambda \colon \Nel \Rightarrow \sd$, but we should check that it actually behaves as we have described in \ref{bl nt lambda}. 
To this end, suppose we have $f\in (\Nel X)_k$, regarded as an object $f 
\colon [k] \to \Nel (X)$ of $\el (\Nel X)$.
Write $p(f) \in \el (\nerve \simplexcategory)$ as $[n_0] \to [n_1] \to \cdots \to [n_k]$.
Then $\mu_X$ takes $f$ to the object
\[
\begin{tikzcd}
{Q[k]} \ar[rr,"\mu_X(f)"] \drar["\alpha_{p(f)}"'] & & X \\
& {[n_k]} \ar[ur,"f(k)"']
\end{tikzcd}
\]
of $\el (X)$. This is the expected thing, except that we should not be regarding it 
as a $(2k+1)$-simplex of $X$, but rather as a $k$-simplex of $\sd (X)$. This is
exactly what the pullback of \ref{sd} accomplishes, so $\tilde \lambda_X(f)$ is
indeed what was specified in \ref{bl nt lambda}.
\end{blanko}

\begin{lemma}\label{activeequiv}
  For any simplicial space $X$, 
  the simplicial map $\lambda_X\colon \Nel \Rightarrow \sd$
  sends 1-simplices in $\Nel(X)$ lying over active maps in $\simplexcategory$
  to degenerate $1$-simplices of $\sd(X)$.
\end{lemma}
\begin{proof}
Let $f = \left( \Delta^{n_0}\xrightarrow{f_1}\Delta^{n_1}\to X\right)$ be a 1-simplex in $\Nel(X)$ with $f_1$ active.
Then $\lambda_X(f)$ is defined as in the triangle in the below diagram.
To prove the lemma, it is enough to show that $\alpha_{p(f)}$ factors through 
$Q(s^0)$ as in the left-hand square of the diagram
\[
\begin{tikzcd}
Q_!\Delta^0 \dar[-act] & Q_! \Delta^1 \lar["Q(s^0)"'] \dar[-act,"\alpha_{p(f)}"'] \drar["\lambda_X(f)"] \\
\Delta^{n_0} \rar[-act,"f_1"'] & \Delta^{n_1} \rar & X .
\end{tikzcd}
\]
Since $f_1$ is active, we have $\alpha_{p(f)}(0') = f_1(0) = 0$ and $\alpha_{p(f)}(0) = f_1(n_0) = n_1$.
Since $\alpha_{p(f)}$ is order preserving, and $1' \leq 0' \leq 0 \leq 1$, we then have $\alpha_{p(f)}(1') = 0$ and $\alpha_{p(f)}(1) = n_1$.
Thus $\alpha_{p(f)}$ factors as indicated in the left square above.
\end{proof}

\begin{blanko}[Remark]\label{rmk:len}
  Suppose $f, g\colon X \to \nerve \simplexcategory$ are any two maps. Then the
  square below left commutes, hence so does the square below right.
  \[\begin{tikzcd}
  X \rar{f} \dar[swap]{g} & \nerve \simplexcategory \dar  & & \el (X) \rar{\el(f)} 
  \dar[swap]{\el(g)} & \el (\nerve \simplexcategory) \dar{\newlen} \\
  \nerve \simplexcategory \rar & \ast &  & \el (\nerve \simplexcategory) \rar[swap]{\newlen} & \simplexcategory.
  \end{tikzcd}\]
\end{blanko}

The following formalizes \cite[3.7]{Kock-Spivak:1807.06000}.
\begin{lemma}\label{lem:QB=AQ}
  The natural transformations $\beta$ and $\alpha$ are related 
  as follows:
\[
\begin{tikzcd}[sep={15mm,between origins}]
\el (\nerve \simplexcategory) 
\ar[rr, bend left, pos=0.45, "\newlen"]
\ar[rr, phantom, "\Downarrow \beta", start anchor=center, end anchor=center]
\ar[rr, bend right, pos=0.45, "\newlast_{\simplexcategory}"']
& & \simplexcategory \ar[rr, "Q"] && \simplexcategory
\end{tikzcd}
\]
is equivalent to
\[
\begin{tikzcd}[sep={15mm,between origins}]
\el (\nerve \simplexcategory) \ar[rr, "\el\nerve(Q)"'] &&
\el (\nerve \simplexcategory)  
\ar[rr, bend left, "Q\circ \newlen"]
\ar[rr, phantom, "\Downarrow\alpha"]
\ar[rr, bend right, "\newlast_{\simplexcategory}"' ]
& & \simplexcategory  .
\end{tikzcd}
\]
\end{lemma}

\begin{proof}
  Note first that the domains of these natural transformation agree:
  $$
  Q \circ \newlen \circ \el(\nerve Q) 
  = Q \circ \newlen \circ \el(\nerve \id) = Q \circ \newlen .
  $$
  Here the first equation uses Remark~\ref{rmk:len}.
  The codomains of the natural transformations agree by
  naturality of $\newlast$.
  We now check the whiskerings by a direct computation.
  Let $f\colon [k] \to \simplexcategory$ be
  $[n_0]\stackrel{f_1}\to[n_1]\stackrel{f_2}\to\cdots\stackrel{f_k}\to[n_k]$
  and write $g = Q(f)$ for
\[\begin{tikzcd}
{Q[n_0]} \rar{Q(f_1)} & {Q[n_1]} \rar{Q(f_2)} & \cdots \rar{Q(f_k)} & {Q[n_k]} ,
\end{tikzcd}\]
so that $g_{ij}(t) = f_{ij}(t)$ and $g_{ij}(t') = f_{ij}(t)'$.
We'll use our preferred names $n_t$ and $n_t'$ for the maximal and minimal elements of $Q[n_t] = [n_t]\op\join [n_t]$ (in place of $2n_t+1$ and $0$) when referring to the definition of $\alpha$ from \ref{middle-segment}.
We then have
\[
\begin{gathered}
\alpha_g(t) = g_{tk} (\max(Q[n_t])) = g_{tk}(n_t) = f_{tk}(n_t) = \beta_f(t) = Q(\beta_f)(t)
\\
\alpha_g(t') = g_{tk} (\min(Q[n_t])) = g_{tk}(n_t') = f_{tk}(n_t)' = \beta_f(t)' = Q(\beta_f)(t'). 
\end{gathered}
\]
Thus $\alpha_{Q(f)} = Q(\beta_f)$.
\end{proof}

Lifting this equation to general simplicial spaces, we get the following.
\begin{lemma}\label{lem:xi-tilde and mu}
    For any simplicial space $X$, we have the commutative diagram
    \[
    \begin{tikzcd}
\el(\Nel\sd X) \ar[d, "\el\nerve(\omega_X)"'] 
\ar[r, "\widetilde\xi_{\sd X}"] & \el(\sd X) \ar[d, "\omega_X"]  \\
\el(\Nel X) \ar[r, "\mu_X"'] & \el (X)    .
\end{tikzcd}
    \]
\end{lemma}

\begin{proof}
    Both composites are the domain of the lift of the (two versions of the) 
	natural transformation of Lemma~\ref{lem:QB=AQ} along 
	the right fibration $\el(X)\to\simplexcategory$, cf.~Lemma~\ref{lem nt lifting}. In more detail,
\begin{equation}\label{eq:liftwhiskbeta}
\begin{tikzcd}[sep={15mm,between origins}]
\el(\Nel\sd X)
\ar[rr, bend left, "\widetilde\xi_{\sd X}"]
\ar[rr, phantom, "\Downarrow \beta_X", start anchor=center, end anchor=center]
\ar[rr, bend right, "\newlast_{\sd X}"']
& & \el(\sd X) \ar[rr, "\omega_X"] && \el (X)
\end{tikzcd}
\end{equation}
is the lift of
\[
\begin{tikzcd}[sep={15mm,between origins}]
\el (\nerve \simplexcategory )
\ar[rr, bend left, "\newlen" pos=45/100]
\ar[rr, phantom, "\Downarrow \beta", start anchor=center, end anchor=center]
\ar[rr, bend right, "\newlast_{\simplexcategory}"' pos=45/100]
& & \simplexcategory \ar[rr, "Q"] && \simplexcategory  ,
\end{tikzcd}
\]
and on the other hand
\begin{equation}\label{eq:liftwhiskalpha}
\begin{tikzcd}[sep={15mm,between origins}]
\el(\Nel\sd X) \ar[rr, "\el\nerve(\omega)"'] &&
\el (\Nel X ) 
\ar[rr, bend left, "\mu_X"]
\ar[rr, phantom, "\Downarrow \alpha_X"]
\ar[rr, bend right, "\newlast_{X}"']
& & \el (X)  
\end{tikzcd}
\end{equation}
is the lift of
\[
\begin{tikzcd}[sep={15mm,between origins}]
\el(\nerve \simplexcategory) \ar[rr, "\el\nerve(Q)"'] &&
\el (\nerve \simplexcategory ) 
\ar[rr, bend left, "Q\circ \newlen"]
\ar[rr, phantom, "\Downarrow\alpha"]
\ar[rr, bend right, "\newlast_{\simplexcategory}"']
& & \simplexcategory  .
\end{tikzcd}
\]
Therefore these two natural transformations \eqref{eq:liftwhiskbeta} and \eqref{eq:liftwhiskalpha} agree, and in particular their domains agree, which is the assertion of the lemma.
\end{proof}

The following is the analog of \cite[Lemma 3.8]{Kock-Spivak:1807.06000} for simplicial spaces.

\begin{lemma}\label{lambda-omega}
  There is a natural commutative diagram of simplicial spaces
  \[
  \begin{tikzcd}[column sep={70pt,between origins}]
   &&& \sd(X)  \\
  \Nel(X) \ar[rrru, bend left=12, "\lambda_X" ] &&
  Q\upperstar (\Nel X) 
  \ar[ll, "\nerve(\omega_X)"] \ar[r, phantom, "\simeq" description]
  & \Nel(\sd X) . \ar[u, "\last_{\sd X }"']
  \end{tikzcd}
  \]  
\end{lemma}
\begin{proof}
We establish the corresponding triangle at the level of elements:
  \[
  \begin{tikzcd}[column sep={90pt,between origins}]
   & \el(\sd X)  \\
  \el(\Nel X) \ar[ru, bend left=12, "\widetilde\lambda_X" ] & 
  \ar[l, "\el\nerve(\omega_X)"] \el(\Nel\sd X) .
  \ar[u, "\widetilde\last_{\sd X}"']
  \end{tikzcd}
  \]  
  Since this is the equation of two maps with codomain $\el (\sd X)$, we can
  exploit the pullback characterization of the latter:
  \[
  \begin{tikzcd}
  \el (\sd X) \drpullback \ar[d, "q"'] \ar[r, "\omega_X"] & \el(X) \ar[d, "p"]  \\
  \simplexcategory \ar[r, "Q"'] & \simplexcategory .
  \end{tikzcd}
  \]
  It is thus
  enough to show that the two maps become equal after postcomposition with
  $q$ and that they become equal after postcomposition with $\omega_X$.
  
  For postcomposition with $q$ we compute
  \begin{eqnarray*}
    q \circ \widetilde \lambda_X \circ \el \nerve(\omega_X) & \stackrel{\eqref{eq:lambdatilde}} = 
	 & \newlen \circ \el \nerve (p) \circ 
	 \el\nerve(\omega_X) \\
	 & \stackrel{\eqref{eq:omega}} = & \newlen \circ \el \nerve (Q) \circ 
	 \el \nerve(q) \\
	 &  = & \newlen \circ \el \nerve (Q\circ q) \\
	 & \stackrel{\text{\ref{rmk:len}}} = & \newlen \circ \el \nerve (q) \\
	 & \stackrel{\ref{bl construction last vertex}}= & q \circ 
	 \widetilde\xi_{\sd X} .
  \end{eqnarray*}
  
  For postcomposition with $\omega_X$, we compute
    \begin{eqnarray*}
    \omega_X \circ \widetilde \lambda_X \circ \el \nerve(\omega_X) & 
	\stackrel{\eqref{eq:lambdatilde}}= & \mu_X \circ \el\nerve(\omega_X) 
	\\
	& \stackrel{\ref{lem:xi-tilde and mu}}=  & \omega_X \circ 
	 \widetilde\xi_{\sd X} .
    \end{eqnarray*}
\end{proof}

The following cartesian property was established for discrete decomposition
spaces in \cite[Proposition~3.9]{Kock-Spivak:1807.06000}.

\begin{lemma}\label{lemma:lambdaculf}
  The natural transformation $\lambda\colon \Nel \Rightarrow \sd$ is 
  cartesian on culf maps. In other words, for every culf map $F\colon 
  Y \to X$ of simplicial spaces, the naturality square
  \[
  \begin{tikzcd}
  \Nel(Y) \ar[r, "\lambda_Y"] \ar[d, "\Nel(F)"'] & \sd(Y) \ar[d, 
  "\sd(F)"]  \\
  \Nel(X) \ar[r, "\lambda_X"'] & \sd(X)
  \end{tikzcd}
  \]
  is a pullback.
\end{lemma}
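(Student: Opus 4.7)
The plan is to mirror the strategy of the proof of \cref{el-L}, replacing the lower-segments construction by the middle-segments construction~\ref{middle-segment}. I would check that the square is a pullback in each simplicial degree $k$ separately. In degree $k$ we have
\[
\Nel(Y)_k \;=\; \sum_{f} Y_{n_k}, \qquad \sd(Y)_k \;=\; Y_{2k+1},
\]
where the sum runs over chains $f = ([n_0]\to\cdots\to[n_k])$ in $\nerve\simplexcategory$, and the middle-segments construction produces a distinguished active map $\alpha(f)\colon [2k{+}1] = Q[k] \actto [n_k]$. By the description of $\lambda$ in terms of $\alpha(f)$, the $k$-th component of $\lambda_Y$ restricted to the $f$-summand is the map $\alpha(f)\upperstar \colon Y_{n_k} \to Y_{2k+1}$, and analogously for $\lambda_X$. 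Hence the square in degree $k$ takes the form
\[
\begin{tikzcd}
\sum_f Y_{n_k} \ar[r, "\alpha(f)\upperstar"] \ar[d, "\sum_f F_{n_k}"'] & Y_{2k+1} \ar[d, "F_{2k+1}"] \\
\sum_f X_{n_k} \ar[r, "\alpha(f)\upperstar"'] & X_{2k+1} .
\end{tikzcd}
\]

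Next I would observe that the left vertical map preserves the indexing by $f$, because $\Nel(F)$ is a morphism of right fibrations over $\simplexcategory$ (equivalently, it lives over $\nerve\simplexcategory$). Therefore checking the pullback property reduces to checking it summand by summand: for each fixed chain $f$ one must show
\[
\begin{tikzcd}
Y_{n_k} \ar[r, "\alpha(f)\upperstar"] \ar[d] & Y_{2k+1} \ar[d, "F_{2k+1}"] \\
X_{n_k} \ar[r, "\alpha(f)\upperstar"'] & X_{2k+1}
\end{tikzcd}
\]
is a pullback square.

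This final step is immediate from the hypothesis that $F$ is culf together with the key property of the middle-segments construction: by definition $\alpha(f)$ is an active map, and culf maps are cartesian on active maps (\ref{culf}). I do not expect genuine obstacles here; the only thing that deserves care is the bookkeeping identifying $\lambda$ componentwise in terms of $\alpha(f)\upperstar$ and the fact that $\Nel(F)$ respects the $f$-summands, both of which are already implicit in the construction of $\lambda$ preceding this lemma. The proof is thus a direct analogue of \cref{el-L}, with the \toppreserving\ map $\beta(f)$ replaced by the active map $\alpha(f)$, and the right-fibration hypothesis replaced by the culf hypothesis.
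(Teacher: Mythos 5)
Your proposal is correct and coincides with the paper's own argument: both proofs check the square degreewise, identify $\lambda$ on the $f$-summand of $\Nel(Y)_k = \sum_f Y_{n_k}$ as $\alpha(f)\upperstar$ via the middle-segments construction, reduce to each summand using that $\Nel(F)$ lives over $\nerve\simplexcategory$, and conclude from $\alpha(f)$ being active and $F$ being culf. No gaps; this is precisely the intended analogue of \cref{el-L}.
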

  
\begin{proof}
  This proof follows the same idea as that of Lemma~\ref{el-L}, but using
  the middle-segments construction \ref{middle-segment} instead of the
  lower-segments construction \ref{lower-segment}. We check it in each
  simplicial degree separately. In simplicial degree $k$ we have
  $$
  \Nel(Y)_k = \sum_{[n_0]\to\cdots\to [n_k]} Y_{n_k} .
  $$
  If the chain of maps $[n_0]\to\cdots\to [n_k]$ is called $f$, then the
  middle-segments construction gives us an active map $\alpha_f \colon
  [2k{+}1] \actto [n_k]$, and $\lambda_Y$ is given in the $f$-summand by
  $$
 Y_{n_k} \stackrel{\alpha_f\upperstar} \longrightarrow Y_{2k+1} .
  $$
  Altogether, the square we want to show is a pullback (in degree $k$)
  is identified 
  with
  \[
  \begin{tikzcd}
  \sum_f Y_{n_k} \ar[r, "\alpha_f\upperstar"] \ar[d] & Y_{2k+1} \ar[d, "F_{2k+1}"]  \\
  \sum_f X_{n_k} \ar[r, "\alpha_f\upperstar"'] & X_{2k+1}
  \end{tikzcd}
  \]
 where the horizontal maps on each summand depend on $f\in (\nerve \simplexcategory)_k$.
  The left vertical map respects $f$ (since $\Nel(F)$ is a morphism of 
  right fibrations over $\nerve\simplexcategory$).
  Therefore, since pullbacks commute with sums, the
  pullback property  can be established separately for each summand. 
  For a fixed chain $f$, the square is thus
  \[
  \begin{tikzcd}
  Y_{n_k} \ar[r, "\alpha_f\upperstar"] \ar[d, "F_{n_k}"'] & Y_{2k+1} \ar[d, "F_{2k+1}"]  \\
  X_{n_k} \ar[r, "\alpha_f\upperstar"'] & X_{2k+1} ,
  \end{tikzcd}
  \]
  and this square is a pullback since $\alpha_f$ is active and $F$ is culf.
\end{proof}

%%%%%%%%%%%%%%%%%%%%%%%%%%%%%%%%%%%%%%%%%%%%%%%%%%
\section{Culfy and righteous maps}
%%%%%%%%%%%%%%%%%%%%%%%%%%%%%%%%%%%%%%%%%%%%%%%%%%
\label{sec:culfy}

For the proof of \cref{untwisting theorem} below, it is helpful to recast the notions of culf maps and right fibrations from the $\infty$-category of simplicial spaces $\PrSh(\simplexcategory)$ to the equivalent $\infty$-category $\RFib(\simplexcategory)$ of right fibrations over $\simplexcategory$.

\begin{blanko}[Categories of elements]
Recall that for a presheaf $X\colon \CC\op\to\spaces$, the category of elements $\el(X) \coloneqq \CC \comma X$
is the $\infty$-category which 
has as objects and arrows, respectively, diagrams of the form
$$
\begin{tikzcd}
m \ar[d, "\tau"]  \\
X 
\end{tikzcd}
\qquad\qquad
\begin{tikzcd}[column sep={2.5em,between origins}]
m' \ar[rd, "\tau'"'] \ar[rr, "\alpha"] && m \ar[ld, "\tau"]  \\
& X   &
\end{tikzcd}
$$
with $m$ and $m'$ objects in $\CC$ (that is, representables).
(We shall be interested
in cases when $\CC$ is a subcategory of $\simplexcategory$, so we use letters such as $n$ to
denote the objects in $\CC$.)
\end{blanko}

\begin{lemma}
  A natural transformation 
$$
\begin{tikzcd}[column sep={5em,between origins}]
  \CC\op \ar[r, bend left, pos=0.45, "Y"] \ar[r, bend right, pos=0.45, "X"'] \ar[r, 
phantom, "\text{\scriptsize $\Downarrow f$}"]   & \spaces
\end{tikzcd}
$$
is cartesian if and only if the corresponding right fibration
$$
\el(Y) \stackrel{p}\longrightarrow \el(X)
$$
is also a left fibration.
  
\end{lemma}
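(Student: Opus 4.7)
The plan is to reduce the biconditional to the fact that, for a right fibration $p\colon E \to B$ of $\infty$-categories, being additionally a left fibration is equivalent to $p$ having all its fiber-transport maps be equivalences. Once we unpack what the fibers and transport maps of $p = \el(f)$ look like in terms of the natural transformation $f$, we will see that this condition matches the cartesian condition on $f$ square by square.

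First I would recall (or cite Lurie) the standard characterization: a right fibration $p\colon E \to B$ is a left fibration if and only if for every morphism $\beta\colon b'\to b$ in $B$, the induced functor on fibers $p^{-1}(b) \to p^{-1}(b')$ (obtained by transporting the unique cartesian lift of $\beta$) is an equivalence of spaces. Equivalently, the straightening $\widetilde{p}\colon B\op \to \spaces$ inverts all morphisms of $B$. Applying this to our situation, $p = \el(f)\colon \el(Y)\to\el(X)$ is a right fibration whose straightening sends $(m,\sigma)\in\el(X)$ to the fiber of $f_m\colon Y(m)\to X(m)$ over $\sigma$. Every arrow in $\el(X)$ has the form $(\alpha,\sigma)\colon (m',\alpha\upperstar\sigma)\to (m,\sigma)$ for some $\alpha\colon m'\to m$ in $\CC$ and $\sigma\in X(m)$, and transport along it is induced by $\alpha\upperstar\colon Y(m)\to Y(m')$.

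Second, I would observe that for each $\alpha\colon m'\to m$, requiring the transport map on fibers to be an equivalence for \emph{every} $\sigma\in X(m)$ is exactly the statement that the naturality square
\[
\begin{tikzcd}
Y(m) \ar[r, "\alpha\upperstar"] \ar[d, "f_m"'] & Y(m') \ar[d, "f_{m'}"] \\
X(m) \ar[r, "\alpha\upperstar"'] & X(m')
\end{tikzcd}
\]
is a pullback of spaces, since a square of $\infty$-groupoids is a pullback if and only if the map it induces on the fiber over each point of the bottom-right corner is an equivalence. Ranging over all $\alpha$ therefore identifies ``$p$ is also a left fibration'' with ``every naturality square of $f$ is cartesian'', which is the definition of $f$ being a cartesian natural transformation.

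The main subtlety, such as it is, lies in the identification of the straightening of $p = \el(f)$ with the pointwise-fiber presheaf $(m,\sigma)\mapsto \mathrm{fib}_\sigma(f_m)$ and of its functoriality with the naturality data of $f$; this is standard and follows from the compatibility of the straightening--unstraightening equivalence with pullbacks, together with the elementary description of $\el(Y)\to\el(X)$ as fiberwise given by $f_m$ on representables. With this identification in hand, the argument is purely a reformulation, with no computation beyond what is sketched above.
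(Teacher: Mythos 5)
Your proof is correct, but it takes a genuinely different route from the paper's. The paper never invokes straightening: it unwinds the definition of a left fibration for $p\colon \el(Y)\to\el(X)$ directly as a unique-lifting condition for arrows (citing Riehl--Verity), identifies the data of such a lifting problem with a map $\alpha\colon m\to n$ together with $\tau\colon m\to Y$ and $\sigma\colon n\to X$, and observes that contractibility of the space of lifts $\gamma\colon n\to Y$ is literally the assertion that the square of mapping spaces $\Map(n,Y)\to\Map(m,Y)$ over $\Map(n,X)\to\Map(m,X)$ is a pullback --- which is exactly the statement that the naturality square of $f$ at $\alpha$ is cartesian. You instead route through the straightening of $\el(f)$, identified with the fiberwise presheaf $(m,\sigma)\mapsto\operatorname{fib}_\sigma(f_m)$, together with the standard fact that a right fibration is additionally a left fibration precisely when all its fiber-transport maps are equivalences (equivalently, when its straightening inverts every morphism), and you conclude by the fiberwise criterion for pullbacks of $\infty$-groupoids. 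Both arguments are sound; the paper's is more self-contained, needing only the lifting characterization of left fibrations and no straightening, whereas yours is shorter at the price of importing the identification of the straightening of $\el(f)$ and its functoriality, which you rightly flag as the only substantive point. One small slip: in your pullback criterion the fibers are taken over points of $X(m)$, the bottom-\emph{left} corner of your naturality square, not the bottom-right; this is harmless, since the sentence before it correctly quantifies over $\sigma\in X(m)$, but the criterion as stated with the bottom-right corner does not even parse.
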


\begin{proof}
To say that $p \colon \el (Y) \to \el(X)$ is a left fibration means that given
an object $\tau \in \el(Y)$ and an arrow $\theta\colon p\tau \to \sigma$,
there is a ``unique'' lift $\tau \to \gamma$.
See for example Riehl--Verity~\cite[Proposition~5.5.6]{RiehlVerity:EICT}.
Uniqueness means that there is a contractible space of such lifts, as we now detail. 
The arrow $\theta\colon p\tau \to \sigma$ 
amounts to the solid square in the diagram
\[
\begin{tikzcd}[sep={5em,between origins}]
m \ar[d, "\tau"'] \ar[r, "\alpha"] 
& n \ar[d, "\sigma"] \ar[ld, dotted, "\gamma"] \\
Y \ar[r, "f"'] & X.
\end{tikzcd}
\]
The space of lifts of $\theta$ is the space of fillers $\gamma \colon n \to Y$ in this square.
Contractibility of the space of fillers for each such $\theta$ (with $\alpha$ fixed) means 
that the following diagram of spaces is a pullback:
\[
\begin{tikzcd}[column sep={10em,between origins}, row sep={4em,between origins}]
\Map(n,Y) \ar[d, "\text{post }f"'] \ar[r, "\text{pre }\alpha"] 
\drpullback & \Map(m,Y) \ar[d, "\text{post }f"]  \\
\Map(n,X) \ar[r, "\text{pre }\alpha"'] & \Map(m,X). 
\end{tikzcd}
\]
But this square being a pullback for every $\alpha$ precisely means that $f\colon Y \Rightarrow X$ is cartesian.
\end{proof}

\begin{cor}\label{cor:culfy}
A simplicial map 
$$
\begin{tikzcd}[column sep={5em,between origins}]
  \simplexcategory\op \ar[r, bend left, pos=0.43, "Y"] \ar[r, bend right, pos=0.45, "X"'] \ar[r, 
phantom, "\text{\scriptsize $\Downarrow f$}"]  & \spaces
\end{tikzcd}
$$
is culf if and only if the corresponding right fibration $p$
$$
\el(Y) \stackrel{p}\longrightarrow \el(X) \longrightarrow \simplexcategory
$$
becomes a left fibration after restriction to $\simplexcategory_{\operatorname{act}} \subset 
\simplexcategory$,
the subcategory of active maps.
\end{cor}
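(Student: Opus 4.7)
The plan is to reduce \cref{cor:culfy} directly to the preceding lemma by restricting everything along the inclusion $\simplexcategory_{\operatorname{act}} \hookrightarrow \simplexcategory$.

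First I would unpack the culf condition. By the characterization in \ref{culf}, the map $f\colon Y \to X$ is culf precisely when for every active arrow $\alpha\colon n \actto m$ in $\simplexcategory$, the naturality square
\[
\begin{tikzcd}
Y_m \ar[r, "\alpha\upperstar"] \ar[d, "f_m"'] \drpullback & Y_n \ar[d, "f_n"] \\
X_m \ar[r, "\alpha\upperstar"'] & X_n
\end{tikzcd}
\]
is a pullback. Equivalently, restricting along $\simplexcategory_{\operatorname{act}} \hookrightarrow \simplexcategory$, the natural transformation $f|_{\simplexcategory_{\operatorname{act}}}\colon Y|_{\simplexcategory_{\operatorname{act}}} \Rightarrow X|_{\simplexcategory_{\operatorname{act}}}$ of presheaves on $\simplexcategory_{\operatorname{act}}$ is cartesian.

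Next I would apply the preceding lemma with $\CC \coloneqq \simplexcategory_{\operatorname{act}}$. That lemma says a natural transformation of presheaves on $\CC$ is cartesian if and only if the associated right fibration between categories of elements is also a left fibration. Applied to the restricted transformation, this means $f$ is culf if and only if
$$
\el\bigl(Y|_{\simplexcategory_{\operatorname{act}}}\bigr) \longrightarrow \el\bigl(X|_{\simplexcategory_{\operatorname{act}}}\bigr)
$$
is a left fibration (it is automatically a right fibration).

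The final step is to identify this map with the restriction of $p\colon \el(Y) \to \el(X)$ along $\simplexcategory_{\operatorname{act}} \hookrightarrow \simplexcategory$. This is a standard fact: restriction of a presheaf along a subcategory inclusion corresponds to pullback of its category of elements. Concretely, the square
\[
\begin{tikzcd}
\el(X|_{\simplexcategory_{\operatorname{act}}}) \drpullback \ar[r] \ar[d] & \el(X) \ar[d] \\
\simplexcategory_{\operatorname{act}} \ar[r, hook] & \simplexcategory
\end{tikzcd}
\]
is a pullback, and likewise for $Y$. Combining these identifications with the application of the previous lemma yields the corollary.

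There is no real obstacle here beyond bookkeeping; the content of the corollary is just the previous lemma applied to the subcategory $\simplexcategory_{\operatorname{act}}$, once one observes that being culf is exactly being cartesian on this subcategory and that categories of elements commute with such restrictions. The only mildly delicate point is to check that the left-fibration condition one obtains from the lemma, stated over $\simplexcategory_{\operatorname{act}}$, is the same as the condition in the statement that $p$ ``becomes a left fibration after restriction to $\simplexcategory_{\operatorname{act}}$''.
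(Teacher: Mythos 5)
Your proposal is correct and is exactly the argument the paper intends: the corollary is stated as an immediate consequence of the preceding lemma, obtained by noting that culf means cartesian on active maps, applying the lemma with $\CC = \simplexcategory_{\operatorname{act}}$, and identifying $\el(X|_{\simplexcategory_{\operatorname{act}}})$ with the pullback of $\el(X) \to \simplexcategory$ along $\simplexcategory_{\operatorname{act}} \hookrightarrow \simplexcategory$. No gap; the bookkeeping you flag (restriction of presheaves versus pullback of categories of elements) is indeed the only point to check, and it holds as you state.
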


\bigskip

We shall call such functors $p$ {\em culfy maps} of right fibrations over
$\simplexcategory$. We also use the term culfy map for the corresponding
notion for right fibrations of simplicial spaces over
$\nerve\simplexcategory$. That is, a culfy map between right fibrations
over $\nerve \simplexcategory$ is one whose pullback along
$\nerve\simplexcategory_{\operatorname{act}} \to
\nerve\simplexcategory$ is a left fibration as well. 
We define full subcategories $\kat{culfy}_{\simplexcategory}(\el X) \subseteq \RFib_{\simplexcategory}(\el X)$ and 
$\kat{Culfy}_{\nerve\simplexcategory}(\Nel X)\subseteq  
\RFIB_{\nerve\simplexcategory}(\Nel X)$ having the culfy maps as objects, so that the dashed arrows in the diagram below become equivalences by definition.
(Note that the middle row of the diagram concerns
right fibrations between $\infty$-categories, rather than between
simplicial spaces.)
\[
\begin{tikzcd}
\culf(X) \rar[hook] \dar[dashed,"\simeq"] & \simpspaces_{/X} \dar["\simeq"] &[-0.4cm] Y \to X \dar[mapsto] \\
\kat{culfy}_{\simplexcategory}(\el X) \rar[hook] \dar[dashed,"\simeq"] & \RFib_{\simplexcategory}(\el X) \dar["\nerve"', "\simeq"] & \el (Y) \to \el (X) \to \simplexcategory \dar[mapsto] \\
\kat{Culfy}_{\nerve\simplexcategory}(\Nel X) \rar[hook]  & 
\RFIB_{\nerve\simplexcategory}(\Nel X)  & \Nel (Y) \to \Nel (X) \to 
\nerve\simplexcategory  .
\end{tikzcd}
\]

\begin{cor}\label{cor righteous maps}
  A simplicial map
$$
\begin{tikzcd}[column sep={5em,between origins}]
  \simplexcategory\op \ar[r, bend left, pos=0.43, "Y"] \ar[r, bend right, pos=0.45, "X"'] \ar[r, phantom, "\text{\scriptsize $\Downarrow f$}"]  & \spaces
\end{tikzcd}
$$
is a right fibration if and only if the corresponding right fibration $p \coloneq \el(f)$:
$$
\el(Y) \stackrel{p}\longrightarrow \el(X) \longrightarrow \simplexcategory
$$
becomes a left fibration after restriction to $\simplexcategory^t \subset 
\simplexcategory$, the subcategory of \toppreserving\ monotone maps (that is, 
presheaves on $\simplexcategory^t$ are ``simplicial objects with missing top face maps'' --- see \ref{dec}).
\end{cor}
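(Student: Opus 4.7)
The plan is to mirror the derivation of Corollary~\ref{cor:culfy}, replacing the subcategory $\simplexcategory_{\operatorname{act}}$ of active maps by the subcategory $\simplexcategory^t$ of last-point-preserving maps. By definition, a simplicial map $f \colon Y \to X$ is a right fibration precisely when it is cartesian on every top-preserving map $\ell \colon [m] \to [n]$ in $\simplexcategory$. With $u \colon \simplexcategory^t \to \simplexcategory$ the forgetful functor from~\ref{dec}, this is exactly the statement that $u\upperstar f \colon u\upperstar Y \Rightarrow u\upperstar X$ is a cartesian natural transformation between $\simplexcategory^t$-presheaves.

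I would then invoke the Lemma immediately preceding Corollary~\ref{cor:culfy}, applied not with $\CC = \simplexcategory$ but with $\CC = \simplexcategory^t$: a natural transformation of $\simplexcategory^t$-presheaves is cartesian if and only if the associated right fibration $\el(u\upperstar Y) \to \el(u\upperstar X)$ over $\simplexcategory^t$ is simultaneously a left fibration. The proof of that lemma is written for a general small $\infty$-category $\CC$, so this reinterpretation is automatic; nothing about $\simplexcategory$ was used.

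Finally, I would identify $\el(u\upperstar Y) \to \el(u\upperstar X) \to \simplexcategory^t$ with the pullback of $\el(Y) \to \el(X) \to \simplexcategory$ along $\simplexcategory^t \hookrightarrow \simplexcategory$, using the formal fact that restriction of a presheaf along $u$ corresponds under the Grothendieck construction to pullback of its category of elements along $u$. Chaining these three equivalences gives the desired characterization. The argument is essentially a relabelling of the preceding lemma, made possible by the fact that ``right fibration of simplicial spaces'' is exactly ``cartesian on $\simplexcategory^t$-restrictions'', in direct parallel with ``culf $=$ cartesian on $\simplexcategory_{\operatorname{act}}$-restrictions''; consequently I do not expect any genuine obstacle.
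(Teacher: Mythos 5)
Your argument is correct and is exactly the (implicit) argument the paper intends: the corollary is stated without proof as an immediate consequence of the preceding lemma, namely that ``right fibration'' means cartesian on $\simplexcategory^t$-morphisms, that the lemma applies verbatim with $\CC=\simplexcategory^t$, and that $\el(u\upperstar Y)\to\el(u\upperstar X)$ identifies with the base change of $\el(Y)\to\el(X)$ along $\simplexcategory^t\subset\simplexcategory$. No gaps; this parallels \cref{cor:culfy} exactly as the paper does.
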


We call such functors $p$ {\em righteous maps} of right fibrations
over $\simplexcategory$.  
We also use the term righteous map for the 
corresponding notion for right fibrations of simplicial spaces over 
$\nerve\simplexcategory$. 
The following diagram summarizes the 
notions, where $\kat{righteous}_{\simplexcategory}(\el X)  \subseteq \RFib_{\simplexcategory}(\el X)$ and $\kat{Righteous}_{\nerve\simplexcategory}(\Nel X) \subseteq 
\RFIB_{\nerve\simplexcategory}(\Nel X)$ are the full subcategories of righteous maps. The middle row concerns right fibrations of $\infty$-categories 
rather than of simplicial spaces. 

\[
\begin{tikzcd}
\RFIB(X) \rar[hook] \dar[dashed,"\simeq"] & \simpspaces_{/X} \dar["\simeq"] &[-0.4cm] Y \to X \dar[mapsto] \\
\kat{righteous}_{\simplexcategory}(\el X) \rar[hook] \dar[dashed,"\simeq"] 
& \RFib_{\simplexcategory}(\el X) \dar["\nerve"', "\simeq"] & \el (Y) \to 
\el (X) \to \simplexcategory \dar[mapsto] \\
\kat{Righteous}_{\nerve\simplexcategory}(\Nel X) \rar[hook]  & 
\RFIB_{\nerve\simplexcategory}(\Nel X)  & \Nel (Y) \to \Nel (X) \to 
\nerve\simplexcategory  .
\end{tikzcd}
\]

%%%%%%%%%%%%%%%%%%%%%%%%%%%%%%%%%%%%%%%%%%%%%%%%%%
\section{Main theorem via pullback along \texorpdfstring{$\lambda$}{λ}}\label{sec untwist lambda}
%%%%%%%%%%%%%%%%%%%%%%%%%%%%%%%%%%%%%%%%%%%%%%%%%%

We know by Lemma~\ref{culf<=>rfib} that the edgewise subdivision of a 
culf map is a right fibration. The following main theorem gives an inverse 
construction.

\begin{theorem}[\cref{thm_c}]\label{untwisting theorem}
  For $X$ a simplicial space, the functor
  \begin{eqnarray*}
    \sd_{X} \colon \culf(X) & \longrightarrow & \simprfib(\sd X)  \\
    Y{\to}X & \longmapsto & \sd(Y){\to}\!\sd(X)
  \end{eqnarray*}
  is an equivalence. The inverse equivalence is given essentially by 
  pullback along $\lambda_X \colon \Nel(X) \to \sd(X)$, as detailed in the proof.
\end{theorem}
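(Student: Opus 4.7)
I will construct an inverse functor $U\colon \simprfib(\sd X) \to \culf(X)$ via pullback along $\lambda_X$, and use the culfy/righteous reformulations of Section~\ref{sec:culfy} to organise the argument. Given a right fibration $R \to \sd X$, form the pullback $\lambda_X^{*}R \to \Nel(X)$, which is again a right fibration. I claim this is in fact a \emph{culfy} right fibration in the sense of \cref{cor:culfy}, so that under the equivalence $\culf(X) \simeq \kat{Culfy}_{\nerve\simplexcategory}(\Nel X)$ it classifies a unique culf map $U(R) \to X$. This prescription extends to a functor on morphisms, and defines $U$.

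The key technical step is the culfy verification: I must show that the composite right fibration $\lambda_X^{*}R \to \Nel(X) \to \nerve\simplexcategory$, after pullback along $\nerve\simplexcategory_{\operatorname{act}} \hookrightarrow \nerve\simplexcategory$, is a left fibration. Because all simplicial spaces involved are Segal (being right fibrations over Segal bases), the dual of \cref{lem rfib between segal spaces} reduces this to cartesianness on the initial-vertex inclusion $d^1\colon [0]\to[1]$, i.e.\ to a single $1$-simplex lifting condition. Now \cref{activeequiv} tells us that $\lambda_X$ carries every $1$-simplex of $\Nel(X)$ lying over an active map to an equivalence in $\sd(X)$, and over an equivalence in the base a right fibration admits unique lifts in both directions. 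Hence $\lambda_X^{*}R$ has the required source-to-target lifting property over every active morphism of $\Nel(X)$, as needed.

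Next I verify the two round trips. For $\sd \circ U \simeq \operatorname{id}$: setting $Y = U(R)$, the defining culfy correspondence gives $\Nel(Y) \simeq \lambda_X^{*}R$ over $\Nel(X)$. Since $\xi_{\sd X}^{*}$ is fully faithful by \cref{lemma:epsilonL is equiv}, it suffices to identify $\xi_{\sd X}^{*}(\sd Y)$ with $\xi_{\sd X}^{*}(R)$ over $\Nel(\sd X)$. The corollary following \cref{lemma:epsilonL is equiv} gives $\xi_{\sd X}^{*}(\sd Y) \simeq \Nel(\sd Y)$, while \cref{lambda-omega} factors $\xi_{\sd X}$ as $\lambda_X \circ \nerve(\omega_X)$, whence
\[
\xi_{\sd X}^{*}(R) \simeq \nerve(\omega_X)^{*}\bigl(\lambda_X^{*}R\bigr) \simeq \nerve(\omega_X)^{*}\bigl(\Nel Y\bigr).
\]
These two agree because $\nerve$ preserves pullbacks and the defining pullback of $\sd$ gives $\el(\sd Y) = Q^{*}(\el Y)$. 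For $U \circ \sd \simeq \operatorname{id}$: given a culf map $Y \to X$, \cref{culf<=>rfib} gives that $\sd Y \to \sd X$ is a right fibration, and then \cref{lemma:lambdaculf} (cartesianness of $\lambda$ on culf maps) identifies $\lambda_X^{*}(\sd Y) \simeq \Nel(Y)$ over $\Nel(X)$. Under the culfy equivalence, this is exactly the presentation of $Y \to X$, so $U(\sd Y) \simeq Y$.

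The main obstacle I anticipate is the rigorous culfy verification, in particular the careful handling of the pulled-back simplicial spaces over $\nerve\simplexcategory_{\operatorname{act}}$ and the justification that the Segal-style $1$-simplex reduction genuinely applies there. Once that is in place, the rest is a matter of assembling the natural equivalences above, exploiting naturality of $\lambda$, $\xi$ and $\omega$, into a coherent quasi-inverse pair.
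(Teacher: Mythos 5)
Your proposal is correct and follows essentially the same route as the paper: the inverse is pullback along $\lambda_X$ mediated by the culfy/righteous reformulation, with the culfy verification via the Segal reduction and \cref{activeequiv}, one round trip via \cref{lemma:lambdaculf}, and the other via \cref{lambda-omega}, the identification $\el(\sd Y)=Q\upperstar(\el Y)$, and the (fully faithful) functor $\last_{\sd X}\upperstar\simeq\Nel_{\sd X}$. The only difference is packaging — you check two explicit round trips where the paper verifies two commuting triangles and inverts $\lambda_X\upperstar$ by a two-sided-inverse argument — which is not a substantive divergence.
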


Henceforth, subscripts on functors, such as $\sd_{X}$, indicate the
functors induced on slices (or subcategories of slices).

\begin{proof}[Proof of \cref{untwisting theorem}]
We have the following commutative triangles:
  \begin{equation}\label{eq diag diagram}
  \begin{tikzcd}[row sep={5.5em,between origins}, column sep={4.3em,between origins}]
  \culf(X) \ar[rrrrr, "\sd_{X}"] \ar[d, "\Nel_{X}"'] &&&&& 
  \simprfib(\sd X)
  \ar[d, "\Nel_{\sd X}=(\last_{\sd X})\upperstar"]  \ar[llllld, "\lambda_X\upperstar"'] 
  \\
  \simprfib(\Nel(X)) \ar[rrr, "Q\upperstar"'] &&& 
  \simprfib(Q\upperstar \Nel X) \ar[rr, phantom, "\simeq" description]
 && \simprfib(\Nel \sd X)   .
  \end{tikzcd}
  \end{equation}
  The upper left triangle commutes because of 
  Lemma~\ref{lemma:lambdaculf}, which says that applying $\sd$ and 
  then pulling back along $\lambda$ is the same as applying $\Nel$.
  
  The lower right triangle involves some canonical 
  identifications, first of all $Q\upperstar( \Nel X) \simeq \Nel( 
\sd X)$. Note further that since $\last$ is cartesian 
on right fibrations (Lemma~\ref{el-L}), the pullback square of Lemma~\ref{el-L} gives 
the identification $\Nel_{\sd X}=(\last_{\sd X})\upperstar$ indicated.
The triangle now commutes by Lemma~\ref{lambda-omega}.

We know from Lemma~\ref{culf<=>rfib} that the functor
$Q\upperstar \colon \simprfib(\Nel X) \to \simprfib(\Nel \sd X)$ restricts to a functor 
$Q\upperstar\colon \simplculfy_{\nerve\simplexcategory}(\Nel X) \to 
\simplrighteous_{\nerve\simplexcategory}(\Nel \sd X)$. 
Here, as in \cref{sec:culfy}, $\simplculfy_{\nerve\simplexcategory}(\Nel X) 
\subset \simprfib_{\nerve\simplexcategory}(\Nel X)$ is the $\infty$-category of maps of right fibrations $Y \to \Nel(X) \to \nerve\simplexcategory$ that become also left fibrations after base change along $\nerve\simplexcategory_{\operatorname{act}}\subset \nerve\simplexcategory$.
Similarly, 
\[
  \simplrighteous_{\nerve\simplexcategory}(\Nel \sd X) \subset 
  \simprfib_{\nerve\simplexcategory}(\Nel\sd X)
\] 
is the $\infty$-category of maps of right fibrations $Y \to \Nel(\sd X) \to \nerve\simplexcategory$ that become also left fibrations after base change along $\nerve\simplexcategory^t \subset \nerve\simplexcategory$.
The point is now that all the 
downgoing maps in \eqref{eq diag diagram} --- including 
  $\lambda_X\upperstar$ --- actually land in these subcategories of culfy or righteous maps, 
  as indicated in the diagram
  \begin{equation}\label{diag culfy righteous}
  \begin{tikzcd}[row sep=2.5em]
  \culf(X) \ar[r, "\sd_X"] \ar[d, "\Nel_X"'] & 
  \simprfib(\sd X)
  \ar[d, "\Nel_{\sd X}=(\last_{\sd X})\upperstar"]  \ar[ld, "\lambda_X\upperstar"'] \\
  \simplculfy_{\nerve\simplexcategory}(\Nel X) \ar[r, "Q\upperstar"'] 
  & 
  \simplrighteous_{\nerve\simplexcategory}(\Nel \sd X) . 
  \end{tikzcd}
  \end{equation}
We already know that the vertical maps $\Nel$ land in these subcategories and are equivalences (by definition of these 
  classes of maps).

  We have to check that $\lambda\upperstar$ 
  lands in culfy maps. Let $Y \to \sd(X)$ be a right fibration. We 
  need to check that after pullback along $\lambda$ and restriction to 
  $\simplexcategory_{\operatorname{act}}$ the result is a left fibration. 
  The relevant diagram is
  \[
  \begin{tikzcd}
  \Delta^0 \ar[d, "d^1"'] \ar[r, "\name{y}"] & \cdot \drpullback \ar[d] \ar[r] & \cdot 
  \drpullback \ar[d] \ar[r] & Y \ar[d]  
  \\
  \Delta^1 \ar[r, "\name{e}"'] \ar[ru, dotted, "?"'] &\Nel (X)_{\mid \operatorname{act}} \drpullback 
  \ar[d] \ar[r] & \Nel (X)
  \ar[d] \ar[r, "\lambda_X"'] & \sd(X)  .
  \\
  & \nerve \simplexcategory_{\operatorname{act}} \ar[r] & \nerve \simplexcategory &
  \end{tikzcd}
  \]
  By \cref{lem rfib creates segal}, all objects in the second column are Segal spaces; by the dual of \cref{lem rfib between segal spaces}, to prove the map of interest is a left fibration it is enough to prove it is orthogonal to $d^1 \colon \Delta^0 \to \Delta^1$. 
  So we need to show, for any maps $\name{y}$ and $\name{e}$ making the leftmost 
  square commute, that the space of lifts indicated is contractible.
As the two squares in the upper right are given by pullback, this space of lifts is equivalent 
  to the space of lifts of the long composite rectangle. But since 
  the arrow $e$ in $\Nel (X)$ is active, $\lambda_X$ takes it to a 
  degenerate $1$-simplex in 
  $\sd(X)$ (by Lemma~\ref{activeequiv}), so it lifts uniquely, since 
  $Y \to \sd(X)$ is a right fibration.
  
  Finally in \eqref{diag culfy righteous}, since the vertical maps are
  equivalences, we see that $\lambda_X\upperstar$
  has up-to-homotopy inverses on both sides, so all maps in \eqref{diag culfy righteous} are equivalences.
  In particular, $\sd_X$ is an equivalence.
\end{proof}

%%%%%%%%%%%%%%%%%%%%%%%%%%%%%%
\section{Main theorem via right Kan extension}
\label{sec main via rke}
%%%%%%%%%%%%%%%%%%%%%%%%%%%%%%

In this section we give a very different description of the inverse 
featured in the main theorem,
in terms of the right adjoint to edgewise subdivision. In a sense this
description is much more direct than the pullback-along-$\lambda$
description given in \cref{untwisting theorem}, but in
practice the right adjoint is not easy to compute.

\bigskip

The edgewise subdivision of an ordinary category $\mathcal{C}$ is just the
twisted arrow category, so its objects are the arrows of $\mathcal{C}$. In
the special case $\mathcal{C}= \Delta^n$, which is a poset, an arrow is
completely specified by its endpoints, so we denote by $(i,j)$
the object
in $\sd(\Delta^n)$ corresponding to the arrow $i{\to}j$ in $\Delta^n$.
With this convention, $Q\upperstar(\Delta^3) = \sd(\Delta^3)$ may be visualized as in the following picture
\[\begin{tikzcd}[row sep=tiny, column sep=small]
(0,0) \drar & & (1,1)\drar \dlar & & (2,2) \dlar \drar & & (3,3) \dlar \\
& (0,1) \drar & & (1,2) \dlar \drar & & (2,3) \dlar \\
& & (0,2) \drar & & (1,3) \dlar\\
&& & (0,3),
\end{tikzcd}\]
and the pictures for other representables are similar.

\bigskip

We will arrive shortly at the $Q\upperstar \isleftadjointto Q\lowerstar$ 
  adjunction, but first we need a few preliminary results on the 
  $Q\lowershriek \isleftadjointto Q\upperstar$ adjunction.
  
\begin{lemma}
  The unit for the $Q\lowershriek \isleftadjointto Q\upperstar$ adjunction
  is given on representables by
  \begin{eqnarray*}
    \eta_{\Delta^n} \colon \Delta^n & \longrightarrow & Q\upperstar 
  Q\lowershriek \Delta^n = \sd(\Delta^{2n+1})  \\
    i & \longmapsto & (n{-}i,n{+}i{+}1) .
  \end{eqnarray*}
\end{lemma}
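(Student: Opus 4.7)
The plan is to extract the formula from the Yoneda lemma together with the fact that the left adjoint $Q\lowershriek$ preserves representables.

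First I would identify $Q\lowershriek\Delta^n$ with $\Delta^{2n+1}$. Since $\Delta^n = \yo[n]$ and $Q\upperstar$ is precomposition, the adjunction together with Yoneda give natural equivalences
\[
\Map(Q\lowershriek\Delta^n, X) \simeq \Map(\Delta^n, Q\upperstar X) \simeq (Q\upperstar X)_n = X_{2n+1} \simeq \Map(\Delta^{2n+1}, X),
\]
so $Q\lowershriek\Delta^n \simeq \Delta^{Q[n]} = \Delta^{2n+1}$. Under this identification the unit becomes a map $\eta_{\Delta^n}\colon \Delta^n \to Q\upperstar\Delta^{2n+1} = \sd(\Delta^{2n+1})$, which by Yoneda is classified by an element of $(\sd\Delta^{2n+1})_n = \Hom_{\simplexcategory}(Q[n],[2n+1]) = \Hom_{\simplexcategory}([2n+1],[2n+1])$; tracing through the adjunction bijection, this element is $\id_{[2n+1]}$.

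Next I would compute on vertices from this Yoneda description. A vertex $i\in\Delta^n_0$ is the inclusion $\iota_i\colon [0]\to[n]$, and the naturality of the Yoneda correspondence sends $i$ to the $1$-simplex of $\Delta^{2n+1}$ obtained by pulling back $\id_{[2n+1]}$ along $Q(\iota_i)\colon [1] = Q[0] \to Q[n] = [2n+1]$, i.e., simply to $Q(\iota_i)$ itself (viewed as a vertex of $\sd(\Delta^{2n+1})$). Since $Q(\iota_i) = \iota_i\op \join \iota_i$, the two elements $0'$ and $0$ of $[0]\op\join[0] = [1]$ are sent respectively to $i'\in [n]\op$ and $i\in[n]$ inside $Q[n]$.

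The last step is to translate these labels into the standard linear numbering of $[2n+1]$ recorded in the display just before the lemma, namely $n' < (n{-}1)' < \cdots < 0' < 0 < 1 < \cdots < n$. Under this bijection $i'$ sits at position $n-i$ and $i$ at position $n+i+1$, producing the $1$-simplex $(n-i)\to(n+i+1)$, i.e., the object $(n-i,\,n+i+1)$ of $\sd(\Delta^{2n+1})$, as claimed. The only step requiring genuine care is this last translation between the two labelings of $Q[n]$; everything else is a formal consequence of the Yoneda lemma and the definition of $Q$.
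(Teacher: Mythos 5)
Your proposal is correct and follows essentially the same route as the paper: both identify the unit at $\Delta^n$ with the identity of $[2n{+}1]$ reinterpreted under the adjunction as an $n$-simplex of $\sd(\Delta^{2n+1})$, and then read off the $i$th vertex via the relabeling $i'\mapsto n-i$, $i\mapsto n+i+1$. Your version just makes the Yoneda bookkeeping (the identification $Q\lowershriek\Delta^n\simeq\Delta^{2n+1}$ and the vertex computation via $Q(\iota_i)$) more explicit than the paper's picture-based reading.
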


\begin{proof}
  For a general simplicial space $X$, the adjunction equivalence 
  $\Map(Q\lowershriek \Delta^n, X) \allowbreak \isopil \Map(\Delta^n, Q\upperstar X)$ acts 
  by sending a $(2n+1)$-simplex in $X$ to the {\em same} simplex, but 
  reinterpreted as an $n$-simplex in $\sd(X)$. Now instantiate at $X= 
  Q\lowershriek \Delta^n$, and consider the identity map 
  $\id \colon Q\lowershriek \Delta^n \to Q\lowershriek \Delta^n$. The unit
  will be the corresponding map under the adjunction equivalence. This is now the
  $n$-simplex
    \begin{equation}
  \begin{tikzcd}[cramped, row sep=small, column sep=small]
    n{+}1\ar[r]&n{+}2\ar[r]&\cdots\ar[r]&2n{+}1\\
    n\ar[u]&n{-}1\ar[l]&\cdots\ar[l]&0 ,
    \ar[l]
  \end{tikzcd}
  \end{equation}
  where clearly the $i$th vertex is the arrow 
  $(n{-}i,n{+}i{+}1)$.
\end{proof}

\begin{cor}\label{cor counit final}
  The unit $\eta_{\Delta^n} \colon \Delta^n \to Q\upperstar 
  Q\lowershriek \Delta^n$ is final.
\end{cor}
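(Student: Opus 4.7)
The plan is to deduce this directly from \cref{lem: terminal objects}, which says that a map between simplicial spaces with terminal vertices that preserves them is final. So I need to identify the terminal vertices on both sides and check that $\eta_{\Delta^n}$ sends one to the other.

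First I would observe that $\Delta^n$ has the obvious terminal vertex $n$. For $Q\upperstar Q\lowershriek \Delta^n = \sd(\Delta^{2n+1})$, since $\Delta^{2n+1}$ is the nerve of a poset, its edgewise subdivision is the nerve of the twisted arrow category, whose objects are the arrows $(i,j)$ with $0 \leq i \leq j \leq 2n+1$, and in which a morphism $(i,j) \to (i',j')$ exists iff $i' \leq i$ and $j \leq j'$. The terminal object is therefore the ``longest'' arrow $(0,2n+1)$, since for every $(i,j)$ there is a unique map $(i,j) \to (0,2n+1)$.

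Now using the explicit formula $\eta_{\Delta^n}(i) = (n{-}i,\, n{+}i{+}1)$ from the preceding lemma, I evaluate at the terminal vertex $i=n$ and get $\eta_{\Delta^n}(n) = (0,\, 2n{+}1)$, which is precisely the terminal vertex of $\sd(\Delta^{2n+1})$ identified above. Thus $\eta_{\Delta^n}$ preserves terminal vertices, and \cref{lem: terminal objects} applies to conclude that $\eta_{\Delta^n}$ is final.

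I don't expect any real obstacle here: the only mildly delicate point is making sure the ``terminal vertex'' of $\sd(\Delta^{2n+1})$ really is $(0,2n+1)$, but this is unambiguous since $\Delta^{2n+1}$ is a $1$-category (in fact a poset), so $\sd(\Delta^{2n+1})$ is again the nerve of a $1$-category and a terminal vertex in the simplicial-space sense coincides with a terminal object in that category.
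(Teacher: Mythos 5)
Your proposal is correct and is essentially the paper's own proof: both identify the terminal vertices, use the explicit formula $\eta_{\Delta^n}(i)=(n{-}i,n{+}i{+}1)$ to see that $n\mapsto(0,2n{+}1)$, and conclude by \cref{lem: terminal objects}. The extra verification that $(0,2n{+}1)$ is terminal in $\sd(\Delta^{2n+1})$ is a fine (and correct) elaboration of a point the paper leaves implicit.
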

\begin{proof}
  It sends the terminal object $n\in \Delta^n$ to the terminal object
  $(0,2n{+}1) \in \sd(\Delta^{2n+1})$.
  By \cref{lem: terminal objects}, $\eta_{\Delta^n}$ is final.
\end{proof}

\begin{prop}\label{prop counit ambifinal} The counit $\varepsilon_{\Delta^n} \colon Q\lowershriek Q\upperstar \Delta^n \to \Delta^n$ is ambifinal.
\end{prop}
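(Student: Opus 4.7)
My plan is to verify the lifting property of $\varepsilon_{\Delta^n}$ directly against an arbitrary culf map $p \colon Y \to X$, translating through the $Q\lowershriek \isleftadjointto Q\upperstar$ adjunction. The two crucial ingredients will be that $\sd(p)$ is a right fibration (by \cref{culf<=>rfib}) and that $(0, n)$ is the terminal object of $\sd(\Delta^n)$.

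First, I will reformulate the lifting problem via adjunction. A square with top map $f \colon Q\lowershriek \sd(\Delta^n) \to Y$ and bottom $g \colon \Delta^n \to X$ corresponds, by the adjunction together with the triangle identity $Q\upperstar(\varepsilon) \circ \eta = \id$, to the data of $g$ together with a map $f^\flat \colon \sd(\Delta^n) \to \sd(Y)$ satisfying $\sd(p) \circ f^\flat = \sd(g)$; a candidate lift $h \colon \Delta^n \to Y$ similarly corresponds to an element $h \in Y_n$ with $p(h) = g$ and $\sd(h) = f^\flat$. So it suffices to produce such an $h$ uniquely.

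Next, I will exploit that $(0, n)$ is terminal in $\sd(\Delta^n)$, so by \cref{lem: terminal objects} the inclusion $\Delta^0 \to \sd(\Delta^n)$ picking it out is final. Combined with $\sd(p)$ being a right fibration, this forces any lift of $\sd(g)$ along $\sd(p)$ to be uniquely determined by its value at $(0,n)$; in particular, $f^\flat$ records an element $\tilde e \coloneqq f^\flat(0,n) \in \sd(Y)_0 = Y_1$ lifting $\sd(g)(0,n) \in X_1$ along $p_1$. Then the culf property of $p$ (\cref{lem:simp-culf}) applied to the active long-edge map $[1] \actto [n]$ makes the natural map $Y_n \to X_n \times_{X_1} Y_1$ an equivalence, yielding a unique $h \in Y_n$ over $g$ whose long edge is $\tilde e$.

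Finally, I will check that $\sd(h) = f^\flat$: both maps lie over $\sd(g)$ along the right fibration $\sd(p)$ and both send the terminal vertex $(0,n)$ to $\tilde e$, so they agree by the uniqueness already noted. Uniqueness of $h$ among lifts of the original square follows from the same two ingredients used in reverse. The one subtle point will be the adjunction bookkeeping, especially the identification of ``value at the terminal vertex $(0,n)$ of $\sd(\Delta^n)$'' with ``restriction along the long edge of $\Delta^n$''; once this is pinned down, the remaining steps are formal manipulations with a right fibration and a cartesian square.
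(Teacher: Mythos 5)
Your proposal is correct, but it takes a genuinely different route from the paper. You verify the orthogonality $\varepsilon_{\Delta^n}\perp p$ directly against an arbitrary culf map $p\colon Y\to X$: after transposing through the $Q\lowershriek\isleftadjointto Q\upperstar$ adjunction (using the triangle identity), the lifting problem becomes a two-step pullback computation --- first the orthogonality of the terminal-vertex inclusion $\Delta^0\to\sd(\Delta^n)$ (final by \cref{lem: terminal objects}, since $(0,n)$ is terminal) against the right fibration $\sd(p)$ from \cref{culf<=>rfib}, and then the culf pullback along the long edge $[1]\actto[n]$ from \cref{lem:simp-culf}, which together identify the space of lifts with the contractible fiber of an equivalence $Y_n\isopil Y_1\times_{X_1}X_n\simeq\Map(\sd\Delta^n,\sd Y)\times_{\Map(\sd\Delta^n,\sd X)}X_n$. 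The paper instead argues formally: the unit $\eta_{\Delta^n}$ is final (\cref{cor counit final}), $Q\lowershriek$ carries final maps to ambifinal maps (\cref{final<=>ambifinal}), so $Q\lowershriek(\eta_{\Delta^n})$ is ambifinal; a triangle identity plus right cancellation in the left class then gives the result for the odd simplices $\varepsilon_{\Delta^{2n+1}}$, and closure of left classes under retracts (using that $\Delta^{2n}$ is a retract of $\Delta^{2n+1}$) handles the even case. What each approach buys: the paper's proof is shorter and purely synthetic, with no element-level bookkeeping beyond what is already in \cref{cor counit final}, but needs the odd/even case split and the retract trick; your proof treats all $n$ uniformly and makes the lift space completely explicit, at the cost of the adjunction bookkeeping you flag --- in particular you must phrase every ``unique lift'' as contractibility of a space of lifts (which your pullback formulation does deliver) and pin down that evaluating $\sd(h)$ at the vertex $(0,n)$ is restriction of $h$ along the long edge, i.e.\ that the adjunct of $\Delta^0\xrightarrow{(0,n)}Q\upperstar\Delta^n$ is the active map $\Delta^1\actto\Delta^n$. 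With those points spelled out, your argument is complete and does not create any circularity, since \cref{culf<=>rfib}, \cref{lem: terminal objects}, and \cref{lem:simp-culf} all precede the proposition.
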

\begin{proof}
By \cref{final<=>ambifinal}, we know that $Q\lowershriek$ sends final maps to ambifinal maps; 
combining this with \cref{cor counit final}, we see that $Q\lowershriek (\eta_{\Delta^n})$ is ambifinal.
A triangle identity gives that $\varepsilon_{Q\lowershriek \Delta^n} \circ Q\lowershriek (\eta_{\Delta^n}) \simeq \id_{Q\lowershriek\Delta^n}$, so by right cancellation, $\varepsilon_{Q\lowershriek \Delta^n} = \varepsilon_{\Delta^{2n+1}}$ is ambifinal.
We thus have the result for odd-dimensional simplices.
But $\Delta^{2n}$ is a retract of $\Delta^{2n+1}$, and the left 
class in any factorization system is closed under retracts, 
so $\varepsilon_{\Delta^{2n}}$ is ambifinal as well.
\end{proof}

We now come to the right adjoint $Q\lowerstar$ to edgewise subdivision 
$Q\upperstar$.

\begin{prop}\label{prop rke rfib to culf} 
The right Kan extension functor $Q\lowerstar \colon \simpspaces \to \simpspaces$ takes right fibrations to culf maps. 
\end{prop}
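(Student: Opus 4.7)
The plan is to reduce the culf condition on $Q\lowerstar(p)$, via the adjunction $Q\upperstar \isleftadjointto Q\lowerstar$, to a finality statement about edgewise subdivisions of active maps between representables. Given a right fibration $p\colon Y \to X$, to verify that $Q\lowerstar(p)$ is culf, I will check that $Q\lowerstar(p)$ is right orthogonal to every active map $g\colon \Delta^k \actto \Delta^n$ between representables. By the $Q\upperstar \isleftadjointto Q\lowerstar$ adjunction, the space of lifts for the square
\[
\begin{tikzcd}
\Delta^k \ar[d, -act, "g"'] \ar[r] & Q\lowerstar Y \ar[d, "Q\lowerstar p"] \\
\Delta^n \ar[r] & Q\lowerstar X
\end{tikzcd}
\]
is naturally equivalent to the space of lifts for the transposed square
\[
\begin{tikzcd}
\sd(\Delta^k) \ar[d, "\sd(g)"'] \ar[r] & Y \ar[d, "p"] \\
\sd(\Delta^n) \ar[r] & X .
\end{tikzcd}
\]
Since $p$ is a right fibration, this latter space is contractible as soon as $\sd(g)$ is final. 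It thus suffices to prove that $\sd(g)\colon \sd(\Delta^k) \to \sd(\Delta^n)$ is final for every active $g$.

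To establish this, I will invoke \cref{lem: terminal objects}: it is enough to exhibit terminal vertices in $\sd(\Delta^k)$ and $\sd(\Delta^n)$ and verify that $\sd(g)$ preserves them. The simplicial space $\sd(\Delta^n)$ is the nerve of the twisted arrow poset of $[n]$, whose vertices are the pairs $(i,j)$ with $0 \leq i \leq j \leq n$, ordered by declaring $(i,j) \leq (i',j')$ precisely when $i' \leq i$ and $j \leq j'$. In this poset the pair $(0,n)$ is visibly terminal. For an active map $g\colon [k] \actto [n]$, satisfying $g(0)=0$ and $g(k)=n$, the induced map $\sd(g)$ sends the terminal vertex $(0,k)$ of $\sd(\Delta^k)$ to $(g(0),g(k))=(0,n)$, the terminal vertex of $\sd(\Delta^n)$. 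Hence $\sd(g)$ is final, and the desired result follows.

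The real content is packed into \cref{lem: terminal objects}, which handles the genuinely non-trivial finality claim; once that is available, the adjunction reduction and the identification of terminal vertices in twisted arrow posets are entirely formal. The only minor point of care will be keeping the edge case $n=0$ in view, but there $\sd(\Delta^0)$ is a point and any map into it trivially preserves the terminal vertex, so nothing additional is required.
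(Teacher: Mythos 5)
Your proof is correct and takes essentially the same route as the paper: the paper's proof likewise observes that $Q\upperstar$ sends active maps between representables to terminal-vertex-preserving maps, hence final by \cref{lem: terminal objects}, and then invokes the standard adjunction argument for factorization systems, which you have simply made explicit by transposing the lifting squares along $Q\upperstar \isleftadjointto Q\lowerstar$.
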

\begin{proof}
  Let $p$ be a right fibration; by the definition of culf \ref{culf}, we wish to show that $a \perp Q\lowerstar (p)$ for every active map $a \colon \Delta^n \actto
  \Delta^m$. 
  By behavior of liftings with adjunctions \ref{factorization systems}, this is equivalent to $Q\upperstar(a) \perp p$.
  But this holds by \cref{comprehensive} since $Q\upperstar(a)$ is last-point-preserving, hence final (\cref{lem: terminal objects}).
\end{proof}

The following general result should be well known to experts, but we could 
not find a suitable reference for it.
Note that
the relationship between $\varepsilon$ and $\eta'$ is not dual (via taking opposite $\infty$-categories) to the relationship between $\varepsilon'$ and $\eta$.

\begin{lemma}\label{lem adjoint string relation}
Let $\CC$ and $\DD$ be $\infty$-categories and
\[
\begin{tikzcd}[column sep=large]
    \DD  \rar["F" description] 
    \rar[phantom, bend right=18, "\scriptscriptstyle\perp"]  % bottom one
    \rar[phantom, bend left=18, "\scriptscriptstyle\perp"] % top one
    & \CC
    \lar[bend right=30, "L" swap] 
    \lar[bend left=30, "R"] 
\end{tikzcd}
\]
a string of adjoint functors. 
Then the units and counits
\begin{align*}
\eta & \colon \id_\CC \Rightarrow FL & \eta' & \colon \id_\DD \Rightarrow RF \\
\varepsilon & \colon LF \Rightarrow \id_\DD &  \varepsilon' & \colon FR 
\Rightarrow \id_\CC 
\end{align*}
are related by the following:
\[ 
\begin{tikzcd}
  \Map(b,FRc) \dar["{\Map (b,\varepsilon'_c)}"'] \rar{\sim} & \Map(Lb, Rc) \dar{\sim}  
& 
  \Map(d,e) \rar["{\Map(\varepsilon_d,e)}"] \dar["{\Map(d,\eta'_e)}"'] & \Map(LFd,e)
  \\
  \Map(b,c) & \lar["{\Map (\eta_b,c)}"] \Map(FLb, c)
& 
  \Map(d,RFe) \rar["\sim"'] & \Map(Fd,Fe) \uar["\sim"']
\end{tikzcd} 
\qquad
\]
where $b,c \in \CC$ and $d,e\in \DD$.
\end{lemma}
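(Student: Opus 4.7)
The plan is to verify both squares by elementary diagram chases using only naturality of the units and counits together with the two triangle identities, one for $L \isleftadjointto F$ and one for $F \isleftadjointto R$. No special features of the ambient $\infty$-category are needed beyond these general adjunction data.

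For the left-hand square, I will start with $f \colon b \to FRc$ and follow it around. Transposing along $L \isleftadjointto F$ gives $\varepsilon_{Rc} \circ Lf \colon Lb \to Rc$; then transposing along $F \isleftadjointto R$ gives $\varepsilon'_c \circ F\varepsilon_{Rc} \circ FLf \colon FLb \to c$. Precomposing with the bottom arrow $\eta_b$ and invoking naturality of $\eta$ at $f$ rewrites the tail $FLf \circ \eta_b$ as $\eta_{FRc} \circ f$, so the composite becomes $\varepsilon'_c \circ F\varepsilon_{Rc} \circ \eta_{FRc} \circ f$. The triangle identity $F\varepsilon \circ \eta F = \id_F$ applied at $Rc$ collapses the middle two factors to the identity on $FRc$, leaving $\varepsilon'_c \circ f$, which is precisely the image of $f$ under the left vertical arrow.

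For the right-hand square the argument is symmetric: chase $g \colon d \to e$ either by first applying $\eta'_e$ and then transposing along $F \isleftadjointto R$, or by first applying $\varepsilon_d$ and then transposing along $L \isleftadjointto F$. The first route produces $\varepsilon'_{Fe} \circ F\eta'_e \circ Fg$, which reduces to $Fg$ by the triangle identity $\varepsilon' F \circ F\eta' = \id_F$. The second route produces $Fg \circ F\varepsilon_d \circ \eta_{Fd}$, which reduces to $Fg$ by the triangle identity $F\varepsilon \circ \eta F = \id_F$. Both compositions therefore agree in $\Map(Fd,Fe)$.

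There is no real obstacle; the only point demanding care is that $F$ plays opposite roles in the two adjunctions, so one must apply the triangle identity appropriate to each step and keep track of which unit/counit lives on which side. The entire argument transports verbatim to the $\infty$-categorical setting because an adjunction between $\infty$-categories is by definition equipped with unit and counit transformations satisfying the triangle identities up to coherent equivalence, and mapping-space isomorphisms induced by the adjunctions are natural in both variables.
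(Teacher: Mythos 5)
Your proof is correct and is essentially the paper's argument: both rest on naturality of $\eta$ and $\varepsilon$ plus the two triangle identities, the paper merely packaging the same chase as pastings of mapping-space diagrams so that each step is a specified homotopy of maps of spaces (the point you dispatch with the closing remark that the adjunction equivalences and units/counits are natural in both variables). Your handling of the right-hand square --- measuring both composites in $\Map(Fd,Fe)$ via the inverse of the right vertical equivalence, which trades the paper's use of naturality of $\varepsilon$ for the $L\dashv F$ triangle identity --- is a harmless variant of the paper's computation.
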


\begin{proof}
   For clarity, we temporarily write $[-,-]$ instead of $\Map(-,-)$.
   The first square in the statement commutes because it arises from the 
   pasting diagram
\[ 
\begin{tikzcd}
{[b,FRc]} \rar{L} \ar[ddr, bend right=10,"\eta_{FRc} \circ \text{-}" near start] \ar[dd,"\id"] & 
{[Lb,LFRc]}  \rar["\varepsilon_{Rc} \circ \text{-}"] \dar["F"] & {[Lb,Rc]}  \ar[dd,"F"] \\
& {[FLb,FLFRc]} \dar["\text{-}\circ \eta_b"] \ar[dr,"F\varepsilon_{Rc} \circ \text{-}"] \\
{[b,FRc]} \dar["\varepsilon_c' \circ \text{-}"] & {[b,FLFRc]} \lar["F\varepsilon_{Rc} \circ \text{-}"'] & {[FLb,FRc]} \dar["\varepsilon_c' \circ \text{-}"] \ar[ll, bend left=15,"\text{-}\circ \eta_b"] \\
{[b,c]} & & {[FLb,c]} . \ar[ll,"\text{-}\circ \eta_b"]
\end{tikzcd}
\]
One of the triangles in the top left is a triangle identity, and the other commutes since $\eta$ is a natural transformation $\id_\CC \Rightarrow FL$.
The square in the upper left uses that $F$ is a functor, while the other two commute by associativity of composition.

The second square in the statement commutes because it arises from the 
pasting diagram
\[ 
\begin{tikzcd}
{[d,e]} \ar[ddd,"\eta'_e \circ \text{-}"']  \ar[rr, "\text{-} \circ \varepsilon_d"] \ar[dr,"LF"] & & {[LFd,e]} \\
& {[LFd,LFe]} \rar["\id"] \dar["LF\eta'_e \circ \text{-}" swap] & {[LFd,LFe]} \uar["\varepsilon_e\circ \text{-}"'] \\
& {[LFd,LFRFe]} \ar[ur,"L\varepsilon'_{Fe} \circ \text{-}" swap] \\
{[d,RFe]} \rar["F"] & {[Fd,FRFe]} \uar["L"] \rar["\varepsilon'_{Fe} \circ 
\text{-}"] & {[Fd,Fe]} . \ar[uu,"L"']
\end{tikzcd}
\]
The top region commutes because $\varepsilon$ is a natural transformation $LR \Rightarrow \id_\CC$.
The bottom left (resp.\ bottom right) region commutes since $LF$ (resp.\ $L$) is a functor.
The triangle is $L$ applied to the triangle identity of $L\isleftadjointto F$.
\end{proof}

\begin{cor}\label{cor expectation 4}   The unit $\eta'$ of the $Q\upperstar \isleftadjointto Q\lowerstar$ 
  adjunction
  can be computed in simplicial degree $n$ as
  $$
  \Map(\varepsilon_{\Delta^n},X) \colon \Map(\Delta^n,X) \to \Map(Q\lowershriek Q\upperstar \Delta^n, X) = \Map(\Delta^n, Q\lowerstar Q\upperstar X) .
  $$
\end{cor}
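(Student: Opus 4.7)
The statement is a direct application of the second commutative square in \cref{lem adjoint string relation}, specialized to the adjoint string
\[
Q\lowershriek \isleftadjointto Q\upperstar \isleftadjointto Q\lowerstar
\]
on $\simpspaces$. The plan is to identify each piece of the square with the data appearing in the corollary and then invoke Yoneda.

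First, I would set $L = Q\lowershriek$, $F = Q\upperstar$, $R = Q\lowerstar$, with counit $\varepsilon\colon Q\lowershriek Q\upperstar \Rightarrow \id$ of the first adjunction and unit $\eta'\colon \id \Rightarrow Q\lowerstar Q\upperstar$ of the second. Specializing the second square of \cref{lem adjoint string relation} to $d = \Delta^n$ and $e = X$, we obtain a commutative diagram
\[
\begin{tikzcd}
  \Map(\Delta^n,X) \rar["{\Map(\varepsilon_{\Delta^n},X)}"] \dar["{\Map(\Delta^n,\eta'_X)}"'] & \Map(Q\lowershriek Q\upperstar \Delta^n,X) \\
  \Map(\Delta^n,Q\lowerstar Q\upperstar X) \rar["\sim"'] & \Map(Q\upperstar\Delta^n,Q\upperstar X). \uar["\sim"']
\end{tikzcd}
\]
Here the bottom horizontal equivalence is the $Q\upperstar \isleftadjointto Q\lowerstar$ adjunction, and the right vertical equivalence is the $Q\lowershriek \isleftadjointto Q\upperstar$ adjunction; composing them gives the natural identification $\Map(Q\lowershriek Q\upperstar\Delta^n, X) \simeq \Map(\Delta^n, Q\lowerstar Q\upperstar X)$ asserted in the corollary.

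Next, by the Yoneda interpretation of simplicial spaces, $\Map(\Delta^n,-)$ is evaluation at $[n]$, so the left vertical map $\Map(\Delta^n,\eta'_X)$ is precisely the degree-$n$ component $(\eta'_X)_n$. The commutativity of the square above therefore identifies $(\eta'_X)_n$ with $\Map(\varepsilon_{\Delta^n},X)$ under the displayed adjunction isomorphisms, which is exactly the content of the corollary.

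There is no real obstacle: once \cref{lem adjoint string relation} is in hand, the corollary is simply the Yoneda-theoretic reading of its second square for the adjoint string $Q\lowershriek \isleftadjointto Q\upperstar \isleftadjointto Q\lowerstar$. The only thing worth emphasizing is that one uses the \emph{second} square of the lemma (the one involving $\varepsilon$ of the outer adjunction and $\eta'$ of the inner one), rather than the first.
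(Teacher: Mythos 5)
Your proposal is correct and coincides with the paper's intended argument: the corollary carries no separate proof precisely because it is the second square of \cref{lem adjoint string relation}, specialized to the string $Q\lowershriek \isleftadjointto Q\upperstar \isleftadjointto Q\lowerstar$ with $d=\Delta^n$, $e=X$, read through the Yoneda identification $\Map(\Delta^n,-)\simeq(-)_n$. Your identification of the displayed equality $\Map(Q\lowershriek Q\upperstar\Delta^n,X)=\Map(\Delta^n,Q\lowerstar Q\upperstar X)$ as the composite of the two adjunction equivalences is exactly right.
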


\begin{lemma}\label{lem unit cartesian on culf}
  The unit $\eta'$ of the $Q\upperstar \dashv Q\lowerstar$ adjunction is cartesian on culf maps.
\end{lemma}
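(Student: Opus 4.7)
The plan is to verify the pullback condition level by level. Fix a culf map $p \colon Y \to X$. In simplicial degree $n$, the naturality square for $\eta'$ at $p$ is
\[
\begin{tikzcd}
Y_n \ar[r, "\eta'_{Y,n}"] \ar[d, "p_n"'] & (Q\lowerstar Q\upperstar Y)_n \ar[d]  \\
X_n \ar[r, "\eta'_{X,n}"'] & (Q\lowerstar Q\upperstar X)_n,
\end{tikzcd}
\]
and by \cref{cor expectation 4} this is canonically identified with the square of mapping spaces
\[
\begin{tikzcd}
\Map(\Delta^n, Y) \ar[r, "(\varepsilon_{\Delta^n})\upperstar"] \ar[d, "p\lowerstar"'] & \Map(Q\lowershriek Q\upperstar \Delta^n, Y) \ar[d, "p\lowerstar"]  \\
\Map(\Delta^n, X) \ar[r, "(\varepsilon_{\Delta^n})\upperstar"'] & \Map(Q\lowershriek Q\upperstar \Delta^n, X),
\end{tikzcd}
\]
in which both horizontal maps are precomposition with the counit $\varepsilon_{\Delta^n} \colon Q\lowershriek Q\upperstar \Delta^n \to \Delta^n$ of the $Q\lowershriek \isleftadjointto Q\upperstar$ adjunction.

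The next step is to recognize the pullback of the bottom and right edges as the space of commutative squares with left column $\varepsilon_{\Delta^n}$ and right column $p$, and to recognize the comparison map out of $\Map(\Delta^n, Y)$ as the canonical map from the space of diagonal lifts to the space of such squares (sending $g \colon \Delta^n \to Y$ to the square whose top edge is $g \circ \varepsilon_{\Delta^n}$ and whose bottom edge is $p \circ g$, with $g$ itself as filler). The cartesianness of the naturality square is thus equivalent to this forgetful map being an equivalence, i.e.\ to the left-orthogonality $\varepsilon_{\Delta^n} \perp p$.

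Finally I would invoke \cref{prop counit ambifinal}, which says that $\varepsilon_{\Delta^n}$ is ambifinal, together with the definition of ambifinality as left-orthogonality to culf maps; since $p$ is culf by hypothesis, this gives the required orthogonality. There is no serious obstacle here: the argument is essentially an assembly of \cref{cor expectation 4} (to rewrite the square in terms of mapping spaces out of $\varepsilon_{\Delta^n}$) and \cref{prop counit ambifinal} (to conclude via the ambifinal--culf orthogonality). The only point needing care is the bookkeeping that identifies the naturality pullback with the lifting diagram for $\varepsilon_{\Delta^n}$ against $p$.
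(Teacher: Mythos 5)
Your proposal is correct and follows essentially the same route as the paper: verify the naturality square is a pullback levelwise, use \cref{cor expectation 4} to rewrite it as the mapping-space square for precomposition with $\varepsilon_{\Delta^n}$, and conclude from \cref{prop counit ambifinal} via the ambifinal--culf orthogonality. The paper leaves the translation between the pullback condition and the lifting/orthogonality condition implicit, which you simply spell out.
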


\begin{proof}
Let $p \colon Y \to X$ be culf. 
By \cref{culf<=>rfib} and \cref{prop rke rfib to culf}, $Q\lowerstar Q\upperstar (p)$ is culf as well.
We will show that the diagram
\[ \begin{tikzcd}
Y \dar[swap]{p} \rar{\eta'_Y} & Q\lowerstar Q\upperstar Y  \dar{Q\lowerstar Q\upperstar p}  \\
X \rar[swap]{\eta'_X} & Q\lowerstar Q\upperstar X
\end{tikzcd} \]
is a pullback in each simplicial degree $n$.
Using \cref{cor expectation 4}, this becomes
\[ \begin{tikzcd}[column sep={13em,between origins}]
\Map(\Delta^n, Y) \dar \rar{\Map(\varepsilon_{\Delta^n}, Y)} & 
\Map(Q\lowershriek Q\upperstar \Delta^n, Y) \dar \\
\Map(\Delta^n, X) \rar[swap]{\Map(\varepsilon_{\Delta^n}, X)} & 
\Map(Q\lowershriek Q\upperstar \Delta^n, X) ,
\end{tikzcd} \]
where $\varepsilon$ is the counit for the $Q\lowershriek  \dashv Q\upperstar$ adjunction.
By \cref{prop counit ambifinal}, $\varepsilon_{\Delta^n}$ is ambifinal, so this square is a pullback.
\end{proof}

\begin{lemma}\label{lem counit cart on rfib}
  The counit $\varepsilon'$ of the $Q\upperstar \dashv Q\lowerstar$
  adjunction is cartesian on right fibrations.
\end{lemma}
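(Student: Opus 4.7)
The plan is to mirror the proof of \cref{lem unit cartesian on culf}, but with the roles of the two adjunctions in the string $Q\lowershriek \isleftadjointto Q\upperstar \isleftadjointto Q\lowerstar$ swapped, and with ambifinality of $\varepsilon_{\Delta^n}$ replaced by finality of $\eta_{\Delta^n}$. Specifically, for a right fibration $p\colon Y \to X$, I would check in each simplicial degree $n$ that the naturality square
\[ \begin{tikzcd}
Q\upperstar Q\lowerstar Y \dar \rar{\varepsilon'_Y} & Y \dar{p} \\
Q\upperstar Q\lowerstar X \rar[swap]{\varepsilon'_X} & X
\end{tikzcd} \]
is a pullback.

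The first step is to produce a description of $\varepsilon'_X$ in simplicial degree $n$, analogous to \cref{cor expectation 4}. Applying the first square of \cref{lem adjoint string relation} to the adjoint string $Q\lowershriek \isleftadjointto Q\upperstar \isleftadjointto Q\lowerstar$ (that is, with $L = Q\lowershriek$, $F = Q\upperstar$, $R = Q\lowerstar$) at $b = \Delta^n$ and $c = X$ identifies the map $(Q\upperstar Q\lowerstar X)_n \to X_n$ induced by $\varepsilon'_X$ with the map
\[ \Map(\eta_{\Delta^n}, X) \colon \Map(Q\upperstar Q\lowershriek \Delta^n, X) \to \Map(\Delta^n, X), \]
where $\eta_{\Delta^n} \colon \Delta^n \to Q\upperstar Q\lowershriek \Delta^n = \sd(\Delta^{2n+1})$ is the unit of the $Q\lowershriek \isleftadjointto Q\upperstar$ adjunction. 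These identifications are natural in $X$, so under them the square in simplicial degree $n$ becomes
\[ \begin{tikzcd}[column sep={11em,between origins}]
\Map(Q\upperstar Q\lowershriek \Delta^n, Y) \dar \rar{\Map(\eta_{\Delta^n},Y)} & \Map(\Delta^n, Y) \dar{p_*} \\
\Map(Q\upperstar Q\lowershriek \Delta^n, X) \rar[swap]{\Map(\eta_{\Delta^n},X)} & \Map(\Delta^n, X).
\end{tikzcd} \]

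To complete the proof it is enough to observe that this last square is precisely the orthogonality square expressing that $p$ is right orthogonal to $\eta_{\Delta^n}$. Since $\eta_{\Delta^n}$ is final by \cref{cor counit final}, and $p$ is by assumption a right fibration, this orthogonality holds, so the square is a pullback. The only real input beyond \cref{cor counit final} is the compatibility between units and counits in the adjoint string provided by \cref{lem adjoint string relation}; once that is in hand the argument is essentially formal, and I do not anticipate a significant obstacle.
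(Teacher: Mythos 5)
Your proposal is correct and follows essentially the same route as the paper: identify $(\varepsilon'_X)_n$ with $\Map(\eta_{\Delta^n},X)$ via the first square of \cref{lem adjoint string relation}, then conclude from finality of $\eta_{\Delta^n}$ (\cref{cor counit final}) and the right-fibration hypothesis that the resulting mapping-space square is a pullback. The paper's proof is exactly this degreewise argument, mirroring \cref{lem unit cartesian on culf} as you anticipated.
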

\begin{proof}
  Let $f\colon Y \to X$ be a right fibration. We show that the diagram
  \[ \begin{tikzcd}
  Q\upperstar Q\lowerstar Y \dar[swap]{Q\upperstar Q\lowerstar (f)} \ar[r, 
  "\varepsilon'_Y"] & Y \ar[d, "f"] \\
  Q\upperstar Q\lowerstar X \rar[swap]{\varepsilon'_X}  & X
  \end{tikzcd} \]
  is a pullback in each simplicial degree $n$. Using \cref{lem adjoint
  string relation}, this becomes
  \[ 
  \begin{tikzcd}
\Map(\Delta^n,Q\upperstar Q\lowerstar Y) 
  \rar{\sim} \dar[swap]{\Map(\Delta^n,Q\upperstar Q\lowerstar (f))}  
&[-0.4cm]
\Map(Q\upperstar Q\lowershriek  \Delta^n,Y) 
   \rar{\Map(\eta_{\Delta^n}, Y)} \dar["{\Map(Q\upperstar Q\lowershriek \Delta^n,f)}"]
&[+1.2cm]
\Map(\Delta^n,Y) 
  \dar["{\Map(\Delta^n,f)}"] 
\\
\Map(\Delta^n, Q\upperstar Q\lowerstar X) 
  \rar{\sim} 
& 
\Map(Q\upperstar Q\lowershriek \Delta^n, X) \rar[swap]{\Map(\eta_{\Delta^n}, X)} 
& 
\Map(\Delta^n, X) ,
\end{tikzcd}
\]
  where $\eta$ is the unit for the $Q\lowershriek \dashv Q\upperstar$
  adjunction. Since $\eta_{\Delta^n}$ is final by \cref{cor counit final}
  and $f$ is a right fibration, this is a pullback.
\end{proof}

\begin{blanko}[Slicing adjunctions]\label{slicing adj}
Recall that given an adjunction
\[
\begin{tikzcd}
\DD \rar[bend left, "F"] \ar[r,phantom,"\scriptstyle\perp"] & \CC \lar[bend left, "G"] 
\end{tikzcd}
\]
where $\DD$ has pullbacks, we can obtain an adjunction
\[
\begin{tikzcd}
\DD_{/d}
\rar[bend left, "F_d", start anchor = {[yshift=1.5ex]east}, end anchor = {[yshift=1.5ex]west}] 
\rar[phantom,"\scriptstyle\perp"] & 
\CC_{/Fd} 
\lar[bend left, start anchor = {[yshift=-1.5ex]west}, end anchor = {[yshift=-1.5ex]east}] 
\end{tikzcd}
\]
whose right adjoint is given by applying $G_{Fd} \colon \CC_{/Fd} \to
\DD_{/GFd}$ and then pulling back along the unit $\eta_d \colon d \to GFd$
(see \cite[Proposition 5.2.5.1]{HTT}). 

Instantiating to the $Q\upperstar \dashv Q\lowerstar$
adjunction, we get the sliced adjunction
\[
\begin{tikzcd}
\simpspaces_{/X} 
\rar[bend left, "\sd_X", start anchor = {[yshift=1.5ex]east}, end anchor = {[yshift=1.5ex]west}] 
\rar[phantom,"\scriptstyle\perp"] & 
\simpspaces_{/\sd(X)}  .
\lar[bend left, "(\eta'_X)\upperstar \circ Q\lowerstar", start anchor = {[yshift=-1.5ex]west}, end anchor = {[yshift=-1.5ex]east}] 
\end{tikzcd}
\]
By \cref{culf<=>rfib} and \cref{prop rke rfib to culf}, this adjunction 
restricts to an adjunction
\begin{equation}\label{eq culf rfib}
\begin{tikzcd}
\culf(X) 
\rar[bend left, "\sd_X", start anchor = {[yshift=1.5ex]east}, end anchor = {[yshift=1.5ex]west}] 
\rar[phantom,"\scriptstyle\perp"] & 
\simprfib(\sd X)  .
\lar[bend left, "(\eta'_X)\upperstar \circ Q\lowerstar", start anchor = {[yshift=-1.5ex]west}, end anchor = {[yshift=-1.5ex]east}] 
\end{tikzcd}
\end{equation}
In detail, the right adjoint acts on a
given right fibration $W \to Q\upperstar X$ by first
applying $Q\lowerstar$
to get a culf map $Q\lowerstar W \to Q\lowerstar Q\upperstar X$ (by 
Proposition~\ref{prop rke rfib to culf}), and then
pulling back along the unit $\eta_X'$ of the $Q\upperstar \dashv Q\lowerstar$ 
adjunction to get a culf map $Y \to X$ as in 
\[ \begin{tikzcd}
Y \rar \dar \drpullback & Q\lowerstar W \dar \\
X \rar[swap]{\eta_X'} & Q\lowerstar Q\upperstar X .
\end{tikzcd} \]
\end{blanko}

The next two lemmas together show that the adjunction \eqref{eq culf rfib} is an adjoint 
equivalence, so that in particular the functor $(\eta'_X)\upperstar \circ 
Q\lowerstar$ is an alternative description of the inverse to
the 
equivalence displayed in \cref{thm_c}.

\begin{lemma}\label{lem composition first way}
The counit
\[
\begin{tikzcd}
\simprfib(\sd X) \rar{Q\lowerstar} \ar[rrr, bend right=15,"\id"'] \ar[rrr, phantom, bend right=8, "\Downarrow"] & \culf(Q\lowerstar Q\upperstar X) \rar{(\eta'_X)\upperstar}  & \culf(X) \rar{Q\upperstar} & \simprfib(\sd X)
\end{tikzcd}
\]
of the adjunction \eqref{eq culf rfib} is an equivalence.
\end{lemma}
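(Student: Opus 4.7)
The plan is to unfold the composite and recognize it as a pair of pasted pullback squares whose outer rectangle has identity along the bottom.

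First, I unpack the composite. Given a right fibration $W \to \sd X$, the functor $Q\lowerstar$ produces the culf map $Q\lowerstar W \to Q\lowerstar Q\upperstar X$ (culf by \cref{prop rke rfib to culf}). Pulling back along $\eta'_X \colon X \to Q\lowerstar Q\upperstar X$ gives a culf map $Y \to X$ sitting in the pullback square
\[\begin{tikzcd}
Y \rar \dar \drpullback & Q\lowerstar W \dar \\
X \rar["\eta'_X"'] & Q\lowerstar Q\upperstar X.
\end{tikzcd}\]
Applying $Q\upperstar$ (which preserves pullbacks, since it is both a left and a right adjoint) yields a pullback square on the left of the diagram
\[\begin{tikzcd}
Q\upperstar Y \rar \dar \drpullback & Q\upperstar Q\lowerstar W \dar \rar["\varepsilon'_W"] \drpullback & W \dar \\
Q\upperstar X \rar["Q\upperstar \eta'_X"'] & Q\upperstar Q\lowerstar Q\upperstar X \rar["\varepsilon'_{Q\upperstar X}"'] & Q\upperstar X.
\end{tikzcd}\]

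Next I exhibit the square on the right as a pullback. Since $W \to Q\upperstar X$ is a right fibration, \cref{lem counit cart on rfib} says that the naturality square for $\varepsilon'$ at this map is cartesian, which is exactly the right-hand square above. Pullback pasting then shows that the outer rectangle is a pullback as well.

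The bottom edge of the outer rectangle is $\varepsilon'_{Q\upperstar X} \circ Q\upperstar \eta'_X$, which equals $\id_{Q\upperstar X}$ by the triangle identity for $Q\upperstar \dashv Q\lowerstar$. Therefore the outer pullback square exhibits the top edge $Q\upperstar Y \to W$ as the pullback of $W \to Q\upperstar X$ along the identity, which is an equivalence. This equivalence is the component at $W$ of the natural transformation in the statement (obtained from the triangle identity and the counit of the adjoint equivalence of \ref{slicing adj}), and naturality in $W$ is automatic from naturality of $\varepsilon'$ and from the universal property of pullback. The only potential obstacle is verifying that the comparison map produced by this pasting genuinely is the component of the displayed natural transformation rather than some other equivalence, but this is forced by the construction of the counit of the sliced adjunction in \ref{slicing adj} together with the triangle identities.
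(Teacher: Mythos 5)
Your proposal is correct and follows essentially the same route as the paper's proof: apply $Q\upperstar$ to the defining pullback square (using that $Q\upperstar$ is a right adjoint), paste with the cartesian naturality square of $\varepsilon'$ from \cref{lem counit cart on rfib}, and conclude from the triangle identity that the outer rectangle has identity along the bottom, so the top edge $Q\upperstar Y \to W$ is an equivalence. Your closing remark about identifying this equivalence with the component of the displayed natural transformation is a welcome extra precision that the paper leaves implicit.
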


\begin{proof}
Let $p\colon W \to Q\upperstar X$ be a right fibration. 
Applying $Q\lowerstar$ and pulling back along $\eta'_X$ gives 
the culf map $q$ on the left in
\[ \begin{tikzcd}
Y \rar  \dar[swap]{q} \drpullback & Q\lowerstar W \ar[d, "Q\lowerstar (p)"]  \\
X \rar[swap]{\eta'_X} & Q\lowerstar Q\upperstar X.
\end{tikzcd} \]
This is sent by $Q\upperstar$ to the left square in the following diagram, which is again 
a pullback since $Q\upperstar$ is also a right adjoint.
\[ \begin{tikzcd}[column sep=large]
Q\upperstar Y  \drpullback \ar[r] \ar[d, "Q\upperstar (q)"'] &
Q\upperstar Q\lowerstar  W \drpullback \dar[swap]{Q\upperstar Q\lowerstar (p)} \rar{\varepsilon_W'} &
W \dar{p} \\
Q\upperstar X 
\rar{Q\upperstar (\eta'_X)} \ar[rr, bend right=20, "\id"'] & 
Q\upperstar Q\lowerstar Q\upperstar X \rar{\varepsilon_{Q\upperstar X}'} &
Q\upperstar X
\end{tikzcd} \]
The right square is a pullback by Lemma~\ref{lem counit cart on rfib}.
Since the large rectangle is thus a pullback, with bottom edge an identity,
it follows that the top edge $Q\upperstar  Y \to W$ is an equivalence.
Hence $Q\upperstar q \simeq p$, as desired. 
\end{proof}

\begin{lemma}
The unit
\[
\begin{tikzcd}
\culf(X) \rar[swap]{Q\upperstar} \ar[rrr, bend left=15,"\id"] \ar[rrr, phantom, bend left=8, "\Downarrow"] & \simprfib(\sd X) \rar[swap]{Q\lowerstar} & \culf(Q\lowerstar Q\upperstar X) \rar[swap]{(\eta'_X)\upperstar}  & \culf(X)
\end{tikzcd}
\]
of the adjunction \eqref{eq culf rfib} is an equivalence.
\end{lemma}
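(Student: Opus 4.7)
My plan is to observe that this natural transformation, in the sliced adjunction described in \ref{slicing adj}, is nothing but the unit of that adjunction restricted to culf maps. So showing it is an equivalence amounts to showing that the unit is an equivalence on each culf map $p\colon Y \to X$. Given such a $p$, unwinding the construction of the sliced right adjoint as in \ref{slicing adj}, the composite $(\eta'_X)\upperstar \circ Q\lowerstar \circ Q\upperstar$ applied to $p$ produces the culf map $q\colon Y' \to X$, where $Y'$ is defined by the pullback square
\[
\begin{tikzcd}
Y' \drpullback \rar \dar[swap]{q} & Q\lowerstar Q\upperstar Y \dar{Q\lowerstar Q\upperstar p} \\
X \rar[swap]{\eta'_X} & Q\lowerstar Q\upperstar X .
\end{tikzcd}
\]
The unit of the sliced adjunction at $p$ is then the canonical map $Y \to Y'$ induced, via the universal property of this pullback, by the naturality square of $\eta'$ along $p$.

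The key step is to invoke \cref{lem unit cartesian on culf}, which asserts precisely that the naturality square
\[
\begin{tikzcd}
Y \dar[swap]{p} \rar{\eta'_Y} & Q\lowerstar Q\upperstar Y \dar{Q\lowerstar Q\upperstar p} \\
X \rar[swap]{\eta'_X} & Q\lowerstar Q\upperstar X
\end{tikzcd}
\]
is itself a pullback whenever $p$ is culf. Comparing this with the pullback square defining $Y'$, we conclude that the comparison map $Y \to Y'$ over $X$ is an equivalence, which is exactly what is needed.

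There is no real obstacle beyond applying \cref{lem unit cartesian on culf}: that lemma, together with the construction of the right adjoint of a sliced adjunction as pullback along the unit (\ref{slicing adj}), gives the result immediately. Together with \cref{lem composition first way}, this establishes that the sliced adjunction $\sd_X \dashv (\eta'_X)\upperstar \circ Q\lowerstar$ restricts to an adjoint equivalence between $\culf(X)$ and $\simprfib(\sd X)$, furnishing the second proof of \cref{thm_c} with the promised explicit right-adjoint description of the inverse.
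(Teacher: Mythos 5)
Your proposal is correct and follows essentially the same route as the paper: the paper's proof simply exhibits the naturality square of $\eta'$ at a culf map $p$ as a pullback by \cref{lem unit cartesian on culf}, which (given the description of the sliced right adjoint as $(\eta'_X)\upperstar \circ Q\lowerstar$ from \ref{slicing adj}) is exactly the statement that the unit comparison $Y \to Y'$ over $X$ is an equivalence. Your write-up just makes explicit the identification of the displayed natural transformation with that unit, which the paper leaves implicit.
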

\begin{proof}
If $p\colon Y \to X$ is culf, then 
\[ \begin{tikzcd}
Y \dar[swap]{p} \rar{\eta'_Y} \drpullback & Q\lowerstar Q\upperstar Y  \dar{Q\lowerstar Q\upperstar (p)} \\
X \rar[swap]{\eta'_X} & Q\lowerstar Q\upperstar X
\end{tikzcd} \]
is a pullback by \cref{lem unit cartesian on culf}.
\end{proof}

Combining the previous two lemmas, we have established our second proof of 
\cref{thm_c}:
\begin{theorem}[\cref{thm_c}]\label{thm_c_actual_2}
The adjunction $\sd_X \colon \simpspaces_{/X} \rightleftarrows \simpspaces_{/\sd X}$ restricts to an equivalence $\culf(X) \simeq \RFib(\sd X)$.
\end{theorem}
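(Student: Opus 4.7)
The plan is to observe that the preceding two lemmas together deliver the theorem almost immediately; the proof amounts to organizing what is already known. First, I would invoke the slicing construction for adjunctions from \ref{slicing adj} to obtain the adjunction
$$\sd_X \colon \simpspaces_{/X} \rightleftarrows \simpspaces_{/\sd X} : (\eta'_X)\upperstar \circ Q\lowerstar.$$
Next, I would verify that this adjunction restricts to the full sub-$\infty$-categories $\culf(X) \subset \simpspaces_{/X}$ and $\simprfib(\sd X) \subset \simpspaces_{/\sd X}$. That $\sd_X$ sends culf maps to right fibrations is \cref{culf<=>rfib}. That $(\eta'_X)\upperstar \circ Q\lowerstar$ sends right fibrations to culf maps follows by combining \cref{prop rke rfib to culf} (which gives that $Q\lowerstar$ takes right fibrations to culf maps) with the fact that base change preserves culf maps. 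This restricted adjunction is precisely the one displayed at the end of \ref{slicing adj}.

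The main step is then to identify the unit and counit of the restricted adjunction as equivalences. This is exactly the content of the two preceding lemmas: \cref{lem composition first way} shows that $\sd_X \circ (\eta'_X)\upperstar \circ Q\lowerstar \simeq \id$ on $\simprfib(\sd X)$, and its companion shows the reverse composite is equivalent to the identity on $\culf(X)$. Applying the standard fact that an adjunction whose unit and counit are both equivalences is an adjoint equivalence yields $\culf(X) \simeq \simprfib(\sd X)$.

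I do not expect a real obstacle at this stage, since all substantive ingredients (the cartesianness of $\eta'$ on culf maps, the cartesianness of $\varepsilon'$ on right fibrations, the finality of $\eta_{\Delta^n}$, and the ambifinality of $\varepsilon_{\Delta^n}$) have been carried out in the preceding results. The only minor matter is a bookkeeping check that the natural transformations exhibited in the two preceding lemmas are indeed the unit and counit of the sliced adjunction; this is a direct unpacking of the construction in \ref{slicing adj} using the triangle identities, and reduces to observing that the pullback square defining the right adjoint on objects is functorial in a way compatible with $\eta'$ and $\varepsilon'$.
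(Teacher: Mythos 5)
Your proposal matches the paper's own argument: the paper likewise takes the sliced adjunction from \ref{slicing adj}, restricts it using \cref{culf<=>rfib} and \cref{prop rke rfib to culf}, and concludes by "combining the previous two lemmas," which show that both composites are equivalent to the identities. The only remark is that your final bookkeeping step is not strictly needed, since two functors each of whose composites is naturally equivalent to an identity are already mutually inverse equivalences, regardless of whether the exhibited transformations are the actual unit and counit.
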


%%%%%%%%%%%%%%%%%%%%%%%%%%%%%%
\section{Decomposition spaces and Rezk completeness}
%%%%%%%%%%%%%%%%%%%%%%%%%%%%%%

\begin{blanko}[Decomposition spaces/$2$-Segal spaces]\label{decomp}
  A {\em decomposition space} \cite{GKT1} (or {\em $2$-Segal
  space}~\cite{Dyckerhoff-Kapranov:1212.3563}) is a simplicial 
  $\infty$-groupoid $X \colon \simplexcategory\op\to\spaces$ that takes 
  active-inert pushouts in $\simplexcategory$ to pullbacks in $\spaces$.
Namely, each span $[m] \leftarrowtail [n] \actto [p]$ in $\simplexcategory$ admits a pushout
\[ \begin{tikzcd}
{[n]} \rar[-act] \dar[tail] \drar[phantom, "\ulcorner" very near end] & {[p]} \dar[tail] \\
{[m]} \rar[-act] & {[q]} ,
\end{tikzcd} \]
and each such pushout is sent to a pullback of $\infty$-groupoids.
\end{blanko}

\begin{lemma}[Bergner et al.~\cite{BOORS:Edgewise}]
\label{BOORS:Edgewise}
  A simplicial space $X$ is a decomposition space if and only if $\sd(X)$ is a Segal space.
\end{lemma}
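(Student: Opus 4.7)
My plan is to translate the Segal condition on $\sd X = Q\upperstar X$ into a family of pullback conditions on $X$, and to recognize those as instances of the active-inert pullback axiom defining a decomposition space.

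Using the explicit description of $Q[n] = [n]\op\join[n] = [2n{+}1]$, one first observes that $(\sd X)_n = X_{2n+1}$, $(\sd X)_1 = X_3$, and $(\sd X)_0 = X_1$, so the Segal condition for $\sd X$ at level $n \ge 2$ unpacks to the requirement that
\[
X_{2n+1} \longrightarrow X_3 \times_{X_1} X_3 \times_{X_1} \cdots \times_{X_1} X_3 \qquad (n \text{ copies})
\]
is an equivalence, with structure maps obtained by applying $Q$ to the standard Segal cover of $[n]$. The crucial observation is that each such condition corresponds, under $Q$, to an explicit active-inert pushout in $\simplexcategory$. For $n = 2$ the relevant pushout is
\[
\begin{tikzcd}
{[1]} \ar[r, "\iota"] \ar[d, "\alpha"'] & {[3]} \ar[d, "\mu_1"] \\
{[3]} \ar[r, "\mu_0"'] & {[5]}
\end{tikzcd}
\]
where $\iota \colon [1]\inertto[3]$ is the inert middle edge, $\alpha \colon [1]\actto[3]$ the active long edge, $\mu_0 \colon [3]\inertto[5]$ the inert inclusion at positions $\{1,2,3,4\}$, and $\mu_1 \colon [3]\to[5]$ the endpoint-preserving (hence active) map at positions $\{0,1,4,5\}$; a direct check shows this really is a pushout in $\simplexcategory$, identifying the endpoints of one copy of $[3]$ with the two middle vertices of the other to produce $[5]$. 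Higher-level Segal conditions admit analogous active-inert pushout descriptions which can be obtained iteratively.

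Given this identification, the forward direction ($X$ decomposition $\Rightarrow$ $\sd X$ Segal) is immediate: the active-inert pullback axiom applied to squares as above yields the required Segal equivalences for $\sd X$. The main obstacle is the converse: given $\sd X$ Segal, one must show $X$ satisfies \emph{every} active-inert pullback condition, not only those arising from the Segal covers of $\sd X$. I would handle this by showing that an arbitrary active-inert pushout in $\simplexcategory$ can be embedded into a pushout arising from the Segal cover of $\sd X$ at a sufficiently high level, then recovering the original pullback via pullback pasting and cancellation. Unitality (for codegeneracies) is automatic via the Feller et al.\ theorem on unitality of $2$-Segal spaces cited in the excerpt; carrying out the reduction carefully is the principal content of Bergner--Osorno--Ozornova--Rovelli--Scheimbauer.
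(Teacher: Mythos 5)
First, note that the paper does not actually prove this statement: it is quoted from Bergner--Osorno--Ozornova--Rovelli--Scheimbauer and used as a black box, so there is no internal proof to compare against; your proposal has to stand on its own. Your forward direction does stand: the identification $(\sd X)_n = X_{2n+1}$, the computation that the Segal maps of $\sd X$ are restriction along the nested inclusions $Q(\rho_i)$, and the level-$2$ square you display (it is indeed an active--inert pushout, with $\alpha,\mu_1$ active and $\iota,\mu_0$ inert, whose image under $X$ is exactly the Segal square for $\sd X$ in degree $2$) are all correct, and the iterated version handles higher Segal maps. So ``decomposition $\Rightarrow$ $\sd X$ Segal'' is fine.

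The gap is in the converse, which is the entire content of the theorem. Your plan --- embed an arbitrary active--inert pushout into a pushout coming from a Segal cover of $\sd X$ at high level and then recover the original square by pullback pasting and cancellation --- cannot work as stated. Every space occurring in the Segal conditions for $\sd X$ is an odd-degree piece $X_{2k+1}$ of $X$, and every map occurring there is of the form $X(Q\phi)$; pasting and cancellation among such squares can only ever produce pullback squares whose corners are again odd-degree spaces connected by such maps. But the generating decomposition-space squares (inner face against $d_\bot$ or $d_\top$) involve three consecutive degrees $X_{n-1}, X_n, X_{n+1}$, hence always an even-degree space. Bridging this parity mismatch requires auxiliary retract/degeneracy arguments --- exactly the kind of trick this paper uses elsewhere, e.g.\ in \cref{ulf=culf}, \cref{culf<=>rfib} and \cref{remark gen set sigma}, where conditions at $\Delta^{2n}$ are deduced from those at $\Delta^{2n+1}$ by exhibiting a retract --- and the Feller et al.\ unitality theorem you invoke only disposes of the codegeneracy squares, not of this issue. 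Since you also explicitly defer ``carrying out the reduction carefully'' to the cited BOORS paper, the hard implication is not proved in your proposal; to complete it you would need either the retract-based reduction just described or an alternative route (e.g.\ via the path-space/d\'ecalage criterion for $2$-Segal spaces).
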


The following is \cref{thm_d} from the introduction.
\begin{theorem}[\cref{thm_d}]\label{thm_d_actual}
  The $\infty$-category of decomposition spaces and culf maps is 
  locally an $\infty$-topos. 
  More precisely, for $X$ a 
  decomposition space, we have an equivalence
  $$
  \Decomp_{/X} \simeq \simprfib(\sd X) \simeq \simprfib(\widehat{\sd X}) \simeq  \PrSh(\widehat {\sd X}).
  $$
\end{theorem}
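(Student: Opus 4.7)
The plan is to establish each of the three equivalences in the chain separately. The first equivalence $\Decomp_{/X} \simeq \simprfib(\sd X)$ is essentially a direct application of \cref{thm_c_actual_2}: I would first observe that the natural embedding $\Decomp_{/X} \hookrightarrow \culf(X)$ is an equivalence when $X$ is a decomposition space, since being culf over a decomposition space forces the domain to be a decomposition space as well (this is the cancellation-style observation from \cite{GKT1} that is cited in the introductory discussion right after the statement of \cref{thm_d}). Composing with the equivalence $\culf(X) \simeq \simprfib(\sd X)$ of \cref{thm_c_actual_2} then yields the first identification.

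For the second equivalence $\simprfib(\sd X) \simeq \simprfib(\widehat{\sd X})$, I would first invoke \cref{BOORS:Edgewise} to conclude that $\sd X$ is a Segal space (this is where the decomposition-space hypothesis on $X$ is actually used, beyond the first step). The Rezk completion map $\sd X \to \widehat{\sd X}$ is then a Dwyer--Kan/complete Segal space equivalence of Segal spaces. The plan is to show that pullback along any such equivalence induces an equivalence on $\infty$-categories of right fibrations. Concretely, by \cref{lem rfib creates segal} right fibrations over Segal spaces are Segal, so via the nerve-inclusion and \cref{prop CFS restriction} the statement reduces to the $\infty$-categorical fact that right fibrations are invariant under equivalences of $\infty$-categories—base change along an equivalence is an equivalence of slices, and preserves the right class of the comprehensive factorization system.

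For the third equivalence $\simprfib(\widehat{\sd X}) \simeq \PrSh(\widehat{\sd X})$, since $\widehat{\sd X}$ is a Rezk complete Segal space, it corresponds under the nerve functor to an $\infty$-category $\CC$. By \cref{prop CFS restriction} and the discussion following it, $\simprfib(\widehat{\sd X}) \simeq \RFib(\CC)$; then straightening--unstraightening \cite{HTT} gives $\RFib(\CC) \simeq \PrSh(\CC)$, and we identify $\PrSh(\CC)$ with $\PrSh(\widehat{\sd X})$.

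The main obstacle is Step~2: the invariance of $\simprfib(-)$ under Rezk completion. Once one knows that $\sd X$ is Segal, the most conceptual route is to appeal to \cref{prop CFS restriction}—reducing the assertion to the $\infty$-categorical statement that equivalences of $\infty$-categories induce equivalences on slices of right fibrations—but care is required because $\sd X$ is in general only Segal and not Rezk complete, so one must first pass through the nerve model via the Rezk completion and verify that $\simprfib$ of a Segal space is unchanged when passing to its completion. The remaining two steps are essentially bookkeeping and straightening--unstraightening.
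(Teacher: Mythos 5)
Your overall decomposition into three equivalences is the same as the paper's proof, and your first step ($\Decomp_{/X}\simeq\culf(X)$ because culf over a decomposition space is again a decomposition space, followed by \cref{thm_c_actual_2}) and third step (straightening--unstraightening for the complete Segal space $\widehat{\sd X}$, via the nerve) are exactly what the paper does. The genuine gap is your second step, which you correctly identify as the main obstacle but do not actually prove. The proposed reduction---``via the nerve-inclusion and \cref{prop CFS restriction} the statement reduces to the $\infty$-categorical fact that right fibrations are invariant under equivalences of $\infty$-categories''---does not go through: $\sd X$ is only Segal, not Rezk complete (and by \cref{BOORS:Edgewise} that is all the decomposition-space hypothesis gives you), so neither $\sd X$ nor the total spaces of right fibrations over it need lie in the essential image of the nerve, and \cref{prop CFS restriction} says nothing about them. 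Moreover the completion map $\sd X\to\widehat{\sd X}$ is a Dwyer--Kan equivalence but not a levelwise equivalence of simplicial spaces, so no formal ``base change along an equivalence of slices'' argument is available; invariance of $\simprfib(-)$ under Dwyer--Kan equivalences of Segal spaces is precisely the nontrivial point at stake. Your fallback---``verify that $\simprfib$ of a Segal space is unchanged when passing to its completion''---is a restatement of the claim to be proved, so as written the step is circular.

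The missing ingredient is exactly \cref{rfib-hat} (proved as \cref{prop rfib equiv X LX}), to which the paper devotes its appendix: one introduces relative complete maps between Segal spaces, shows that right fibrations between Segal spaces are relative complete (\cref{rf rel comp}), shows the completion unit is cartesian on relative complete maps (\cref{rel complete cartesian}), proves that Rezk completion is a semi-left-exact localization (\cref{prop semi left exact}), and deduces that $L$ preserves right fibrations and that pullback along $\alpha_{\sd X}$ gives the equivalence $\simprfib(\widehat{\sd X})\simeq\simprfib(\sd X)$. Some argument of this kind (or an appeal to Boavida's result, which is the model-categorical antecedent) is required to close your second step; without it the chain of equivalences is not established.
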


Here $\widehat{(-)}$ denotes the Rezk completion of a Segal space.

\begin{proof}
  The first step in the equivalence is \cref{thm_c}. The second step is
  \cref{rfib-hat} below, which says that Rezk completion of Segal spaces
  does not affect right fibrations --- note that $\sd(X)$ is a Segal space
  since $X$ is a decomposition space. The last step is
  straightening/unstraightening for complete Segal spaces. 
\end{proof}

\begin{prop}\label{rfib-hat}
  Suppose $X$ is a Segal space.
  Then pulling back along the completion map $X \to \widehat{X}$ 
  induces an equivalence $\RFIB(\widehat{X}) \to \RFIB(X)$.
\end{prop}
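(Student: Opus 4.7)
The plan is to exhibit the inverse of $c_X^*\colon \RFIB(\widehat X) \to \RFIB(X)$ as the operation that sends a right fibration $p\colon Y \to X$ to the Rezk completion $\widehat p\colon \widehat Y \to \widehat X$. This is well-defined because $Y$ is automatically Segal by \cref{lem rfib creates segal}, so Rezk completion applies; functoriality then supplies the induced map $\widehat p$ together with a naturality square
\[
\begin{tikzcd}
Y \ar[d, "p"'] \ar[r, "c_Y"] & \widehat Y \ar[d, "\widehat p"] \\
X \ar[r, "c_X"'] & \widehat X.
\end{tikzcd}
\]
The verification then rests on three claims: (a) $\widehat p$ is a right fibration; (b) the naturality square above is a pullback; and (c) for any right fibration $q\colon Z \to \widehat X$ (where $Z$ is already Rezk complete by \cref{lem rfib creates segal}), the Rezk completion of the pullback $c_X^* q \to X$ recovers $q$ up to equivalence.

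The underlying principle for all three claims is that right fibrations over a Segal space $B$ correspond, via straightening--unstraightening, to space-valued presheaves on $B$ (cf.~\cite{Boavida:SOGC}), and that restriction of presheaves along the completion $c_X$ is an equivalence because $c_X$ is a categorical equivalence of Segal spaces. For claim (b), this principle implies that the fiber of $\widehat p$ over $c_X(x) \in \widehat X_0$ agrees with the fiber of $p$ over $x \in X_0$: right fibrations have equivalent fibers over equivalent objects, and completion identifies equivalent objects. Combined with essential surjectivity of $c_X$, this shows the comparison map $Y \to X \times_{\widehat X} \widehat Y$ of right fibrations over $X$ is fibrewise an equivalence, hence an equivalence. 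Claim (a) then follows from (b) by applying \cref{lem rfib between segal spaces} and transporting the cartesianness of $p$ on $d^0$ across the fully faithful $c_X$. Claim (c) is the dual statement, obtained directly from the presheaf-level equivalence.

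The main obstacle is a clean formulation of straightening--unstraightening for right fibrations over a (not necessarily complete) Segal space, together with the compatibility of base change along $c_X$ with restriction of the corresponding presheaves. Once this input is drawn from \cite{Boavida:SOGC}, the theorem reduces to the conceptually transparent statement that a categorical equivalence between Segal spaces induces an equivalence on their presheaf $\infty$-categories by restriction.
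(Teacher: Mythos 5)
Your skeleton is the right one: the inverse is indeed Rezk completion on slices, and your claims (a), (b), (c) correspond exactly to what the appendix proves (\cref{L preserves rfib}, \cref{rel complete cartesian} applied to right fibrations via \cref{rf rel comp}, and the counit argument in \cref{prop rc maps equiv}/\cref{prop rfib equiv X LX}). The genuine gap is that you never prove any of these three claims: all of them are justified by appealing to ``straightening--unstraightening of right fibrations over a (non-complete) Segal space, plus the fact that restriction along $c_X$ is an equivalence of presheaf categories because $c_X$ is a categorical equivalence.'' That invariance statement \emph{is} \cref{rfib-hat} (it is essentially Boavida's Corollary 5.6, which the appendix credits and then reproves synthetically), so as written the argument is circular, or at best reduces to citing the theorem in a model-categorical formulation that would still need to be translated into the present setting and checked to be compatible with pullback along $c_X$. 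A Dwyer--Kan equivalence is far from a levelwise equivalence, and the assertion that the fibration/presheaf theory only depends on the completion is precisely the nontrivial content; nothing in the paper's framework lets you quote it for free.

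The same circularity appears concretely in your fibrewise argument for (b): the fibre of $\widehat p$ over $c_X(x)$ is computed in $\widehat Y$, whose zeroth space is genuinely different from $Y_0$ (completion adds paths between equivalent objects of $Y$), and identifying it with the fibre of $p$ over $x$ is exactly the assertion that the naturality square for the completion unit is a pullback --- i.e.\ claim (b) itself. What the paper actually supplies to fill these holes is: the notion of relative complete map, the fact that right fibrations between Segal spaces are relative complete (\cref{rf rel comp}, via the finality of $\Delta^0\to E(1)$ from \cref{lem: terminal objects}); the cartesianness of the completion unit on relative complete maps (\cref{rel complete cartesian}), which rests on stability of $\pi_0$-surjective Dwyer--Kan equivalences under pullback (\cref{lem DK pullback}), $\pi_0$-surjectivity of completion maps (\cref{map to LX is surjective}), and the characterization of levelwise equivalences as Dwyer--Kan $+$ relative complete (\cref{prop equiv Segal}); semi-left-exactness of $L$ (\cref{prop semi left exact}) to get your claim (c); and a separate argument that $L$ preserves right fibrations (\cref{L preserves rfib}). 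If you want to salvage your outline, you must prove (a), (b), (c) by arguments of this kind rather than by invoking a presheaf-level equivalence over the uncompleted Segal space; also note that to get an equivalence of $\infty$-categories (not just object-wise inverse constructions) you should, as the paper does, run the argument through the sliced adjunction of \ref{slicing adj} and \cite[5.2.5.1]{HTT}.
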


This is proved in the appendix.

\bigskip

We finish the paper with the observation (\cref{prop twist rezk}) that in many cases it is not necessary to Rezk complete, namely when the decomposition space itself is Rezk complete, as is usually the case for
decomposition spaces of combinatorial origin (e.g.\ M\"obius decomposition spaces \cite[Corollary 8.7]{GKT2}). For this we first need a few results about Rezk completeness for decomposition spaces.

\begin{blanko}[Equivalences]
  Let $X$ be a decomposition space. An arrow $f\in X_1$ is called an
  {\em equivalence} if there exists $\sigma \in X_2$ such that
  $d_2(\sigma) = f$ and $d_1(\sigma) = s_0 d_1 (f)$ and there exists
  $\tau\in X_2$ such that $d_0(\tau) = f$ and $d_1(\tau) = s_0 d_0 (f)$:
  \[
\begin{tikzcd}[column sep={2.7em,between origins}]
 & y \ar[rd,"g"] &   \\
x \ar[ru, "f"] 
\ar[rr, "\id_x"'] 
&\ar[u, phantom, "\sigma" description] & x
\end{tikzcd}
\qquad
\begin{tikzcd}[column sep={2.7em,between origins}]
 y \ar[rr, "\id_y"]
 \ar[rd,"h"'] 
 &\ar[d, phantom, pos=0.35, "\tau" description]& y \\
 & x \ar[ru, "f"'] &
\end{tikzcd}
\]
  We denote by $X_1^{\operatorname{eq}} \subset X_1$ the full sub
  $\infty$-groupoid spanned by the equivalences. Note that degenerate arrows are always equivalences.
\end{blanko}

\begin{blanko}[Remark]
  Feller~\cite{Feller:2204.01910} proposes a stronger notion of equivalence for $1$-simplices in a decomposition space, which are instead witnessed by maps from the nerve of the free-living isomorphism.
  For Segal spaces, this makes no difference (by \cite[Theorem~6.2]{Rezk:MHTHT}), but generally this change will result in a potentially larger class of Rezk complete decomposition spaces than the following definition from \cite[5.13]{GKT2}.
\end{blanko}

\begin{blanko}[Rezk completeness]\label{def:Rezk}
  A decomposition space $X$ is called {\em Rezk complete} when the
  canonical map $s_0 \colon X_0 \to X_1^{\operatorname{eq}}$ is a homotopy equivalence.
\end{blanko}

\begin{blanko}[Remark]\label{remark on rezk completeness}
The Rezk completeness condition as formulated here only refers 
to $1$-simplices. However, since decomposition spaces have the 
property that degeneracy can be detected on principal edges \cite[\S2]{GKT2},
the $1$-dimensional condition implies other conditions  corresponding to degeneracies. We will not attempt at
distilling this observation into a general statement, but only prove
the following illustrative case, which we will actually need in the proof of \cref{prop twist rezk}. Intuitively it says that 
the space of $3$-simplices whose first and third principal
  edges are equivalences is homotopy equivalent to $X_1$. 
\end{blanko}

\begin{lemma}\label{010-equivalence}
  For any Rezk complete decomposition space $X$, 
  the square 
  \[\begin{tikzcd}
    X_1 \rar{s_\bot s_\top} \dar[swap]{(s_0d_1 , \id , s_0d_0)} & X_3 
  \dar{(d_\top d_\top , d_\top d_\bot , d_\bot d_\bot)} \\
    X_1^{\operatorname{eq}} \times X_1 \times X_1^{\operatorname{eq}} \rar & X_1 \times X_1 \times X_1
  \end{tikzcd}
  \]
is a pullback.
\end{lemma}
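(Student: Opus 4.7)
My plan is to reduce the three-dimensional claim to two symmetric two-dimensional claims via the Segal decomposition of $X_3$ coming from the upper d\'ecalage, and then to derive each of those from the ``degeneracy detected on principal edges'' property of decomposition spaces \cite[\S2]{GKT2} combined with Rezk completeness.

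Since $X$ is a decomposition space, $\Dectop X$ is a Segal space, and its level-two Segal map identifies $X_3$ with the fiber product $X_2 \times_{X_1} X_2$ via $\sigma \mapsto (d_2\sigma, d_0\sigma)$, fibered over the common middle principal edge $d_0 d_2\sigma = d_1 d_0\sigma$. Writing $(\alpha,\beta)$ for such a matching pair, the first principal edge of $\sigma$ is $d_2\alpha$ and the third is $d_0\beta$, so the pullback appearing in the statement is identified with
\[
\{\alpha \in X_2 : d_2\alpha \in X_1^{\operatorname{eq}}\} \times_{X_1} \{\beta \in X_2 : d_0\beta \in X_1^{\operatorname{eq}}\},
\]
the maps to $X_1$ being $d_0$ on the left and $d_1$ on the right.

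The heart of the proof is the sub-claim that $d_0 \colon \{\alpha \in X_2 : d_2\alpha \in X_1^{\operatorname{eq}}\} \to X_1$ is a homotopy equivalence with inverse $s_0$. That $s_0$ lands in the subspace and is a section follows from $d_2 s_0 = s_0 d_1$ (so $d_2 s_0 f$ is a degeneracy, hence an equivalence) and $d_0 s_0 = \id$. For the converse I would invoke the degeneracy-detected-on-principal-edges pullback for decomposition spaces,
\[
\begin{tikzcd}
X_1 \ar[r, "s_0"] \ar[d, "d_1"'] \drpullback & X_2 \ar[d, "d_2"] \\
X_0 \ar[r, "s_0"'] & X_1,
\end{tikzcd}
\]
which identifies $X_2 \times_{X_1, s_0} X_0 \simeq X_1$ via $d_0$. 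Now Rezk completeness asserts that $s_0 \colon X_0 \to X_1^{\operatorname{eq}}$ is a homotopy equivalence, so pulling back $d_2$ along it upgrades the above identification to $\{\alpha : d_2\alpha \in X_1^{\operatorname{eq}}\} \simeq X_1$, as required. The symmetric argument, with $s_1$ and $d_1$ replacing $s_0$ and $d_0$, handles the $\beta$-factor.

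Combining the two sub-equivalences, the pullback becomes $X_1 \times_{X_1} X_1 \simeq X_1$. Tracing through the identifications, $f \in X_1$ corresponds to the matching pair $(s_0 f, s_1 f) \in X_2 \times_{X_1} X_2$, hence to the 3-simplex $s_0 s_1 f = s_\bot s_\top f$ (one checks via the usual simplicial identities that $d_2 s_0 s_1 f = s_0 f$ and $d_0 s_0 s_1 f = s_1 f$), whose first and third principal edges are $s_0 d_1 f$ and $s_0 d_0 f$, matching the left vertical map. The main obstacle is simply to assemble the degeneracy-detected pullback from \cite{GKT2} in the precise form stated above and to confirm that the Rezk-completeness equivalence passes cleanly through the pullback with $d_2$.
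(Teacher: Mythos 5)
Your proof is correct, but it takes a genuinely different route from the paper's. The paper's argument is a two-step reduction: Rezk completeness is used once to replace $X_1^{\operatorname{eq}}$ by $X_0$ (via $s_0$), and the resulting square, with $X_0\times X_1\times X_0$ in the lower left corner, is then a pullback by the packaged characterization of decomposition spaces in \cite[Proposition 6.9(3)]{GKT1}, quoted wholesale. You instead rederive the relevant instance from more primitive ingredients: the level-$2$ Segal condition for $\Dectop{X}$ (i.e.\ the 2-Segal identification $X_3\simeq X_2\times_{X_1}X_2$ via $(d_2,d_0)$), the two degeneracy-versus-outer-face pullback squares, and Rezk completeness applied once in each factor. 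Your two auxiliary squares are solid: each is $X$ applied to an active--inert pushout in $\simplexcategory$ (your first one is the pushout of the active $s^0\colon[1]\to[0]$ along the inert $d^\top\colon[1]\to[2]$), so they hold for any decomposition space, and they do appear in \cite[\S2]{GKT2} as you say. The trade-off is that your argument is longer but self-contained at the level of the generating pullbacks, while the paper's is shorter at the price of citing a characterization theorem. Two minor points, neither a gap: the gluing edge $d_0d_2\sigma=d_1d_0\sigma$ in your decomposition is the $1$--$3$ edge of $\sigma$ (the long edge of $d_0\sigma$), not the middle principal edge --- harmless, since everything downstream uses only the correct formulas ($d_0$ and $d_1$ as structure maps, $d_2\alpha$ and $d_0\beta$ as the outer principal edges); and your identification of the stated pullback with the full subspace of $X_3$ implicitly uses that $X_1^{\operatorname{eq}}\times X_1\times X_1^{\operatorname{eq}}\to X_1\times X_1\times X_1$ is a monomorphism, after which showing $s_\bot s_\top\colon X_1\to P$ is an equivalence indeed establishes the pullback --- worth one explicit sentence.
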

\begin{proof}
By Rezk completeness in the sense of \ref{def:Rezk}, the square is homotopy equivalent to 
  \[\begin{tikzcd}[column sep={4cm,between origins}]
    X_1 \rar{s_\bot s_\top}\drpullback \dar[swap]{(d_1 , \id , d_0)} & X_3 
  \dar{(d_\top d_\top , d_\top d_\bot , d_\bot d_\bot)} \\
    X_0 \times X_1 \times X_0 \rar[swap]{s_0 \times \id \times s_0} &
	X_1 \times X_1 \times X_1  ,
  \end{tikzcd}
  \]
  which is a pullback by the characterization of decomposition spaces given in Proposition~6.9 (3) of \cite{GKT1}.
\end{proof}

\begin{prop}\label{prop:Rekz}
If $X$ is a Rezk complete decomposition space, and $Y \to X$ is 
  culf, then also $Y$ is a Rezk complete decomposition space.
\end{prop}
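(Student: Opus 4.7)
The plan is to derive Rezk completeness of $Y$ from that of $X$ by transporting the equivalence $s_0 \colon X_0 \xrightarrow{\simeq} X_1^{\operatorname{eq}}$ across the culfness pullback along $p_1 \colon Y_1 \to X_1$. First, note that $Y$ is itself a decomposition space, since the decomposition-space property is stable under pullback along culf maps \cite{GKT1}, so that the subspace $Y_1^{\operatorname{eq}} \subseteq Y_1$ is well-defined via \ref{def:Rezk}.

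The key construction is the following pasting of pullback squares, in which $D \coloneqq Y_1 \times_{X_1} X_1^{\operatorname{eq}}$ is the sub-$\infty$-groupoid of $Y_1$ consisting of those arrows whose image in $X_1$ is an equivalence:
\[
\begin{tikzcd}
Y_0 \rar \dar \drpullback & D \rar \dar \drpullback & Y_1 \dar{p_1} \\
X_0 \rar{s_0} & X_1^{\operatorname{eq}} \rar[hook] & X_1.
\end{tikzcd}
\]
The right square is a pullback by definition of $D$, and the outer rectangle is the pullback produced by culfness applied to the active codegeneracy $s^0 \colon [1] \to [0]$. Since the bottom-left map is an equivalence by Rezk completeness of $X$, pasting yields an equivalence $Y_0 \xrightarrow{\simeq} D$ whose composite with $D \hookrightarrow Y_1$ is the degeneracy $s_0 \colon Y_0 \to Y_1$.

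The final step is to identify $D$ and $Y_1^{\operatorname{eq}}$ as subspaces of $Y_1$. One inclusion $Y_1^{\operatorname{eq}} \hookrightarrow D$ is immediate, since $p$ sends any pair of 2-simplex witnesses for $f$ being an equivalence in $Y$ to witnesses for $pf$ in $X$; in particular, the equivalence $Y_0 \xrightarrow{\simeq} D$ above factors as $Y_0 \xrightarrow{s_0} Y_1^{\operatorname{eq}} \hookrightarrow D$. Both $Y_1^{\operatorname{eq}} \hookrightarrow Y_1$ (by its definition as a full sub-$\infty$-groupoid) and $D \hookrightarrow Y_1$ (by stability of $(-1)$-truncated maps under pullback) are $(-1)$-truncated, hence so is $Y_1^{\operatorname{eq}} \hookrightarrow D$; since the composite $Y_0 \to Y_1^{\operatorname{eq}} \hookrightarrow D$ is an equivalence, the inclusion $Y_1^{\operatorname{eq}} \hookrightarrow D$ must itself be an equivalence, and two-out-of-three then gives $s_0 \colon Y_0 \xrightarrow{\simeq} Y_1^{\operatorname{eq}}$. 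I expect no serious obstacle here: the only thing to check is the $(-1)$-truncatedness argument, which is cleanly formal, and notably the stronger 3-simplex statement of \cref{010-equivalence} is not needed for this proposition (it is instead reserved for the Rezk completeness of $\sd(X)$ in \cref{prop twist rezk}).
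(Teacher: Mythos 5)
Your proposal is correct and is essentially the paper's own argument in slightly different packaging: both proofs use culfness on the active codegeneracy $[1]\to[0]$ to identify $Y_0$ with $Y_1\times_{X_1}X_0\simeq Y_1\times_{X_1}X_1^{\operatorname{eq}}$ (via Rezk completeness of $X$), and then identify this subobject of $Y_1$ with $Y_1^{\operatorname{eq}}$ by a monomorphism argument --- the paper through its retract-of-monos \cref{lemma about monos}, you through the $(-1)$-truncated-inclusion-with-section observation, which is the same content. You are also right that \cref{010-equivalence} plays no role here and is only needed for \cref{prop twist rezk}.
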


\begin{proof}
By \cite[Lemma 4.6]{GKT1}, we know that $Y$ is a decomposition space.
Further, since pullbacks of monomorphisms are monomorphisms, $s_0 \colon Y_0 \to Y_1$ is a monomorphism of $\infty$-groupoids.

Suppose $X$ is Rezk complete.
Equivalences are preserved by arbitrary maps of simplicial spaces, so $Y_1^{\operatorname{eq}} \to Y_1 \to X_1$ lands in $X_1^{\operatorname{eq}}$.
Since $[1] \actto [0]$ is active and $Y\to X$ is culf, the square in the following diagram is a pullback
\[
\begin{tikzcd}
Y_1^{\operatorname{eq}} \ar[dr,dashed,"r"] \dar[bend right=10] \ar[drr, bend left,hook] \\
X_1^{\operatorname{eq}} \ar[dr, bend right, "\simeq"'] & Y_0 \rar{s_0} \dar \drpullback & Y_1 \dar \\
& X_0 \rar[swap]{s_0} & X_1 ,
\end{tikzcd}
\]
hence we have a map $r\colon Y_1^{\operatorname{eq}} \to Y_0$ which we hope is an equivalence.
Here, $s_0r$ is equivalent to the inclusion $Y_1^{\operatorname{eq}} \to Y^1$.
We thus have the commutative diagram of $\infty$-groupoids
\[
\begin{tikzcd}[column sep=small]
Y_0  \rar \ar[dr,"s_0"'] \ar[rr, bend left=40,"\id"] & Y_1^{\operatorname{eq}} \dar["s_0r" description] \rar{r} & Y_0  \ar[dl, "s_0"] \\
& Y_1 .
\end{tikzcd}
\]
Since the non-horizontal maps are monomorphisms of $\infty$-groupoids, it follows from 
the following general
\cref{lemma about monos} that the inclusion $Y_0 \to Y_1^{\operatorname{eq}}$ is an equivalence.
\end{proof}

\begin{lemma}\label{lemma about monos}
If 
\[
\begin{tikzcd}[column sep=small]
A  \rar{k} \ar[dr,"i"'] \ar[rr, bend left=40,"\id"] & B \dar["j"] \rar{r} & A  \ar[dl, "i"] \\
& C
\end{tikzcd}
\]
is a commutative diagram of spaces with $i$ and $j$ monomorphisms (i.e.\ inclusions of unions of path components), then $k$ and $r$ are equivalences.
\end{lemma}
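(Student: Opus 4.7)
The plan is to exhibit $k$ and $r$ as mutually inverse equivalences in the $\infty$-category of spaces, using just the hypothesis that $j$ is a monomorphism. Unpacking the three commuting subtriangles of the given diagram, we extract the relations $rk \simeq \id_A$ (the outer bent triangle), $jk \simeq i$ (the left triangle), and $ir \simeq j$ (the right triangle).

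The key step is to combine these: substituting $i \simeq jk$ into $j \simeq ir$ yields
\[
j \simeq j \circ (kr),
\]
so both $kr$ and $\id_B$ are lifts of $j\colon B \to C$ through $j$ itself. Since $j$ is a monomorphism of $\infty$-groupoids, it is $(-1)$-truncated, which is equivalent to saying that for every space $X$ the postcomposition map $\Map(X, j)\colon \Map(X,B) \to \Map(X,C)$ is itself a monomorphism of spaces. Applied to $X = B$, this gives left cancellation: any two maps $B \to B$ that agree after composing with $j$ must already agree. Thus $kr \simeq \id_B$, and combined with $rk \simeq \id_A$ we conclude that $k$ and $r$ are mutually inverse equivalences.

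I do not anticipate any real obstacle. It is worth noting that the hypothesis that $i$ is a monomorphism is never used in this argument; only the left-cancellability of the monomorphism $j$ is required. Alternatively, one could argue that $r$ is a monomorphism because $j \simeq ir$ is (every left factor of a monomorphism is a monomorphism), and $r$ has a section $k$, hence is a monomorphism that is also a split epimorphism, so an equivalence; this variant likewise uses only that $j$ is mono.
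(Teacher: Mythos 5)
Your main argument is correct, and it takes a genuinely different route from the paper. The paper argues componentwise: it first passes to $\pi_0$, where injectivity of $\pi_0(i)$ and $\pi_0(j)$ together with $rk\simeq\id$ forces $\pi_0(k)$ and $\pi_0(r)$ to be bijections, and then uses that a monomorphism of $\infty$-groupoids restricts to an equivalence on each path component to see that $k|_W$ is an equivalence for every component $W$ of $A$; this uses both hypotheses, on $i$ and on $j$. Your proof instead cancels the monomorphism $j$ directly: from $j\simeq ir\simeq j(kr)$ and the fact that $\Map(B,j)\colon\Map(B,B)\to\Map(B,C)$ is $(-1)$-truncated (hence $\pi_0$-injective), you get $kr\simeq\id_B$, which together with $rk\simeq\id_A$ exhibits $k$ and $r$ as mutually inverse. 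This is shorter, avoids any decomposition into components, and has the bonus of showing that the hypothesis that $i$ is a monomorphism is redundant; the paper's proof, while longer, makes explicit the concrete "inclusion of path components" picture of monomorphisms that is used elsewhere in that section (e.g.\ in the proof of \cref{prop:Rekz}).

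One caveat about your closing aside: the parenthetical claim that ``every left factor of a monomorphism is a monomorphism'' is false for spaces. For example, in $*\to S^1\to *$ the composite is an equivalence (hence mono), but $*\to S^1$ is not a monomorphism, its fibers being $\Omega S^1\simeq\mathbb{Z}$. The correct cancellation statement is that if $i\circ r$ is $(-1)$-truncated and $i$ is $0$-truncated (for instance, itself a monomorphism), then $r$ is $(-1)$-truncated. So the alternative variant does implicitly use the hypothesis on $i$, contrary to your remark; only your main argument gets by with $j$ alone.
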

\begin{proof}
  First take path components of each of the spaces in question.
  Since $\pi_0(i)$ and $\pi_0(j)$ are injections of sets, we have the same is true of $\pi_0(k)$ and $\pi_0(r)$.
  Hence both $\pi_0(k)$ and $\pi_0(r)$ are bijections of sets.
  
  Now suppose $W\in \pi_0(A)$ is some path component of $A$, the element $W' \in \pi_0(B)$ is its image under $\pi_0(k)$, and $Z \in \pi_0(C)$ is the image of $W$ under $\pi_0(i)$.
  By the assumption that $i$ and $j$ are monomorphisms of $\infty$-groupoids, we have the two diagonal legs in the diagram
  \[
\begin{tikzcd}[column sep=tiny]
W \ar[dr,"i|_W"',"\rotatebox{-45}{$\simeq$}", bend right] \ar[rr,"k|_W"] & & W' \ar[dl,"j|_{W'}","\rotatebox{45}{$\simeq$}"', bend left] \\
& Z
\end{tikzcd}\]
are equivalences, hence $k|_W$ is an equivalence as well.
It follows that $k \colon A \to B$ is an equivalence, hence $r\colon B \to A$ is an equivalence.
\end{proof}

\begin{prop}\label{prop twist rezk}
  If $X$ is a Rezk complete decomposition space, then $\sd(X)$ is
  a Rezk complete Segal space.
\end{prop}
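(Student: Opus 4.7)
Since $X$ is a decomposition space, $\sd X$ is a Segal space by \cref{BOORS:Edgewise}, so by \cref{def:Rezk} it suffices to show that $s_0 \colon (\sd X)_0 \to (\sd X)_1^{\operatorname{eq}}$ is an equivalence of $\infty$-groupoids. Unwinding, $(\sd X)_0 = X_1$, $(\sd X)_1 = X_3$, and the map $s_0^{\sd X}$ is the double degeneracy $s_\bot s_\top \colon X_1 \to X_3$.

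The first step is to define $Z \subset X_3$ as the pullback of $(d_\top d_\top, d_\bot d_\bot) \colon X_3 \to X_1 \times X_1$ along the monomorphism $X_1^{\operatorname{eq}} \times X_1^{\operatorname{eq}} \hookrightarrow X_1 \times X_1$: the space of $3$-simplices whose first and third principal edges are equivalences in $X$. Forgetting the unconstrained middle factor in \cref{010-equivalence} immediately gives that $s_\bot s_\top \colon X_1 \to Z$ is an equivalence; this is where Rezk completeness of $X$ is consumed.

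The second step is to show $(\sd X)_1^{\operatorname{eq}} \subseteq Z$. For this I will exhibit two natural transformations targeting $Q$: namely $\iota \colon \operatorname{id}_{\simplexcategory} \Rightarrow Q$ with $\iota_n(i) = n+1+i$ (realizing $[n]$ as the non-op half of $Q[n]$), and $\jmath \colon \operatorname{op} \Rightarrow Q$ with $\jmath_n(i) = i$ (realizing $[n]$ as the op half, whose reversed orientation forces the opposite endofunctor $\operatorname{op} \colon \simplexcategory \to \simplexcategory$ to appear as source). Naturality of each is a short computation from $Q[n] = [n]\op \star [n]$. The induced simplicial maps $\iota\upperstar \colon \sd X \to X$ and $\jmath\upperstar \colon \sd X \to X\op$ act on $1$-simplices as $d_\bot d_\bot$ and $d_\top d_\top$ respectively. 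Any simplicial map between decomposition spaces preserves equivalences (the defining 2-simplex witnesses transport forward along the map), and the definition of equivalence is self-dual, so the equivalences of $X\op$ agree with $X_1^{\operatorname{eq}}$ as subspaces of $X_1$. Hence for every $\sigma \in (\sd X)_1^{\operatorname{eq}}$ both $d_\top d_\top \sigma$ and $d_\bot d_\bot \sigma$ lie in $X_1^{\operatorname{eq}}$, so $\sigma \in Z$.

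To conclude, note that $s_\bot s_\top$ lands in $(\sd X)_1^{\operatorname{eq}}$ because degenerate $1$-simplices are always equivalences, yielding a factorization $X_1 \xrightarrow{s_\bot s_\top} (\sd X)_1^{\operatorname{eq}} \hookrightarrow Z$ in which the composite is an equivalence and the second leg is a monomorphism of $\infty$-groupoids. A monomorphism of $\infty$-groupoids that is surjective on $\pi_0$ is an equivalence, so the inclusion is an equivalence and hence so is $s_\bot s_\top \colon X_1 \to (\sd X)_1^{\operatorname{eq}}$. The main obstacle is the second step: in particular, verifying the naturality of $\jmath$ (which forces its codomain to be $X\op$ rather than $X$) and the identification of its components on $1$-simplices with the outer principal-edge maps; but these are direct unpackings of the definition of $Q$.
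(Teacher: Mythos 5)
Your proof is correct, and for the key step it takes a genuinely different route from the paper's. Both arguments have the same skeleton: reduce to showing $s_\bot s_\top\colon X_1\to(\sd X)_1^{\operatorname{eq}}$ is an equivalence, and use \cref{010-equivalence} --- the only place where Rezk completeness of $X$ and the decomposition property enter --- to identify $X_1$ with the space $Z$ of $3$-simplices whose outer principal edges are equivalences; your ``forget the middle factor'' reduction is legitimate, since pulling back $(d_\top d_\top, d_\top d_\bot, d_\bot d_\bot)$ along $X_1^{\operatorname{eq}}\times X_1\times X_1^{\operatorname{eq}}\to X_1^{\times 3}$ is the same as pulling back $(d_\top d_\top, d_\bot d_\bot)$ along $X_1^{\operatorname{eq}}\times X_1^{\operatorname{eq}}\to X_1^{\times 2}$. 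Where you diverge is in proving $(\sd X)_1^{\operatorname{eq}}\subseteq Z$: the paper unwinds the two witnessing $5$-simplices $\alpha,\beta$ of an equivalence $\tau$ and extracts the needed $2$-simplices by applying $d_\top d_\top d_\top$ and $d_\bot d_\bot d_\bot$, relying on the fact, special to decomposition spaces, that a doubly degenerate $3$-simplex has individually degenerate edges (cf.~\ref{remark on rezk completeness}); you instead note that the inclusions of the two join factors of $Q[n]=[n]\op\join[n]$ assemble into natural transformations $\iota\colon\id\Rightarrow Q$ and $\jmath\colon\op\Rightarrow Q$ (your formulas and naturality checks are right), giving simplicial maps $\sd X\to X$ and $\sd X\to X\op$ that restrict on $1$-simplices to $d_\bot d_\bot$ and $d_\top d_\top$, and then use that the witness-based notion of equivalence is preserved by arbitrary simplicial maps and is self-dual, so $(X\op)_1^{\operatorname{eq}}=X_1^{\operatorname{eq}}$. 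This is the simplicial-space incarnation of the projection $\sd X\to X\op\times X$, and it is slicker: it avoids the $5$-simplex combinatorics entirely and needs no decomposition-space input at that point. Your closing step --- degenerate arrows are equivalences, so $s_\bot s_\top$ factors as $X_1\to(\sd X)_1^{\operatorname{eq}}\hookrightarrow Z$ with the composite an equivalence and the inclusion a $\pi_0$-surjective monomorphism of $\infty$-groupoids, hence an equivalence --- also makes precise the final homotopical bookkeeping that the paper treats a bit informally; what the paper's hands-on argument buys in exchange is an explicit view of exactly which pieces of the witnessing data are used.
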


\begin{proof}
  We already know from \cref{BOORS:Edgewise} that $\sd(X)$ is a Segal space, so it remains to
  check that $\sd(X)$ is Rezk complete. So assume that $\tau\in (\sd X)_1$ is
  an equivalence, meaning that there exist $\alpha\in (\sd X)_2$
  such that $d^{\smallsd}_2(\alpha) = \tau$ and $d^{\smallsd}_1(\alpha) = 
  s^{\smallsd}_0 d^{\smallsd}_1 (\tau)$ and there exists $\beta\in(\sd X)_2$ such that
  $d^{\smallsd}_0(\beta) = \tau$ and $d^{\smallsd}_1(\beta) = 
  s^{\smallsd}_0 d^{\smallsd}_0 (\tau)$. Spelling out everything in terms of the original face
  and degeneracy maps of $X$, we have $\alpha\in X_5$ such that 
  $d_\bot d_\top (\alpha) = \tau$ and $d_{\bot+1} d_{\top-1}(\alpha) = 
  s_\bot s_\top d_{\bot} d_{\top} (\tau)$ as well as
  $\beta\in X_5$ such that 
  $d_{\bot+2} d_{\top-2} (\beta) = \tau$ and $d_{\bot+1} d_{\top-1}(\beta) = 
  s_\bot s_\top d_{\bot+1} d_{\top-1} (\tau)$. 
We make the following picture of $\tau$:
  \[
\begin{tikzcd}[row sep={15pt,between origins}]
  &\cdot\\
  \cdot \ar[ur, "v"] \\
  {} \ar[r, phantom, "\tau"] & {}\\
  \cdot \ar[uu, "a"]\\
  &\cdot \ar[ul, "u"] \ar[uuuu, "b"']
\end{tikzcd}
\]
  just to have the notation $u \coloneqq d_\top d_\top (\tau)$ and $v \coloneqq d_\bot
  d_\bot (\tau)$. 
  In order to show that $\tau$ is equivalent to an element in the image of $s_{\bot}
  s_{\top} \colon X_1 \to X_3$, 
  we should show that $u$ and $v$ are themselves equivalences; this is because \cref{010-equivalence} gives that the diagram
  \[\begin{tikzcd}
    X_1 \rar{s_\bot s_\top}\drpullback \dar[swap]{s_0d_1 \times \id \times s_0d_0} & X_3 \dar{d_\top d_\top \times d_\top d_\bot \times d_\bot d_\bot} \\
    X_1^{\operatorname{eq}} \times X_1 \times X_1^{\operatorname{eq}} \rar & X_1 \times X_1 \times X_1
  \end{tikzcd}
  \]
  is a pullback.

  The relevant 2-simplices to show that $u$ and $v$ are equivalences are extracted from $\alpha$
  and $\beta$: applying $d_\top d_\top d_\top$ to the $5$-simplices
  $\alpha$ and $\beta$ we get the required $2$-simplices for $u$, and
  applying $d_\bot d_\bot d_\bot$ to the $5$-simplices $\alpha$ and
  $\beta$ we get the required $2$-simplices for $v$. There are thus four
  cases. Just as an illustration of how the argument goes, let us
  consider $ d_\top d_\top d_\top(\alpha) $. Here is a picture of
  $\alpha$:
    \[
\begin{tikzcd}[row sep={15pt,between origins}]
  && \cdot \\
  &\cdot \ar[ur] &\\
  \cdot \ar[ur, "v"] \ar[uurr, dotted, equal, bend left]\\
  {} \ar[r, phantom, "\tau"] & {}\\
  \cdot \ar[uu, "a"]\\
  &\cdot \ar[ul, "u"] \ar[uuuu] \\
  && \cdot \ar[ul] \ar[uuuuuu, "a"'] \ar[uull, dotted, equal, bend left]
\end{tikzcd}
\]
The curved dotted lines illustrate the edges obtained by applying
$d_{\bot+1} d_{\top-1}$. By assumption, the resulting $3$-simplex is 
doubly degenerate. Precisely, 
$$d_{\bot+1} d_{\top-1}(\alpha) = 
  s_\bot s_\top d_{\bot} d_{\top} (\tau) = s_\bot s_\top (a).$$
  The fact that this whole $3$-simplex is doubly degenerate in this way
  implies that also the curved edges are degenerate individually.
  The lower triangle in the picture is $d_\top d_\top d_\top(\alpha) $
  and it is thus degenerate in precisely the way required to exhibit
  $u$ as an equivalence (from one side). With similar arguments applied
  to $\beta$ we see that $u$ is also an equivalence from the other side,
  and finally by Rezk completeness we can 
  therefore conclude that $u$ is degenerate. The analogous conclusion 
  for $v$ is reached using $d_\bot d_\bot d_\bot$ instead of $d_\top d_\top d_\top$.
\end{proof}

\begin{blanko}[Remark]
  In the special case where $X$ is a Rezk complete Segal space (and not just a Rezk complete decomposition space), this result was proven by Mukherjee--Rasekh~\cite{Mukherjee:TACSS} in the complete Segal space model structure for bisimplicial sets.
\end{blanko}

\newpage

\appendix

\section{Relative complete maps and Rezk completion}

\begin{center}
\textit{By Philip Hackney, Joachim Kock, and Jan Steinebrunner}
\end{center}

\bigskip

We prove the following:
{
\renewcommand{\thelemma}{\ref{prop rfib equiv X LX}}
\begin{proposition}
For $X$ a Segal space, we have $\kat{RFib}(X)\simeq \kat{RFib}(LX)$,
\end{proposition}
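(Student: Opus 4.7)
My plan is to show that base change along the Rezk completion map $j \colon X \to LX$ gives an equivalence $j^* \colon \RFIB(LX) \to \RFIB(X)$. This functor is well-defined since right fibrations are stable under pullback.

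I would exhibit the inverse using Rezk completion: for a right fibration $p \colon Y \to X$, the total space $Y$ is itself a Segal space by \cref{lem rfib creates segal}, so one can form the Rezk completion $LY$ and the induced map $Lp \colon LY \to LX$. The key technical claims are (i) $Lp$ is again a right fibration, and (ii) the canonical naturality square
\[ \begin{tikzcd}
Y \ar[r] \ar[d, "p"'] & LY \ar[d, "Lp"] \\
X \ar[r, "j"'] & LX
\end{tikzcd} \]
is a pullback. Together these show that $p \mapsto Lp$ is an inverse to $j^*$ on one side; the reverse composite is the identity because right fibrations over Rezk complete Segal spaces are themselves Rezk complete (by \cref{lem rfib creates segal} again), so starting from $W \to LX$ and applying $j^*$ then $L$ returns $W$ up to equivalence.

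For (ii), I would exploit that $j$ is a Dwyer--Kan equivalence (essentially surjective and fully faithful); essential surjectivity guarantees fibers of $LY \to LX$ are captured by fibers of $Y \to X$ up to equivalence, and combined with the Segal conditions on both $Y$ and $LY$, this propagates fiberwise equivalences to an equivalence of the whole simplicial spaces. The main obstacle is (i): Rezk completion is a left Bousfield localization and does not preserve arbitrary pullbacks, so a structural argument is needed. One approach is to present $L$ as an iterated pushout along the inclusion from the walking arrow into the walking isomorphism and check inductively that the pullback square characterizing a right fibration between Segal spaces (\cref{lem rfib between segal spaces}) is preserved at each stage. An alternative bypass is to pass through straightening/unstraightening: since $\spaces$ is Rezk complete, the universal property of Rezk completion gives $\Fun(X^{\mathrm{op}}, \spaces) \simeq \Fun((LX)^{\mathrm{op}}, \spaces)$, and straightening identifies both sides with $\RFIB(X)$ and $\RFIB(LX)$ respectively, yielding the result directly and recovering $L$ as the explicit inverse a posteriori.
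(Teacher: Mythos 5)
Your high-level plan is the same as the paper's (pullback along the completion map, with inverse given by $L$), and your claims (i) and (ii) are precisely the paper's \cref{L preserves rfib} and the cartesianness of the completion unit on right fibrations. But the arguments you offer for these claims are where the real work lies, and they have genuine gaps. For (ii), ``essential surjectivity captures fibers up to equivalence, and Segal conditions propagate this'' is a heuristic, not a proof: the paper establishes (ii) by showing that right fibrations between Segal spaces are relative complete (\cref{rf rel comp}) and that the unit $\alpha$ is cartesian on relative complete maps (\cref{rel complete cartesian}), which itself rests on the characterization of levelwise equivalences as Dwyer--Kan equivalences that are relative complete (\cref{prop equiv Segal}) and on pullback-stability of suitable DK equivalences (\cref{lem DK pullback}, \cref{map to LX is surjective}); none of this is supplied by your sketch. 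For (i), your proposed induction over a cell-by-cell construction of $L$ is only a strategy, and it is exactly the delicate point: $L$ is not left exact (only semi-left-exact, \cref{prop semi left exact}), so there is no general reason the pullback squares defining a right fibration survive the pushouts building the completion. The paper instead deduces (i) \emph{from} (ii) together with $\pi_0$-surjectivity of $X_n \to LX_n$ and the wrong-way pasting lemma (\cref{lem wrong way pasting}). Finally, your argument that the reverse composite is the identity is also incomplete: completeness of $W$ gives $LW \simeq W$, but what you need is that $X \times_{LX} W \to W$ is inverted by $L$, i.e.\ that the DK equivalence $X \to LX$ remains a DK equivalence after pullback along the right fibration $W \to LX$; \cref{lem rfib creates segal} does not give this, whereas the paper gets it from \cref{lem DK pullback} and \cref{map to LX is surjective} (packaged in \cref{prop semi left exact} and \cref{prop rc maps equiv}).

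Your ``alternative bypass'' through straightening is circular. Straightening/unstraightening $\kat{Rfib}(\CC) \simeq \PrSh(\CC)$ is available only for $\infty$-categories, i.e.\ Rezk complete Segal spaces; for a non-complete Segal space $X$, the identification of $\kat{RFib}(X)$ (right fibrations of simplicial spaces in the paper's sense) with presheaves on $X$, or on $LX$, is essentially the content of \cref{prop rfib equiv X LX} itself, so it cannot be invoked to prove it. The step $\Fun(X\op,\spaces) \simeq \Fun((LX)\op,\spaces)$ can be given a meaning via the universal property of completion, but the step identifying $\Fun(X\op,\spaces)$ with $\kat{RFib}(X)$ is precisely what is at stake.
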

\addtocounter{lemma}{-1}
}

\noindent
where $LX$ is the Rezk completion of $X$ (previously denoted 
$\widehat{X}$).
This result should be attributed to Boavida, who proved
it in the setting of model categories~\cite{Boavida:SOGC}. Our proof is synthetic
and a bit more conceptual, deriving the result from the 
following:

{
\renewcommand{\thelemma}{\ref{prop semi left exact}}
\begin{proposition}
Rezk completion is a semi-left-exact localization.
\end{proposition}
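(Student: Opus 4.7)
The plan is to verify the standard Cassidy--H\'ebert--Kelly characterization of semi-left-exactness: $L$ is semi-left-exact iff for every object $X$, every local object $B$, and every map $f \colon B \to LX$, the left vertical $p \colon W \to B$ in the pullback square
\[
\begin{tikzcd}
W \ar[r] \ar[d, "p"'] \drpullback & X \ar[d, "\eta_X"] \\
B \ar[r, "f"'] & LX
\end{tikzcd}
\]
is a local equivalence, i.e., $LW \simeq B$. In our setting, this amounts to showing that for every Segal space $X$ and every complete Segal space $B$, the map $p$ is a Rezk equivalence. For this I will invoke Rezk's classical theorem that a map of Segal spaces is a Rezk equivalence if and only if it is both fully faithful and essentially surjective.

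Since pullbacks of Segal spaces are Segal, $W$ is a Segal space. For full faithfulness: the unit $\eta_X$ is itself a Rezk equivalence, so the square
\[
\begin{tikzcd}
X_1 \ar[r] \ar[d] \drpullback & (LX)_1 \ar[d] \\
X_0 \times X_0 \ar[r] & (LX)_0 \times (LX)_0
\end{tikzcd}
\]
is a pullback. Using that $W_n = B_n \times_{(LX)_n} X_n$ at each simplicial level, a short pullback-pasting computation identifies $W_1$ with $B_1 \times_{B_0 \times B_0} (W_0 \times W_0)$, showing that $p$ is fully faithful.

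For essential surjectivity, given $b \in B_0$ I will produce a point $w \in W_0$ with $p(w) = b$ (which is stronger than needed). Since $\eta_X$ is essentially surjective and $LX$ is complete (so that $s_0 \colon (LX)_0 \xrightarrow{\simeq} (LX)_1^{\operatorname{eq}}$ is an equivalence), every point of $(LX)_0$ is path-equivalent to one of the form $\eta_X(x)$. Applied to $f(b)$, this provides $x \in X_0$ together with a path $\alpha \colon f(b) \sim \eta_X(x)$ in $(LX)_0$, and then the triple $(b,x,\alpha) \in W_0 = B_0 \times_{(LX)_0} X_0$ maps to $b$ under $p$. Combining both conditions, $p$ is a Rezk equivalence, so $LW \simeq LB = B$.

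The key step, and the only nontrivial one, is the appeal to completeness of $LX$ in the essential-surjectivity argument: a priori, essential surjectivity of $\eta_X$ only provides an equivalence $\eta_X(x) \sim f(b)$ in $(LX)_1^{\operatorname{eq}}$, whereas producing a point of the strict level-zero pullback $W_0$ requires an actual path in $(LX)_0$; completeness is precisely what converts the former into the latter via the equivalence $s_0$. The rest of the argument is formal, resting on pullback-stability of full faithfulness and on the Dwyer--Kan characterization of Rezk equivalences.
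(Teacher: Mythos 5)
Your argument is correct, but it takes a different route from the paper's. The paper defines semi-left-exactness as preservation of pullbacks $X\times_S T$ over cospans of complete Segal spaces and proves it by the relative-completeness formalism: $Y\to X$ is relative complete as a pullback of $T\to S$, the completion unit is cartesian on relative complete maps (\cref{rel complete cartesian}), and then the ``wrong-way'' pasting lemma (\cref{lem wrong way pasting}) together with $\pi_0$-surjectivity of $X_n\to LX_n$ (\cref{map to LX is surjective}) yields the conclusion levelwise. You instead verify the classical Cassidy--H\'ebert--Kelly criterion --- that pulling back the unit $\eta_X$ along any $f\colon B\to LX$ with $B$ complete produces a map inverted by $L$ --- by checking directly, via \cref{rezk 7.7}, that $W\to B$ is a Dwyer--Kan equivalence: your full-faithfulness computation is exactly the fully faithful half of \cref{lem DK pullback}, and your essential-surjectivity step (using completeness of $LX$ to turn an equivalence $\eta_X(x)\simeq f(b)$ into an actual path in $(LX)_0$, hence a point of the pullback) is the argument of \cref{map to LX is surjective}. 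What your route buys is a more self-contained, elementary proof that bypasses relative complete maps and the wrong-way pasting lemma; what the paper's route buys is that its intermediate lemmas are reused elsewhere in the appendix (e.g.\ for \cref{prop fact system} and \cref{L preserves rfib}). The one point you leave implicit is the equivalence of the CHK unit-pullback criterion with the definition of semi-left-exactness actually used in the paper: the nontrivial direction follows by taking $B= LX\times_S T$ (complete, since $\kat{CSS}$ is reflective in $\kat{Seg}$ and hence closed under pullbacks) and observing $X\times_S T\simeq X\times_{LX}B$, so $L(X\times_S T)\simeq B=LX\times_S T$; this two-line reduction should be stated to make the proof complete relative to the paper's definition.
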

\addtocounter{lemma}{-1}
}

This is of independent interest. For example, it readily implies
that there is a factorization system consisting of the
Dwyer--Kan equivalences and the relative Rezk complete maps (\cref{prop fact system}).
We will also use this to show that relative complete Segal spaces over a Segal space $X$ correspond to complete Segal spaces over $LX$:

{
\renewcommand{\thelemma}{\ref{prop rc maps equiv}}
\begin{proposition}
For any Segal space $X$:
    $\kat{Seg}^{\operatorname{rc}}_{/X} \simeq \kat{CSS}_{/LX} $.\end{proposition}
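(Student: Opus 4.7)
The plan is to derive the equivalence from the semi-left-exactness of Rezk completion (\cref{prop semi left exact}), by the standard principle that a semi-left-exact localization restricts to an equivalence between local slices and the right-class slices of the induced factorization system.

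First, I would construct an adjunction between the two slice categories. Define $F \colon \kat{CSS}_{/LX} \to \kat{Seg}_{/X}$ by pullback along the completion unit $\eta_X \colon X \to LX$, sending $(Z \to LX)$ to $(X \times_{LX} Z \to X)$; this is well-defined because Segal spaces are closed under pullbacks. Define $G \colon \kat{Seg}_{/X} \to \kat{CSS}_{/LX}$ by $G(Y \to X) \coloneqq (LY \to LX)$. Using the universal property of $L$ (applied at the complete object $Z$) together with the universal property of the pullback, both $\Map_{\kat{CSS}_{/LX}}(G(Y), Z)$ and $\Map_{\kat{Seg}_{/X}}(Y, F(Z))$ identify with the fibre of $\Map(Y, Z) \to \Map(Y, LX)$ over the composite $Y \to X \to LX$, yielding $G \isleftadjointto F$.

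Next, I would invoke \cref{prop semi left exact} to show that $F$ is fully faithful, or equivalently that the counit $GF \Rightarrow \id$ is an equivalence. Semi-left-exactness implies that $L$ preserves the defining pullback square of $F(Z)$, so
\[
L(X \times_{LX} Z) \simeq LX \times_{LLX} LZ \simeq LX \times_{LX} Z \simeq Z
\]
for every complete $Z$, using that $L$ is idempotent and that $Z$ is local. Hence $GF \simeq \id_{\kat{CSS}_{/LX}}$.

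Finally, I would identify the essential image of $F$ with $\kat{Seg}^{\operatorname{rc}}_{/X}$. A map $Y \to X$ lies in this image precisely when the unit $Y \to FG(Y) \simeq X \times_{LX} LY$ is an equivalence, and this is exactly the cartesian condition that picks out the right class (relative complete maps) of the factorization system of \cref{prop fact system}. Combined with fully faithfulness, $F$ therefore restricts to the claimed equivalence $\kat{CSS}_{/LX} \simeq \kat{Seg}^{\operatorname{rc}}_{/X}$. The only nontrivial input is \cref{prop semi left exact} itself; once semi-left-exactness is available, the rest is a formal consequence of the theory of reflective subcategories, so the real work is entirely absorbed into that earlier proposition.
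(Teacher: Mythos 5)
Your proposal is correct and follows essentially the same route as the paper: the adjunction you build by hand is exactly the sliced localization adjunction (which the paper gets from \ref{slicing adj} and \cite[5.2.5.1]{HTT}), semi-left-exactness (\cref{prop semi left exact}) is used in the same way to show the counit $L(X\times_{LX}Z)\to Z$ is invertible, and the essential image is identified via the unit-is-cartesian condition, which is the content of \cref{rel complete cartesian} and \cref{cor new char rel compl} (the precise reference for your last step, rather than \cref{prop fact system} itself). No substantive gap; only the usual caveat that the mapping-space identification of the adjunction should be made natural, which is why the paper cites the slicing adjunction directly.
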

\addtocounter{lemma}{-1}
}

Before coming to these results, we need to set up some terminology, notation, and a few preliminary results.

\bigskip

Let $E(1)$ denote the {\em strict} nerve of the contractible groupoid with two 
objects. 
By \cite[Theorem 6.2]{Rezk:MHTHT} the space of equivalences 
$X_1^{\operatorname{eq}} \subset X_1$ of a Segal space $X$ is equivalent to 
$$
X_1^{\operatorname{eq}} \simeq \map(E(1), X) .
$$
Recall (from \cite[\S6]{Rezk:MHTHT}) that a Segal space $X$ is called {\em (Rezk) complete}
when either (and hence both) of the maps
$$
X_0 \overset{d_1}{\underset{d_0}\leftleftarrows} X_1^{\operatorname{eq}}
$$
is an equivalence (this is equivalent to the definition from \ref{def:Rezk}). 

The full inclusion $\kat{CSS} \hookrightarrow \kat{Seg}$ of complete 
Segal spaces into all Segal spaces has a left adjoint (reflection)
$$
L \colon \kat{Seg} \to \kat{CSS} .
$$
An explicit formula was given by Rezk~\cite[\S14]{Rezk:MHTHT} (see also 
\cite[Proposition 2.6]{AyalaFrancis:FHC} for a model independent account).
We shall not need the explicit formula. What we do need is \cref{rezk 7.7} (due to Rezk) characterizing the class of maps inverted by $L$
as the Dwyer--Kan equivalences, as we now recall.

Recall that any Segal space $X$ has mapping spaces
\[ \begin{tikzcd}
\map_X(x,x') \rar \dar \ar[dr, phantom, "\lrcorner" very near start]  & X_1 \dar{(d_1,d_0)} \\
\ast \rar[swap]{x,x'} & X_0 \times X_0 .
\end{tikzcd} \]
There is an associated \emph{homotopy category} $\ho (X)$, with set of objects $\pi_0(X_0)$ and $\hom([x],[x']) = \pi_0(\map_X(x,x'))$.
See \cite[\S5]{Rezk:MHTHT}.

\begin{definition}[{\cite[7.4]{Rezk:MHTHT}}]
A map $f\colon Y \to X$ between Segal spaces is a \emph{Dwyer--Kan equivalence} if
\begin{enumerate}
    \item $f$ is \emph{essentially surjective}, that is, the induced map $\ho (f) \colon \ho (Y) \to \ho (X)$ is essentially surjective, and \label{DK def ho equiv}
    \item $f$ is \emph{fully faithful}, that is, for each $y,y' \in Y$, the induced map on mapping spaces
    \[
        \map_Y (y,y') \to \map_X(fy, fy')
    \]
    is an equivalence. \label{DK def ff}
\end{enumerate}
\end{definition}

Note that being fully faithful is equivalent to the assertion that the square
\[ \begin{tikzcd}
Y_n \rar \dar & X_n \dar \\
Y_0^{\times n+1} \rar & X_0^{\times n+1}
\end{tikzcd} \]
is a pullback for $n=1$ or equivalently for all $n$.

\begin{theorem}[{\cite[Theorem 7.7]{Rezk:MHTHT}}]\label{rezk 7.7}
    The Dwyer--Kan equivalences between Segal spaces are precisely the maps that
    are inverted by the completion functor $L \colon \kat{Seg} \to \kat{CSS}$.
\end{theorem}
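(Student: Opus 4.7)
The plan is to reduce the theorem to two intermediate results: (i) every DK equivalence between \emph{complete} Segal spaces is already a levelwise equivalence, and (ii) for every Segal space $X$, the unit map $\eta_X \colon X \to LX$ of the reflective localization is itself a DK equivalence. Granting these, the theorem falls out of two applications of $2$-out-of-$3$ (clearly enjoyed by DK equivalences) to the naturality square
\[
\begin{tikzcd}
Y \dar[swap]{f} \rar{\eta_Y} & LY \dar{Lf} \\
X \rar[swap]{\eta_X} & LX,
\end{tikzcd}
\]
whose horizontal edges are DK equivalences by (ii). If $f$ is a DK equivalence, then so is $Lf$, hence by (i) it is a levelwise equivalence, i.e.\ an equivalence of complete Segal spaces; conversely, if $Lf$ is an equivalence, then in particular it is a DK equivalence, and 2-out-of-3 upgrades $f$ to one as well.

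For (i), I would argue levelwise. Let $f \colon Y \to X$ be a DK equivalence between complete Segal spaces. For $n = 0$, completeness identifies $Y_0 \simeq Y_1^{\operatorname{eq}}$ and similarly for $X$, so it suffices to check that $f_1$ restricts to an equivalence $Y_1^{\operatorname{eq}} \to X_1^{\operatorname{eq}}$. This can be analyzed fiberwise over $Y_0 \times Y_0 \to X_0 \times X_0$: full faithfulness gives $\map_Y(y, y') \simeq \map_X(fy, fy')$ for each pair, and since invertibility is detected in the homotopy category, this restricts to the sub-$\infty$-groupoids of equivalences. Meanwhile, essential surjectivity of $\ho(f)$ combined with completeness forces $\pi_0 f_0$ to be a bijection (surjectivity is immediate; injectivity uses that $fy \simeq fy'$ implies an equivalence $y \to y'$ already exists in $Y$ by fully faithfulness). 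Assembling these, $f_0$ is an equivalence. For $n \geq 1$, the Segal condition expresses $X_n$ as an iterated homotopy fiber product of copies of $X_1$ over $X_0$, so full faithfulness plus the $n=0$ case give at once that $f_n$ is an equivalence.

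For (ii), I would rely on a concrete model of the completion. Either invoke Rezk's iterated pushout formula: $LX$ is built by gluing in universal equivalences $E(n) \to \Delta^n$ along maps which factor through the already-existing equivalences in $X$, so each stage preserves mapping spaces and only identifies objects that were already equivalent in $\ho(X)$; or, more conceptually, realize $LX$ as the Dwyer--Kan localization of $X$ at its $1$-simplex equivalences $X_1^{\operatorname{eq}}$, which is by construction a DK equivalence. Either route yields that $\eta_X$ induces an equivalence on homotopy categories and a levelwise equivalence on mapping spaces, which is precisely (ii).

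The main obstacle is ingredient (ii): unlike (i), it is not a purely formal consequence of the definitions but requires a hands-on description of $L$, and the precise choice of model dictates how transparent the verification is. By contrast, once (ii) is in hand, both (i) and the final 2-out-of-3 argument are essentially soft. One should note that if one takes as background the characterization of $L$ as \emph{the} reflective localization at the class of DK equivalences, then (ii) is automatic by the universal property and the theorem becomes tautological; but pinning down this characterization of $L$ is exactly the substance of (ii).
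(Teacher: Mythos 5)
The paper does not actually prove this statement: it is imported as a citation to Rezk, so the relevant benchmark is Rezk's own argument, and your decomposition coincides with it. The 2-out-of-3 step is Rezk's Lemma 7.5 (not quite as ``clear'' as you say --- cancelling on the appropriate side needs essential surjectivity to reach objects outside the image --- but standard); your ingredient (i), that a Dwyer--Kan equivalence between complete Segal spaces is a levelwise equivalence, is Rezk's Proposition 7.6, your sketch of it is essentially sound, and in fact it is \cref{prop equiv Segal} of the appendix specialized to complete Segal spaces (any map between complete Segal spaces is relative complete); and ingredient (ii), that the unit $\eta_X\colon X\to LX$ is a Dwyer--Kan equivalence, is Rezk's completion theorem.

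The genuine gap is (ii), which you assert rather than prove, and which is where essentially all the content of the theorem lives. Your first route is not executable as described: Rezk's completion is not an ``iterated pushout along $E(n)\to\Delta^n$'' (his model is the realization of $m\mapsto X^{E(m)}$, or a fibrant replacement in the localized model structure), and in any cell-attachment construction the intermediate stages need not be Segal spaces, so ``preserves mapping spaces and only identifies objects already equivalent in $\operatorname{ho}(X)$'' is not even a well-formed stage-by-stage claim --- verifying that the end result receives a Dwyer--Kan equivalence from $X$ is exactly the technical heart of Rezk's Section 14. Your second route is circular, as you yourself half-concede: knowing that $LX$ is the localization of $X$ at its equivalences, and that inverting already-invertible arrows does not change the underlying $\infty$-category as presented by a (non-complete) Segal space, is equivalent to the statement that $\eta_X$ is a Dwyer--Kan equivalence, so nothing is ``by construction'' here. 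Note also that what the present paper extracts from \cref{rezk 7.7} downstream (e.g.\ in \cref{map to LX is surjective} and \cref{rel complete cartesian}) is precisely the fact that $\alpha_X\colon X\to LX$ is a Dwyer--Kan equivalence; a proof that takes (ii) as an unproved input therefore does not replace the citation to Rezk, it merely relocates it.
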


We now introduce the notion of a relative complete map between Segal spaces, 
which is a variation on what was called a \emph{fiberwise complete Segal space} in \cite[2.2]{BoavidaWeiss:SSECC} and \cite[\S1.4]{Boavida:SOGC}.
(A good notion for maps between general simplicial spaces would utilize arbitrary maps $E(n) \to E(m)$.)

\begin{definition}\label{def rel complete segal}
Suppose $Y$ and $X$ are Segal spaces.
A map $Y \to X$ is \emph{relative complete} if
it is right orthogonal to both morphisms $E(0) \rightrightarrows E(1)$.
\end{definition}

Since there is an automorphism of $E(1)$ that permutes the two morphisms $E(0) \to E(1)$ it suffices to check the orthogonality against only one of them.
Spelling this out we see that $Y \to X$ is relative complete if and only if the square
\[ \begin{tikzcd}
Y_1^{\operatorname{eq}} \rar \dar[swap]{d_i} & X_1^{\operatorname{eq}}  \dar{d_i} \\
Y_0 \rar & X_0 
\end{tikzcd} \]
is a pullback for $i=0$ or $i=1$.

We begin by recording some properties of relative complete maps that are also fully faithful or essentially surjective.

\begin{lemma}\label{lem mono}
If $Y \to X$ is a map between Segal spaces which is relative complete and fully faithful, then it is levelwise a monomorphism (of $\infty$-groupoids).
\end{lemma}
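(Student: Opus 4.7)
The plan is to reduce the statement to showing that $f_0 \colon Y_0 \to X_0$ is a monomorphism of $\infty$-groupoids, and then propagate this up to all simplicial degrees via full faithfulness. Indeed, the fully faithful hypothesis gives a pullback square
\[
\begin{tikzcd}
Y_n \rar \dar & X_n \dar \\
Y_0^{n+1} \rar & X_0^{n+1}
\end{tikzcd}
\]
for every $n$, and monomorphisms of $\infty$-groupoids are stable under finite products and pullbacks, so it will suffice to prove that $f_0$ itself is a monomorphism.

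To handle $f_0$, I would compare two pullback descriptions of $Y_1^{\operatorname{eq}}$. First, relative completeness with respect to $d_0$ gives $Y_1^{\operatorname{eq}} \simeq X_1^{\operatorname{eq}} \times_{X_0} Y_0$ directly. Second, full faithfulness implies that a $1$-simplex of $Y$ is an equivalence if and only if its image in $X$ is one (equivalences in a Segal space are detected by $\ho$, and $\ho(f)$ inherits full faithfulness from $f$), so the $n{=}1$ fully faithful pullback restricts to equivalences and yields
\[
Y_1^{\operatorname{eq}} \simeq X_1^{\operatorname{eq}} \times_{X_0 \times X_0} (Y_0 \times Y_0) .
\]

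The natural \emph{forget-source} comparison map from this second description to the first, $(e, y_0, y_1) \mapsto (e, y_1)$, must therefore be an equivalence. Its fibre over a point $(e, y_1)$ is precisely the fibre of $f_0$ over the source $d_1(e) \in X_0$. Specialising to the identity $e = s_0^X(f_0 y_1)$ — which is an equivalence because degenerate $1$-simplices always are, and whose source equals $f_0(y_1)$ — shows that the fibre of $f_0$ over $f_0(y_1)$ is contractible for every $y_1 \in Y_0$. This is exactly the criterion for $f_0$ to be a monomorphism of $\infty$-groupoids.

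The key conceptual move is recognising that the two hypotheses give two pullback descriptions of $Y_1^{\operatorname{eq}}$ over different bases, so that their comparison forces $f_0$ to be a monomorphism. The only subtle point is justifying that full faithfulness restricts cleanly to the subspaces of equivalences; everything else is formal manipulation of pullbacks and products of monomorphisms.
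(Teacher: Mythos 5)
Your proof is correct and uses essentially the same ingredients as the paper's: the relative-completeness pullback for $Y_1^{\operatorname{eq}}$, conservativity of $\operatorname{ho}(Y)\to\operatorname{ho}(X)$ deduced from full faithfulness, and the fully faithful pullback square, with the extension to all simplicial degrees obtained by base change along that square. The only difference is cosmetic: the paper composes the three pullback squares into a pullback of diagonals and reads off that $f$ induces equivalences on path spaces of $Y_0$, whereas you compare the two pullback descriptions of $Y_1^{\operatorname{eq}}$ and evaluate the fibre of the comparison map at degenerate equivalences to conclude that the fibres of $f_0$ over its image are contractible --- two standard, equivalent criteria for a monomorphism of $\infty$-groupoids.
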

\begin{proof}
We first show that $Y_0 \to X_0$ is a monomorphism.
Relative completeness implies that the square
\[ \begin{tikzcd}
Y_0 \rar \dar \ar[dr, phantom, "\lrcorner" very near start] & Y^{E(1)} \dar \\
X_0 \rar & X^{E(1)}
\end{tikzcd} \]
is a pullback of spaces.
We thus have the composite pullback
\[ \begin{tikzcd}
Y_0 \rar \dar \ar[dr, phantom, "\lrcorner" very near start] & 
Y_1^{\mathrm{eq}} \dar \ar[r, hook] \ar[dr, phantom, "\lrcorner" very near start] &
Y_1 \rar \dar \ar[dr, phantom, "\lrcorner" very near start] &
Y_0^{\times 2} \dar \\
X_0 \rar &
X_1^{\mathrm{eq}} \ar[r, hook] &
X_1 \rar & 
X_0^{\times 2} ,
\end{tikzcd} \]
where the right pullback square says that $Y \to X$ is fully
faithful and the middle square uses conservativity of $\ho (Y) \to
\ho (X)$. The fiber of the diagonal $Y_0 \to
Y_0^{\times 2}$ at $(y,y')$ is the space of paths from $y$ to $y'$,
so this being a pullback shows that $\operatorname{Path}_{Y_0}(y,y')
\to \operatorname{Path}_{X_0}(f_0 y,f_0 y')$ is an equivalence for
all $y,y'$. This implies $f_0$ is a monomorphism ($\pi_0$-injective and an
equivalence of spaces on each path component).

Now the right square in the above diagram is a pullback, so $f_1$ is a monomorphism. Since $Y$ and $X$ are Segal it follows that $f_n \colon Y_n \to X_n$ is a monomorphism for all $n$.
\end{proof}

\begin{lemma}\label{map to LX is surjective}
  For each Segal space $X$ and $n\in \N$, the completion map
  $$
  (\alpha_X)_n \colon X_n \to (LX)_n
  $$
  is $\pi_0$-surjective.
\end{lemma}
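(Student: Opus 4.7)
The plan is to invoke Rezk's characterization of $\alpha_X \colon X \to LX$ as a Dwyer--Kan equivalence (\cref{rezk 7.7}), so that it is both essentially surjective and fully faithful, and then handle the cases $n = 0$ and $n \geq 1$ separately. The first case is extracted from essential surjectivity together with completeness of $LX$, and the second is bootstrapped from the first using the fully-faithful pullback square.

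For $n = 0$, essential surjectivity provides, for each $x \in (LX)_0$, some $y \in X_0$ together with an equivalence $f \in (LX)_1^{\operatorname{eq}}$ with $d_1(f) = \alpha_X(y)$ and $d_0(f) = x$. Completeness of $LX$ makes $s_0 \colon (LX)_0 \to (LX)_1^{\operatorname{eq}}$ an equivalence, so $f$ is path-connected to some $s_0(g)$ in $(LX)_1^{\operatorname{eq}}$. Applying $d_0$ and $d_1$ and using $d_i \circ s_0 = \id$ then gives paths $\alpha_X(y) = d_1(f) \sim g \sim d_0(f) = x$ in $(LX)_0$, so $(\alpha_X)_0$ is $\pi_0$-surjective.

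For $n \geq 1$, since $\alpha_X$ is fully faithful we have the pullback square
\[
\begin{tikzcd}
X_n \rar \dar \drpullback & (LX)_n \dar \\
X_0^{\times (n+1)} \rar[swap]{(\alpha_X)_0^{\times (n+1)}} & (LX)_0^{\times (n+1)}.
\end{tikzcd}
\]
Given $\sigma \in (LX)_n$ with vertex tuple $v$, the $n = 0$ case applied componentwise produces $\tilde y \in X_0^{\times (n+1)}$ whose image $v' \coloneqq (\alpha_X)_0^{\times (n+1)}(\tilde y)$ lies in the same path component as $v$. A standard path-lifting argument for the map of spaces $(LX)_n \to (LX)_0^{\times (n+1)}$ then produces $\sigma' \in (LX)_n$ in the same path component as $\sigma$ but with vertex tuple exactly $v'$; by the pullback, $\sigma'$ lifts to some $\tilde\sigma \in X_n$ with $\alpha_X(\tilde\sigma) = \sigma'$, whence $[\alpha_X(\tilde\sigma)] = [\sigma]$ in $\pi_0((LX)_n)$.

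The main obstacle is this path-lifting step: for a map of spaces $p \colon E \to B$, one needs that every path component of $E$ lying over the path component of $b \in B$ is represented by a point of the fibre of $p$ over $b$. This is a general feature of maps of $\infty$-groupoids, a consequence of the long exact sequence of a fibration after fibrant replacement of $p$, and does not depend on any specific property of $LX$ or $\alpha_X$.
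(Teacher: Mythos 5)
Your proof is correct and follows essentially the same route as the paper: the $n=0$ case via essential surjectivity of the Dwyer--Kan equivalence $\alpha_X$ together with completeness of $LX$ (isomorphic objects are isotopic), and the case $n\geq 1$ via the fully-faithfulness pullback square over $X_0^{\times (n+1)} \to (LX)_0^{\times (n+1)}$. The only difference is cosmetic: where the paper simply regards $X_n \to (LX)_n$ as a base change of a $\pi_0$-surjective map and uses that such surjectivity is stable under pullback, you unpack that stability by hand with the path-lifting argument.
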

\begin{proof}
    We begin with the case $n=0$.
    Since $X \to LX$ is a Dwyer--Kan equivalence it is essentially surjective. 
    This means that every point in $LX_0$ is isomorphic in $LX$ to a point in the image of $X_0 \to LX_0$.
    But $LX$ is complete and hence isomorphic objects are isotopic, therefore $X_0 \to LX_0$ is $\pi_0$-surjective.
    For general $n$ we can use fully faithfulness to write $X_n \to LX_n$ as the base-change of $X_0^{\times n+1} \to LX_0^{\times n+1}$, which is $\pi_0$-surjective by the first part of the proof.
\end{proof}

\begin{lemma}\label{lem epi}
If $Y \to X$ is a map between Segal spaces which is
relative complete and essentially surjective,
then $\pi_0 (Y_0) \to \pi_0 (X_0)$ is surjective.
\end{lemma}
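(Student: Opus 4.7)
The plan is to combine essential surjectivity with relative completeness: essential surjectivity furnishes a preimage up to equivalence in $\ho(X)$, and relative completeness lets us lift the witnessing equivalence into $Y$ so that we can read off a genuine preimage on $\pi_0$.

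Fix $x \in X_0$. By essential surjectivity of $\ho(f) \colon \ho(Y) \to \ho(X)$, there exists $y \in Y_0$ together with an isomorphism $[fy] \to [x]$ in $\ho(X)$. By definition of $\ho$, such an isomorphism is represented by some $e \in X_1^{\operatorname{eq}}$ with $[d_1 e] = [f_0 y]$ and $[d_0 e] = [x]$ in $\pi_0(X_0)$.

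Next, choose a path $\gamma$ in $X_0$ from $d_1(e)$ to $f_0(y)$; the data $(e, y, \gamma)$ then defines a point of the homotopy pullback $X_1^{\operatorname{eq}} \times^h_{X_0} Y_0$ taken along $d_1 \colon X_1^{\operatorname{eq}} \to X_0$ and $f_0 \colon Y_0 \to X_0$. Relative completeness of $f$ (\cref{def rel complete segal}) says precisely that this homotopy pullback is equivalent to $Y_1^{\operatorname{eq}}$, so we obtain a lift $\tilde e \in Y_1^{\operatorname{eq}}$ whose image $f_1(\tilde e) \in X_1^{\operatorname{eq}}$ lies in the same component as $e$.

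Applying $d_0$ and using naturality of the face map gives $f_0(d_0(\tilde e)) = d_0(f_1(\tilde e))$, which lies in the component of $d_0(e)$, and hence in the component of $x$. Therefore $[d_0(\tilde e)] \in \pi_0(Y_0)$ maps to $[x] \in \pi_0(X_0)$, proving surjectivity. There is no real obstacle; the only minor care needed is matching source/target conventions with the chosen inclusion $E(0) \to E(1)$, which is irrelevant thanks to the automorphism of $E(1)$ swapping the two morphisms (noted just after \cref{def rel complete segal}), so one may use whichever of $d_0, d_1$ is convenient.
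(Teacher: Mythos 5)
Your proof is correct and is essentially the paper's argument: both use essential surjectivity of $\ho(f)$ to produce $y$ and an equivalence $e$ in $X$ joining $f(y)$ to $x$, and then invoke relative completeness to lift $e$ to an equivalence in $Y$ whose other endpoint hits the component of $x$. The only cosmetic difference is that you phrase the lift via the pullback square $Y_1^{\operatorname{eq}} \simeq X_1^{\operatorname{eq}} \times_{X_0} Y_0$ (spelled out after \cref{def rel complete segal}), while the paper uses the equivalent orthogonality formulation against $E(0) \to E(1)$.
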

\begin{proof}
Given $x \in X_0$, since $\ho (Y) \to \ho (X)$ is 
essentially surjective there exists $e\colon E(1) \to X$ so that 
the following diagrams commute for some $y\in Y_0$.
\[ \begin{tikzcd}
\Delta^0 \dar[swap]{1}  \rar{y} & Y \dar \\
E(1) \rar{e} & X  .  \\
\Delta^0 \uar{0} \ar[ur, bend right, "x"']
\end{tikzcd} \]
Since $Y\to X$ is relative complete, a unique lift exists in the top square.
\end{proof}

\begin{proposition}\label{prop equiv Segal}
A map $Y \to X$ between Segal spaces is a levelwise equivalence if and only if it is a Dwyer--Kan equivalence and relative complete.
\end{proposition}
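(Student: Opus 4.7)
The forward direction will be essentially trivial. If $f\colon Y \to X$ is a levelwise equivalence, then its induced map of homotopy categories is an equivalence, giving essential surjectivity; the square exhibiting fully faithfulness (for any $n$, and in particular $n=1$) has equivalences on all four corners and hence is a pullback; and the relative completeness square likewise has equivalences on all four sides. So all three required conditions hold.

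For the converse, suppose $f$ is both Dwyer--Kan and relative complete. The plan is first to prove that $f_0\colon Y_0 \to X_0$ is an equivalence of spaces, and then to bootstrap this up to all simplicial degrees using fully faithfulness. For the bootstrap step, fully faithfulness (which is part of the Dwyer--Kan hypothesis) ensures that for every $n \ge 0$ the square
\[
\begin{tikzcd}
Y_n \ar[r] \ar[d] \ar[dr, phantom, "\lrcorner" very near start] & X_n \ar[d] \\
Y_0^{\times(n+1)} \ar[r] & X_0^{\times(n+1)}
\end{tikzcd}
\]
is a pullback, so once $f_0$ is known to be an equivalence the bottom map is an equivalence (as a finite product of equivalences), and hence so is $f_n$.

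To prove $f_0$ is an equivalence, I will combine two consequences of relative completeness already established earlier in the appendix. On the one hand, \cref{lem mono} (relative complete plus fully faithful) yields that $f_0$ is a monomorphism of $\infty$-groupoids, meaning all its fibers are $(-1)$-truncated. On the other hand, \cref{lem epi} (relative complete plus essentially surjective) yields that $\pi_0(f_0)$ is surjective. A monomorphism of spaces which is $\pi_0$-surjective is automatically an equivalence: on each path component in the image it is already an equivalence by $(-1)$-truncatedness, and $\pi_0$-surjectivity guarantees that every path component of $X_0$ is in the image. There is no real obstacle in this argument; the work has been done in the two preceding lemmas, and what remains is only to assemble them and invoke this standard fact about monomorphisms of $\infty$-groupoids.
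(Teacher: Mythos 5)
Your proposal is correct and follows essentially the same route as the paper: the converse combines \cref{lem mono} and \cref{lem epi} to see that $Y_0 \to X_0$ is a $\pi_0$-surjective monomorphism and hence an equivalence, and then bootstraps to higher degrees via the fully-faithfulness pullback squares (the paper does this at $n=1$ and then invokes the Segal condition, which is the same content as your ``all $n$'' version). The forward direction is indeed immediate, as you note.
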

\begin{proof}
  By \cref{lem mono}, $Y_0 \to X_0$ is a monomorphism of $\infty$-groupoids, and 
  by \cref{lem epi} it is also surjective on $\pi_0$, so altogether
$Y_0\to X_0$ is an equivalence. Since 
\[ \begin{tikzcd}
Y_1 \rar \dar & X_1 \dar \\
Y_0^{\times 2} \rar & X_0^{\times 2}
\end{tikzcd} \]
is a pullback, we see that also $Y_1\to X_1$ is an equivalence, and since $Y$ and $X$ are 
Segal, all the higher $Y_n \to X_n$ become equivalences too.
\end{proof}

\begin{lemma}\label{lem DK pullback}
Dwyer--Kan equivalences that are $\pi_0$-surjective
in simplicial degree zero are stable under pullback.
\end{lemma}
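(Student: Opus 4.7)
Let $f\colon Y \to X$ be a Dwyer--Kan equivalence of Segal spaces whose component $f_0$ is $\pi_0$-surjective, and let $g\colon Z \to X$ be any map from a Segal space. The plan is to form the pullback
\[
\begin{tikzcd}
W \drpullback \dar[swap]{f'} \rar{g'} & Y \dar{f} \\
Z \rar{g} & X
\end{tikzcd}
\]
in simplicial spaces and to verify the three desired properties for $f'$ (fully faithful, essentially surjective, $\pi_0$-surjective in degree zero). Since the Segal condition is a finite-limit condition on simplicial spaces, $W$ will automatically be a Segal space, so the statement makes sense.

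I would begin with $\pi_0$-surjectivity of $f'_0 \colon W_0 \to Z_0$, which is the easiest part: the fiber of $f'_0$ over any $z \in Z_0$ is equivalent to the fiber of $f_0$ over $g(z) \in X_0$, which is non-empty by hypothesis. Essential surjectivity of $f'$ then follows immediately, since $\pi_0$-surjectivity on zero-simplices implies essential surjectivity for any map between Segal spaces.

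For fully faithfulness, I would run a short pullback-pasting argument. Using the identifications $W_1 = Y_1 \times_{X_1} Z_1$ and $W_0^{\times 2} = Y_0^{\times 2} \times_{X_0^{\times 2}} Z_0^{\times 2}$, the square
\[
\begin{tikzcd}
W_1 \rar \dar & Z_1 \dar \\
W_0^{\times 2} \rar & Z_0^{\times 2}
\end{tikzcd}
\]
whose cartesianness expresses the fully faithfulness of $f'$ should be exhibited as the base change of the fully-faithful square for $f$ along $Z \to X$; concretely, $W_0^{\times 2} \times_{Z_0^{\times 2}} Z_1 \simeq Y_0^{\times 2} \times_{X_0^{\times 2}} Z_1 \simeq Y_1 \times_{X_1} Z_1 = W_1$, where the middle step uses the hypothesis that $f$ is fully faithful.

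No step looks especially difficult; the whole argument is a routine diagram chase. The one substantive point is that the hypothesis of $\pi_0$-surjectivity in degree zero (rather than essential surjectivity alone) is exactly what is needed, because essential surjectivity is not in general preserved by pullback, while $\pi_0$-surjectivity of maps of $\infty$-groupoids is.
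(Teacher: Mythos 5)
Your proof is correct and follows essentially the same route as the paper's: fully faithfulness via pasting the fully-faithful pullback square for $f$ with the levelwise pullback squares defining $W$ (the paper phrases this as a cube of spaces), and essential surjectivity deduced from the fact that $\pi_0$-surjectivity of the degree-zero map is preserved by pullback of spaces. Your closing remark about why $\pi_0$-surjectivity in degree zero is the right hypothesis is exactly the point the paper is making as well.
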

By \cref{map to LX is surjective} this lemma applies, in particular, to Dwyer--Kan equivalences whose codomain is a complete Segal space.
\begin{proof}
Suppose $B \to A$ is such a Dwyer--Kan equivalence, and consider a pullback diagram
\[ \begin{tikzcd}
Y \rar \dar \ar[dr, phantom, "\lrcorner" very near start] & X \dar \\
B \rar & A.
\end{tikzcd} \]
This yields a cube of spaces, four of whose faces are given below:
\[ \begin{tikzcd}
Y_1 \rar \dar \ar[dr, phantom, "\lrcorner" very near start] & X_1 \dar & 
Y_1 \rar \dar  & X_1 \dar
 \\
B_1 \rar \dar \ar[dr, phantom, "\lrcorner" very near start] & A_1 \dar &
Y_0^{\times 2} \rar \dar \ar[dr, phantom, "\lrcorner" very near start] & X_0^{\times 2} \dar
 \\
B_0^{\times 2} \rar & A_0^{\times 2} &
B_0^{\times 2} \rar & A_0^{\times 2}  .
\end{tikzcd} \]
The bottom left square is a pullback since $B \to A$ is fully faithful; it follows that the top right square is a pullback as well.
Hence $Y \to X$ is fully faithful.

Now if $\pi_0(B_0) \to \pi_0(A_0)$ is surjective, the same is true of $\pi_0(Y_0) \to \pi_0(X_0)$.
In particular, $\ho (Y) \to \ho (X)$ is surjective on objects, hence essentially surjective.
\end{proof}

Let $L$ be the Rezk completion functor on Segal spaces, and $\alpha \colon \id \Rightarrow L$ the completion natural transformation.

\begin{lemma}\label{rel complete cartesian}
The natural transformation $\alpha$ is cartesian on relative complete maps.
\end{lemma}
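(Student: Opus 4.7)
The plan is to show the naturality square is a pullback by the standard method: form the pullback $P \coloneqq X \times_{LX} LY$ (still a Segal space, as Segal spaces are closed under limits), obtain the canonical comparison map $c \colon Y \to P$, and prove that $c$ is a levelwise equivalence. By \cref{prop equiv Segal}, it suffices to verify that $c$ is both a Dwyer--Kan equivalence and a relative complete map.

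For the Dwyer--Kan equivalence part, I would use that $\alpha_X$ is itself a DK equivalence (being a unit for the reflection $L$, see \cref{rezk 7.7}) and is $\pi_0$-surjective in simplicial degree $0$ by \cref{map to LX is surjective}. \Cref{lem DK pullback} then implies that the pullback $P \to LY$ of $\alpha_X$ along $Lf$ is a DK equivalence. The composite $Y \overset{c}{\to} P \to LY$ agrees with $\alpha_Y$, which is a DK equivalence, so 2-out-of-3 (valid because DK equivalences are exactly the class inverted by $L$) yields that $c$ is a DK equivalence.

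For relative completeness of $c$, the key observation is that any map between complete Segal spaces is automatically relative complete: in the defining square
\[
\begin{tikzcd}
A_1^{\operatorname{eq}} \ar[r] \ar[d, "d_i"'] & B_1^{\operatorname{eq}} \ar[d, "d_i"] \\
A_0 \ar[r] & B_0
\end{tikzcd}
\]
both vertical maps are equivalences when $A$ and $B$ are complete. Hence $Lf$ is relative complete. Since the class of relative complete maps is defined by right orthogonality with respect to $\{E(0) \rightrightarrows E(1)\}$, it is closed under pullback and enjoys right cancellation; in particular, $P \to X$ is relative complete as a pullback of $Lf$. Writing $f = (P \to X) \circ c$ with both $f$ and $P \to X$ relative complete, right cancellation gives that $c$ is relative complete. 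Combining the two properties via \cref{prop equiv Segal} yields that $c$ is a levelwise equivalence, as required. The argument is essentially bookkeeping; the only mildly delicate point is the verification of right cancellation for the right-orthogonal class of relative complete maps, but this is standard (a lift against $E(0) \to E(1)$ into the domain of $c$ can be produced by lifting first against $f$ and then recognizing uniqueness by lifting against $P \to X$).
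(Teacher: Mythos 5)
Your proposal is correct and follows essentially the same route as the paper's proof: form the pullback $Q = X\times_{LX} LY$, show the comparison map $Y\to Q$ is a Dwyer--Kan equivalence via \cref{lem DK pullback}, \cref{map to LX is surjective} and 2-out-of-3, show it is relative complete via pullback-stability and cancellation for the right-orthogonal class (the paper calls this ``left cancellation''), and conclude with \cref{prop equiv Segal}. No gaps.
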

\begin{proof}
Consider the diagram
\[ \begin{tikzcd}
Y \ar[ddr, bend right,"f"'] \ar[dr] \ar[drr,bend left,"\alpha_Y"] &[-0.5cm] \\[-0.5cm]
& Q \rar \ar[dr, phantom, "\lrcorner" very near start] \dar & LY \dar["Lf"] \\
& X \ar[r, "\alpha_X"'] & LX  .
\end{tikzcd} \]
Since $\alpha_X$ is a Dwyer--Kan equivalence, so too is $Q \to LY$ by \cref{lem DK pullback}.
Then since $\alpha_Y$ is a Dwyer--Kan equivalence we have by 2-of-3 (see \cite[Lemma 7.5]{Rezk:MHTHT}) that $Y\to Q$ is as well.

But now $Lf$ is relative complete since it's a map between complete Segal spaces, 
and so too is $Q \to X$, since
relative complete maps are stable under pullback. The map $f$ was assumed relative complete, so $Y \to Q$ is also relative complete by left cancellation.
\Cref{prop equiv Segal} implies that $Y\to Q$ is an equivalence.
\end{proof}

The proof of the preceding lemma really shows that any square whose vertical edges are relative complete and whose horizontal edges are Dwyer--Kan equivalences must in fact be a pullback.

\begin{corollary}\label{cor new char rel compl}
The following classes of maps between Segal spaces coincide:
\begin{enumerate}
\item relative complete maps, \label{en eq rel compl 1}
\item those maps on which the natural transformation $\alpha$ is cartesian, and\label{en eq rel compl 2}
\item pullbacks of maps between complete Segal spaces. \label{en eq rel compl 3}
\end{enumerate}
\end{corollary}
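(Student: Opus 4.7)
The plan is to close the circle (\ref{en eq rel compl 1})$\Rightarrow$(\ref{en eq rel compl 2})$\Rightarrow$(\ref{en eq rel compl 3})$\Rightarrow$(\ref{en eq rel compl 1}), with only the first implication requiring real work, that work having already been carried out in \cref{rel complete cartesian}.

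First, (\ref{en eq rel compl 1})$\Rightarrow$(\ref{en eq rel compl 2}) is precisely the content of \cref{rel complete cartesian}, which says that if $f \colon Y \to X$ is relative complete, then the naturality square
\[
\begin{tikzcd}
Y \ar[r, "\alpha_Y"] \ar[d, "f"'] & LY \ar[d, "Lf"] \\
X \ar[r, "\alpha_X"'] & LX
\end{tikzcd}
\]
is a pullback. The implication (\ref{en eq rel compl 2})$\Rightarrow$(\ref{en eq rel compl 3}) is then immediate: the naturality square above being a pullback exhibits $f$ as the pullback of $Lf \colon LY \to LX$, which is a map between complete Segal spaces.

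For (\ref{en eq rel compl 3})$\Rightarrow$(\ref{en eq rel compl 1}), I need two easy observations. The first is that any map between complete Segal spaces is relative complete: if $Z$ is a complete Segal space then $d_i \colon Z_1^{\operatorname{eq}} \to Z_0$ is an equivalence by definition, so for any map $g \colon W \to Z$ between complete Segal spaces, the square
\[
\begin{tikzcd}
W_1^{\operatorname{eq}} \ar[r] \ar[d, "d_i"'] & Z_1^{\operatorname{eq}} \ar[d, "d_i"] \\
W_0 \ar[r] & Z_0
\end{tikzcd}
\]
has equivalences for both vertical arrows, and is therefore a pullback. The second observation is that relative complete maps are stable under pullback, which follows from the fact that they are characterized by a right orthogonality condition (\cref{def rel complete segal}), and right classes are always closed under pullback. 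Combining these two observations with (\ref{en eq rel compl 3}) yields (\ref{en eq rel compl 1}).

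The only potential subtlety is ensuring the pullback in (\ref{en eq rel compl 3}) actually lives in Segal spaces (rather than, say, general simplicial spaces), but this is automatic since Segal spaces are closed under limits in simplicial spaces; hence the notion of relative completeness makes sense for the pullback, and the second observation applies.
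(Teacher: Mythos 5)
Your proposal is correct and follows essentially the same route as the paper: the cycle (1)$\Rightarrow$(2)$\Rightarrow$(3)$\Rightarrow$(1), with \cref{rel complete cartesian} giving (1)$\Rightarrow$(2), the implication (2)$\Rightarrow$(3) being immediate, and (3)$\Rightarrow$(1) following from the facts that maps between complete Segal spaces are relative complete and that relative complete maps are stable under pullback. Your spelled-out justifications of these last two facts (equivalent verticals give a pullback; right orthogonality classes are pullback-stable, with Segal spaces closed under pullbacks) are exactly the standard reasons the paper leaves implicit.
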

\begin{proof}
Every map between complete Segal spaces is relative complete, and pullbacks of relative complete maps are relative complete, so class \eqref{en eq rel compl 3} is contained in class \eqref{en eq rel compl 1}.
\Cref{rel complete cartesian} states that \eqref{en eq rel compl 1} is contained in \eqref{en eq rel compl 2}, and the final containment \eqref{en eq rel compl 2} $\subseteq$ \eqref{en eq rel compl 3} is immediate.
\end{proof}

\begin{definition}
  Suppose $\CC \hookrightarrow \DD$ is a reflective subcategory of a
  finitely-complete $\infty$-category $\DD$, with left adjoint $L\colon
  \DD \to \CC$. We say that $L$ is \emph{semi-left-exact} if, for
  all pullback squares of $\DD$ below-left with $S, T \in \CC$,
\[ \begin{tikzcd}
Y \dar \rar \ar[dr, phantom, "\lrcorner" very near start] & T \dar 
& %RIGHT TOP
LY \dar \rar \ar[dr, phantom, "\lrcorner" very near start] & T \dar 
\\ %LEFT BOTTOM
X \rar & S 
& %RIGHT BOTTOM
LX \rar & S
\end{tikzcd} \]
the square above-right is also a pullback (in $\CC$).
\end{definition}

This terminology agrees with that of Cassidy--H\'ebert--Kelly in the 1-cat\-e\-gorical case \cite[p.298]{CassidyHebertKelly:RSLFS}.
Gepner and Kock \cite[1.2]{GepnerKock:ULCCIC} also consider this condition in the case of locally cartesian closed $\infty$-categories, and call such left adjoints \emph{locally cartesian
localizations}.

\begin{proposition}\label{prop sle fact system}
  Suppose $L\colon \DD \to \CC$ is a semi-left-exact reflector (where $\DD$
  is finitely-complete), and $\alpha \colon \id \Rightarrow L$ is the unit
  of the reflection. Then there is a factorization system $(\mathcal{L},
  \mathcal{R})$ on $\DD$, where $\mathcal{L}$ are those maps inverted by
  $L$, and $\mathcal{R}$ is the class of maps on which the natural 
  transformation $\alpha$ is cartesian.
\end{proposition}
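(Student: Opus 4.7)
The plan is to construct the factorization explicitly using pullbacks and then verify orthogonality from the pullback characterization of $\mathcal{R}$. Given any map $f \colon Y \to X$ in $\DD$, form the pullback $Z \coloneqq X \times_{LX} LY$, yielding a factorization $Y \xrightarrow{g} Z \xrightarrow{r} X$, where $g$ is induced by the naturality square $\alpha_X \circ f = Lf \circ \alpha_Y$ via the universal property of the pullback. I claim that $g \in \mathcal{L}$ and $r \in \mathcal{R}$.

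To see that $r \in \mathcal{R}$, apply semi-left-exactness to the defining pullback of $Z$, whose bottom-right span $LY \to LX \leftarrow LX$ already lies in $\CC$. This gives $LZ \simeq LX \times_{LX} LY \simeq LY$, and under this identification the naturality square of $\alpha$ at $r$ recovers the defining pullback of $Z$, so $\alpha$ is cartesian on $r$. For $g \in \mathcal{L}$, note that the composite $Y \xrightarrow{g} Z \to LY$ (where the second arrow is the projection $Z \to LY$, which agrees with $\alpha_Z$ modulo the identification $LZ \simeq LY$) is by construction equal to $\alpha_Y$. Applying $L$ gives $Lg = L\alpha_Y$ under $LZ \simeq LY$, and $L\alpha_Y$ is an equivalence since $LY \in \CC$ (a standard consequence of the triangle identities for any reflection).

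For orthogonality $\mathcal{L} \perp \mathcal{R}$, consider any square with $\ell \colon A \to B$ in $\mathcal{L}$ and $r \colon C \to D$ in $\mathcal{R}$. Using the pullback $C \simeq D \times_{LD} LC$ furnished by $r \in \mathcal{R}$, the space of lifts is identified with the space of lifts in
\[
\begin{tikzcd}
A \ar[r] \ar[d, "\ell"'] & LC \ar[d, "Lr"] \\
B \ar[r] \ar[ur, dashed] & LD
\end{tikzcd}
\]
where the top and bottom maps are obtained by post-composing with $\alpha_C$ and $\alpha_D$ respectively. Since $LC$ lies in $\CC$, the $L \dashv \iota$ adjunction identifies $\Map(B,LC) \simeq \Map(LB,LC)$ and $\Map(A, LC) \simeq \Map(LA, LC)$, and the restriction map between these is precomposition with $L\ell$, an equivalence by assumption. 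Hence the space of lifts is contractible.

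Once we have functorial factorizations and orthogonality, the existence of a factorization system is standard in the $\infty$-categorical setting (e.g.\ via \cite[Proposition 5.2.8.17]{HTT} or the recognition criteria in \cite[\S3.1]{ABFJ:LELI}). The main subtlety I foresee is bookkeeping in the identification $LZ \simeq LY$ and checking that the naturality square of $\alpha$ at $r$ really is the defining pullback of $Z$; beyond that, the semi-left-exactness hypothesis does all the heavy lifting.
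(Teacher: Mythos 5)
Your construction is correct, and in substance it is the argument the paper outsources to its references: the paper's own proof of this proposition is a short citation of Cassidy--H\'ebert--Kelly (their Theorem 4.3) and of \cite[Proposition 3.1.10]{ABFJ:LELI}, plus the observation that inspecting the latter's proof shows only semi-left-exactness is used. The factorization you build, $Y \to X\times_{LX} LY \to X$, is exactly the classical reflective (CHK) factorization those sources use, so your route is not mathematically different; what it buys is a self-contained proof in the semi-left-exact $\infty$-categorical setting instead of an appeal to inspecting someone else's proof, at the cost of the bookkeeping you flag. That bookkeeping does go through: the identification $LZ \simeq LY$ over $LX$ uses that $L\alpha_Y$ and $\alpha_{LY}$ are equivalences and that $L\alpha \simeq \alpha L$ (both are sections of the counit, which is an equivalence), after which the comparison map $Z \to X\times_{LX} LZ$ is an equivalence, giving $r\in\mathcal{R}$, and $L(\mathrm{pr}_2)\circ Lg \simeq L\alpha_Y$ gives $g\in\mathcal{L}$. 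Two small points to record explicitly: the reduction of the lifting problem against $r$ to one against $Lr$ is the standard fact that lifting spaces against a map agree with lifting spaces against any map it is pulled back from; and the recognition criterion you invoke at the end also requires both classes to be stable under retracts, which holds here because equivalences and pullback squares are retract-stable. With those remarks your argument is complete.
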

\begin{proof}
This is a straightforward generalization to 
$\infty$-categories of a classical theorem of Cassidy--H\'ebert--Kelly \cite[Theorem 4.3]{CassidyHebertKelly:RSLFS}. 
In the setting of $\infty$-categories it is essentially
Proposition 3.1.10 of \cite{ABFJ:LELI} except that they have 
``left-exact'' in place of ``semi-left-exact.''
Inspecting the second paragraph of their proof one sees that only the semi-left-exact condition is actually used.
\end{proof}

Recall the following (for example from \cite[Lemma 3.3]{Steinebrunner:LCFRFCSC}).

\begin{lemma}\label{lem wrong way pasting}
Consider a diagram of spaces
\[ \begin{tikzcd}
A \rar \dar \ar[dr, phantom, "\lrcorner" very near start] &
B \rar \dar &
C \dar \\
X \rar[swap, "f"] &
Y \rar &
Z ,
\end{tikzcd} \]
where the left square is a pullback, the composite rectangle is a pullback, and the map $f$ is $\pi_0$-surjective.
Then the right square is a pullback as well. \qed
\end{lemma}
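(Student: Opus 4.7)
The plan is to compare the canonical map $B \to Y \times_Z C$ to its base change along $f$, and then use $\pi_0$-surjectivity of $f$ to transport the conclusion to the whole base. Let $P \coloneqq Y\times_Z C$ denote the pullback of the right cospan, and let $\phi\colon B \to P$ be the map produced by the universal property applied to the right square. The right square is a pullback if and only if $\phi$ is an equivalence, so this is our target.

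Base-changing $\phi$ along $f \colon X \to Y$, the domain becomes $X \times_Y B \simeq A$ by the left square being a pullback, while the codomain becomes $X \times_Y (Y \times_Z C) \simeq X \times_Z C \simeq A$ by the composite rectangle being a pullback. Hence $f^*\phi$ is an equivalence (in fact canonically equivalent to $\operatorname{id}_A$, though I won't need this).

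Finally, I would invoke the general fact that a map $\phi\colon B \to P$ of spaces whose base change along a $\pi_0$-surjective map $q\colon Q \to P$ is an equivalence must itself be an equivalence. Here $q\colon P\times_Y X \to P$ is the pullback of $f$ along $P \to Y$, and so is $\pi_0$-surjective because $f$ is. To conclude that $\phi$ is an equivalence, I would argue fiberwise: for $b\in P$, pick $e\in Q$ with $q(e)\simeq b$ (possible by $\pi_0$-surjectivity of $q$), and observe that the fiber of $\phi$ at $b$ is equivalent to the fiber of $q^*\phi$ at $e$, which is contractible by hypothesis. Since contractibility of fibers only depends on the path component of the base point, every fiber of $\phi$ is contractible and $\phi$ is an equivalence. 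No serious difficulty is anticipated; the argument is the standard pullback-pasting lemma augmented by the observation that $\pi_0$-surjections reflect equivalences under pullback.
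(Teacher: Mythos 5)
Your argument is correct. The paper itself does not prove this lemma --- it is recalled with a citation to Steinebrunner [Lemma 3.3] --- so there is no in-paper proof to compare against; your write-up effectively supplies the missing argument. The route you take is the natural one: reduce the statement to the comparison map $\phi\colon B \to Y\times_Z C$, observe that its base change along $f$ is identified (via the left square and the outer rectangle) with the comparison map of the composite rectangle and hence is an equivalence, and then conclude that $\phi$ is an equivalence because equivalences descend along base change by a $\pi_0$-surjective map. The two small points your argument relies on are both handled correctly: the pullback $X\times_Y P \to P$ of $f$ is again $\pi_0$-surjective (its fibers are fibers of $f$, which are nonempty), and a map of spaces all of whose fibers are contractible is an equivalence, with contractibility of the fiber over $b$ transported from the fiber over $q(e)$ along a path in the same component. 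One point worth stating explicitly if you write this up: the identification of $f^\ast\phi$ with the comparison map of the outer rectangle uses the uniqueness of maps into the pullback $X\times_Z C$ with prescribed components, but this is routine.
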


\begin{proposition}\label{prop semi left exact}
The completion functor $L \colon \kat{Seg} \to \kat{CSS}$ is semi-left-exact.
\end{proposition}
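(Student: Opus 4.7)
The plan is to verify that for any pullback square of Segal spaces
\[
\begin{tikzcd}
Y \dar \rar \ar[dr, phantom, "\lrcorner" very near start] & T \dar \\
X \rar & S
\end{tikzcd}
\]
with $S, T \in \kat{CSS}$, the canonical comparison $LY \to LX \times_S T$ is an equivalence. The first observation is that any map between complete Segal spaces is automatically relative complete: if $Y, X$ are complete, then both maps $(-)_0 \to (-)_1^{\operatorname{eq}}$ are equivalences, and so the defining square for relative completeness is trivially a pullback. Hence $T \to S$ is relative complete. Since relative completeness is a right-orthogonality condition (with respect to $E(0) \rightrightarrows E(1)$), it is stable under pullback, so $Y \to X$ is relative complete as well.

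Next I would assemble the diagram
\[
\begin{tikzcd}
Y \rar{\alpha_Y} \dar & LY \rar \dar & T \dar \\
X \rar{\alpha_X} & LX \rar & S,
\end{tikzcd}
\]
where the maps $LX \to S$ and $LY \to T$ come from the universal property of $L \dashv \kat{CSS} \hookrightarrow \kat{Seg}$ (since $S$ and $T$ are already complete), so that the bottom triangle $X \to LX \to S$ recovers $X \to S$, and similarly for the top. With these identifications the outer rectangle is the original pullback. By \cref{rel complete cartesian}, the left square is a pullback because $Y \to X$ is relative complete.

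The key step is then to apply the wrong-way pasting \cref{lem wrong way pasting} degreewise. In each simplicial degree $n$, the left square and the outer rectangle are pullbacks of spaces, and $\alpha_X \colon X_n \to (LX)_n$ is $\pi_0$-surjective by \cref{map to LX is surjective}. So \cref{lem wrong way pasting} yields that the right square is a pullback of spaces in each degree $n$. Since pullbacks of simplicial spaces are computed degreewise, the right square
\[
\begin{tikzcd}
LY \rar \dar & T \dar \\
LX \rar & S
\end{tikzcd}
\]
is a pullback of simplicial spaces, and hence a pullback in $\kat{CSS}$ (as $\kat{CSS}$ is reflective, so its limits are computed in $\kat{Seg}$). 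This gives the required equivalence $LY \simeq LX \times_S T$.

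I do not expect any serious obstacle: the argument is essentially a straightforward assembly of the ingredients already established, namely the cartesianness of $\alpha$ on relative complete maps, the degreewise $\pi_0$-surjectivity of the completion map, and the wrong-way pasting lemma. The only subtlety worth flagging explicitly is the justification that the pasted diagram commutes, which follows from the uniqueness clause in the universal property of $L$ applied to maps landing in the complete Segal spaces $S$ and $T$.
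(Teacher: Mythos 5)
Your proof is correct and follows essentially the same route as the paper: pullback-stability of relative completeness, cartesianness of $\alpha$ on relative complete maps (\cref{rel complete cartesian}), degreewise $\pi_0$-surjectivity of the completion map (\cref{map to LX is surjective}), and the wrong-way pasting lemma (\cref{lem wrong way pasting}) applied levelwise. The extra remarks you flag (commutativity of the pasted diagram via the universal property of $L$, and limits in $\kat{CSS}$ being computed in $\kat{Seg}$) are fine and only make explicit what the paper leaves implicit.
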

\begin{proof}
Consider a pullback of Segal spaces
\[ \begin{tikzcd}
Y \dar \rar \ar[dr, phantom, "\lrcorner" very near start] & T \dar \\
X \rar & S
\end{tikzcd} \]
where $S$ and $T$ are complete Segal spaces.
Since $T\to S$ is automatically relative complete, it follows that $Y \to X$ 
is relative complete.
We then have the diagram
\[ \begin{tikzcd}
Y \rar{\alpha_Y} \dar \ar[dr, phantom, "\lrcorner" very near start] & LY \rar \dar & T \dar \\
X \rar[swap]{\alpha_X} & LX \rar & S ,
\end{tikzcd} \]
where the left square is a pullback by \cref{rel complete cartesian} and the outer square is a pullback by assumption.
Our goal is to show that the right-hand square is a pullback as well.
By \cref{map to LX is surjective} the map $X_n \to LX_n$ is surjective on path components.
Thus by \cref{lem wrong way pasting}, for each $n$ the right square in 
\[ \begin{tikzcd}
Y_n \rar \dar \ar[dr, phantom, "\lrcorner" very near start] & LY_n \rar \dar & T_n \dar \\
X_n \rar & LX_n \rar & S_n
\end{tikzcd} \]
is a pullback.
Thus $L(T\times_S X) \to T\times_S LX$ is an equivalence, as desired.
\end{proof}

\begin{proposition}\label{prop fact system}
Dwyer--Kan equivalences and relative complete maps constitute a factorization system on $\kat{Seg}$.
\end{proposition}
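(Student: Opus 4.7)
The plan is to apply \cref{prop sle fact system} to the reflection $L\colon \kat{Seg}\to\kat{CSS}$ and then identify the two classes appearing in the resulting factorization system with Dwyer--Kan equivalences and relative complete maps, respectively. All the hard work has already been done: the semi-left-exactness of $L$ is \cref{prop semi left exact}, which supplies the one nontrivial input to \cref{prop sle fact system}. The remaining task is purely one of identification.

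First I would apply \cref{prop sle fact system} directly. Since $\kat{Seg}$ is finitely complete (limits are computed levelwise in $\simpspaces$, and Segal spaces are closed under finite limits), and $L$ is a reflector onto the replete subcategory $\kat{CSS}\subset \kat{Seg}$, \cref{prop semi left exact} yields that $L$ is semi-left-exact. Thus \cref{prop sle fact system} produces a factorization system $(\mathcal{L},\mathcal{R})$ on $\kat{Seg}$, where $\mathcal{L}$ is the class of maps inverted by $L$ and $\mathcal{R}$ is the class of maps on which the unit $\alpha\colon \id\Rightarrow L$ is cartesian.

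Next I would identify the classes. By \cref{rezk 7.7}, a map between Segal spaces is inverted by $L$ if and only if it is a Dwyer--Kan equivalence, so $\mathcal{L}$ is precisely the class of Dwyer--Kan equivalences. By \cref{cor new char rel compl} (specifically the equivalence of \eqref{en eq rel compl 1} and \eqref{en eq rel compl 2}), the class of maps on which $\alpha$ is cartesian coincides with the class of relative complete maps, so $\mathcal{R}$ is precisely the class of relative complete maps. This completes the proof.

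There is no real obstacle here; the content lies in the three earlier ingredients (\cref{prop semi left exact}, \cref{rezk 7.7}, and \cref{cor new char rel compl}), and the proposition just assembles them via the general factorization-system machinery of \cref{prop sle fact system}.
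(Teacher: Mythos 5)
Your proposal is correct and follows essentially the same route as the paper's own proof: invoke \cref{prop sle fact system} with the semi-left-exactness of $L$ from \cref{prop semi left exact}, then identify the left class as Dwyer--Kan equivalences (\cref{rezk 7.7}) and the right class as relative complete maps (\cref{cor new char rel compl}). The brief check that $\kat{Seg}$ is finitely complete is a welcome (if minor) addition that the paper leaves implicit.
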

\begin{proof}
This is the factorization system guaranteed by \cref{prop sle fact system}, using that the localization $L \colon\kat{Seg} \to \kat{CSS}$ is semi-left-exact (\cref{prop semi left exact}).
The Dwyer--Kan equivalences are precisely those maps between Segal spaces which are inverted by $L$.
In \cref{cor new char rel compl}, we identified the relative complete maps as the right class in this factorization system.
\end{proof}

\begin{proposition}\label{prop rc maps equiv}
Suppose $X$ is a Segal space.
Then $\alpha_X\upperstar \colon \kat{Seg}_{/LX} \to \kat{Seg}_{/X}$ restricts to an equivalence $\kat{CSS}_{/LX} \to \kat{Seg}^{\operatorname{rc}}_{/X}$, into the full subcategory of the relative complete maps with codomain $X$. The inverse is given by the completion $L$.
\end{proposition}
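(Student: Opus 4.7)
The plan is to check that the two claimed functors land in the indicated subcategories, and then to identify the two composites with the identity by invoking, respectively, \cref{rel complete cartesian} and the semi-left-exactness established in \cref{prop semi left exact}.

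First I would check that $\alpha_X\upperstar$ does restrict as claimed. Given $T \to LX$ with $T \in \kat{CSS}$, this map is automatically relative complete since both source and target are complete Segal spaces (the relative completeness condition is vacuous in that case). Relative complete maps are stable under pullback (being defined by orthogonality), so $\alpha_X\upperstar(T \to LX) = (Y \to X)$ is in $\kat{Seg}^{\operatorname{rc}}_{/X}$. In the opposite direction, the completion functor $L$ applied to a map $Y \to X$ gives $LY \to LX$, which is a map in $\kat{CSS}_{/LX}$; no restriction is needed on the source.

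Next I would verify $\alpha_X\upperstar \circ L \simeq \id$ on $\kat{Seg}^{\operatorname{rc}}_{/X}$. Given a relative complete map $f \colon Y \to X$, \cref{rel complete cartesian} says precisely that the naturality square
\[
\begin{tikzcd}
Y \rar{\alpha_Y} \dar[swap]{f} \ar[dr, phantom, "\lrcorner" very near start] & LY \dar{Lf} \\
X \rar[swap]{\alpha_X} & LX
\end{tikzcd}
\]
is a pullback, which is exactly the statement that $\alpha_X\upperstar(Lf) \simeq f$ over $X$.

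For the other composite $L \circ \alpha_X\upperstar \simeq \id$ on $\kat{CSS}_{/LX}$, start with $g \colon T \to LX$ where $T$ is complete, and form the pullback $Y = X \times_{LX} T$. Since both $LX$ and $T$ are complete Segal spaces, \cref{prop semi left exact} applied to this pullback square (with $S = LX$, the source of $\alpha_X$, lying in $\kat{CSS}$) gives that
\[
\begin{tikzcd}
LY \rar \dar \ar[dr, phantom, "\lrcorner" very near start] & T \dar{g} \\
LX \rar[swap]{\id} & LX
\end{tikzcd}
\]
is a pullback in $\kat{CSS}$. Since the bottom edge is the identity, the top edge $LY \to T$ is an equivalence over $LX$, giving $L(\alpha_X\upperstar g) \simeq g$.

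The only subtle point, and the one that required building up the entire appendix, is the invocation of semi-left-exactness of $L$ in the second composite; the rest is formal consequence of \cref{rel complete cartesian} together with the stability of relative complete maps under pullback. Combining the two composite identifications yields the desired equivalence, and the description of the inverse as $L$ is immediate.
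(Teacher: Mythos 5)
Your proposal is correct and uses the same two key inputs as the paper's proof: \cref{rel complete cartesian} (for $\alpha_X\upperstar \circ L \simeq \id$ on relative complete maps) and semi-left-exactness from \cref{prop semi left exact} (for $L \circ \alpha_X\upperstar \simeq \id$ on $\kat{CSS}_{/LX}$). The only difference is packaging: the paper invokes the sliced adjunction $L_X \dashv \alpha_X\upperstar$ and checks that its counit is invertible, identifying the essential image via \cref{cor new char rel compl}, whereas you verify the two composites directly by hand — the comparison maps you use are exactly the unit and counit of that adjunction, so the arguments coincide in substance.
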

\begin{proof}
Recall (from \ref{slicing adj} above and \cite[5.2.5.1]{HTT}) that the sliced
localization functor $L_X \colon \kat{Seg}_{/X} \to \kat{CSS}_{/LX}$
has right adjoint
\begin{equation}\label{eq css to seg}
\begin{tikzcd}[sep=small]
    \kat{CSS}_{/LX} \rar[hook] & \kat{Seg}_{/LX} \rar{\alpha_X\upperstar} & \kat{Seg}_{/X}
\end{tikzcd}
\end{equation}
with counit (at an object $A\to LX$) given by
\[ \begin{tikzcd}
L(X \times_{LX} A) \rar{L(\operatorname{pr}_2)} & LA \rar{\varepsilon_{A}} & A  .
\end{tikzcd} \]
The statement is that this counit is invertible.
But since $L$ is semi-left-exact, the first map is just the projection $L(X\times_{LX} A) \simeq  LX \times_{LX}  LA \to LA$
which is an equivalence, and the second map $\varepsilon_A$ is an equivalence since the inclusion $\kat{CSS} \subset \kat{Seg}$ is full.
(This argument is borrowed from the proof of \cite[Lemma 1.7]{GepnerKock:ULCCIC}, which however unnecessarily assumes the
ambient $\infty$-category to be locally cartesian closed.)
\Cref{cor new char rel compl} identifies the image of  \eqref{eq css to seg} as the relative complete maps.
\end{proof}

\begin{remark}
    In their work on configuration categories, Boavida and Weiss~\cite[Appendix B]{BoavidaWeiss:SSECC} introduce a model category of relative complete Segal spaces over a Segal space $X$. 
    \Cref{prop rc maps equiv} shows that the $\infty$-category underlying this model category is equivalent to the slice $\infty$-category  $\catinf_{/LX}$.
\end{remark}

\begin{corollary}\label{rc over DK equivalence}
If $f\colon Y\to X$ is a Dwyer--Kan equivalence between Segal spaces, then $f\upperstar \colon \kat{Seg}^{\operatorname{rc}}_{/X} \to \kat{Seg}^{\operatorname{rc}}_{/Y}$ is an equivalence.
\end{corollary}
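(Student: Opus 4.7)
The plan is to reduce this to the fact that $Lf$ is an equivalence of complete Segal spaces, and then invoke \cref{prop rc maps equiv} to transfer equivalences between slices of complete Segal spaces to equivalences between slices of relative complete maps.

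Concretely, by \cref{rezk 7.7}, since $f$ is a Dwyer--Kan equivalence, the induced map $Lf \colon LY \to LX$ becomes an equivalence in $\kat{CSS}$, and hence pullback along $Lf$ induces an equivalence
\[
(Lf)\upperstar \colon \kat{CSS}_{/LX} \isopil \kat{CSS}_{/LY}.
\]
By \cref{prop rc maps equiv}, the horizontal arrows in the square
\[
\begin{tikzcd}
\kat{CSS}_{/LX} \rar["{(Lf)\upperstar}"] \dar["\simeq"'] & \kat{CSS}_{/LY} \dar["\simeq"] \\
\kat{Seg}^{\operatorname{rc}}_{/X} \rar["f\upperstar"'] & \kat{Seg}^{\operatorname{rc}}_{/Y}
\end{tikzcd}
\]
are equivalences, where the left and right vertical arrows send $A \to LX$ to $X\times_{LX} A \to X$ and $B \to LY$ to $Y\times_{LY} B \to Y$ respectively.

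The main point to verify is that this square commutes. Starting from $A\to LX$, going down and then right gives $Y \times_X (X\times_{LX} A) \to Y$, while going right and then down gives $Y \times_{LY} (LY \times_{LX} A) \to Y$. Both pullbacks are canonically identified with $Y \times_{LX} A \to Y$ (using that the composite $Y\to X \to LX$ agrees with $Y\to LY \to LX$ by naturality of $\alpha$), so the square commutes up to canonical equivalence. Since the two vertical maps and the top horizontal map are equivalences, the bottom map $f\upperstar$ is an equivalence as well.

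No real obstacle here — the only thing to be careful with is the commutativity of the transfer square, and that amounts to a standard pullback-pasting argument using naturality of the unit $\alpha\colon \id \Rightarrow L$.
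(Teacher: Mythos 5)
Your proposal is correct and follows essentially the same route as the paper: both pass to the square comparing $(Lf)\upperstar$ on $\kat{CSS}$-slices with $f\upperstar$ on relative complete slices, using \cref{prop rc maps equiv} for the vertical equivalences and the fact that $L$ inverts Dwyer--Kan equivalences for the top arrow. The only difference is that you spell out the commutativity of the square via pullback pasting and naturality of $\alpha$, which the paper leaves implicit; that verification is accurate.
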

\begin{proof}
Since $Lf \colon LY \to LX$ is an equivalence of simplicial spaces, 
we have the indicated equivalences in the commutative square
\[ \begin{tikzcd}
\kat{CSS}_{/LX} \rar["(Lf)\upperstar","\simeq"'] \dar["\alpha_X\upperstar"',"
\simeq"] & \kat{CSS}_{/LY} \dar["\alpha_Y\upperstar","
\simeq"'] \\
\kat{Seg}^{\operatorname{rc}}_{/X} \rar[swap]{f\upperstar} & 
\kat{Seg}^{\operatorname{rc}}_{/Y} .
\end{tikzcd} \]
It follows that $f\upperstar$ is an equivalence as well.
\end{proof}

The proof of \cref{prop rc maps equiv} and its corollary
did not use anything special about our situation, other than $L\colon \kat{Seg} \to \kat{CSS}$ being semi-left-exact.
We conclude that the following proposition holds (and the case $Y 
\to LY$ recovers the statement of \cref{prop rc maps equiv}).

\begin{proposition}
Suppose $L\colon \DD \to \CC$ is a semi-left-exact reflector and 
$(\mathcal{L}, \mathcal{R})$ is the factorization system on $\DD$ from \cref{prop sle fact system}.
If $f\colon Y\to X$ is in $\mathcal{L}$, then $f\upperstar \colon \mathcal{R}_{/X} \to \mathcal{R}_{/Y}$ is an equivalence. \qed
\end{proposition}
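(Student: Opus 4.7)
The plan is to abstract the proofs of \cref{prop rc maps equiv} and \cref{rc over DK equivalence}, which depend only on semi-left-exactness and on the formal structure of the factorization system $(\mathcal{L},\mathcal{R})$ from \cref{prop sle fact system}. The argument has two steps: first, promote \cref{prop rc maps equiv} itself to the abstract setting (the key equivalence $\CC_{/LX}\simeq\mathcal{R}_{/X}$), and then use functoriality to conclude.

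For the first step, I would show that for each $X\in\DD$ the sliced adjunction $L_X\colon\DD_{/X}\rightleftarrows\CC_{/LX}$ of \ref{slicing adj} (whose right adjoint sends $(A\to LX)$ to the pullback $(X\times_{LX}A\to X)$) restricts to an equivalence $L_X\colon\mathcal{R}_{/X}\simeq\CC_{/LX}\colon\alpha_X\upperstar$. For $A\in\CC_{/LX}$, the counit $L(X\times_{LX}A)\to A$ factors as $L(X\times_{LX}A)\to LA\to A$; the first map is invertible because semi-left-exactness gives $L(X\times_{LX}A)\simeq LX\times_{LX}LA\simeq LA$, and the second is invertible because $\CC\hookrightarrow\DD$ is fully faithful. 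For $g\colon Y\to X$ in $\mathcal{R}_{/X}$, the unit $Y\to X\times_{LX}LY$ is an equivalence by the very definition of $\mathcal{R}$ in \cref{prop sle fact system}, namely that $\alpha$ is cartesian on $g$. This also shows $\alpha_X\upperstar$ actually lands in $\mathcal{R}_{/X}$, since the naturality square of $\alpha$ at the pullback $X\times_{LX}A\to X$ identifies (via $LA\simeq A$) with the original defining pullback square, so $\alpha$ is cartesian on it by construction.

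For the second step, suppose $f\colon Y\to X$ lies in $\mathcal{L}$, so $Lf$ is an equivalence in $\CC$ and consequently $(Lf)\upperstar\colon\CC_{/LX}\to\CC_{/LY}$ is an equivalence. By naturality of $\alpha$ at $f$ combined with the pullback squares defining $\alpha_X\upperstar$ and $\alpha_Y\upperstar$, the square
\[
\begin{tikzcd}
\CC_{/LX}\ar[r,"(Lf)\upperstar"]\ar[d,"\alpha_X\upperstar"',"\simeq"]&\CC_{/LY}\ar[d,"\alpha_Y\upperstar","\simeq"']\\
\mathcal{R}_{/X}\ar[r,"f\upperstar"']&\mathcal{R}_{/Y}
\end{tikzcd}
\]
commutes, and three of its sides are equivalences, forcing $f\upperstar$ to be an equivalence as well.

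The only substantive point, as in the Segal case, is the identification $L(X\times_{LX}A)\simeq LA$ for $A\in\CC$, which is exactly where the semi-left-exactness hypothesis gets used. Everything else is formal manipulation of the factorization system given by \cref{prop sle fact system} together with fully faithfulness of the inclusion $\CC\hookrightarrow\DD$.
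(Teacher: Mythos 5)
Your proposal is correct and follows essentially the paper's own route: the paper proves this proposition simply by observing that the proofs of \cref{prop rc maps equiv} (counit invertible via semi-left-exactness and fullness of $\CC\hookrightarrow\DD$, unit invertible precisely because $\mathcal{R}$ is the class on which $\alpha$ is cartesian) and of \cref{rc over DK equivalence} (the commuting square with $(Lf)\upperstar$ and the two $\alpha\upperstar$'s) use nothing beyond semi-left-exactness, which is exactly the abstraction you carry out in your two steps.
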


\bigskip
We now examine the implications of these results to our maps of interest, right fibrations.

\begin{lemma}\label{rf rel comp}
Right fibrations between Segal spaces are relative complete.
\end{lemma}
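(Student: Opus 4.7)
The plan is to verify \cref{def rel complete segal} at $i = 0$, namely that
\[
\begin{tikzcd}
Y_1^{\operatorname{eq}} \ar[r] \ar[d, "d_0"'] & X_1^{\operatorname{eq}} \ar[d, "d_0"] \\
Y_0 \ar[r, "f_0"'] & X_0
\end{tikzcd}
\]
is a pullback. By \cref{lem rfib between segal spaces}, the analogous square with $Y_1$ and $X_1$ in place of $Y_1^{\operatorname{eq}}$ and $X_1^{\operatorname{eq}}$ is already a pullback, giving $Y_1 \simeq X_1 \times_{X_0} Y_0$. Since the inclusions of equivalences into $1$-simplices are full sub-$\infty$-groupoid inclusions (unions of connected components), establishing the desired pullback reduces to showing that $f$ is \emph{conservative}: an arrow $y \in Y_1$ is an equivalence in $Y$ precisely when $f_1(y)$ is an equivalence in $X$.

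The forward direction is automatic, as any simplicial map preserves the $2$-simplex witnesses of an equivalence. For the converse, suppose $y \colon a \to b$ in $Y$ has $f_1(y)$ an equivalence in $X$, witnessed by $\tau_X \in X_2$ with $d_0(\tau_X) = f_1(y)$ and $d_1(\tau_X) = s_0(f(b))$. I lift $\tau_X$ to $\tau_Y \in Y_2$ using the pullback $Y_2 \simeq X_2 \times_{X_0} Y_0$ of \cref{lemma right fib char}, prescribing last vertex $b$. The identification $Y_1 \simeq X_1 \times_{X_0} Y_0$ applied to $d_1(\tau_Y)$ (image $s_0(f(b))$, target $b$) forces $d_1(\tau_Y) \simeq s_0(b)$, and applied to $d_0(\tau_Y)$ (image $f_1(y)$, target $b$) forces $d_0(\tau_Y) \simeq y$. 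Thus $\tau_Y$ is a $2$-simplex in $Y$ exhibiting a right inverse $g_Y \coloneqq d_2(\tau_Y)$ of $y$.

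To upgrade right-invertibility to invertibility, I apply the same construction to $g_Y$: its image $f_1(g_Y) = d_2(\tau_X)$ is itself an equivalence in $X$ (as a right inverse of the equivalence $f_1(y)$), so the previous paragraph yields a right inverse $z \in Y_1$ of $g_Y$. Segal-space algebra in $\map_Y(a,a)$ then gives $y \simeq y \cdot (g_Y \cdot z) \simeq (y \cdot g_Y) \cdot z \simeq z$, and hence $g_Y \cdot y \simeq g_Y \cdot z \simeq \id_a$, producing via the Segal condition a $2$-simplex $\sigma_Y$ witnessing $g_Y$ as a left inverse of $y$. Together with $\tau_Y$, this exhibits $y$ as an equivalence in $Y$, completing the proof of conservativity.

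The main obstacle is the vertex-and-edge bookkeeping in the second paragraph: one must iteratively invoke the uniqueness from $Y_1 \simeq X_1 \times_{X_0} Y_0$ on the face maps of $\tau_Y$ to force $y$ itself (not merely some lift of $f_1(y)$) to appear literally as its $12$-edge, so that $\tau_Y$ genuinely witnesses $y$ rather than some other lift having a right inverse.
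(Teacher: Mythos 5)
Your proof is correct, but it takes a genuinely different route from the paper's. The paper's own argument is a one-liner at the level of orthogonality: relative completeness is by definition right orthogonality to (either of) the maps $E(0)\rightrightarrows E(1)$, and the map $\Delta^0 \to E(1)$ picks out a terminal vertex of $E(1)$, hence is final by \cref{lem: terminal objects}; since right fibrations are by definition right orthogonal to final maps, the lemma follows with no computation and no mention of equivalence witnesses. You instead verify the spelled-out pullback-square form of \cref{def rel complete segal} directly, reducing it (via \cref{lem rfib between segal spaces} and the fact that $Y_1^{\operatorname{eq}}\subset Y_1$ and $X_1^{\operatorname{eq}}\subset X_1$ are unions of path components) to conservativity of the right fibration, which you then prove by lifting witnessing $2$-simplices through the cartesian squares of \cref{lemma right fib char} and running the standard argument that a right-invertible arrow whose right inverse is itself right-invertible is invertible in $\operatorname{ho}(Y)$. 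The trade-off: the paper's proof is shorter, purely formal, and immune to the face-map and homotopy bookkeeping you rightly flag at the end (which does go through, since $d_0$ and $d_1$ of a $2$-simplex preserve the last vertex, so the uniqueness clause of the cartesian square applies to both faces); your proof is more hands-on and, in exchange, isolates a fact of independent interest that the orthogonality argument never makes explicit, namely that right fibrations between Segal spaces are conservative.
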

\begin{proof}
It is enough to observe that $1 \colon \Delta^0 \to E(1)$ is a final map 
(by \cref{lem: terminal objects}) since it preserves the last vertex. 
\end{proof}

\begin{proposition}\label{L preserves rfib}
If $Y \to X$ is a right fibration between Segal spaces, then $LY \to LX$ is also a right fibration.
\end{proposition}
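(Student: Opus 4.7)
The plan is to reduce the statement, via the characterization of right fibrations between Segal spaces in \cref{lem rfib between segal spaces}, to checking that a certain square of spaces is a pullback, and to verify this by a descent argument along a $\pi_0$-surjection.

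First, since $p \colon Y \to X$ is a right fibration between Segal spaces, \cref{rf rel comp} gives that $p$ is relative complete, and so by \cref{rel complete cartesian} the completion naturality square
\[
\begin{tikzcd}
Y \rar{\alpha_Y} \dar[swap]{p} \ar[dr, phantom, "\lrcorner" very near start] & LY \dar{Lp} \\
X \rar{\alpha_X} & LX
\end{tikzcd}
\]
is a pullback of Segal spaces, and hence a pullback of spaces in every simplicial degree. Since $LY$ and $LX$ are (complete) Segal spaces, \cref{lem rfib between segal spaces} reduces showing that $Lp$ is a right fibration to showing that the canonical comparison map $\psi \colon LY_1 \to LY_0 \times_{LX_0} LX_1$ is an equivalence of spaces; this $\psi$ is naturally a morphism over $LX_1$.

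The key step is then to pull $\psi$ back along $(\alpha_X)_1 \colon X_1 \to LX_1$. Using the above pullback square in degrees $0$ and $1$, together with the pasting law for pullbacks and the naturality of $\alpha$ (which identifies the two factorizations of $X_1 \to LX_0$), the pulled-back map $\tilde\psi$ is identified with the comparison map $Y_1 \to Y_0 \times_{X_0} X_1$ for the $d^0$-square of $p$. Since $p$ is a right fibration between Segal spaces, this comparison map is an equivalence by \cref{lem rfib between segal spaces}.

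Finally, \cref{map to LX is surjective} gives that $(\alpha_X)_1$ is $\pi_0$-surjective, hence an effective epimorphism in $\spaces$. A map of spaces over $LX_1$ which becomes an equivalence after pullback along such an effective epi is itself an equivalence, because equivalences of spaces are detected on fibers and every fiber of $\psi$ is equivalent to a fiber of $\tilde\psi$ above a suitable lift. Thus $\psi$ is an equivalence, and so $Lp$ is a right fibration. The main piece of bookkeeping to execute carefully will be the identification of $\tilde\psi$ with the comparison map for $p$; the descent step itself is routine in $\spaces$.
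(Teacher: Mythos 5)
Your proof is correct, and it shares the paper's backbone --- the cartesian naturality square from \cref{rf rel comp} and \cref{rel complete cartesian}, plus the $\pi_0$-surjectivity of $\alpha_X$ from \cref{map to LX is surjective} --- but the reduction and the final step are organized differently. The paper checks the right-fibration condition in every simplicial degree at once: it uses \cref{lemma right fib char} (cartesianness on all last-vertex inclusions $[0]\to[n]$), pastes the degree-$n$ naturality square against the degree-$0$ one, and invokes \cref{lem wrong way pasting} for each $n$; this never needs to know that $LY$ and $LX$ are Segal. You instead exploit that the completions are Segal spaces to reduce, via \cref{lem rfib between segal spaces}, to the single $d^0$-square, and then prove the needed cancellation by hand: pulling the comparison map $\psi\colon LY_1 \to LY_0\times_{LX_0}LX_1$ back along the $\pi_0$-surjection $(\alpha_X)_1$, identifying the result with the comparison map for $p$ (the bookkeeping you flag does go through, using simpliciality of $\alpha$ and the naturality pullback in degrees $0$ and $1$), and concluding by the fiberwise criterion for equivalences of spaces over a base. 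Your descent step is exactly the special case of \cref{lem wrong way pasting} that the paper cites, rephrased in terms of fibers, so nothing is gained or lost there; what your route buys is a check of only one square, at the cost of leaning on the Segal property of $LX$ and $LY$ and on a slightly heavier identification of the pulled-back comparison map, whereas the paper's degreewise argument is more uniform and more directly generalizable beyond the Segal setting.
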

\begin{proof}
By \cref{rel complete cartesian} and \cref{rf rel comp}, the following 
naturality square is a pullback.
\[ \begin{tikzcd}
Y \rar \dar \ar[dr, phantom, "\lrcorner" very near start] & LY \dar \\
 X \rar & LX
\end{tikzcd} \]
It follows that the right square below is a pullback, while the left 
square is a pullback since $Y\to X$ is a right fibration.
\[ \begin{tikzcd}
Y_n \rar \dar \ar[dr, phantom, "\lrcorner" very near start]  & Y_0 \rar \dar \ar[dr, phantom, "\lrcorner" very near start] & LY_0 \dar \\
X_n \rar & X_0 \rar & LX_0
\end{tikzcd} \]
We then have the left and outer squares of the following are pullbacks.
\begin{equation}\label{wrong way pasting law} \begin{tikzcd}
Y_n \rar \dar \ar[dr, phantom, "\lrcorner" very near start]  & LY_n \rar \dar & LY_0 \dar \\
X_n \rar & LX_n \rar & LX_0
\end{tikzcd}
\end{equation}
We wish to deduce that the right hand square is a pullback.
But by \cref{map to LX is surjective} $X_n \to LX_n$ is surjective on path components, so by \cref{lem wrong way pasting} the right square in \eqref{wrong way pasting law} is a pullback.
\end{proof}

The following equivalence is a restriction of that of \cref{prop rc maps equiv}.
It more or less recovers \cite[Corollary 5.6]{Boavida:SOGC}.

\begin{proposition}\label{prop rfib equiv X LX}
Suppose $X$ is a Segal space.
Then $\alpha_X\upperstar \colon \kat{RFib}_{/LX} \to \kat{RFib}_{/X}$ is an equivalence, with inverse given by the completion $L$.
\end{proposition}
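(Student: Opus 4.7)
The plan is to derive this directly as a restriction of the equivalence established in \cref{prop rc maps equiv}, namely $\alpha_X\upperstar \colon \kat{CSS}_{/LX} \simeq \kat{Seg}^{\operatorname{rc}}_{/X}$ with inverse given by the sliced completion $L$. I would show that both functors of this adjoint equivalence carry right fibrations to right fibrations, and that on either side the right fibrations form a full subcategory of the ambient slice.

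First, I would observe that $\kat{RFib}(LX)$ sits as a full subcategory of $\kat{CSS}_{/LX}$: by \cref{lem rfib creates segal} the source of a right fibration over a (complete) Segal space is again a (complete) Segal space. Analogously, $\kat{RFib}(X)$ sits inside $\kat{Seg}^{\operatorname{rc}}_{/X}$ by \cref{lem rfib creates segal} combined with \cref{rf rel comp}, which says that right fibrations between Segal spaces are relative complete.

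Next, I would verify that the two functors of the equivalence preserve the subcategories of right fibrations. In one direction, $\alpha_X\upperstar$ is pullback along $\alpha_X$, and right fibrations are stable under pullback (being a right class in an orthogonal factorization system on simplicial spaces by \cref{comprehensive}). In the other direction, given a right fibration $Y \to X$ between Segal spaces, \cref{L preserves rfib} asserts that $LY \to LX$ is again a right fibration; and by \cref{prop rc maps equiv} this is exactly the image of $Y \to X$ under the sliced completion functor inverse to $\alpha_X\upperstar$.

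Since both functors in the adjoint equivalence $\kat{CSS}_{/LX} \simeq \kat{Seg}^{\operatorname{rc}}_{/X}$ restrict to the full subcategories spanned by right fibrations, the unit and counit automatically restrict as well, yielding the desired equivalence $\kat{RFib}(LX) \simeq \kat{RFib}(X)$ with $\alpha_X\upperstar$ and $L$ as the mutually inverse functors. No step here looks obstructive; the real work was done in building up \cref{prop semi left exact}, \cref{prop rc maps equiv}, and \cref{L preserves rfib}.
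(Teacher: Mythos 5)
Your proposal is correct and follows essentially the same route as the paper: both realize $\kat{RFib}(X)$ and $\kat{RFib}(LX)$ as full subcategories of $\kat{Seg}^{\operatorname{rc}}_{/X}$ and $\kat{CSS}_{/LX}$ (via \cref{rf rel comp} and \cref{lem rfib creates segal}), and then restrict the equivalence of \cref{prop rc maps equiv} using \cref{L preserves rfib} in one direction and pullback-stability of right fibrations in the other.
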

\begin{proof}
The fully-faithful functor $\kat{RFib}_{/X} \to \kat{Seg}_{/X}$ factors through $\kat{Seg}^{\operatorname{rc}}_{/X}$ by \cref{rf rel comp}.
Any right fibration whose codomain is a complete Segal space also has a complete Segal space as its domain. 
By \cref{L preserves rfib} we have the following lift of $L$.
\[ \begin{tikzcd}
\kat{RFib}_{/X} \rar[dashed, "L"] \dar["\text{f.f.}"] & \kat{RFib}_{/LX} \dar["\text{f.f.}"] \\
\kat{Seg}^{\operatorname{rc}}_{/X} \rar["L"', "\simeq"] & \kat{CSS}_{/LX}
\end{tikzcd} \]
Of course $\alpha_X\upperstar$ restricts to $\kat{RFib}_{/LX} \to \kat{RFib}_{/X}$, and since the vertical maps are fully faithful, this implies the result.
\end{proof}

The following more or less recovers \cite[Proposition 5.5]{Boavida:SOGC}.

\begin{corollary}
If $f\colon Y \to X$ is a Dwyer--Kan equivalence between Segal spaces, then $f\upperstar \colon \kat{RFib}_{/X} \to \kat{RFib}_{/Y}$ is an equivalence.
\qed
\end{corollary}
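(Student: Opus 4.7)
The plan is to argue in direct analogy with \cref{rc over DK equivalence}, replacing the equivalence $\kat{Seg}^{\operatorname{rc}}_{/(-)} \simeq \kat{CSS}_{/L(-)}$ with the equivalence $\kat{RFib}_{/(-)} \simeq \kat{RFib}_{/L(-)}$ furnished by \cref{prop rfib equiv X LX}. The key observation needed beyond the previous proposition is simply that if $f\colon Y \to X$ is a Dwyer--Kan equivalence between Segal spaces, then its Rezk completion $Lf \colon LY \to LX$ is an equivalence of complete Segal spaces: this is exactly the content of Rezk's characterization \cref{rezk 7.7} (the Dwyer--Kan equivalences are the maps inverted by $L$).

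From this it follows that $(Lf)\upperstar \colon \kat{RFib}_{/LX} \to \kat{RFib}_{/LY}$ is an equivalence, since an equivalence of simplicial spaces induces an equivalence of slices and hence of full subcategories of right fibrations. I would then assemble the square
\[
\begin{tikzcd}
\kat{RFib}_{/LX} \rar["(Lf)\upperstar","\simeq"'] \dar["\alpha_X\upperstar"',"\simeq"] & \kat{RFib}_{/LY} \dar["\alpha_Y\upperstar","\simeq"'] \\
\kat{RFib}_{/X} \rar[swap]{f\upperstar} & \kat{RFib}_{/Y}
\end{tikzcd}
\]
which commutes up to equivalence by naturality of $\alpha$. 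The two vertical maps are equivalences by \cref{prop rfib equiv X LX}, and the top horizontal map is the equivalence induced by $Lf$. Two-out-of-three then forces $f\upperstar$ to be an equivalence, completing the proof.

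There is no real obstacle here: the statement is an immediate consequence of the preceding proposition together with Rezk's theorem. The only small thing to verify is commutativity of the square, which holds because both composites send a right fibration $W \to LX$ to the pullback along the composite $Y \to X \to LX \simeq LY$, and the two ways of factoring this composite agree by naturality of $\alpha$.
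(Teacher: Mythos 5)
Your argument is correct and is exactly the intended one: the paper states this corollary with a \qed precisely because it follows from \cref{prop rfib equiv X LX} by the same square-and-two-out-of-three argument used for \cref{rc over DK equivalence}, which is the square you draw (with $Lf$ an equivalence by \cref{rezk 7.7} and commutativity by naturality of $\alpha$).
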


\begin{remark}\label{rem: flagged}
As a special case of \cite[Theorem 0.27]{AyalaFrancis:FHC}, $\kat{Seg}$ is equivalent to the full subcategory of the arrow $\infty$-category of $\catinf$ on the essentially surjective functors $\DD \to \CC$ where $\DD$ is an $\infty$-groupoid.
Under this equivalence, completion $\kat{Seg} \to \kat{CSS} \simeq \catinf$ may be identified with the codomain map $(\DD \to \CC) \mapsto \CC$.
It may be possible to give alternative proofs of some statements in this appendix using their results.
\end{remark}

\renewcommand{\bibliofont}{\normalsize}

\bibliographystyle{scplain}
\bibliography{2-segal}

\end{document}